\DeclareMathAlphabet{\pazocal}{OMS}{zplm}{m}{n}
\tikzset{>=stealth}
\newcommand{\cl}[1]{\marginpar{\color{purple}\tiny #1 --cl}}
\newcommand{\stab}{{\rm{Stab}}}
  \newcommand{\calA}{\mathcal{A}}
  \newcommand{\calB}{\mathcal{B}}
  \newcommand{\calC}{\mathcal{C}}
  \newcommand{\calE}{\mathcal{E}}
  \newcommand{\calF}{\mathcal{F}}
  \newcommand{\calG}{\mathcal{G}}
  \newcommand{\calH}{\mathcal{H}}
  \newcommand{\calL}{\mathcal{L}}
  \newcommand{\calN}{\mathcal{N}}
  \newcommand{\calP}{\mathcal{P}}
  \newcommand{\calS}{\mathcal{S}}
  \newcommand{\calW}{\mathcal{W}}
  \newcommand{\calX}{\mathcal{X}}
\newcommand{\ev}{{\rm{ev}}}
  \newcommand{\HH}{\mathbb{H}}
  \newtheorem{theorem}{Theorem}[section]
  \newtheorem{proof of the main theorem}[theorem]{Proof of the Main Theorem}
  \newtheorem{proposition}[theorem]{Proposition}
  \newtheorem{corollary}[theorem]{Corollary}
  \newtheorem{lemma}[theorem]{Lemma}
  \newtheorem*{conjecture*}{Conjecture}
\newtheorem*{CTUtheorem}{Theorem~\ref{CTU}}
  \theoremstyle{definition}
  \newtheorem{definition}[theorem]{Definition}
   \newtheorem{remark}[theorem]{Remark}
  \newtheorem*{remark*}{Remark}
  \newtheorem{claim}[theorem]{Claim}
  \newtheorem*{claim*}{Claim}
  \newtheorem*{question*}{Question}
  \newtheorem*{answer*}{Answer}
  \newtheorem*{application*}{Application}
  \newtheorem*{ideas*}{ideas}
  \DeclareMathOperator{\PMod}{PMod}
  \DeclareMathOperator{\diam}{diam}
  \DeclareMathOperator{\Diff}{Diff}
  \newcommand{\ML}{\ensuremath{\mathcal{ML}}\xspace}
  \newcommand{\FL}{\ensuremath{\mathcal{FL}}\xspace}
  \newcommand{\EL}{\ensuremath{\mathcal{EL}}\xspace}
  \newcommand{\PML}{\ensuremath{\mathcal{PML}}\xspace}
         \newcommand{\PFL}{\ensuremath{\mathcal{PFL}}\xspace}
\newcommand{\sEL}{\ensuremath{\calE\calL^s(\dot S)}\xspace}
  \newcommand{\la}{\langle} 
  \newcommand{\ra}{\rangle}
  \newcommand{\co}{\colon\thinspace}
  \newcommand{\param}{{\mathchoice{\mkern1mu\mbox{\raise2.2pt\hbox{$
  \centerdot$}}
  \mkern1mu}{\mkern1mu\mbox{\raise2.2pt\hbox{$\centerdot$}}\mkern1mu}{
  \mkern1.5mu\centerdot\mkern1.5mu}{\mkern1.5mu\centerdot\mkern1.5mu}}}
\renewcommand{\setminus}{{\smallsetminus}}
    \newcommand{\sC}{{\mathcal C^s(\dot S)}}
    \newcommand{\csC}{{\overline{\mathcal C^s}(\dot S)}}
    \newcommand{\C}{{\mathcal C}}
    \newcommand{\Cdot}{\mathcal C(\dot S)}
\newcommand{\lcut}{{ \{ \hspace{-.1cm} \{ }}
\newcommand{\rcut}{{ \} \hspace{-.1cm} \} }}
\newcommand{\BS}{\mathbb S}
\newcommand{\Abd}{\BS^1_{\hspace{-.1cm}\calA}}
\newcommand{\AbdC}{\BS^1_{\hspace{-.1cm}\calA_0}}
\newcommand{\id}{{\rm{id}}}
\begin{document}


\title[A Cannon-Thurston map for surviving complexes]{A Universal Cannon-Thurston map and the surviving curve complex.}

  \author {Funda G\"ultepe} \thanks{The first author was partially supported by a University of Toledo startup grant.}
  \author {Christopher J. Leininger} \thanks{The second author was partially supported by NSF grant DMS-1510034, 1811518, and 2106419.}
\author{Witsarut Pho-on}

\address{Department of Mathematics and Statistics \\
 University of Toledo\\
 Toledo, OH, 43606}
\email{funda.gultepe@utoledo.edu}
\urladdr{http://www.math.utoledo.edu/~fgultepe/}

\address{Department of Mathematics\\
Rice University\\ Houston, TX 77005}
\email{cjl12@rice.edu}
\urladdr{https://sites.google.com/view/chris-leiningers-webpage/home}

\address{Department of Mathematics\\Faculty of Science\\Srinakharinwirot University\\ Bangkok 10110, Thailand}
\email{witsarut@g.swu.ac.th}
\urladdr{https://sites.google.com/g.swu.ac.th/witsarut/}

\begin{abstract} Using the Birman exact sequence for pure mapping class groups,  we construct a universal Cannon-Thurston map onto the boundary of a curve complex for a surface with punctures we call {\em surviving curve complex}.  Along the way we prove hyperbolicity of this complex and identify its boundary as a space of laminations.  As a corollary we obtain a universal Cannon-Thurston map to the boundary of the ordinary curve complex, extending earlier work of the second author with Mj and Schleimer.
\end{abstract}

\maketitle

\section{introduction} \label{S:intro}

Given a closed hyperbolic $3$--manifold $M$ that fibers over the circle with fiber a surface $S$, Cannon and Thurston \cite{CT} proved that the lift to the universal covers $\HH^2 \to \HH^3$ of the inclusion $S \to M$ extends to a continuous $\pi_1(S)$-equivariant map of the compactifications.  This is quite remarkable as the ideal boundary map $\BS^1_\infty \to \BS^2_\infty$ is a $\pi_1S$--equivariant, sphere--filling Peano curve.   A {\em Cannon-Thurston map}, $\BS^1_\infty \to \BS^2_\infty$, for a type-preserving, properly discontinuous actions of the fundamental group $\pi_1S$ of hyperbolic surfaces (closed or punctured) acting on hyperbolic $3$--space was shown to exist in various situations (see \cite{Minsky-rigidity,ADP-CT,McMullen,BowCT1}), with Mj \cite{Mj1} proving the existence in general (see Section~\ref{S:historical} for a discussion of even more general Cannon-Thurston maps).



Suppose that $S$ is a hyperbolic surface with basepoint $z \in S$, and write $\dot S = S \smallsetminus\{z\}$. The curve complex of $\dot S$ is a $\delta$--hyperbolic space on which $\pi_1S = \pi_1(S,z)$ acts via the Birman exact sequence.  In \cite{LeinMjSch}, the second author, Mj, and Schleimer constructed a {\em universal Cannon-Thurston map} when $S$ is a closed surface of genus at least $2$.  Here we complete this picture, extending this to all surfaces $S$ with complexity $\xi(S) \geq 2$.

\begin{theorem}[Universal Cannon-Thurston Map] \label{T:UCT C short} Let $S$ be a connected, orientable surface with $\xi(S) \geq 2$.  Then there exists a subset $\AbdC \subset \BS^1_\infty$ and a continuous, $\pi_1S$--equivariant, finite-to-one surjective map $\partial \Phi_0 \colon \AbdC \to \partial \C(\dot S)$.  Moreover, if $\partial i \colon \BS^1_\infty \to \BS^2_\infty$ is any Cannon-Thurston map for a proper, type-preserving, isometric action on $\mathbb H^3$ without accidental parabolics, then there exists a map $q \colon \partial i(\AbdC) \to \partial \C(\dot S)$ so that $\partial \Phi_0$ factors as
\[
\xymatrix{ \AbdC \ar[r]_{\partial i \quad} \ar@/^1pc/[rr]^{\partial \Phi_0} & \partial i(\AbdC) \ar[r]_{q} & \partial \C(\dot S).
}\]
\end{theorem}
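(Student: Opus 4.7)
The plan is to deduce Theorem~\ref{T:UCT C short} as a corollary of the main theorem of the paper, which constructs a universal Cannon-Thurston map $\partial \Phi \colon \Abd \to \partial \sC$ for the (already established) hyperbolic surviving curve complex $\sC$, together with the identification of $\partial \sC$ as a space of laminations on $\dot S$.

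The key additional ingredient is a natural continuous, $\pi_1 S$--equivariant, finite-to-one surjection $\pi$ from a subset of $\partial \sC$ onto $\partial \Cdot$, induced by the inclusion $\sC \hookrightarrow \Cdot$. Under the lamination identifications --- $\partial \sC$ as surviving laminations on $\dot S$ and $\partial \Cdot$ as Klarreich's ending lamination space of $\dot S$ --- the map $\pi$ essentially forgets the extra structure a surviving lamination carries about how its leaves interact with the puncture $z$. Granting $\pi$, I would define $\AbdC := (\partial \Phi)^{-1}(\mathrm{dom}(\pi)) \subset \Abd$ and set $\partial \Phi_0 := \pi \circ \partial \Phi|_{\AbdC}$. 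The required continuity, $\pi_1 S$--equivariance, surjectivity onto $\partial \Cdot$, and finite-to-one-ness of $\partial \Phi_0$ then follow from the corresponding properties of $\partial \Phi$ and $\pi$.

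For the universal factorization, let $\partial i$ be a Cannon-Thurston map as in the hypothesis. The universal property of $\partial \Phi$ from the main theorem yields $q' \colon \partial i(\Abd) \to \partial \sC$ with $\partial \Phi = q' \circ \partial i|_{\Abd}$. Setting $q := \pi \circ q'|_{\partial i(\AbdC)}$ produces the desired factorization $\partial \Phi_0 = q \circ \partial i|_{\AbdC}$; well-definedness of $q$ on $\partial i(\AbdC)$ is automatic once one observes that coincidences $\partial i(x)= \partial i(y)$ force $\partial \Phi(x) = \partial \Phi(y)$, hence $\partial \Phi_0(x) = \partial \Phi_0(y)$. The main obstacle I expect is the construction of $\pi$: proving continuity, finite-to-oneness, and correct domain requires a careful comparison of convergence in the two curve complexes and of their respective boundary lamination spaces, which is where the identification of $\partial \sC$ established earlier in the paper does the real work.
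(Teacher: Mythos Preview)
Your overall architecture---pull back along $\partial\Phi$ to define $\AbdC$, then pass to $\partial\C(\dot S)$---matches the paper, but your description of the bridging map $\pi$ is wrong, and this misconception propagates to your argument for finite-to-one-ness.

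The inclusion $\sC\hookrightarrow\C(\dot S)$ is not a quasi-isometric embedding (long geodesics in $\C(W)$ for a proper witness $W$ are unbounded in $\sC$ but stay within distance $1$ of $\partial W$ in $\C(\dot S)$), so the induced boundary map is only defined on the subset $\EL(\dot S)\subset\EL^s(\dot S)=\partial\sC$, and there it is a \emph{homeomorphism}, not a nontrivial finite-to-one quotient. There is no ``forgetting how leaves interact with $z$'': a lamination in $\EL(W)$ for a proper witness $W$ is simply not filling on $\dot S$ and has no image in $\partial\C(\dot S)$. So in the paper $\partial\Phi_0$ is literally the restriction $\partial\Phi|_{\AbdC}$ with $\AbdC=\partial\Phi^{-1}(\EL(\dot S))$.

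Consequently, finite-to-one-ness cannot come from $\pi$; it comes entirely from the fiber analysis of $\partial\Phi$ itself. This is Theorem~\ref{CTU} (identified points are exactly ideal endpoints of a leaf or complementary region of some $p^{-1}(\calL)$, $\calL\in\EL(S)$) together with Proposition~\ref{P:preimage of witness ELs}: a fiber $\partial\Phi^{-1}(\calL_0)$ is infinite precisely when $\calL_0\in\EL(W)$ for a proper witness, i.e.\ precisely when $\calL_0\notin\EL(\dot S)$. You would need to supply this argument.

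For the factorization through $\partial i$: Theorem~\ref{CT} does not itself assert a universal property for $\partial\Phi$, so there is no ready-made $q'$ to invoke. The paper instead uses Theorem~\ref{CTU} directly, together with the known structure of Kleinian Cannon--Thurston maps (identifications under $\partial i$ are governed by endpoints of leaves/complementary regions of $p^{-1}(\calL)$ for the ending lamination(s) $\calL\in\EL(S)$), to see that $\partial i(x)=\partial i(y)$ implies $\partial\Phi(x)=\partial\Phi(y)$. Your final sentence already gestures at this, and that is indeed the mechanism---but it is Theorem~\ref{CTU}, not an abstract universal property, doing the work.
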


For the reader familiar with Cannon-Thurston maps in the setting of cusped hyperbolic surfaces, the finite-to-one condition may seem unnatural.  We address this below in the process of describing the subset $\AbdC \subset \BS^1_\infty$.  First, we elaborate on the universal property of the theorem (that is, the ``moreover" part).

Let $p \colon \HH = \HH^2 \to S$ denote the universal cover \footnote{We will mostly be interested in real hyperbolic space in dimension $2$, so will simply write $\HH = \HH^2$.}.  A proper, type-preserving, isometric action of $\pi_1S$ on a $\mathbb H^3$ has quotient hyperbolic $3$--manifold homeomorphic to $S \times \mathbb R$.  Each of the two ends (after removing cusp neighborhoods) is either geometrically finite or simply degenerate.  In the latter case, there is an associated {\em ending lamination} that records the asymptotic geometry of the end; see \cite{TNotes,bonahon,Minsky-endinglam,BCM-endinglam}].  The Cannon-Thurston map $\BS^1_\infty \to \BS^2_\infty$ for such an action is an embedding if both ends are geometrically finite; see \cite{Floyd}.  If there are one or two degenerate ends, the Cannon-Thurston map is a quotient map onto a dendrite or the entire sphere $\BS^2_\infty$, respectively, where a pair of points $x,y \in \BS^1_\infty$ are identified if and only if $x$ and $y$ are ideal endpoints of a leaf or complementary region of the $p^{-1}(\calL)$ for (one of) the ending lamination(s) $\calL$; see \cite{CT,Minsky-rigidity,BowCT1,Mj2}.  A more precise version of the universal property is thus given by the following.  Here $\EL(S)$ is the space of ending laminations of $S$, which are all possible ending laminations of ends of hyperbolic $3$--manifolds as above; see Section~\ref{S:laminations} for definitions.

\begin{theorem} \label{T:Phi0 identified} Given two distinct points $x,y\in \AbdC$, $\partial\Phi_0(x) = \partial \Phi_0(y)$ if and only if $x$ and $y$ are the ideal endpoints of a leaf or complementary region of $p^{-1}(\calL)$ for some $\calL \in \EL(S)$.
\end{theorem}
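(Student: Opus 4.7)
The proof splits into two implications. The \emph{if} direction will follow almost immediately from the universal factorization of Theorem~\ref{T:UCT C short}, together with the well-understood fiber structure of classical Cannon--Thurston maps for simply degenerate Kleinian surface groups. The \emph{only if} direction is the substantive one, and will use the identification of $\partial \calC(\dot S)$ with a space of laminations obtained earlier in the paper.

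For the \emph{if} direction, suppose $x,y\in\AbdC$ are ideal endpoints of a common leaf, or of a common complementary region, of $p^{-1}(\calL)$ for some $\calL\in\EL(S)$. I would invoke the Bers density theorem (or Thurston's direct limiting construction) to produce a proper, type-preserving isometric $\pi_1 S$--action on $\HH^3$, without accidental parabolics, whose quotient $N\cong S\times\RR$ has a simply degenerate end realizing $\calL$ as its ending lamination. By Mj's theorem this action admits a Cannon--Thurston map $\partial i\colon \BS^1_\infty\to\BS^2_\infty$, and by the combined work of Cannon--Thurston, Minsky, Bowditch, and Mj, two points are identified by $\partial i$ exactly when they lie in the ideal boundary of a common leaf or complementary region of $p^{-1}(\calL')$ for one of the two ending laminations $\calL'$ of $N$. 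In particular $\partial i(x)=\partial i(y)$, and the factorization $\partial\Phi_0=q\circ\partial i$ immediately gives $\partial\Phi_0(x)=\partial\Phi_0(y)$.

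For the \emph{only if} direction, suppose $x\neq y$ in $\AbdC$ satisfy $\partial\Phi_0(x)=\partial\Phi_0(y)=\mu\in\partial\calC(\dot S)$. The plan is first to extract from $\mu$ an ending lamination of $S$, and then to force $x,y$ to lie on the boundary of a common leaf or complementary region of its lift. Using the identification of $\partial\sC$ with a space of minimal, filling laminations on $\dot S$ (the Klarreich--type theorem stated earlier in the paper, together with hyperbolicity of the surviving curve complex), $\mu$ corresponds to such a lamination $\lambda$ on $\dot S$. Filling in $z$ produces a lamination $\calL$ on $S$, and I would verify that $\calL$ is still minimal and filling on $S$, so that $\calL\in\EL(S)$: the only complementary region of $\lambda$ in $\dot S$ that can be affected by filling in $z$ is a region containing $z$, which closes up to a single complementary region of $\calL$ on $S$.

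Finally, to identify $x$ and $y$ with endpoints of a common leaf or complementary region of $p^{-1}(\calL)$, I would choose sequences in $\sC$ realizing the convergence $\partial\Phi_0(\cdot)\to\mu$ along geodesic rays in $\HH$ accumulating on $x$ and on $y$. Lifting these curves to $\HH$ and passing to a Hausdorff limit, the lifts must accumulate on leaves of $p^{-1}(\calL)$, and convergence of their endpoints to $x$ and to $y$ then forces $x,y$ to lie on the ideal boundary of a single such leaf or complementary region. The main obstacle will be translating between three different notions of convergence---combinatorial convergence in $\partial\calC(\dot S)$, Hausdorff convergence of laminations on $\dot S$ and on $S$, and convergence of ideal endpoints in $\BS^1_\infty$---while keeping careful track of the puncture $z$. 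This is precisely the issue that motivates restricting the domain to $\AbdC$, which excludes parabolic fixed points of the peripheral subgroup at $z$ where the correspondence with ending laminations of $S$ would break down.
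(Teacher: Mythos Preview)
Your proposal has a logical circularity in the ``if'' direction and is too vague in the ``only if'' direction to constitute a proof.

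For the ``if'' direction, you invoke the universal factorization $\partial\Phi_0 = q\circ\partial i$ from Theorem~\ref{T:UCT C short}. But the existence of such a $q$ is \emph{equivalent} to the assertion that fibers of $\partial i$ are contained in fibers of $\partial\Phi_0$---which, given the known structure of Cannon--Thurston fibers, is exactly the ``if'' direction you are trying to prove. In the paper's logical order, Theorem~\ref{T:Phi0 identified} is established first (as an immediate corollary of Theorem~\ref{CTU}), and only then is the universal factorization of Theorem~\ref{T:UCT C short} deduced from it. So you are assuming the conclusion.

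For the ``only if'' direction, your outline---fill in $z$ to get $\calL\in\EL(S)$, then lift curves and pass to a Hausdorff limit to pin down $x,y$ on a leaf or complementary region of $p^{-1}(\calL)$---identifies the correct lamination $\calL$, but the step linking Hausdorff accumulation of lifted curves to the claim that $x$ and $y$ sit on the \emph{same} leaf or complementary region is not a proof. Nothing in your sketch explains why the limiting leaves through $x$ and through $y$ should coincide or bound a common region.

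The paper's route is entirely different and much shorter at this stage: Theorem~\ref{T:Phi0 identified} is an immediate restriction of Theorem~\ref{CTU}, since $\partial\Phi_0$ is by definition the restriction of $\partial\Phi$ to $\AbdC$. The real work happens in the proof of Theorem~\ref{CTU}, which does not use Hausdorff limits of lifted curves at all. Instead it uses the \emph{extended survival map} $\hat\Phi\colon \bar\C(S)\times\HH\to\bar\C^s(\dot S)$ and the description of the boundary sets $\partial\calX(\gamma)$ (equations~\eqref{E:boundary intersections 1} and~\eqref{E:boundary intersections 2}). Combined with the nesting characterization of $\partial\Phi$ and the key Lemma~\ref{L:points identified by hat phi}---which says $\hat\Phi(\calL_1,x_1)=\hat\Phi(\calL_2,x_2)$ forces $\calL_1=\calL_2$ and $x_1,x_2$ to share a leaf or complementary region---one extracts the common $\calL$ and the common leaf/region directly. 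You should study that machinery rather than attempting an ad hoc limiting argument.
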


When $S$ has punctures, $\partial \C(\dot S)$ is not the most natural ``receptacle" for a universal Cannon-Thurston map.  Indeed, there is another hyperbolic space whose boundary naturally properly contains $\partial \C(\dot S)$.  The {\em surviving curve complex} of $\dot S$, denoted $\C^s(\dot S)$ is the subcomplex of $\C(\dot S)$ spanned by curves that ``survive" upon filling $z$ back in.  In section \ref{S:hyperbolicity}, we prove that $\C^s(\dot S)$ is hyperbolic.  One could alternatively verify the axioms due to Masur and Schleimer \cite{MSch1}, or try to relax the conditions of Vokes \cite{Vokes} to prove hyperbolicity; see Section~\ref{S:hyperbolicity}.

The projection $\Pi \colon \C^s(\dot S) \to \C(S)$ was studied by the second author with Kent and Schleimer in \cite{LeinKentSch} where it was shown that for any vertex $v \in \C(S)$, the fiber $\Pi^{-1}(v)$ is $\pi_1S$--equivariantly isomorphic to the Bass-Serre tree dual to the splitting of $\pi_1S$ defined by the curve determined by $v$; see also \cite{Harer},\cite{Hat-Vogt}.  As such, there is a $\pi_1S$--equivariant map $\Phi_v \colon \HH \to \Pi^{-1}(v) \subset \C^s(\dot S)$; see \S\ref{S:tree map construction}.  As we will see, the first part of Theorem~\ref{T:UCT C short} is a consequence of the following; see Section~\ref{S:UCT maps}.

\newcommand{\CTstatement}{
For any vertex $v\in \C$, the map $\Phi_v: \HH \rightarrow \C^s(\dot S)$ has a continuous $\pi_1(S)$--equivariant extension
 \[\overline\Phi_v: \HH \cup \Abd \rightarrow \overline\C^s(\dot S)\]
 and the induced  map
 \[ \partial\Phi = \overline\Phi_v|_{\Abd}: \Abd \rightarrow \partial \C^s(\dot S)\]
is surjective and does not depend on $v$. Moreover,
$\partial \Phi$ is equivariant with respect to the action of the pure mapping class group $\PMod(\dot S)$.}

\begin{theorem} \label{CT} \CTstatement
\end{theorem}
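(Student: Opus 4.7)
The plan is to construct $\overline{\Phi}_v$ by verifying a Mitra-style Cannon-Thurston criterion for the map $\Phi_v \colon \HH \to T_v = \Pi^{-1}(v) \subset \C^s(\dot S)$, using the hyperbolicity of $\C^s(\dot S)$ proved in Section~\ref{S:hyperbolicity}. Recall that $T_v$ is the $\pi_1 S$--tree dual to the splitting of $\pi_1 S$ along the curve represented by $v$, and that $\Phi_v$ is a $\pi_1 S$--equivariant map to $T_v$. A first routine observation is that $\Phi_v$ is coarsely Lipschitz: the composition $\HH \to \pi_1 S \to T_v \hookrightarrow \C^s(\dot S)$ is, at each stage, coarsely Lipschitz (orbit map, tree orbit map, combinatorial inclusion).

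The technical core is to prove the following: for every $R > 0$ and every basepoint $\tilde z \in \HH$, there exists $R' > 0$ such that whenever $\gamma$ is a geodesic segment in $\HH$ with both ideal endpoints in $\Abd$ and $d_\HH(\tilde z, \gamma) \geq R'$, the image $\Phi_v(\gamma)$ lies outside the $R$--ball about $\Phi_v(\tilde z)$ in $\C^s(\dot S)$. This is where the set $\Abd \subset \BS^1_\infty$ does its work: not every geodesic in $\HH$ has image that escapes every compact set in $\C^s(\dot S)$ (lifts of simple closed curves in $S$ are the obvious obstruction), and $\Abd$ is designed precisely so that this uniform escape holds. I expect the main obstacle to be establishing this criterion uniformly. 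The natural approach is to resolve geodesics in $\HH$ into sequences of curves on $\dot S$ with controlled intersection patterns and then use either subsurface projection and bounded geodesic image arguments, or a direct train-track construction, to force large distance in $\C^s(\dot S)$.

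Once the criterion is in place, standard stability of quasi-geodesics in hyperbolic spaces shows that for each $x \in \Abd$ and each geodesic ray $\rho$ in $\HH$ with $\rho(\infty) = x$, the image $\{\Phi_v(\rho(n))\}_{n \in \NN}$ is an unparameterized quasi-geodesic in $\C^s(\dot S)$, and therefore converges to a well-defined $\overline{\Phi}_v(x) \in \partial \C^s(\dot S)$ independent of $\rho$. Continuity of the resulting map on $\HH \cup \Abd$ follows from the same uniform control, and $\pi_1 S$--equivariance is automatic from equivariance of $\Phi_v$.

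It remains to address independence of $v$, surjectivity, and $\PMod(\dot S)$--equivariance. For independence, one verifies that for any other vertex $v' \in \C(S)$, the quantity $d_{\C^s(\dot S)}(\Phi_v(\tilde z), \Phi_{v'}(\tilde z))$ is bounded in terms of $d_{\C(S)}(v, v')$, and such a uniformly bounded perturbation does not change boundary values. Surjectivity is obtained by showing that any $\xi \in \partial \C^s(\dot S)$ is the limit of a sequence of curves contained in some tree $T_w$, which in turn pulls back through $\Phi_w$ to a sequence in $\HH$ with limit in $\Abd$. Finally, $\PMod(\dot S)$--equivariance follows because $\PMod(\dot S)$ acts by isometries on $\C^s(\dot S)$ and on $\HH \cup \Abd$ (via its quotient $\PMod(S)$ in the Birman exact sequence together with the conjugation action on $\pi_1 S$), and these two actions are intertwined by $\Phi_v$ up to a change of basepoint, which is invisible at the boundary by the independence argument.
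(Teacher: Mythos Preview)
Your outline is in the right spirit, but two steps are not yet proofs, and one of them has a genuine gap.

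For existence, your criterion is underspecified: geodesic \emph{segments} have no ideal endpoints, and ``resolve geodesics into curves and apply BGIT or train tracks'' is not a mechanism one can evaluate. The paper's argument is structurally different. One fixes a bi-infinite geodesic $\gamma \subset \HH$ projecting to a \emph{filling} closed curve, and for each $x \in \Abd$ takes $\pi_1 S$--translates $\gamma_n$ nesting down on $x$. The images of the half-spaces, $\mathscr H^+(\gamma_n) = \Phi(\C(S) \times \calH^+(\gamma_n))$, are shown to be weakly convex in $\C^s(\dot S)$, nest, and satisfy $d^s(b,\mathscr H^+(\gamma_n)) \to \infty$ via a finiteness argument in the $\Pi$--preimage of each ball in $\C(S)$; these serve as the quasiconvex neighborhood basis in Mitra's lemma. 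A dividend is that $\bigcap_n \partial \mathscr H^+(\gamma_n)$ visibly does not involve $v$, so independence of $v$ is free; your bounded-perturbation argument for independence is also valid, but this is cleaner.

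Your surjectivity argument has a real gap. You assert that a sequence in $\HH$ whose $\Phi_w$--image converges to $\xi \in \partial \C^s(\dot S)$ has its limit in $\Abd$, but this is false in the punctured setting: the limit can be a parabolic fixed point $x \in \calP$. The paper treats this case separately (Proposition~\ref{surj2} and Lemma~\ref{L:a new annulus}): if $x_n \to x \in \calP$ and $\Phi_v(x_n) \to \calL$, then necessarily $\calL \in \partial \C(\calW(x))$ for the associated proper witness. One then applies powers of the parabolic generator $g$ of $\stab_{\pi_1 S}(x)$, which acts as a Dehn twist in $\partial \calW(x)$ and hence does not change $\pi_{\calW(x)}$, to push the sequence to a new limit $\xi' \neq x$; by Proposition~\ref{surj2} this $\xi'$ must lie in $\Abd$. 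Without this parabolic-shift trick, surjectivity does not follow. Finally, for $\PMod(\dot S)$--equivariance: $\PMod(\dot S)$ does \emph{not} act isometrically on $\HH$; it acts only on $\Abd \subset \partial \pi_1 S$ via $\PMod(\dot S) \to \Aut(\pi_1(S,z))$. The paper verifies equivariance by checking it on the dense set of attracting fixed points of filling elements of $\pi_1 S$, where both sides are forced by the dynamics.
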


The subset $\Abd \subset \BS^1_\infty$ is defined analogously to the set $\mathbb A \subset \BS^1_\infty$ in \cite{LeinMjSch}.  Specifically, $x \in \Abd$ if and only if any geodesic ray $r \subset \HH$ starting at any point and limiting to $x$ at infinity has the property that every essential simple closed curve $\alpha \subset S$ has nonempty intersection with $p(r)$; see Section~\ref{S:UCT maps}. It is straightforward to see that $\Abd$ is the largest set on which a Cannon-Thurston map can be defined to $\partial \C^s(\dot S)$.

As we explain below, $\AbdC \subsetneq \Abd$ and a pair of points in $\AbdC$ are identified by $\partial \Phi_0$ if and only if they are identified by $\partial \Phi$, and thus $\partial \Phi$ is also finite-to-one on $\AbdC$.  It turns out that this precisely describes the difference between $\Abd$ and $\AbdC$.   Let $Z \subset \partial \C^s(\dot S)$ be the set of points $x$ for which $\partial \Phi^{-1}(x)$ is infinite.
\begin{proposition} \label{P:CT difference} We have $\displaystyle \Abd \setminus \AbdC = \partial \Phi^{-1}(Z)$. 
\end{proposition}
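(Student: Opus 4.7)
The plan is to describe fibers of $\partial\Phi$ via the lamination identification of $\partial\C^s(\dot S)$ and then compare them with the fiber structure of $\partial\Phi_0$ from Theorem~\ref{T:Phi0 identified}. Concretely, I would work with the identification (to be established earlier in the paper) of $\partial\C^s(\dot S)$ with a space $\sEL$ of surviving ending laminations on $\dot S$, together with the analogue of Theorem~\ref{T:Phi0 identified} for $\partial\Phi$: for $\calL \in \sEL$, the fiber $\partial\Phi^{-1}(\calL) \subset \Abd$ consists of ideal endpoints of leaves of $p^{-1}(\calL)$ and of complementary regions of $p^{-1}(\calL) \subset \HH$.

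Granting this, $\partial\Phi^{-1}(\calL)$ is infinite precisely when some complementary region $\tilde R$ of $p^{-1}(\calL) \subset \HH$ has infinitely many sides at infinity. Since $\pi_1 S$ acts freely on $\HH$ and cocompactly modulo the orbit $p^{-1}(z)$, such a $\tilde R$ can only arise as a lift of a complementary region $R$ of $\calL$ in $\dot S$ that contains $z$ and has an infinite-sided (crown) cusp at $z$; equivalently, $\calL \in Z$ if and only if $\calL$ fails to extend (after straightening) to an ending lamination in $\EL(S)$ on the filled surface.

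The inclusion $\AbdC \cap \partial\Phi^{-1}(Z) = \emptyset$ now follows quickly: for $x \in \AbdC$, Theorem~\ref{T:Phi0 identified} together with the noted agreement of $\partial\Phi_0$- and $\partial\Phi$-fibers on $\AbdC$ implies that $\partial\Phi(x)$ corresponds to some $\calL_S \in \EL(S)$ whose complementary regions are finite-sided ideal polygons; removing the discrete orbit $p^{-1}(z)$ keeps all complementary regions of $p^{-1}(\partial\Phi(x))$ finite-sided, so $\partial\Phi^{-1}(\partial\Phi(x))$ is finite. Conversely, suppose $x \in \Abd \setminus \AbdC$. Then any geodesic ray $r$ to $x$ misses some non-surviving essential curve $\alpha \subset \dot S$, which bounds a pair of pants in $\dot S$ with the punctures $z$ and some $p_i$ on one side; lifting, $r$ is eventually trapped on the side of a lift $\tilde\alpha$ containing a lift $\tilde z$ of $z$ and the parabolic fixed point $\tilde p_i \in \BS^1_\infty$ for $p_i$.

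The main obstacle is then to promote this trap condition into the structural statement that $\partial\Phi(x)$ has a crown complementary region at $z$, so that $\partial\Phi(x) \in Z$. Intuitively, the parabolic stabilizer of $\tilde p_i$ iterates the lift $\tilde\alpha$ infinitely around $\tilde z$, and any limiting lamination representing $\partial\Phi(x)$ must weave around these iterates, producing infinitely many ideal sides of the complementary region around $\tilde z$; making this precise requires careful combinatorial tracking of how the markers used in the construction of $\partial\Phi$ organize themselves near $\tilde z$ under the parabolic action, together with an explicit appeal to the lamination identification of $\sEL$ to identify the limit as a crown. Once established, $\partial\Phi(x) \in Z$ and the proof is complete.
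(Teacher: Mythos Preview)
The central gap is that you are working from the wrong definition of $\AbdC$. In the paper, $\AbdC$ is \emph{defined} to be $\partial\Phi^{-1}(\EL(\dot S))$; see the paragraph immediately after Proposition~\ref{P:preimage of witness ELs}. With that definition in hand, Proposition~\ref{P:CT difference} is a one-line consequence of Proposition~\ref{P:preimage of witness ELs}: that proposition says $Z = \EL^s(\dot S)\setminus\EL(\dot S)$, so $\partial\Phi^{-1}(Z) = \Abd \setminus \partial\Phi^{-1}(\EL(\dot S)) = \Abd \setminus \AbdC$.

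Your ``Conversely'' paragraph instead asserts that $x \in \Abd \setminus \AbdC$ means every ray to $x$ misses some non-surviving curve $\alpha \subset \dot S$. This is not the definition, and as stated it is not even well-posed: $p \colon \HH \to S$ is the universal cover of $S$, not of $\dot S$, so $p(r)$ is a path in $S$ and a non-surviving curve of $\dot S$ becomes inessential there---there is nothing for $p(r)$ to ``miss''. The parabolic-trapping argument you sketch is therefore aimed at a characterization that is neither the paper's definition nor an established equivalent of it, and you yourself flag it as incomplete.

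There is also a running conflation in your first two paragraphs between laminations on $\dot S$ (elements of $\sEL$, the target of $\partial\Phi$) and laminations on $S$ (elements of $\EL(S)$, which govern the fibers via Theorem~\ref{CTU}). The fiber description in Theorem~\ref{CTU} is in terms of leaves and complementary regions of $p^{-1}(\calL)$ for $\calL \in \EL(S)$, and the link to proper witnesses is made through $\hat\Phi$ (Lemmas~\ref{L:points identified by hat phi} and~\ref{L:where the witness ELs come from}). You do correctly isolate the key geometric fact---infinite fibers correspond exactly to complementary regions of $p^{-1}(\calL)$ in $\HH$ containing a parabolic fixed point---but that is precisely Proposition~\ref{P:preimage of witness ELs}, and once it is available the proposition follows immediately from the actual definition of $\AbdC$ without any of the additional machinery you propose.
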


The analogue of Theorem~\ref{T:Phi0 identified} is also valid for $\Phi$.

\begin{theorem}\label{CTU}
Given two distinct points $x,y\in \Abd$, $\partial\Phi(x) = \partial \Phi(y)$ if and only if $x$ and $y$ are the ideal endpoints of a leaf or complementary region of $p^{-1}(\calL)$ for some $\calL \in \EL(S)$.
\end{theorem}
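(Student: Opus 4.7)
The plan is to adapt the strategy from the analogous result for the ordinary curve complex in \cite{LeinMjSch}, using Theorem~\ref{CT} to translate identifications of boundary points into a geometric condition on geodesics in $\HH$ and their projections to $S$, and then using Klarreich's description of $\partial \C(S)$ together with the tree structure of the fibers of $\Pi \colon \C^s(\dot S) \to \C(S)$ to promote statements in $\partial \C(S)$ to statements in $\partial \C^s(\dot S)$.

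For the \emph{if} direction, fix $\calL \in \EL(S)$ and suppose $x, y \in \BS^1_\infty$ are the ideal endpoints of a leaf $\tilde \ell \subset p^{-1}(\calL)$ (the complementary-region case is analogous). First I verify $x, y \in \Abd$: any geodesic ray limiting to $x$ is asymptotic to $\tilde\ell$, so its image in $S$ accumulates on $\calL$, and since $\calL$ fills $S$ that image meets every essential simple closed curve. Choose sequences $z_n \to x$ and $w_n \to y$ in $\HH$; by Theorem~\ref{CT} it suffices to show that $\Phi_v(z_n)$ and $\Phi_v(w_n)$ have Gromov product going to infinity in $\C^s(\dot S)$. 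The segments $[z_n, w_n]$ project in $S$ to paths shadowing longer and longer subarcs of $\calL$, so by Klarreich's theorem together with coarse Lipschitzness of $\Pi$, the projections $\Pi\Phi_v(z_n)$ and $\Pi\Phi_v(w_n)$ converge to a common point of $\partial \C(S)$. I then promote this ``horizontal'' convergence to genuine convergence in $\partial \C^s(\dot S)$, using the boundary description of $\C^s(\dot S)$ established earlier in the paper.

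For the \emph{only if} direction, suppose $x \neq y$ in $\Abd$ with $\partial\Phi(x) = \partial\Phi(y)$, and let $g \subset \HH$ be the bi-infinite geodesic from $x$ to $y$. By Theorem~\ref{CT}, $\Phi_v(g(t))$ converges to a common point $\zeta \in \partial \C^s(\dot S)$ as $t \to \pm\infty$; pushing $\zeta$ forward via $\Pi$ yields a point of $\partial \C(S)$ which by Klarreich is represented by a unique ending lamination $\calL \in \EL(S)$. The projection $p(g)$ must therefore accumulate onto a filling sublamination of $\calL$, forcing $g$ to lie in a bounded neighborhood of $p^{-1}(\calL)$ near each end. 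A careful enumeration of how a bi-infinite geodesic of $\HH$ can accumulate on $p^{-1}(\calL)$ at both ends shows that either $g$ is asymptotic to a single leaf of $p^{-1}(\calL)$ at both ends, or it runs out two distinct ideal vertices of a single complementary region; any other configuration would let one construct a simple closed curve in $\dot S$ separating the images of $x$ and $y$ in $\partial \C^s(\dot S)$, contradicting $\partial\Phi(x) = \partial \Phi(y)$.

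The main obstacle is the tree-fiber direction of $\Pi$. Two points of $\C^s(\dot S)$ can agree in $\C(S)$ while sitting arbitrarily far apart in the Bass-Serre tree $\Pi^{-1}(v)$, so convergence after push-forward to $\partial \C(S)$ does not automatically give convergence in $\partial \C^s(\dot S)$. Concretely, a leaf of $p^{-1}(\calL)$ may wind repeatedly around preimages of $z$, and the contribution of this winding to the vertical Gromov product in $\C^s(\dot S)$ must be controlled. The resolution will be to use the explicit tree-map construction of $\Phi_v$ (from \S\ref{S:tree map construction}) together with the continuous extension $\overline\Phi_v$ of Theorem~\ref{CT}, which already implicitly encodes the vertical behavior. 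Once the vertical control is established, the lamination-theoretic identifications proceed as in \cite{LeinMjSch}.
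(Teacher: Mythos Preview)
Your proposal has a structural gap that undermines both directions: the map $\Pi \circ \Phi_v$ is \emph{constant}, equal to $v$, because $\Phi_v$ lands entirely in the Bass--Serre tree $T_v = \Pi^{-1}(v)$. So the sentence ``the projections $\Pi\Phi_v(z_n)$ and $\Pi\Phi_v(w_n)$ converge to a common point of $\partial \C(S)$'' is vacuous, and in the converse direction ``pushing $\zeta$ forward via $\Pi$'' cannot produce a point of $\partial \C(S)$ at all. You are aware of this in your final paragraph, but the resolution you gesture at (``$\overline\Phi_v$ already implicitly encodes the vertical behavior'') is circular: that is precisely what needs to be proved. There is no way to extract a lamination $\calL \in \EL(S)$ from $\zeta \in \partial \C^s(\dot S)$ using only $\Phi_v$ and $\Pi$.

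The paper's route avoids this by never working with $\Pi$ on boundaries. Instead it uses two ingredients you have not invoked: the \emph{extended survival map} $\hat\Phi \colon \bar\C(S)\times\HH \to \bar\C^s(\dot S)$ of Section~\ref{S:extended survival}, together with Lemma~\ref{L:points identified by hat phi} (which says $\hat\Phi(\calL_1,x_1)=\hat\Phi(\calL_2,x_2)$ iff $\calL_1=\calL_2$ and $x_1,x_2$ share a leaf or complementary region of $p^{-1}(\calL_1)$); and the nested quasiconvex sets $\mathscr{H}^\pm(\gamma_n)$, $\calX(\gamma_n)$ from Section~\ref{S:existence}, whose boundaries are computed in \eqref{E:boundary intersections 1} and \eqref{E:boundary intersections 2}. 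The ``only if'' direction then reads off $\calL$ and the leaf/region directly from the identity $\partial\Phi(x)=\partial\Phi(y)\in \bigcap_n \hat\Phi(\partial\C(S)\times\gamma_n^x)\cap\hat\Phi(\partial\C(S)\times\gamma_n^y)$, with no accumulation or shadowing analysis needed; the ``if'' direction is the same intersection read backwards. The point is that $\hat\Phi$ lets you vary over \emph{all} of $\EL(S)$ in the first coordinate, which is exactly the missing degree of freedom your $\Phi_v$-only approach lacks.
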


It is easy to see that for any ending lamination $\calL \in \EL(S)$, the endpoints at infinity of any leaf of $p^{-1}(\calL)$ (and hence also the non parabolic fixed points of complementary regions) are contained in $\Abd$, though this a fairly small subset; for example, almost-every point $x \in \BS^1_\infty$ has the property that any geodesic ray limiting to $x$ has dense projection to $S$.   The complementary regions that contain parabolic fixed points are precisely the regions with infinitely many ideal endpoints.
Together with Proposition~\ref{P:CT difference} provides another description of the difference $\Abd \setminus \AbdC$; see Corollary~\ref{C:CT difference 2}.

A important ingredient in the proofs of the above theorems is an identification of the Gromov boundary $\partial \C^s(\dot S)$, analogous to Klarreich's Theorem \cite{Klarreich}; see Theorem~\ref{T:Klarreich}.  Specifically, we let $\calE\calL^s(\dot S)$ denote the space of ending laminations on $\dot S$ together with ending laminations on all proper {\em witnesses} of $\dot S$; see Section~\ref{S:witnesses defined}.  We call $\EL^s(\dot S)$ the space of {\em surviving ending laminations}.  A more precise statement of the following is proved in Section~\ref{S:boundary}; see Theorem~\ref{T:boundary ending precise}

\begin{theorem}\label{survivalendinglam} There is a $\PMod(\dot S)$--equivariant homeomorphism $\partial \C^s(\dot S) \to \EL^s(\dot S)$.
\end{theorem}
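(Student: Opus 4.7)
The plan is to adapt Klarreich's identification \cite{Klarreich} of $\partial \C(S)$ with $\EL(S)$ to the surviving setting, accounting for the additional laminations living on proper witnesses. I will construct a bijection $\Psi \colon \partial \C^s(\dot S) \to \EL^s(\dot S)$ via $\PML(\dot S)$--limits, then verify continuity in both directions; equivariance under $\PMod(\dot S)$ will be automatic from the naturality of the construction.

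To define $\Psi$, take $\xi \in \partial \C^s(\dot S)$ represented by a sequence of vertices $c_n$, pass to a subsequence converging in $\PML(\dot S)$ to some $[\mu]$, and set $\Psi(\xi) = |\mu|$. The crux is showing $|\mu| \in \EL^s(\dot S)$, i.e.\ that $|\mu|$ is an ending lamination either of $\dot S$ or of some proper witness $Y$. If $|\mu|$ were disjoint from some vertex $\alpha$ of $\C^s(\dot S)$, then the ratio $i(\alpha, c_n)/i(\beta, c_n) \to 0$ for any filling $\beta$, which by hyperbolicity of $\C^s(\dot S)$ would force the $\C^s(\dot S)$-distance from $\alpha$ to $c_n$ to stay bounded, contradicting $c_n \to \xi$. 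Hence $|\mu|$ must fill some witness $Y$; applying the same argument to essential subsurfaces of $Y$ shows $|\mu|$ is minimal on $Y$, and therefore an ending lamination of $Y$. Independence of the subsequence and of the representing sequence for $\xi$ follows from the standard Gromov--product argument: two sequences representing $\xi$ have pairwise Gromov products tending to infinity, forcing their $\PML$--limits to share support.

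For the inverse, given $\calL \in \EL^s(\dot S)$ supported on a witness $Y$, I would approximate $\calL$ by a sequence of surviving curves $c_n$ (for instance, weighted simple closed curves in $Y$ converging to $\calL$ in $\PML(Y) \subset \PML(\dot S)$), and check that they determine a point of $\partial \C^s(\dot S)$ by controlling projections to each witness using an appropriate Bounded Geodesic Image phenomenon. That $\Psi$ and this inverse are mutually inverse, and that both are continuous, then follows from further Gromov--product arguments together with the continuity of intersection numbers on $\PML(\dot S)$. The main obstacle I anticipate is pinning down the correct topology on $\EL^s(\dot S)$ and verifying continuity across its stratified structure---in particular, handling sequences of ending laminations living on $\dot S$ that degenerate to laminations living only on proper witnesses, where the loss of components in $\PML(\dot S)$ must be translated into the correct limiting behavior in $\partial \C^s(\dot S)$. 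This is presumably why the precise statement is deferred to Theorem~\ref{T:boundary ending precise}, which will record the explicit topology on $\EL^s(\dot S)$.
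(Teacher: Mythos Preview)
Your plan has a genuine gap at the step where you argue that the $\PML(\dot S)$--limit $|\mu|$ of a sequence $c_n \to \xi \in \partial \C^s(\dot S)$ must lie in $\EL^s(\dot S)$. The implication ``$i(\alpha,c_n)/i(\beta,c_n)\to 0$ forces $d^s(\alpha,c_n)$ bounded'' is false in this setting. Here is a counterexample: let $Y$ be a proper witness, $\psi$ a pseudo-Anosov supported on $Y$ with stable lamination $\calL_Y$, and $\beta\in\C^s_0(\dot S)$ a curve crossing $\partial Y$. For a sequence $m_n$ growing faster than the dilatation $\lambda^n$, the curves $c_n=T_{\partial Y}^{m_n}\psi^n(\beta)$ are surviving and satisfy $\pi_Y(c_n)=\psi^n(\pi_Y(\beta))\to\calL_Y$, so by Lemma~\ref{L:proj and conv2} $c_n\to\xi\in\partial\C(Y)\subset\partial\C^s(\dot S)$; but in $\PML(\dot S)$ the twisting dominates and $c_n\to[\partial Y]$. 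Thus $|\mu|=\partial Y\notin\EL^s(\dot S)$, and indeed $|\mu|$ is disjoint from every $\alpha\in\C(Y)\subset\C^s(\dot S)$, yet $d^s(\alpha,c_n)\to\infty$. The same example shows your map is not well defined: taking instead $m_n$ bounded gives $|\mu|=\calL_Y$. The underlying issue is that, unlike $\C(\dot S)$, the complex $\C^s(\dot S)$ has boundary points reached by sequences that stay $\C(\dot S)$--bounded, and for such sequences intersection numbers alone give no control on $\PML$--limits.

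The paper avoids this by not passing through $\PML(\dot S)$ at all. It first establishes the structural decomposition $\partial\C^s(\dot S)=\bigsqcup_{W\in\Omega(\dot S)}\partial\C(W)$ (Lemma~\ref{L:bijection}), proved using survival paths, the distance formula, and the quasi-isometric embeddings $\C(W)\hookrightarrow\C^s(\dot S)$ from Corollary~\ref{C:witnesses qi embed}. Klarreich's theorem is then applied witness-by-witness to identify each $\partial\C(W)$ with $\EL(W)$, assembling the bijection $\calF$. The convergence characterization and continuity in both directions (Theorem~\ref{T:boundary ending precise}) are handled via the coarse Hausdorff topology using Lemmas~\ref{L:proj and conv} and~\ref{L:proj and conv2}, which transfer coarse Hausdorff convergence between $\{\alpha_n\}$ and $\{\pi_W(\alpha_n)\}$; this sidesteps the subsequence-dependence problem entirely. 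Equivariance then drops out immediately from the convergence characterization.
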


To describe the map $\partial \Phi_0$ in Theorem~\ref{T:UCT C short} we consider the map $\partial \Phi \colon \Abd \to \partial \C^s(\dot S)$ from Theorem~\ref{CT}, composed with the homeomorphism $\partial \C^s(\dot S) \to \EL^s(\dot S)$ from Theorem~\ref{survivalendinglam}.  Since $\EL(\dot S)$ is a subset of $\EL^s(\dot S)$, we can simply take $\AbdC \subset \Abd$ to be the subset that maps onto $\EL(\dot S)$, and compose the restriction $\partial \Phi$ to this subset with the homeomorphism $\EL(\dot S) \to \partial \C(\dot S)$ from Klarreich's Theorem.  The more geometric description of $\AbdC$ is obtained by a more detailed analysis of the map $\partial \Phi$ carried out in Section~\ref{S:UCT maps}.

\subsection{Historical discussion} \label{S:historical}
Existence of the Cannon-Thurston map in the context of Kleinian groups is proved by several authors starting with Floyd \cite{Floyd} for geometrically finite Kleinian groups and then by Cannon and Thurston for fibers of closed hyperbolic 3-manifolds fibering over the circle.  Cannon and Thurston's work was circulated as a preprint around 1984 and inspired works of many others before it was published in 2007 \cite{CT}. The existence of the Cannon-Thurston map was proven by Minsky \cite{Minsky-rigidity} for closed surface groups of bounded geometry and by by Mitra and Klarreich \cite{Mitra2, Klarreich2} for hyperbolic 3-manifolds of bounded geometry with an
incompressible core and without parabolics. Alperin-Dicks-Porti \cite{ADP-CT} proved the existence of the Cannon-Thurston map for figure eight knot complement, McMullen \cite{McMullen} for punctured torus groups, and then Bowditch \cite{BowCT1,BowCT2} for more general punctured surface groups of bounded geometry. Mj completed the investigation for all finitely generated Kleinian surface groups without accidental parabolics, first for closed and then for punctured surfaces in a series of papers that culminated in the two papers \cite{Mj1} and \cite{Mj2}, the latter with an appendix by S.~Das.  For general Kleinian groups, see Das-Mj \cite{DasMj} and Mj \cite{MjKleinian}, and the survey \cite{Mj-survey}.

Moving beyond real hyperbolic spaces, it is now classical that a quasi-isometric embedding of one Gromov hyperbolic space into another extends to an embedding of the Gromov boundaries.
One of the first important generalizations of Cannon and Thurston's work outside the setting of Kleinian groups is due to Mitra in \cite{Mitra1} who proved that given a short exact sequence
\[1\rightarrow H \rightarrow \Gamma \rightarrow G \rightarrow 1\]
of infinite word hyperbolic groups, the Cannon--Thurston map exists and it is surjective. In this case the Cannon-Thurston map $\partial H \rightarrow \partial \Gamma$ is defined between the Gromov boundary $\partial H$ of the fiber group $H$ and the Gromov boundary $\partial \Gamma$ of its extension $\Gamma$.  Mitra defined an algebraic ending lamination associated to points in the Gromov boundary of the base group $G$ in \cite{Mitra-ending}, and recent work of Field \cite{field} proves that the quotient of $\partial H$ in terms of such an ending lamination is a dendrite (compare the Kleinian discussion above).

In a different direction, Mitra later extended his existence result to trees of hyperbolic spaces; see \cite{Mitra2}.  In 2013 Baker and Riley gave the first example example of a hyperbolic subgroup of a hyperbolic group with no continuous Cannon-Thurston map (\cite{BakerRiley1}); see also Matsuda \cite{Matsuda}.  On the other hand, Baker and Riley (\cite{BakerRiley2}) proved existence of Cannon-Thurston maps even under \emph{arbitrarily heavy distortion} of a free subgroup of a hyperbolic group.

For free groups and their hyperbolic extensions, Cannon-Thurston maps are better understood than arbitrary hyperbolic extensions.  Kapovich and Lustig characterized the Cannon-Thurston maps for hyperbolic free-by-cyclic groups with fully irreducible monodromy \cite{CTLustKapo}. Later Dowdall, Kapovich and Taylor characterized Cannon-Thurston  maps for hyperbolic extensions of free groups coming from  convex cocompact subgroups of outer automorphism group of the free group \cite{CTDowKapo}.

Finally we note that we have only discussed a few of the many results on the existence or structure of Cannon-Thurston maps in various settings.  For more see e.g.~\cite{Mj2,MjRa,MjRel,CTLeinKapo,GUER,Fenley-SC,Frankel,Fenley-QG,Mousley}).

\subsection{Outline} \label{S:outline} 
In Section~\ref{S:preliminaries}, we give preliminaries on curve complexes, witnesses and Gromov boundary of a hyperbolic space along with basics of spaces of laminations. In  particular, subsection~\ref{S:tree map construction} is devoted to the construction of the survival map and in subsection~\ref{S:cusps and witnesses} the relation between cusps and witnesses via the survival map is given. In Section~\ref{S:survival paths}, we define survival paths in $\sC$ and give an upper bound on the survival distance $d^s$ in terms of projection distances into curve complexes of witnesses. In Section~\ref{S:hyperbolicity} we prove the hyperbolicity of $\sC$. Section~\ref{S:distance formula} is devoted to the distance formula for $\sC$, a-la Masur-Minsky, and as a result we prove that survival paths are uniform quasi-geodesics in $\sC$.
In Section~\ref{S:boundary} we explore the boundary of the survival curve complex $\sC$ and prove that it is homeomorphic to the space of survival ending laminations on $\dot S$, a result analogous to that of Klarreich \cite{Klarreich}. In Section~\ref{S:extended survival} we extend the definition of survival map to the closures of curve complexes. Finally in Section~\ref{S:UCT maps}, we prove Theorem~\ref{CT} and the rest of the theorems from the introduction.  Specifically, we prove the existence and continuity of the map $\partial \Phi$ in Section~\ref{S:existence} and its surjectivity in Section~\ref{S:surjectivity}.  Finally, we Section~\ref{S:Universal} we prove the universal property of $\partial \Phi$ as well as constructing the map $\partial \Phi_0$.

\subsection*{Acknowledgements} The authors would like to thank Saul Schleimer for helpful conversations in the early stages of this work.  The second author would also like to thank Autumn Kent, Mahan Mj, and Saul Schleimer for their earlier collaborations that served as partial impetus for this work.

\section{Preliminaries} \label{S:preliminaries}

Throughout what follows, we assume $S$ is surface of genus $g \geq 0$ with $n \neq 0$ punctures, and complexity $\xi(S) = 3g-3+n \geq 2$.   We fix a complete hyperbolic metric of finite area on $S$ and a locally isometric universal covering $p \colon \HH \to S$.  We also fix a point $z \in S$, and write $\dot S$ to denote either the punctured surface $S \setminus \{z\}$ or the surface with an additional marked point $(S,z)$, with the situation dictating the intended meaning when it makes a difference.  We sometimes refer to the puncture produced by removing $z$ as the {\em $z$--puncture}.  We further choose $\tilde z \in p^{-1}(z) \subset \HH$ and use this to identify $\pi_1S = \pi_1(S,z)$ with the covering group of $p \colon \HH \to S$, acting by isometries.

\vspace{-.25cm}

\subsection{Notation and conventions} Let $x,y,C,K \geq 0$ with $K \geq 1$.  We write $x \stackrel{K,C}\preceq y$ to mean $x \leq Ky + C$.  We also write
\vspace{-.2cm}
\[  x \stackrel{K,C}{\asymp} y \qquad \Longleftrightarrow \qquad x \stackrel{K,C}\preceq y \, \, \mbox{ and } x \, \, \stackrel{K,C}\succeq y.\]
When the constants are clear from the context or independent of any varying quantities and unimportant, we also write $x \preceq y$ as well as $x \asymp y$.  In addition, we will use the shorthand notation $\lcut x\rcut_C$ denote the cut-off function giving value $x$ if $x \geq C$ and $0$ otherwise.

Any connected simplicial complex will be endowed with a path metric obtained by declaring each simplex to be a regular Euclidean simplex with side lengths equal to $1$.  The vertices of a connected simplicial complex will be denoted with a subscript $0$, and the distance between vertices will be an integer computed as the minimal length of a path in the $1$--skeleton.  By a {\em geodesic} between a pair of vertices $v,w$ in a simplicial complex, we mean either an isometric embedding of an interval into the $1$--skeleton with endpoints $v$ and $w$ or the vertices encountered along such an isometric embedding, with the situation dictating the intended meaning.

\subsection{Curve complexes} \label{S:curve complexes defined}

By a {\em curve} on a surface $Y$, we mean an essential (homotopically nontrivial and nonperipheral), simple closed curve.  We often confuse a curve with its isotopy class.  When convenient, we take the geodesic representative with respect to a complete finite area hyperbolic metric on the surface with geodesic boundary components (if any) and realize an isotopy class by its unique geodesic representative.  A {\em multi-curve} is a disjoint union of pairwise non-isotopic curves, which we also confuse with its isotopy class and geodesic representative when convenient.

The curve complex of a surface $Y$ with $2 \leq \xi(Y) < \infty$ is the complex $\C(Y)$ whose vertices are curves (up to isotopy) and whose $k$--simplices are multi-curves with $k+1$ components.
According to work of Masur-Minsky \cite{MM1}, curve complexes are Gromov hyperbolic.  For other proofs, see \cite{BOWhyp,Ham-hyp} as well as \cite{Aougab,Bowhyp2,ClayRafiSch,HPW} which prove uniform bounds on $\delta$.
\begin{theorem} \label{T:C hyperbolic}
For any surface $Y$, $\C(Y)$ is $\delta$--hyperbolic, for some $\delta > 0$.
\end{theorem}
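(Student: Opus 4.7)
The plan is to prove hyperbolicity of $\C(Y)$ via the unicorn-arc approach of Hensel--Przytycki--Webb, which has the advantage of yielding a uniform $\delta$ through an elementary combinatorial argument. First, I would reduce from curves to arcs. If $Y$ has a puncture (or boundary component), fix one such, call it $p$, and consider the arc complex $\calA(Y,p)$ whose vertices are isotopy classes of essential simple arcs with both endpoints at $p$, with simplices spanned by arcs realizable pairwise disjointly. A standard surgery argument (take the boundary of a regular neighborhood of an arc together with its endpoint) shows that $\calA(Y,p)$ is quasi-isometric to $\C(Y)$, so Gromov hyperbolicity transfers between them. If $Y$ is closed, first remove a point to puncture it, reduce as above, and then use that the forgetful map from the punctured to unpunctured curve complex is coarsely Lipschitz with coarse inverse.

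The central construction is that of unicorn arcs. Given two vertices $\alpha,\beta \in \calA(Y,p)$ realized in minimal position, label the intersection points $\alpha \cap \beta$ as $x_1,\dots,x_k$ in their order along $\alpha$. For each $x_i$, the \emph{unicorn arc} $c_i$ is formed by concatenating the initial segment of $\alpha$ from $p$ to $x_i$ with the terminal segment of $\beta$ from $x_i$ back to $p$; after a slight perturbation at $p$ this is an essential simple arc, hence a vertex of $\calA(Y,p)$. The \emph{unicorn path} $P(\alpha,\beta)$ is the sequence $\alpha = c_0, c_1, \dots, c_k = \beta$.

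The proof then rests on two technical lemmas: (i) consecutive unicorn arcs $c_i$ and $c_{i+1}$ can be isotoped to be disjoint, so $P(\alpha,\beta)$ really is an edge-path in $\calA(Y,p)$; and (ii) a uniform slim-triangle property, asserting that for any three arcs $\alpha,\beta,\gamma$, every vertex of $P(\alpha,\beta)$ lies in a uniformly bounded neighborhood (in $\calA(Y,p)$) of $P(\alpha,\gamma) \cup P(\gamma,\beta)$. Property (i) is an elementary surgery argument on minimal-position representatives. Property (ii) is the main combinatorial content: for each unicorn arc $c \in P(\alpha,\beta)$ one locates a nearby ``source'' vertex in one of the other two unicorn paths by tracking how the initial segment of $\alpha$ and the terminal segment of $\beta$ defining $c$ meet $\gamma$, and then performing controlled surgeries on intersections. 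Once (i) and (ii) are in hand, Gromov hyperbolicity (with explicit constant) follows from a standard criterion in the style of Masur--Minsky: a connected graph equipped with a family of paths between every pair of vertices satisfying the slim-triangle condition is $\delta$-hyperbolic, with the paths being uniform unparameterized quasigeodesics.

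The main obstacle is verifying property (ii); the combinatorial bookkeeping needed to match intersection patterns across three arcs in general position and to produce a single nearby vertex in the union of the other two unicorn paths is the technical heart of the argument. A secondary point is to confirm that the arc-to-curve quasi-isometry behaves well in the lowest admissible complexities just above the excluded $\xi(Y) = 1$, but since the statement only asserts the existence of some $\delta$ (rather than uniformity across all $Y$), one can handle low-complexity cases directly if needed.
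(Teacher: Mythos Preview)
The paper does not give its own proof of this statement: it is quoted as a known result of Masur--Minsky, with citations to several alternative proofs, one of which (\cite{HPW}) is precisely the unicorn-arc argument you outline. So there is nothing to compare against, and your approach is one of the standard ones the paper already points to.

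That said, your reduction for closed surfaces has a genuine gap. The forgetful map $\C(\dot S) \to \C(S)$ obtained by filling the puncture back in is $1$--Lipschitz but does \emph{not} admit a coarse inverse: as the paper itself explains in \S\ref{S:tree map construction}, the fibers of the analogous map $\Pi$ are Bass--Serre trees of infinite diameter, so $\C(\dot S)$ and $\C(S)$ are not quasi-isometric and hyperbolicity cannot be transported along this map. Hensel--Przytycki--Webb deal with closed surfaces by a different (but parallel) construction, working directly in the curve graph with \emph{bicorn curves}---curves built from one subarc of $\alpha$ and one subarc of $\beta$---in place of unicorn arcs; the slim-triangle argument then runs the same way. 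If you only need the result for surfaces with at least one puncture or boundary component (which is in fact all the paper ever uses, since every witness $W \subseteq \dot S$ has nonempty boundary or a puncture), your argument as written is fine.
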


The {\em surviving complex }$\C^s(\dot S)$ is defined to be the subcomplex of the curve complex $\C(\dot S)$, spanned by those curves that do not bound a twice-punctured disk, where one of the punctures is the $z$--puncture.  Given curves $\alpha,\beta \in \C^s_0(\dot S)$, we write $d^s(\alpha,\beta)$ for the distance between $\alpha$ and $\beta$ (in the $1$--skeleton).

\subsection{Witnesses for $\sC$ and subsurface projection to witnesses} \label{S:witnesses defined}

A {\em subsurface} of $\dot S$ is either $\dot S$ itself or a component $Y \subset \dot S$ of the complement of a small, open, regular neighborhood of a (representative of a) multi-curve $A$; we assume $Y$ is not a pair of pants (a sphere with three boundary components/punctures).  The boundary of $Y$, denoted $\partial Y$, is the sub-multi-curve of $A$ consisting of those components that are isotopic into $Y$.  As with (multi-)curves,  subsurfaces is considered up to isotopy, in general, but when convenient we will choose a representative of the isotopy class without comment.

\begin{definition} A {\em witness} for $\sC$ is a subsurface $W \subset \dot S$ such that for every curve $\alpha$ in $\dot S$, no representative of the isotopy class of $\alpha$ can be made disjoint from $W$.
\end{definition}
\begin{remark} Witnesses were introduced in a more general setting by Masur and Schleimer in \cite{MSch1} where they were called {\em holes}.
\end{remark}

Clearly, $\dot S$ is a witness.  Note that if $\beta$ is the boundary of a twice-punctured disk $D$, one of which is the $z$--puncture, and the complementary component $W \subset S$ with $\partial W = \beta$ is a witness.  To see this, we observe that any curve $\alpha$ in  $\sC$ that can be isotoped disjoint from $W$ must be contained in $D$, but the only such curve in $\dot S$ is $\beta$.  It is clear that these two types of subsurfaces account for all witnesses.  We let $\Omega(\dot S)$ denote the set of witnesses and $\Omega_0(\dot S) = \Omega(\dot S) \smallsetminus \{\dot S\}$ the set of proper witnesses.  We note that any proper witness $W$ is determined by its boundary curve, $\partial W$: if $W \neq \dot S$, then $W$ is the closure of the component of $\dot S \setminus \partial W$ not containing the $z$--puncture.

An important tool in what follows is the {\em subsurface projection} of curves in $\sC$ to witnesses; see \cite{MM2}.
\begin{definition} (Projection to witnesses) Let $W\subseteq \dot S$ be a witness for $\sC$ and $\alpha \in \C^s_0(\dot S)$ a curve.  We define the {\em projection of $\alpha$ to $W$}, $\pi_W(\alpha)$ as follows.  If $W = \dot S$, then $\pi_W(\alpha) = \alpha$.  If $W \neq \dot S$, then $\pi_W(\alpha)$ is the {\em set} of curves
\[ \pi_W(\alpha)= \bigcup \partial(\calN(\alpha_0 \cup \partial W)).\]
where (1) we have taken representatives of $\alpha$ and $W$ so that $\alpha$ and $\partial W$ intersect  transversely and minimally, (2) the union is over all complementary arcs $\alpha_0$ of $\alpha \setminus \partial W$ that meet $W$, (3) $\calN(\alpha_0 \cup \partial W)$ is a small regular neighborhood of of the union, and (4) we have discarded any components of $\partial (\calN (\alpha_0 \cup \partial W))$ that are not essential curves in $W$.
The projection $\pi_W(\alpha)$ is always a subset of $\C(W)$ with diameter at most $2$; see \cite{MM2}. We note that $\pi_W(\alpha)$ is never empty by definition of a witness.
\end{definition}

Given $\alpha,\beta \in \C^s_0(\dot S)$ and a witness $W$, we define the {\em distance between $\alpha$ and $\beta$ in $W$} by
\[ d_W(\alpha,\beta) = \diam\{\pi_W(\alpha) \cup \pi_W(\beta)\}.\]
Note that if $W = \dot S$, then $d_{\dot S}(\alpha,\beta)$ is simply the usual distance between $\alpha$ and $\beta$ in $\C(\dot S)$.
According to \cite[Lemma~2.3]{MM2}, projections satisfy a $2$--Lipschitz projection bound.
\begin{proposition} \label{P:2 Lipschitz} For any two distinct curves $\alpha,\beta \in \sC$, $d_W(\alpha,\beta) \leq 2 d^s(\alpha,\beta)$.  In fact, for any path $v_0,\ldots,v_n$ in $\C(\dot S)$ connecting $\alpha$ to $\beta$, such that $\pi_W(v_j) \neq \emptyset$ for all $j$, $d_W(\alpha,\beta) \leq 2n$.
\end{proposition}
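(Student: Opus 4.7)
The plan is to reduce both statements to a single key lemma about \emph{adjacent} vertices in $\C(\dot S)$: if $\gamma,\gamma' \in \C_0(\dot S)$ are disjoint representatives (equivalently, joined by an edge in $\C(\dot S)$) and both have nonempty projection to the witness $W$, then
\[ \diam\bigl(\pi_W(\gamma) \cup \pi_W(\gamma')\bigr) \leq 2 \]
in $\C(W)$. Granting this, the second (stronger) statement implies the first: any $\alpha,\beta \in \sC$ are joined by a path of length $d^s(\alpha,\beta)$ in $\C^s(\dot S) \subset \C(\dot S)$, and by definition of a witness every vertex of $\sC$ has nonempty projection to $W$, so the path hypothesis of the second statement is automatically satisfied with $n = d^s(\alpha,\beta)$.

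The passage from adjacent vertices to an arbitrary path $v_0=\alpha, v_1, \ldots, v_n=\beta$ is a standard telescoping argument, but one has to be careful because the quantity $d_W(\alpha,\beta)$ is a diameter of the \emph{union} of projections rather than a point-to-point distance. Given arbitrary $x \in \pi_W(\alpha)$ and $y \in \pi_W(\beta)$ whose $\C(W)$-distance we wish to bound, I would choose representatives $a_j \in \pi_W(v_j)$ with $a_0 = x$ and $a_n = y$; the key lemma gives $d_{\C(W)}(a_{j-1}, a_j) \leq 2$ for each $j$, so the triangle inequality yields $d_{\C(W)}(x,y) \leq 2n$. Since $x,y$ were arbitrary in $\pi_W(\alpha) \cup \pi_W(\beta)$ (and the diameter $\leq 2$ bound on each individual projection is absorbed once $n \geq 1$), this gives $d_W(\alpha,\beta) \leq 2n$ as required.

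The main obstacle, and where all the geometric content lies, is proving the key lemma for adjacent vertices. Here I would fix hyperbolic geodesic representatives of $\gamma$, $\gamma'$, and $\partial W$ in minimal position (or use the standard bigon-free surgery argument), and analyze how the arcs and curves $\gamma \cap W$ and $\gamma' \cap W$ sit inside $W$. Because $\gamma$ and $\gamma'$ are disjoint, these families of arcs and curves can be realized simultaneously disjointly. One can then pick a component $\gamma_0$ of $\gamma \cap W$ and $\gamma_0'$ of $\gamma' \cap W$ and compare the boundary components of the regular neighborhoods $\calN(\gamma_0 \cup \partial W)$ and $\calN(\gamma_0' \cup \partial W)$: being disjoint (or coinciding with $\partial W$ in boundary-parallel cases), there is a pair of essential curves $a \in \pi_W(\gamma)$, $b \in \pi_W(\gamma')$ with $d_{\C(W)}(a,b) \leq 1$. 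Combined with the elementary fact that each of $\pi_W(\gamma)$ and $\pi_W(\gamma')$ has diameter at most $1$ in $\C(W)$ (any two components of the defining neighborhood share the boundary $\partial W$ up to isotopy and are disjoint from one another), one concludes the diameter of the union is at most $2$.

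In short, the proposition is the usual Masur--Minsky $2$--Lipschitz projection bound transplanted to the surviving setting, the only new input being the observation that membership in $\sC$ (rather than just $\C(\dot S)$) is exactly what guarantees nonempty projection to every witness, which is what allows us to feed a geodesic in $\sC$ into the second statement and extract the first.
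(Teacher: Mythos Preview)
Your reduction is exactly what the paper does: it simply cites \cite[Lemma~2.3]{MM2} for the second statement and observes that the first follows because every vertex of $\sC$ has, by definition of witness, nonempty projection to $W$.

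One small slip in your sketch of the Masur--Minsky lemma: the bound on an individual projection is $\diam(\pi_W(\gamma)) \leq 2$, not $1$, and in any case your assembly ``find $a,b$ at distance $\leq 1$, then add the diameter of each side'' would give $3$ rather than $2$. The clean argument is that \emph{all} arcs of $(\gamma \cup \gamma') \cap W$ are pairwise disjoint simultaneously (since $\gamma,\gamma'$ are disjoint), so for \emph{any} two such arcs $a_0,a_0'$ one can realize $\partial\calN(a_0 \cup \partial W)$ and $\partial\calN(a_0' \cup \partial W)$ with intersection number at most $2$, hence distance $\leq 2$ in $\C(W)$; this gives the diameter bound on the whole union directly. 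Since the paper does not reprove this and just quotes \cite{MM2}, your proposal is in line with it.
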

We should mention that in \cite{MM2} Masur and Minsky consider the map from $\C(\dot S)$ and proves the second statement. Since $\C^s(\dot S)$ is a subcomplex of $\C(\dot S)$ for which every curve has non-empty projection, the first statement follows from the second.

We will also need the following important fact about projections from \cite{MM2}.
\begin{theorem} [Bounded Geodesic Image Theorem]  \label{BGIT} Let $\calG$ be a geodesic in $\C(Y)$ for some surface $Y$, all of whose vertices intersect a subsurface $W\subset Y$. Then, there exists a number $M>0$ such that,
\[\diam _{W}(\pi_W(\calG)) < M\]
where $\pi_W(\calG) $ denotes the image of the geodesic $\calG$ in $W$.
\end{theorem}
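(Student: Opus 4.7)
Proof plan. The plan is to deduce the theorem from the contrapositive of a projection lemma asserting: there is a constant $K = K(\xi(Y))$ such that whenever $\alpha,\beta$ are curves on $Y$ with $\pi_W(\alpha), \pi_W(\beta)$ nonempty and $d_W(\alpha,\beta) > K$, every $\C(Y)$--geodesic from $\alpha$ to $\beta$ contains some vertex $\gamma$ with $\pi_W(\gamma)=\emptyset$. Granted this, the hypothesis of the theorem rules out such a $\gamma$ on $\calG$, so the contrapositive gives $d_W(v_0,v_n)\leq K$, and we may take $M := K+1$.

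To prove the projection lemma, argue by contradiction and suppose every $v_i \in \calG$ has $\pi_W(v_i)\neq\emptyset$. Pick $u_i \in \pi_W(v_i)$. Proposition~\ref{P:2 Lipschitz} yields $d_W(u_i,u_{i+1})\leq 2$, so $u_0,\ldots,u_n$ is a $2$--coarse path in $\C(W)$. Two elementary observations are key: (i) each $u_i$ is an essential curve in $W$, hence disjoint from $\partial W$ in $Y$, placing $u_i$ within $\C(Y)$--distance $1$ of any fixed component of $\partial W$; and (ii) the projection construction places $u_i$ on the boundary of a regular neighborhood of $\partial W$ together with an arc $\alpha_0 \subset v_i\cap W$, so $v_i$ can be joined to $u_i$ in $\C(Y)$ by a bounded-length path passing through $\partial W$. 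Combining these with the $\delta$--hyperbolicity of $\C(Y)$, one seeks to conclude that $\calG$ itself must pass within bounded $\C(Y)$--distance of $\partial W$, forcing some $v_k$ to be disjoint from $\partial W$ and hence violating $\pi_W(v_k)\neq\emptyset$.

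The main obstacle is making this collapse quantitative. The inclusion $\C(W)\hookrightarrow \C(Y)$ is only $1$--Lipschitz, giving $d_{\C(Y)}(u_0,u_n)\leq d_W(u_0,u_n)$, which is the wrong inequality for our purposes. The genuine shortcut comes from observation (i): all of $\C(W)$ sits inside the radius--$1$ ball about $\partial W$ in $\C(Y)$, so $\C(Y)$ collapses $\C(W)$ to a bounded region. Transferring this collapse from the auxiliary projections $u_i$ back onto the geodesic vertices $v_i$ themselves — thereby forcing some $v_k$ close to $\partial W$ — is the substantive content of the theorem and requires hyperbolicity of $\C(Y)$ together with contraction-type estimates for $\pi_W$ along $\C(Y)$--geodesics. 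The original argument of Masur--Minsky \cite{MM2} achieves this through hierarchies of tight geodesics; alternative modern treatments proceed via unicorn paths or uniform hyperbolicity machinery, any of which supplies the required $K$ and thus the desired $M$.
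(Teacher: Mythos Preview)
The paper does not give its own proof of this theorem; it is cited from Masur--Minsky \cite{MM2} (with the uniform version from Webb \cite{Webb-BGIT}) and used as a black box throughout. So there is no in-paper argument to compare against.

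Your proposal is likewise not a proof. The reduction of BGIT to the ``projection lemma'' is fine --- they are contrapositives up to the bounded diameter of individual projections --- but your argument for the projection lemma does not close. The observations you assemble are true but do not yield the conclusion: that the $u_i$ form a $2$--coarse path in $\C(W)$ gives no upper bound on $d_W(u_0,u_n)$ (coarse paths can be arbitrarily long), and the fact that $\C(W)$ sits in the $1$--ball about $\partial W$ inside $\C(Y)$ says nothing about $\C(W)$--distances. Your observation (ii) is also suspect as stated: there is no a priori bound on $d_{\C(Y)}(v_i,\partial W)$, since $v_i$ may intersect $\partial W$ many times. You yourself acknowledge in the final paragraph that the ``substantive content'' --- forcing some $v_k$ to be disjoint from $W$ --- requires the Masur--Minsky hierarchy machinery or a later substitute, and you supply none of it. What you have written is a correct identification of the difficulty followed by a deferral to the literature, which is in effect what the paper does as well, only with more scaffolding around it.
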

We assume (as we may) that $M \geq 8$, as this makes some of our estimates cleaner.  In fact, there is a uniform $M$ that is independent of $Y$ in this theorem, given by Webb~\cite{Webb-BGIT}.

\subsection{Construction of the survival map} \label{S:tree map construction} Consider the forgetful map
\[\Pi:{\sC}\rightarrow \C(S)\]
induced from the inclusion $\dot S \rightarrow S$.  By definition of $\sC$, $\Pi$ is well defined since every curve in $\sC$ determines a curve in $\C(S)$.  Each point  $v \in \C(S)$ determines a weighted multi-curve: $v$ is contained in the interior of a unique simplex, which corresponds to a multi-curve on $S$, and the barycentric coordinates determine weights on the components of the multi-curve.  According to \cite{LeinKentSch}, the fiber of the map $\Pi$ is naturally identified with the Bass-Serre tree associated to the corresponding weighted multi-curve: $\Pi^{-1}(v)= T_v$.

An important tool in our analysis is the \textit{survival map}
\[\Phi: \C(S)\times \mathbb{H}\rightarrow \sC.\]
The construction of the analogous map when $S$ is closed is described in \cite{LeinMjSch}.  Since there are no real subtleties that arise, we describe enough of the details of the construction for our purposes, and refer the reader to that paper for details.
Before getting to the precise definition of $\Phi$, we note that for every $v \in \C(S)$, the restriction of $\Phi$ to $\HH \cong \{v\} \times \HH$ will be denoted $\Phi_v \colon \HH \to \C^s(\dot S)$, and this is simpler to describe: $\Phi_v$ is a $\pi_1S$--equivariantly factors as $\HH \to T_v \to \Pi^{-1}(v)$, where the action of $\pi_1S$ on $\HH$ comes from our reference hyperbolic structure on $S$, the associated covering map $p \colon \HH \to S$, and choice of basepoint $\widetilde z \in p^{-1}(z)$.

To describe $\Phi$ in general, it is convenient to construct a more natural map from which $\Phi$ is defined as the descent to a quotient.  Specifically, we will define a map
\[ \widetilde \Phi \colon \C(S) \times \Diff_0(S) \to \sC \]
where $\Diff_0(S)$ is the component of the group of diffeomorphisms that of $S$ containing the identity (all diffeomorphisms of $S$ are assumed to extend to
to diffeomorphisms of the closed surface obtained by filling in the punctures).
To define $\widetilde \Phi$, first for each curve $\alpha \in \C_0(S)$, we let $\alpha$ denote the geodesic representative in our fixed hyperbolic metric on $S$, and choose once and for all $\epsilon(\alpha) >0$ so that for any two vertices $\alpha,\alpha'$, $i(\alpha,\alpha')$ is equal to the number of components of $N_{\epsilon(\alpha)}(\alpha) \cap N_{\epsilon(\alpha')}(\alpha')$.  
If $f(z)$ is disjoint from the interior of $N_{\epsilon(\alpha)}(\alpha)$, then $\widetilde \Phi(\alpha,f) = f^{-1}(\alpha)$, viewed as a curve on $\dot S$.  If $f(z)$ is contained in the interior of $N_{\epsilon(\alpha)}(\alpha)$, then we let $\alpha_\pm$ denote the two boundary components of this neighborhood, and define $\widetilde \Phi(\alpha,f)$ to be a point of the edge between the curves $f^{-1}(\alpha_-)$ and $f^{-1}(\alpha_+)$ determined by the relative distance to $\alpha_+$ and $\alpha_-$.
For a point $s \in \C(S)$ inside a simplex $\Delta$ of dimension greater than $0$, we use the neighborhoods as well as the barycentric coordinates of $s$ inside $\Delta$ to define $\widetilde \Phi(s,f) \in \Pi^{-1}(s) \subset \Pi^{-1}(\Delta)$; see \cite[Section~2.2]{LeinMjSch} for details.

Next we note that the isotopy from the identity to $f$ lifts to an isotopy from the identity to a {\em canonical lift} $\widetilde f$ of $f$. The map $\Phi$ is then defined from our choice $\widetilde z \in p^{-1}(z)$ and the canonical lift by the equation
\[ \Phi(\alpha,\widetilde f(\widetilde z)) = \widetilde \Phi(\alpha,f).\]
Alternatively, the we have the evaluation map $\ev \colon \Diff_0(S) \to S$, $\ev(f) = f(z)$, which lifts to a map $\widetilde{\ev} \colon \Diff_0(S) \to \mathbb H$ (given by $\widetilde{\ev}(f)= \widetilde f(\widetilde z)$, where again $\widetilde f$ is the canonical lift), and then $\Phi$ is defined as the descent by $\id_{\C(S)} \times \widetilde{\ev}$:
\[ \xymatrixcolsep{4pc}\xymatrixrowsep{2pc}\xymatrix{
\C(S) \times \Diff_0(S) \ar[rd]^{\widetilde \Phi} \ar[d]_{\id_{\C(S)} \times \widetilde{\ev}} \\
\C(S) \times \HH \ar[r]_\Phi & \C^s(\dot S).}\]
Note that every $\widetilde w \in \mathbb H$ is $\widetilde w = \widetilde f(\widetilde z)$ for some $f \in \Diff_0(S)$ (indeed, $\widetilde{\ev}$ defines a locally trivial fiber bundle).  As is shown in \cite{LeinMjSch}, $\Phi(\alpha,\widetilde w)$ is well-defined independent of the choice of such a diffeomorphism $f$ with $\widetilde f(\widetilde z) = \widetilde w$ since any two differ by an isotopy fixing $z$, and $\Phi$ is $\pi_1S$--equivariant (where the points $\widetilde z$ is used to identify the fundamental group with the group of covering transformations).
It is straightforward to see that $\Phi(\alpha, \cdot)$ is constant on components of $\mathbb H \setminus p^{-1}(N_{\epsilon(\alpha)}(\alpha))$: two points $\widetilde w,\widetilde w'$ in such a component are given by $\widetilde w = \widetilde f(\widetilde z)$ and $\widetilde w' = \widetilde f'(\widetilde z)$ where $f$ and $f'$ are isotopic by an isotopy $f_t$, so that $f_t(z)$ remains outside $N_{\epsilon(\alpha)}(\alpha)$ for all $t$.



\subsection{Cusps and witnesses} \label{S:cusps and witnesses}
The following lemma relates $\Phi$ to the proper witnesses.  Let $\mathcal P \subset \partial \mathbb H$ denote the set of parabolic fixed points.  Assume that for each $x \in \mathcal P$, we choose a horoball $H_x$ invariant by the parabolic subgroup $\stab_{\pi_1S}(x)$, the stabilizer of $x$ in $\pi_1S$.  We further assume, as we may, that (1) the union of the horoballs is $\pi_1S$--invariant, (2) the horoballs are pairwise disjoint (so all projected to pairwise disjoint cusp neighborhoods of the punctures), and (3) the horoballs all project disjoint from $N_{\epsilon(\alpha)}(\alpha)$ for all curves $\alpha$.  Recall that any proper witness is determined by its boundary.

\begin{lemma} \label{L:W parabolics}  There is a $\pi_1S$--equivariant bijection $\mathcal W \colon \mathcal P \to \Omega_0(\dot S)$ determined by
\begin{equation} \label{E:definition of W} \partial \mathcal W(x) = f^{-1}(\partial p(H_x)),
\end{equation}
for any $f \in \Diff_0(S)$ with $\widetilde f(\widetilde z)$ in the interior of the horoball $H_x$.
Moreover, $\Phi(\C(S) \times H_x) = \C(\mathcal W(x))$, we have $\Phi(\Pi(u) \times H_x)) = u$ for all $u \in \C(\mathcal W(x))$, and
$Stab_{\pi_1S}(x)$, acts trivially on $\C(\mathcal W(x))$.
\end{lemma}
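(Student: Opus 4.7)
The plan is to unwind the construction of $\Phi$ using two ingredients: the explicit formula $\widetilde\Phi(\alpha,f) = f^{-1}(\alpha)$ (as an isotopy class on $\dot S$) valid whenever $f(z)\notin N_{\epsilon(\alpha)}(\alpha)$, and the standing assumption that the horoballs $H_x$ project disjointly from all the neighborhoods $N_{\epsilon(\alpha)}(\alpha)$. To define $\mathcal W$, fix $x \in \mathcal P$, pick $\widetilde w \in H_x$ and $f \in \Diff_0(S)$ with $\widetilde f(\widetilde z) = \widetilde w$. Because $f$ is isotopic to the identity, $f^{-1}(p(H_x))$ is a cusp region about the same puncture $p_x$ of $S$ as $p(H_x)$, and it contains $z$ since $f(z)\in p(H_x)$. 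Viewed in $\dot S$, the region $f^{-1}(p(H_x))\setminus\{z\}$ is a twice-punctured disk with punctures $\{z,p_x\}$, and its complement in $\dot S$ is a proper witness with boundary $f^{-1}(\partial p(H_x))$; set $\mathcal W(x)$ equal to this witness. The choice of $f$ is immaterial: any two admissible $f,g$ produce twice-punctured disks on $\dot S$ with the same pair of punctures, and such disks are unique up to isotopy, so the resulting witnesses coincide.

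Next I would establish the identity $\Phi(\C(S) \times H_x) = \C(\mathcal W(x))$ together with the claim $\Phi(\Pi(u) \times H_x) = u$. For a vertex $\alpha \in \C_0(S)$ and $\widetilde w \in H_x$, the formula above gives $\Phi(\alpha,\widetilde w) = f^{-1}(\alpha)$; since $\alpha$ is disjoint from $p(H_x)$, the curve $f^{-1}(\alpha)$ is disjoint from $f^{-1}(p(H_x))$ and so lies in $\mathcal W(x)$, essential there because $\alpha$ is non-peripheral on $S$ and $f$ is isotopic to the identity. The argument extends to higher-dimensional simplices through the componentwise construction of $\widetilde\Phi$, yielding $\Phi(\C(S)\times H_x) \subseteq \C(\mathcal W(x))$. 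For the reverse containment and the second clause, given $u \in \C_0(\mathcal W(x))$ take $\alpha = \Pi(u) \in \C_0(S)$ and apply the formula: $\Phi(\alpha,\widetilde w) = f^{-1}(\alpha)$ is a curve in $\mathcal W(x)$ which agrees with $u$ after filling in $z$. Since the complement of $\mathcal W(x)$ in $\dot S$ is a twice-punctured disk, it contains no essential curves other than $\partial\mathcal W(x)$, so any two essential curves of $\mathcal W(x)$ that are isotopic on $S$ must be isotopic on $\dot S$. Therefore $\Phi(\Pi(u),\widetilde w) = u$, and in particular $\Phi(\C(S) \times H_x) = \C(\mathcal W(x))$.

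Equivariance of $\mathcal W$ follows from equivariance of $\Phi$ combined with $\gamma H_x = H_{\gamma x}$: the identity
\[
\Phi(\C(S)\times H_{\gamma x}) \;=\; \gamma\cdot \Phi(\C(S)\times H_x)
\]
becomes $\C(\mathcal W(\gamma x)) = \C(\gamma\mathcal W(x))$, forcing $\mathcal W(\gamma x) = \gamma\mathcal W(x)$ since a proper witness is determined by its boundary curve. For bijectivity of $\mathcal W$, both $\mathcal P/\pi_1S$ and $\Omega_0(\dot S)/\pi_1S$ are in natural bijection with the punctures of $S$, and within each orbit the stabilizer of $x\in\mathcal P$ (a maximal parabolic subgroup of $\pi_1S$) coincides with the point-pushing stabilizer of the corresponding boundary curve. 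Finally, the trivial action of $\stab_{\pi_1S}(x)$ on $\C(\mathcal W(x))$ is immediate from $\Phi(\Pi(u) \times H_x) = u$: given $\gamma\in\stab_{\pi_1S}(x)$ and $u\in\C(\mathcal W(x))$, we have $\gamma\widetilde w\in \gamma H_x = H_x$, so
\[
\gamma\cdot u \;=\; \gamma\,\Phi(\Pi(u),\widetilde w) \;=\; \Phi(\Pi(u),\gamma\widetilde w) \;=\; u.
\]
The main obstacle I foresee is the uniqueness step in paragraph two, identifying which vertex of the Bass-Serre tree $\Pi^{-1}(\alpha) = T_\alpha$ the map $\Phi(\alpha,\,\cdot\,)$ selects on $H_x$: this is the point where the geometry of the horoballs, the canonical lifts defining $\widetilde\Phi$, and the subsurface structure of $\dot S$ must be aligned carefully, but once the twice-punctured disk observation is in hand the rest is essentially bookkeeping.
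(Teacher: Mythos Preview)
Your argument for well-definedness of $\mathcal W$ contains a genuine error. You assert that ``twice-punctured disks on $\dot S$ with the same pair of punctures are unique up to isotopy,'' but this is false: there are infinitely many isotopy classes of simple closed curves on $\dot S$ bounding a disk with punctures $\{z,p_x\}$, one for each proper witness with that pair. Indeed, this is precisely what makes $\Omega_0(\dot S)$ infinite, a fact you yourself use (correctly) in your bijectivity argument when you invoke that each $\pi_1S$--orbit in $\Omega_0(\dot S)$ is a torsor over an infinite quotient. The correct reason $f^{-1}(\partial p(H_x))$ is independent of the choice of $f$ is that any two admissible $f,g$ can be connected by a path $f_t$ in $\Diff_0(S)$ with $\widetilde{f_t}(\widetilde z)$ remaining in the (connected, simply connected) horoball $H_x$; then $f_t(z)$ stays in $p(H_x)$ throughout, so $f_t^{-1}(\partial p(H_x))$ is an isotopy in $\dot S$. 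This is exactly the mechanism behind the well-definedness of $\Phi$ itself, which is what the paper invokes.

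A smaller gap: in your bijectivity argument you assert that the stabilizer of $x$ in $\pi_1S$ coincides with the stabilizer of $\partial\mathcal W(x)$, but you do not justify the nontrivial inclusion $\stab_{\pi_1S}(\partial\mathcal W(x)) \subseteq \stab_{\pi_1S}(x)$. The paper handles injectivity directly and proves exactly this containment using Kra's theorem: a point-push along $\gamma$ fixes $\partial\mathcal W(x)$ only if a representative loop $\gamma_0$ can be made disjoint from $\partial\mathcal W(x)$, which forces $\gamma_0$ into the twice-punctured disk and hence into $\stab_{\pi_1S}(x)$. Your orbit--stabilizer strategy is perfectly sound once this is supplied. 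The remaining parts of your proposal---the computation of $\Phi(\C(S)\times H_x)$, the identity $\Phi(\Pi(u)\times H_x)=u$ via injectivity of $\Pi|_{\C(\mathcal W(x))}$, and the trivial action of $\stab_{\pi_1S}(x)$---match the paper's proof closely.
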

From the lemma (and as illustrated in the proof) $\Phi|_{C(S) \times H_x}$ defines an isomorphism $\C(S) \to \C(\mathcal W(x))$ inverting the isomorphism $\Pi|_{\C(\mathcal W(x))} \colon \C(\mathcal W(x)) \to \C(S)$.
\begin{proof} For any $f \in \Diff_0(S)$ with $\widetilde w = \widetilde f(\widetilde z) \in H_x$ and any curve $v \in \C_0(S)$, we have $\Phi(v,\widetilde w) = \widetilde \Phi(v,f) = f^{-1}(v)$.  On the other hand, $f^{-1}(\partial p(H_x))$ is the boundary of a twice punctured disk containing the $z$ puncture, and hence $f^{-1}(\partial p(H_x))$ is the boundary of a witness we denote $\mathcal W(x)$.  Since $v$ and $\partial p(H_x)$ are disjoint,
\[ \Phi(v,\widetilde w) \in \C(\mathcal W(x)) \subset \sC.\]
The same proof that $\Phi(v,\widetilde w)$ is well-defined (independent of the choice of $f \in \Diff_0(S)$ with $\widetilde f(\widetilde z) = \widetilde w$), shows $f^{-1}(\partial p(H_x))$ is independent of such a choice of $f$ (up to isotopy).  Therefore, $\mathcal W$ is well defined by \eqref{E:definition of W}.  Since $v \in \C_0(S)$ was arbitrary and $\Phi(v,\cdot)$ is constant on components of the complement of $p^{-1}(N_{\epsilon(v)}(v))$, we have
\[ \Phi(\C(S) \times  H_x) \subset \C(\mathcal W(x)).\]
Given $u \in \C(\mathcal W(x))$, we view $u$ as a curve disjoint from $f^{-1}(\partial p(H_x))$ and hence $f(u)$ is disjoint from $p(H_x)$.  There is an isotopy of $f(u)$ to $v$ fixing $p(H_x)$ (since this is just a neighborhood of the cusp) and hence an isotopy of $u$ to $f^{-1}(v)$ disjoint from $f^{-1}(\partial p(H_x))$.   This implies $\Phi(\{v\} \times H_x) = u$, proving that $\Phi(\C(S) \times H_x) = \C(\mathcal W(x))$, as well as the formula $\Phi(\Pi(u)\times H_x) = u$ for all $u \in \C(\mathcal W(x))$.

Next observe that for any proper witness $W$, the subcomplex $\C(W) \subset \C^s(\dot S)$ uniquely determines $W$.  Therefore, the property that $\Phi(\C(S) \times H_x) = \C(\calW(x))$, together with the $\pi_1S$--equivariance of $\Phi$ implies that $\calW$ is $\pi_1S$--equivariant.  All that remains is to show that $\calW$ is a bijection.  Let $C_1,\ldots,C_n$ be the pairwise disjoint horoball cusp neighborhoods of the punctures obtained by projecting the horoballs $H_x$ for all $x \in \calP$.

For any proper witness $W$, there is a diffeomorphism $f \colon S \to S$,  isotopic to the identity by an isotopy $f_t$ which is the identity on $W$ for all $t$, and so that $f(z) \in C_i$, for some $i$.  Note that there is an arc connecting $z$ to the $i^{th}$ puncture which is disjoint from both $\partial W$ and $\partial C_i$.  It follows that $\partial W$ and $\partial C_i$ are isotopic, and thus by further isotopy (no longer the identity on $W$) we may assume that $f(\partial W) = \partial C_i$.  Therefore, $f^{-1}(\partial C_i) = \partial W$.  Observe that the canonical lift $\tilde f$ has $\tilde f(\tilde z) \in H_x$ for some $x \in \calP$ with $p(H_x) = C_i$.  Therefore, $f^{-1}(\partial p(H_x)) = W$, and so $\mathcal W(x) = W$, so $\calW$ is surjective.

To see that $\calW$ is injective, suppose $x,y \in \calP$ are such that $\calW(x) = \calW(y)$.  The two punctures surrounded by $\partial \calW(x)$ and by $\partial \calW(y)$ are therefore the same, hence there exists an element $\gamma \in \pi_1S$ so that $\gamma \cdot x= y$.  By $\pi_1S$--equivariance, we must have
\[ \gamma \cdot \partial \calW(x) = \partial \calW(\gamma \cdot x) = \partial \calW(y) = \partial \calW(x).\]
Choose a representative loop for $\gamma$ with minimal self-intersection and denote this $\gamma_0$.
If $\gamma_0$ is simple closed, then the mapping class associated to $\gamma$ is the product of Dehn twists (with opposite signs) in the boundary curves of a regular neighborhood of $\gamma_0$. Otherwise, $\gamma_0$ fills a subsurface $Y \subset \dot S$ and is pseudo-Anosov on this subsurface by a result of Kra \cite{Kra} (see also \cite{LeinKentSch}).  It follows that $\gamma \cdot \partial \calW(x) = \partial \calW(x)$ if and only if $\gamma_0$ is disjoint from $\partial \calW(x)$, which happens if and only if $f(\gamma_0) \subset p(H_x)$ (up to isotopy relative to $f(z)$).  In the action of $\pi_1S$ on $\HH$, the element $\gamma$ sends $\tilde f(\tilde z)$ to $\gamma \cdot \tilde f(\tilde z)$, and these are the initial and terminal endpoints of the $\tilde f$--image of the lift of $\gamma_0$ with initial point $\tilde z$.
On the other hand, $\tilde f(\tilde z) \in H_x$, and hence so is $\gamma  \cdot \tilde f(\tilde z)$, which means that $\gamma$ is fixes $x$.  Therefore, $y = \gamma \cdot x = x$, and thus $\calW$ is injective.
\end{proof}

\subsection{Spaces of laminations} \label{S:laminations}
We refer the reader to \cite{Thurston1}, \cite{CEG}, \cite{FLP}, and \cite{CB} for details about the topics discussed here.  By a {\em lamination} on a surface $Y$ we mean a {\em compact} subset of the interior of $Y$ foliated by complete geodesics with respect to some complete, hyperbolic metric of finite area, with (possibly empty) geodesic boundary; the geodesics in the foliation are uniquely determined by the lamination and are called the {\em leaves}.  For example, any simple closed geodesic $\alpha$ is a lamination with exactly one leaf.  For a fixed complete, finite area, hyperbolic metric on $Y$, all geodesic laminations are all contained in a compact subset of the interior of $Y$.  For any two complete, hyperbolic metrics of finite area, laminations that are geodesic with respect to the first are isotopic to laminations that are geodesic with respect to the second.  In fact, we can remove any geodesic boundary components, and replace the resulting ends with cusps, and this remains true.  We therefore sometimes view laminations as well-defined up to isotopy, unless a hyperbolic metric is specified in which case we assume they are geodesic.

A {\em complementary region} of a lamination $\calL \subset Y$ is the image in $Y$ of the closure of a component of the preimage in the universal covering; intuitively, it is the union of a complementary component together with the ``leaves bounding this component''.  We view the complementary regions as immersed subsurfaces with (not necessarily compact) boundary consisting of arcs and circles (for a generic lamination, the immersion is injective, though in general it is only injective on the interior of the subsurface).  We will also refer to the closure of a complementary component in the universal cover of $Y$ as a complementary region (of the preimage of a lamination).

We write $\calG\calL(Y)$ for the set of laminations on the surface $Y$, dropping the reference to $Y$ when it is clear from the context.  The set of essential simple closed curves, up to isotopy (i.e.~the vertex set of $\C(Y)$) is thus naturally a subset of $\calG\calL(Y)$. A lamination is {\em minimal} if every leaf is dense in it, and it is \textit{filling} if its complementary regions are ideal polygons, or one-holed ideal polygons where the hole is either a boundary component or cusp of $Y$.  A sublamination of a lamination is a subset which is also a lamination.  Every lamination decomposes as a finite disjoint union of simple closed curves, minimal sublaminations without closed leaves (called the {\em minimal components}), and biinfinite isolated leaves (leaves with a neighborhood disjoint from the rest of the lamination).

There are several topologies on $\calG\calL$ that will be important for us (in what follows, and whenever discussing convergence in the topologies, we view laminations as geodesic laminations with respect to a fixed complete hyperbolic metric of finite area; the resulting topology and convergence is independent of the choice of metric).  The first is a metric topology called the {\em Hausdorff topology} (also known as the {\em Chabauty topology}), induced by the {\em Hausdorff metric} on the set of all compact subsets of a compact space (in our case, the compact subset of the surface that contains all geodesic laminations) defined by
\[ d_H(\calL,\calL') = \inf \{\epsilon >0 \mid \calL \subset \calN_{\epsilon}(\calL') \mbox{ and } \calL' \subset \calN_{\epsilon}(\calL) \}.\]
If a sequence of $\{\calL_i\}$ converges to $\calL$ in this topology, we write $\calL_i \xrightarrow{\text{H}} \calL$.  The following provides a useful characterization of convergence in this topology; see \cite{CEG}.

\begin{lemma} \label{L:Hausdorff convergence} We have $\calL_i \xrightarrow{\text{H}} \calL$  if and only if
\begin{enumerate}
  \item for all $x \in \calL$ there is a sequence of points $x_i \in \calL_i$ so that $x_i \to x$, and
  \item for every subsequence $\{\calL_{i_k}\}_{k=1}^\infty$, if $x_{i_k} \in \calL_{i_k}$, and $x_{i_k} \to x$, then $x \in \calL$.
\end{enumerate}
\end{lemma}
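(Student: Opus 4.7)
The plan is to prove both directions of the equivalence using the standard characterization of the Hausdorff distance, working with the ambient compact subset of the surface that contains every geodesic lamination (fixed once and for all by the choice of finite-area hyperbolic metric).

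For the forward direction, assume $d_H(\calL_i,\calL)\to 0$. To get (1), fix $x\in\calL$. Choose $\epsilon_i\to 0$ with $d_H(\calL_i,\calL)<\epsilon_i$; then $\calL\subset\calN_{\epsilon_i}(\calL_i)$, so I can pick $x_i\in\calL_i$ with $d(x,x_i)<\epsilon_i$, giving $x_i\to x$. To get (2), suppose $x_{i_k}\in\calL_{i_k}$ and $x_{i_k}\to x$. Since $\calL_{i_k}\subset\calN_{\epsilon_{i_k}}(\calL)$ with $\epsilon_{i_k}\to 0$, we have $d(x_{i_k},\calL)\to 0$. Because $\calL$ is closed, this forces $x\in\calL$.

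For the reverse direction, I argue by contradiction. Assume (1) and (2) hold, but $d_H(\calL_i,\calL)\not\to 0$. After passing to a subsequence (relabeled), there is $\epsilon>0$ with $d_H(\calL_i,\calL)\geq\epsilon$ for all $i$, so at least one of the two containments in the definition of $d_H$ fails uniformly. Passing to a further subsequence, exactly one of the following holds for every $i$:
\begin{itemize}
\item[\textbf{(A)}] there exists $y_i\in\calL$ with $d(y_i,\calL_i)\geq\epsilon$, or
\item[\textbf{(B)}] there exists $x_i\in\calL_i$ with $d(x_i,\calL)\geq\epsilon$.
\end{itemize}
In case (A), compactness of $\calL$ lets me pass to a subsequence so that $y_i\to y\in\calL$. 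Hypothesis (1) provides a sequence $z_i\in\calL_i$ with $z_i\to y$, and then
\[ d(y_i,\calL_i)\leq d(y_i,z_i)\leq d(y_i,y)+d(y,z_i)\to 0,\]
contradicting $d(y_i,\calL_i)\geq\epsilon$. In case (B), compactness of the ambient surface lets me pass to a subsequence so that $x_i\to x$; then (2) forces $x\in\calL$, contradicting $d(x_i,\calL)\geq\epsilon$. Either way we reach a contradiction, which completes the proof.

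The argument is entirely formal and uses only compactness and the definition of $d_H$; nothing special about the lamination structure enters beyond the fact that $\calL$ is a closed subset of a compact space. There is no real obstacle here, but the only subtle point is the bookkeeping when extracting subsequences in the reverse direction: one must split into the two possible failures of the Hausdorff containment and apply the two hypotheses (1) and (2) respectively, since neither hypothesis alone suffices to rule out both failures.
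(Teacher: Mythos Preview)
Your proof is correct. The paper does not actually prove this lemma; it merely states it with a reference to \cite{CEG} and remarks that the statement holds for arbitrary compact subsets of a compact metric space. Your argument is exactly the standard one for that general fact, and your closing paragraph makes the same observation the paper does. There is nothing to compare.
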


This lemma holds not just for Hausdorff convergence of laminations, but for any sequence of compact subsets of a compact metric space with respect to the Hausdorff metric.

The set $\calG\calL$ can also be equipped with a weaker topology called the \emph{coarse Hausdorff topology}, \cite{Ham-CH}, introduced by Thurston in \cite{TNotes} where it was called the {\em geometric topology} (see also \cite{CEG} where it was {\em Thurston topology}).  If a sequence $\{\calL_i\}$ converges to $\calL$ in the coarse Hausdorff topology, then we write $\calL_i \xrightarrow{\text{CH}} \calL$.   The following describes convergence in this topology; see \cite{CEG}.
 \begin{lemma} \label{L:CH} We have $\calL_i \xrightarrow{\text{CH}} \calL$ if and only if condition (1) holds from Lemma~\ref{L:Hausdorff convergence}.
\end{lemma}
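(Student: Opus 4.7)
The plan is to work from the one-sided $\epsilon$-neighborhood criterion for the coarse Hausdorff topology: $\calL_i \xrightarrow{\text{CH}} \calL$ if and only if for every $\epsilon > 0$ there is an $N$ with $\calL \subset \calN_\epsilon(\calL_i)$ whenever $i \geq N$. This is the full Hausdorff criterion with only the inclusion of $\calL$ into a neighborhood of $\calL_i$ retained, matching the intuition that $\calL$ should be contained in every limit but no reverse inclusion is demanded. With this in hand, the two directions of the biconditional are short, and the main task is to reconcile this formulation with whichever definition from \cite{CEG} is being used.

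For the forward direction, I would fix $\calL_i \xrightarrow{\text{CH}} \calL$ and $x \in \calL$, and apply the neighborhood criterion with $\epsilon = 1/k$ to obtain a strictly increasing sequence $N_k$ past which some $y_i^{(k)} \in \calL_i$ satisfies $d(x, y_i^{(k)}) < 1/k$. Setting $x_i = y_i^{(k)}$ for $N_k \leq i < N_{k+1}$ (with $x_i \in \calL_i$ chosen arbitrarily for $i < N_1$) yields $x_i \in \calL_i$ with $x_i \to x$, which is condition (1) of Lemma~\ref{L:Hausdorff convergence}.

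For the reverse direction, I would assume condition (1) and fix $\epsilon > 0$. Compactness of $\calL$ gives a finite cover by balls $B(x^{(1)}, \epsilon/2), \ldots, B(x^{(m)}, \epsilon/2)$ centered on points of $\calL$. Condition (1) furnishes sequences $x_i^{(j)} \in \calL_i$ with $x_i^{(j)} \to x^{(j)}$ for each $j$; since $m$ is finite, for $i$ large enough we have $d(x_i^{(j)}, x^{(j)}) < \epsilon/2$ simultaneously for all $j$, so any $x \in \calL$ lies within $\epsilon$ of some $x_i^{(j)} \in \calL_i$, giving $\calL \subset \calN_\epsilon(\calL_i)$ for all large $i$, i.e., $\calL_i \xrightarrow{\text{CH}} \calL$. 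The main (and only real) obstacle is the definitional one noted above: if \cite{CEG} instead takes as primary ``every Hausdorff-convergent subsequence has limit containing $\calL$'', the same arguments run through after passing to a Hausdorff-convergent subsequence in the (compact) ambient space of closed subsets and then invoking Lemma~\ref{L:Hausdorff convergence}(1) in the forward direction and Lemma~\ref{L:Hausdorff convergence}(2) in the reverse. Since the result amounts to a standard comparison of two topologies on a compact metrizable space, I expect no further technical difficulty.
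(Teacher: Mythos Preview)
The paper does not actually supply a proof of this lemma; it merely records the characterization and refers the reader to \cite{CEG}. So there is no ``paper's own proof'' to compare against. Your argument is correct and is the standard one: the one-sided neighborhood criterion $\calL \subset \calN_\epsilon(\calL_i)$ for large $i$ is equivalent to condition (1), with the forward direction a diagonal extraction and the reverse direction an $\epsilon/2$--cover of the compact set $\calL$. Your closing remark that the alternative definition (via Hausdorff-convergent subsequences containing $\calL$) is handled by the same ideas together with Lemma~\ref{L:Hausdorff convergence} is also accurate, and indeed the paper records exactly that equivalence separately as Corollary~\ref{C:CH sublamination}.
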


The next corollary gives a useful way of understanding coarse Hausdorff convergence.

\begin{corollary} \label{C:CH sublamination} We have $\calL_i \xrightarrow{\text{CH}} \calL$ if and only if every Hausdorff convergent subsequence converges to a lamination $\calL'$ containing $\calL$.
\end{corollary}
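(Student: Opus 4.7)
The plan is to deduce both directions directly from the characterizations of Hausdorff and coarse Hausdorff convergence given in Lemmas~\ref{L:Hausdorff convergence} and \ref{L:CH}, using the compactness of the space of laminations in the Hausdorff topology (equivalently, the Blaschke selection theorem applied to the fixed compact region containing every geodesic lamination). This compactness guarantees that every subsequence of $\{\calL_i\}$ admits a further Hausdorff convergent subsequence, which is what allows one to translate statements about all of $\{\calL_i\}$ into statements about Hausdorff limits.

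For the forward direction, I would assume $\calL_i \xrightarrow{\text{CH}} \calL$ and take an arbitrary subsequence with $\calL_{i_k} \xrightarrow{\text{H}} \calL'$. Given $x \in \calL$, Lemma~\ref{L:CH} yields $x_i \in \calL_i$ with $x_i \to x$; passing to the subsequence and invoking condition (2) of Lemma~\ref{L:Hausdorff convergence} for $\calL_{i_k} \xrightarrow{\text{H}} \calL'$ immediately gives $x \in \calL'$, hence $\calL \subset \calL'$.

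For the backward direction, I would argue by contradiction. Suppose condition (1) of Lemma~\ref{L:Hausdorff convergence} fails for $\{\calL_i\}$ and $\calL$, so there exist $x \in \calL$ and $\epsilon > 0$ together with a subsequence $\{\calL_{i_k}\}$ such that $\calL_{i_k} \cap B_\epsilon(x) = \emptyset$ for all $k$. By Hausdorff compactness, pass to a further subsequence, still denoted $\{\calL_{i_k}\}$, with $\calL_{i_k} \xrightarrow{\text{H}} \calL'$ for some lamination $\calL'$. The hypothesis forces $\calL \subset \calL'$, so $x \in \calL'$; but then Lemma~\ref{L:Hausdorff convergence}(1) produces $y_{i_k} \in \calL_{i_k}$ with $y_{i_k} \to x$, contradicting $\calL_{i_k} \cap B_\epsilon(x) = \emptyset$. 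Therefore condition (1) holds, which is precisely $\calL_i \xrightarrow{\text{CH}} \calL$ by Lemma~\ref{L:CH}.

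The only substantive step is the backward direction, whose only real content is the appeal to compactness of the space of laminations in the Hausdorff topology; the rest is a direct bookkeeping of the two characterizations. No other obstacle is anticipated.
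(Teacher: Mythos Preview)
Your proof is correct and is exactly the argument the paper has in mind: the corollary is stated without proof, as an immediate consequence of Lemmas~\ref{L:Hausdorff convergence} and~\ref{L:CH} together with Hausdorff compactness of the space of laminations, and your write-up simply makes this deduction explicit.
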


Since any lamination has only finitely many sublaminations, from the corollary we see that while limits are not necessarily unique in the coarse Hausdorff topology, a sequence can have only finitely many limits.  We let $\EL=\EL(Y)$ denote the space of {\em ending laminations} on $Y$, which are minimal, filling laminations, equipped with the coarse Hausdorff topology.  As suggested by the name, these are precisely the laminations that occur as the ending laminations of a type preserving, proper, isometric action on hyperbolic $3$--space without accidental parabolics as discussed in the introduction.

A \textit{measured lamination} is a lamination $\calL$ together with an invariant transverse measure $\mu$; that is, an assignment of a measure on all arcs transverse to the lamination, satisfying natural subdivision properties which is invariant under isotopy of arcs preserving transversality with the lamination. The support of a measured lamination $(\calL,\mu)$ is the sublamination $|\mu| \subseteq \calL$ with the property that a transverse arc has positive measure if and only if the intersection with $|\mu|$ is nonempty, and is a union of minimal components and simple closed geodesics.  We often assume that $(\calL,\mu)$ has {\em full support}, meaning $\calL = |\mu|$.  In this case, we sometimes write $\mu$ instead of $(\mathcal L,\mu)$.

The space $\ML = \ML(Y)$ of {\em measured laminations} on $Y$ is the set of all measured laminations of full support equipped with the weak* topology on measures on an appropriate family of arcs transverse to all laminations.  Given an arbitrary measured lamination, $(\calL,\mu)$, we have $(|\mu|,\mu)$ is an element of $\ML$, and so every measured lamination determines a unique point of $\ML$.   We let $\FL \subset \ML$ denote the subspace of measured laminations whose support is an ending lamination (i.e.~it is in $\EL$).  We write $\PML$ and $\PFL$ for the respective projectivizations of $\ML$ and $\FL$, obtained by taking the quotient by scaling measures, with the quotient topologies.  The following will be useful in the sequel; see \cite[Chapter 8.10]{TNotes}.

\begin{proposition} \label{P:support continuous} The map $\PML \to \calG\calL$, given by $\mu \mapsto |\mu|$, is continuous with respect to the coarse Hausdorff topology on $\calG\calL$.
\end{proposition}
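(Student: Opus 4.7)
The plan is to verify the characterization of coarse Hausdorff convergence given in Lemma~\ref{L:CH}: given a convergent sequence $[\mu_i] \to [\mu]$ in $\PML$, it suffices to show that every point $x \in |\mu|$ arises as a limit of points $x_i \in |\mu_i|$.

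First I would reduce to a sequence converging in $\ML$ itself. Since the support $|\mu_i|$ is invariant under scaling the transverse measure by any positive real number, the definition of $\PML$ as the quotient $(\ML\setminus\{0\})/\mathbb{R}_{>0}$ allows me to choose scalars $t_i > 0$ with $t_i \mu_i \to \mu$ in $\ML$, and then replace $\mu_i$ by $t_i\mu_i$ without altering its support.

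Next I would fix $x \in |\mu|$ and choose a sequence of arcs $\tau_n$ centered at $x$, transverse to $\mu$, with lengths tending to $0$. Because $x \in |\mu|$, each arc satisfies $\mu(\tau_n) > 0$, and by a small perturbation of endpoints I can arrange that $\partial\tau_n$ lies off $|\mu|$, so the weak* convergence $\mu_i \to \mu$ yields $\mu_i(\tau_n) \to \mu(\tau_n) > 0$. Hence for $i$ sufficiently large (say $i \geq i_n$) there exists a point $x_{i,n} \in \tau_n \cap |\mu_i|$. A standard diagonal argument, passing through a strictly increasing sequence $i_n$ and setting $x_i = x_{i,n}$ for $i_n \leq i < i_{n+1}$, produces $x_i \in |\mu_i|$ with $x_i \to x$, which by Lemma~\ref{L:CH} yields $|\mu_i| \xrightarrow{\text{CH}} |\mu|$.

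The main obstacle will be making the weak* convergence step precise, namely verifying that $\mu_i(\tau_n) \to \mu(\tau_n)$ for the chosen arcs. This requires choosing the arcs so that the boundary has $\mu$-measure zero (the reason for perturbing the endpoints) and then invoking the definition of the weak* topology on $\ML$ in terms of integration against an appropriate family of test functions supported on transverse arcs. Once this point is in hand, the remaining argument is a routine diagonal extraction.
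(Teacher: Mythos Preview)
The paper does not actually prove this proposition; it simply cites Thurston's notes \cite[Chapter~8.10]{TNotes}. Your argument is correct and is essentially the standard one. A couple of remarks:

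First, checking continuity on sequences is legitimate here because the domain $\PML$ is metrizable (hence first countable), so sequential continuity implies continuity regardless of the separation properties of the coarse Hausdorff topology on the target.

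Second, your flagged ``main obstacle'' is less of an obstacle than you suggest, given the paper's conventions. The paper defines the topology on $\ML$ as ``the weak* topology on measures on an appropriate family of arcs transverse to all laminations,'' so convergence of $\mu_i(\tau_n)$ to $\mu(\tau_n)$ is essentially the definition once you have arranged $\partial\tau_n$ to miss $|\mu|$ (so that the measure has no atoms at the endpoints). If instead one starts from the intersection-number or train-track definition of the topology on $\ML$, the cleanest route is to pass through a train track $\theta$ carrying $\mu$: for large $i$ each $\mu_i$ is also carried by $\theta$ with branch weights converging to those of $\mu$, so every branch with positive $\mu$--weight eventually has positive $\mu_i$--weight, which immediately yields points of $|\mu_i|$ converging to any given point of $|\mu|$.

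Either way, the diagonal extraction and the appeal to Lemma~\ref{L:CH} are exactly right.
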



For the surface $\dot S$, we consider the subspace
\[\sEL:=\bigsqcup_{W \in \Omega(\dot S)} \calE \calL(W) \subset {\calG\calL}(\dot S),\]
which is the union of ending laminations of all witnesses of $\dot S$.
Similarly, we will write $\FL^s(\dot S) \subset \ML(\dot S)$ for those measured laminations supported on laminations in $\sEL$, and $\PFL^s(\dot S) \subset \PML(\dot S)$ for its projectivization.

\subsection{Gromov Boundary of  a hyperbolic space}

A $\delta$--hyperbolic space $\calX$ can be equipped with a boundary at infinity, $\partial \calX$ as follows.  Given $x,y \in \calX$ and a basepoint $o \in \calX$, the Gromov product of $x$ and $y$ based at $o$ is given by
\[ \langle x, y \rangle_o = \frac12\left( d(x,o)+d(y,o) - d(x,y) \right).\]
Up to a bounded error (depending only on $\delta$), $\langle x,y \rangle_o$ is the distance from $o$ to a geodesic connecting $x$ and $y$.  The quantity $\langle x,y \rangle_o$ is estimated by the distance from the basepoint $o$ to a quasi-geodesic between $x$ and $y$.  There is an additive and multiplicative error in the estimate that depends only on the hyperbolicity constant and the quasi-geodesic constants.  Using and slim triangles, we also note that for all $x,y,z \in \calX$,
\[ \langle x,y \rangle_o \succeq \min\{ \langle x,z \rangle_o, \langle y,z \rangle_o\} \]
where the constants in the coarse lower bound depend only on the hyperbolicity constant.

A sequence $\{x_n\} \subset \calX$ is said to {\em converge to infinity} if $\displaystyle{\lim_{m,n\rightarrow\infty}\la x_m,x_n\ra_o=\infty}$.   Two sequences $\{x_n\}$ and $\{y_n\}$ are equivalent if $\displaystyle{\lim_{m,n\rightarrow\infty}\la y_m,x_n \ra =\infty}$.  The points in $\partial \calX$ are equivalence classes of sequences converging to infinity, and if $\{x_k\} \in  x \in \partial \calX$, then we say $\{x_k\}$ converges to $x$ and write $x_k \to x$ in $\overline{\calX} = \calX \cup \partial \calX$.  The topology on the boundary is such that a sequence $\{x^n\}_n \subset \partial X$ converges to a point $x \in \partial X$ if there exist sequences $\{x_k^n\}_k$ representing $x^n$ for all $n$, and $\{x_m\}_m$ representing $x$ so that
\[ \lim_{n\to \infty} \liminf_{k,m \to \infty} \langle x_k^n,x_m\rangle_o = \infty.\]
For details see, e.g.~\cite{BH,Kapovich-bdry}.

Klarreich \cite{Klarreich} proved that the Gromov boundary of the curve complex is naturally homeomorphic to the space of ending laminations equipped with the quotient topology from $\FL \subset \ML$ using the geometry of the Teichmuller space\footnote{In fact, Klarreich worked with the space of measured foliations, an alter ego of the space of measured laminations.}.  Hamenst\"adt \cite{Ham-CH} gave a new proof, endowing $\EL$ with the coarse Hausdorff topology (which for $\EL$ is the same topology as the quotient topology), also providing additional information about convergence.  Yet another proof of the version we use here was given by Pho-On \cite{Bom}.

\begin{theorem}  \label{T:Klarreich} For any surface $Y$ equipped with a complete hyperbolic metric of finite area (possibly having geodesic boundary), there is a homeomorphism $\mathcal F_Y \colon \partial \C(Y) \to \EL(Y)$ so that $\alpha_n\rightarrow x$ if and only if $\alpha_n \xrightarrow{\text{CH}} \calF_Y(x)$. 
\end{theorem}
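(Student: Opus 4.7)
The plan is to define $\calF_Y$ on a boundary point $x$ by extracting the unique minimal filling sublamination of any Hausdorff subsequential limit of a curve sequence representing $x$, and then to verify well-definedness, bijectivity, and the convergence characterization; continuity follows from the latter.

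First I would construct the map. Given $x \in \partial \C(Y)$ and a representative $\alpha_n \to x$, pass to a subsequence with $\alpha_n \xrightarrow{\text{H}} \calL$ for some $\calL \in \calG\calL(Y)$, using compactness of the Hausdorff topology on geodesic laminations of $Y$. I claim $\calL$ is filling: if some essential curve $\gamma$ missed $\calL$, then Lemma~\ref{L:Hausdorff convergence} yields a neighborhood of $\gamma$ disjoint from $\alpha_n$ for all large $n$, so $d_{\C(Y)}(\gamma,\alpha_n)\leq 1$, contradicting $d_{\C(Y)}(\alpha_0,\alpha_n) \to \infty$. Any filling lamination decomposes into finitely many minimal components plus finitely many isolated leaves; exactly one of these minimal components must itself be filling in $Y$ (the remaining ones would have to be supported in proper subsurfaces, which cannot occur alongside a filling minimal lamination). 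Define $\calF_Y(x)$ to be this unique minimal filling sublamination.

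The technical heart is the lemma that $\calF_Y(x)$ depends only on $x$. Suppose two Hausdorff-convergent subsequences of $\{\alpha_n\}$ limit to $\calL$ and $\calL'$ whose respective minimal filling sublaminations differ. Then I can locate a proper subsurface $W\subset Y$ whose boundary misses both limits while $W$ itself meets both minimal filling sublaminations nontrivially, so that each tail of the original sequence projects to $\C(W)$ with $\pi_W$-image accumulating onto the restriction of the corresponding limit to $W$. Since those restrictions are distinct filling sublaminations of $W$, the projections $\pi_W(\alpha_n)$ along the two subsequences diverge from each other in $\C(W)$. Applying the Bounded Geodesic Image Theorem (Theorem~\ref{BGIT}) to a $\C(Y)$-geodesic joining vertices from the two subsequences --- whose vertices all meet $W$ by construction --- then bounds $d_W$ between the endpoints, giving the required contradiction. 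Independence of representative follows by interleaving two representatives of $x$ and reapplying the same dichotomy.

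With the uniqueness lemma in hand, the rest assembles quickly. For injectivity, if $\calF_Y(x)=\calF_Y(x')$, interleaving representatives and reapplying the lemma forces every pair of tail indices to have Gromov product tending to infinity in $\C(Y)$, so $x=x'$. For surjectivity, any $\calL\in\EL(Y)$ admits a transverse measure $\mu\in\FL(Y)$ that is approximated in $\PML(Y)$ by weighted simple closed curves $\alpha_n$; Proposition~\ref{P:support continuous} gives $\alpha_n \xrightarrow{\text{CH}} \calL$, and the filling criterion from the first step shows $\{\alpha_n\}$ converges to a point of $\partial \C(Y)$ that necessarily maps to $\calL$. The characterization ``$\alpha_n \to x$ iff $\alpha_n \xrightarrow{\text{CH}} \calF_Y(x)$'' is then immediate from the construction combined with Corollary~\ref{C:CH sublamination}, and from it bicontinuity of $\calF_Y$ follows. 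The main obstacle throughout is the uniqueness lemma: locating a subsurface $W$ that separates the two hypothetical limits and then deploying the Bounded Geodesic Image Theorem to extract a uniform bound contradicting divergence of the subsurface projections.
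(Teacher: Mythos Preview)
The paper does not prove this theorem; it is cited from the literature with attribution to Klarreich, Hamenst\"adt, and Pho-On. Your sketch, however, has a genuine gap at the uniqueness step and a smaller one in the construction of the map.

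For uniqueness you propose to find a proper subsurface $W\subset Y$ whose boundary misses both Hausdorff limits and then apply Theorem~\ref{BGIT}. But by your own earlier argument each Hausdorff limit is a filling lamination of $Y$, so every essential simple closed curve---in particular every component of $\partial W$---must intersect it. No such $W$ exists, and the Bounded Geodesic Image Theorem cannot be invoked in the way you describe. The real obstruction is that two distinct minimal filling laminations of $Y$ intersect transversely, and one must show that curve sequences Hausdorff-converging to transversely intersecting filling laminations have \emph{bounded} Gromov product in $\C(Y)$; this is exactly the substantive step in every known proof (via Teichm\"uller geometry, train tracks, or bicorn curves) and is not accessible through subsurface projections and BGIT alone. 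Relatedly, in constructing $\calF_Y$ you assert that a filling Hausdorff limit $\calL$ has a minimal component that itself fills $Y$, but your parenthetical only establishes uniqueness given existence: a filling lamination can in principle have several minimal components, each supported in a proper subsurface, joined by isolated leaves, with no single minimal component filling. Ruling this out for Hausdorff limits of curve sequences escaping to infinity in $\C(Y)$ is again part of the hard content of the theorem, not a formality.
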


\subsection{Laminations and subsurfaces}  The following lemma relates coarse Hausdorff convergence of a sequence to coarse Hausdorff convergence of its projection to witnesses in important special case.
\begin{lemma} \label{L:proj and conv} If $\{\alpha_n\} \subset \C_0^s(\dot S)$ and $\calL \in \EL(W)$ for some witness $W$, then $\alpha_n \stackrel{CH}{\to} \calL$ if and only if $\pi_W(\alpha_n) \stackrel{CH}{\to} \calL$.
\end{lemma}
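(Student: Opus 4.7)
I would verify the characterization of coarse Hausdorff convergence from Lemma~\ref{L:CH} separately in each direction: for an arbitrary $x \in \calL$, produce a sequence in $\pi_W(\alpha_n)$ (for the forward implication) or in $\alpha_n$ (for the reverse) converging to $x$. Since $\calL \in \EL(W)$, every leaf of $\calL$ lies at positive distance from $\partial W$; I fix $\epsilon > 0$ so that $\calN_{2\epsilon}(\calL) \subset W \setminus \calN_{2\epsilon}(\partial W)$. In the definition of $\pi_W(\alpha_n)$, I take the regular neighborhoods to have width much less than $\epsilon$, so that $\partial \calN(\alpha_n^0 \cup \partial W)$ lies within $\epsilon$ of $\alpha_n^0 \cup \partial W$ for every arc $\alpha_n^0$ of $\alpha_n \cap W$.

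For the forward direction, assume $\alpha_n \xrightarrow{\text{CH}} \calL$ and fix $x \in \calL$. Lemma~\ref{L:CH} yields $x_n \in \alpha_n$ with $x_n \to x$; for large $n$ the point $x_n$ lies more than $\epsilon$ from $\partial W$, hence on some arc component $\alpha_n^0$ of $\alpha_n \cap W$. In a small ball around $x_n$, $\partial \calN(\alpha_n^0 \cup \partial W)$ consists of two parallel copies of $\alpha_n^0$, each belonging to a boundary component $c_n$ of the regular neighborhood; I would verify that at least one such $c_n$ is essential in $W$, hence is a component of $\pi_W(\alpha_n)$, and then a point $y_n \in c_n$ close to $x_n$ satisfies $y_n \to x$.

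For the reverse direction, assume $\pi_W(\alpha_n) \xrightarrow{\text{CH}} \calL$ and fix $x \in \calL$. Lemma~\ref{L:CH} produces $y_n \in \pi_W(\alpha_n)$ with $y_n \to x$, where each $y_n$ lies on a boundary component of some $\calN(\alpha_n^0 \cup \partial W)$. Since the point $y_n$ is eventually at distance at least $\epsilon$ from $\partial W$ and the neighborhoods are sufficiently thin, $y_n$ must lie on the portion of $\partial \calN$ parallel to $\alpha_n^0$ rather than the portion parallel to $\partial W$; choosing $x_n \in \alpha_n^0 \subset \alpha_n$ directly across from $y_n$ gives $d(x_n, y_n) \to 0$, hence $x_n \to x$. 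The main obstacle will be the essentiality verification in the forward direction: ruling out that the boundary curve parallel to $\alpha_n^0$ is peripheral or null-homotopic in $W$ and hence discarded from $\pi_W$. Since this curve passes near $x_n \in \inter(W)$, far from $\partial W$, it cannot be isotoped into a collar of $\partial W$ unless the entire configuration $\alpha_n^0 \cup \partial W$ is absorbed there; using $\xi(W) \geq 2$ together with nonemptiness of $\pi_W(\alpha_n)$, and, if necessary, replacing $\alpha_n^0$ by a concatenation of adjacent arcs of $\alpha_n \cap W$ near $x$, I expect to always recover an essential component to serve as $c_n$.
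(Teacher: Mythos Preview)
Your approach conflates the literal surgered curves $\partial\calN(\alpha_n^0 \cup \partial W)$ with their geodesic representatives. As the paragraph immediately preceding the lemma makes explicit, coarse Hausdorff convergence of $\pi_W(\alpha_n)$ is to be understood via the \emph{geodesic} representatives of the curves in $\pi_W(\alpha_n)$, and these can be far from the thin-neighborhood boundaries you construct. In your forward direction the point $y_n$ lies on the literal $\partial\calN(\alpha_n^0 \cup \partial W)$, but what you need is a point on the geodesic representative of that component near $x$; straightening can move the curve substantially, so proximity of $y_n$ to $x_n$ is not enough. In your reverse direction the points $y_n \in \pi_W(\alpha_n)$ supplied by Lemma~\ref{L:CH} lie on geodesic representatives, not on thin-neighborhood boundaries, so there is no obvious nearby point of $\alpha_n$. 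The essentiality issue you flag is real but secondary to this.

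The paper's proof avoids the problem by working with a property that \emph{survives} geodesic straightening: the geodesic multicurve $\alpha_n^j$ obtained from an isotopy class of arcs $\ell_n^j \subset \alpha_n \cap W$ has no transverse intersection with $\ell_n^j$ (both are already geodesic, and the literal surgery curve is disjoint from $\ell_n^j \cup \partial W$). One then passes to Hausdorff-convergent subsequences of both the $\alpha_n^j$ and the $\ell_n^j$; the limits inherit the no-transverse-intersection property. Combining this with $\calL \subset \lim_H \alpha_n^j$ (which follows from the CH hypothesis on $\pi_W(\alpha_n)$ via Corollary~\ref{C:CH sublamination}) and the fact that $\calL \in \EL(W)$ fills $W$, one deduces that $\calL$ is contained in the relevant Hausdorff limit, which is exactly CH convergence by Corollary~\ref{C:CH sublamination}. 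This route never requires locating a specific essential boundary component near a given point $x$, so your essentiality worry simply does not arise.
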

Note that for each $n$, $\pi_W(\alpha_n)$ is a union of curves, which are not necessarily disjoint.  In particular, $\pi_W(\alpha_n)$ is not necessarily a geodesic laminations, so we should be careful in discussing its coarse Hausdorff convergence.  However, viewing the union as a subset of $\C(W)$, it has diameter at most $2$, and hence if $a_n,a_n' \subset \pi_W(\alpha_n)$ are any two curves, for each $n$, and $\calL \in \EL(W)$, then $a_n$ and $a_n'$ either both coarse Hausdorff converge to $\calL$ or neither does (by Theorem~\ref{T:Klarreich}).  Consequently, it makes sense to say that $\pi_W(\alpha_n)$ coarse Hausdorff converges to a lamination in $\EL(W)$.

\begin{proof}  For the rest of this proof we fix a complete hyperbolic metric on $\dot S$ and realize $W \subset \dot S$ as an embedded subsurface with geodesic boundary.   Let us first assume  $\pi_W(\alpha_n) \stackrel{CH}{\to} \calL \in \EL(W)$.  After passing to an arbitrary convergent subsequence, we may assume $\alpha_n \stackrel{H}{\to} {\calL'}$.  It suffices to show that $\calL \subset \calL'$.

Let $\ell_n^1,\ldots,\ell_n^r \subset \alpha_n \cap W$ be the decomposition into isotopy classes of arcs of intersection: that is, each $\ell_n^j$ is a union of all arcs of intersection of $\alpha_n$ with $W$ so that any two arcs of $\alpha_n \cap W$ are isotopic if and only if they are contained in the same set $\ell_n^j$  (we may have to pass to a further subsequence so that each intersection $\alpha_n \cap W$ consists of the same number $r$ of isotopy classes, which we do).  For each $\ell_n^j$, let $\alpha_n^j \subset \pi_W(\alpha_n)$ be the geodesic multi-curve produced from the isotopy class $\ell_n^j$ by surgery in the definition of projection.  Note that $\alpha_n^j$ and $\ell_n^j$ have no transverse intersections.  Pass to a further subsequence so that $\alpha_n^j \stackrel{H}\to \calL^j$ and $\ell_n^j \stackrel{H}\to \calL_j'$; here, each $\ell_n^j$ is a compact subset of $W$ so Hausdorff convergence to a closed set still makes sense, though $\calL_j'$ are not necessarily geodesic laminations.  By Corollary~\ref{C:CH sublamination} (and the discussion in the paragraph preceding this proof), $\calL \subset \calL^j$, for each $j$.   Appealing to Lemma~\ref{L:Hausdorff convergence}, it easily follows that  $\calL' \cap W = \calL_1' \cup \cdots \cup \calL_r'$.  Since $a_n^j$ has no transverse intersections with $\ell_n^j$, $\calL_j'$ has no transverse intersections with $\calL^j$, for each $j$.  Therefore, $\calL$ has no transverse intersections with $\calL' \cap W$, and since $\calL \subset W$, $\calL'$ has no transverse intersections with $\calL$.  Since $\calL \in \EL(W)$, it follows that $\calL \subset \calL'$, as required.

Now in the opposite direction we assume that  $\alpha_n \stackrel{CH}{\to} \calL \in \EL(W)$.  Let $\ell_n^1,\ldots,\ell_n^r \subset \alpha_n \cap W$ and $\alpha_n^1,\ldots,\alpha_n^r \subset \pi_W(\alpha_n)$ be as above, so that for each $j$ (after passing to a subsequence) we have
\[ \ell_n^j \stackrel{H}\to \calL^j \quad \mbox{ and } \quad \alpha_n^j \stackrel{H} \to \calL_j'. \]
Similar to the above, $\calL \subset \calL^1 \cup \cdots \cup \calL^r$ and since $\ell_n^j$ has no transverse intersections with $\alpha_n^j$, $\calL_j'$ has no transverse intersections with $\calL$.  Since $\calL$ is an ending lamination, $\calL \subset \calL_j'$.  Since the convergent subsequence was arbitrary, it follows that $\pi_W(\alpha_n) \stackrel{CH}\to \calL$.
\end{proof}

Finally, we note that just as curves can be projected to subsurfaces, whenever a lamination minimally intersects a subsurface in a disjoint union of arcs, we may use the same procedure to project laminations.

\section{Survival paths} \label{S:survival paths}

To understand the geometry of $\C^s(\dot S)$, the Gromov boundary, and the Cannon-Thurston map we eventually construct, we will make use of some special paths we call {\em survival paths}.  To describe their construction, we set the following notation.  Given a witness $W \subseteq \dot S$ and $x,y \in \C(W)$, let $[x,y]_W \subset \C(W)$ denote a geodesic between $x$ and $y$.

The following definition is reminiscent of hierarchy paths from \cite{MM2}, though our situation is considerably simpler.
\begin{definition}  Given $x,y \in \C^s(\dot S)$, let $[x,y]_{\dot S}$ be any $\C(\dot S)$--geodesic.  If $W \subsetneq \dot S$ is a proper witness such that $\partial W$ is a vertex of $[x,y]_{\dot S}$, we say that $W$ is a {\em witness for $[x,y]_{\dot S}$}.  Note that if $W$ is a witness for $[x,y]_{\dot S}$, then the immediate predecessor and successor $x',y'$ to $\partial W$  in $[x,y]_{\dot S}$ are necessarily contained in $\C(W)$ (hence also in $\C^s(\dot S)$) and we let $[x',y']_W \subset \C(W)$ be a geodesic (which we also view as a path in $\C^s(\dot S)$).  Replacing every consecutive triple $x',\partial W, y' \subset [x,y]_{\dot S}$ with the path $[x',y']_W$ produces a path from $x$ to $y$ in $\C^s(\dot S)$ which we call a {\em survival path} from $x$ to $y$, and denote it $\sigma(x,y)$.  We call $[x,y]_{\dot S}$ the {\em main geodesic} of $\sigma(x,y)$ and, if $W$ is witness for $[x,y]_{\dot S}$, we call the corresponding $\C(W)$--geodesic $[x',y']_W$ the {\em witness geodesic} of $\sigma(x,y)$ for $W$, and also say that $W$ is a {\em witness for $\sigma(x,y)$}.
\end{definition}

An immediate corollary of Theorem~\ref{BGIT}, we have
\begin{corollary} \label{C:necessarily witnesses} For any $x,y \in \C^s(\dot S)$ and proper witness $W$, if $d_W(x,y) > M$, then $W$ is a witness for $[x,y]_{\dot S}$, for any geodesics $[x,y]_{\dot S}$ between $x$ and $y$.
\end{corollary}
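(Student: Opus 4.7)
The plan is to prove the contrapositive of this statement by directly invoking the Bounded Geodesic Image Theorem. Suppose the conclusion fails: $\partial W$ is not a vertex of $[x,y]_{\dot S}$. I would like to deduce that every vertex of $[x,y]_{\dot S}$ has nonempty projection to $W$, and then apply Theorem~\ref{BGIT} to bound $d_W(x,y)$ by $M$.

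First, I would analyze which vertices of $\C(\dot S)$ can fail to intersect $W$. Since $W$ is a proper witness, its complement $\dot S \setminus W$ is (up to isotopy of the boundary) a twice-punctured disk, one of whose punctures is the $z$--puncture; moreover $\partial W$ is a single curve. Hence the only essential simple closed curve in $\dot S$ that is isotopic into the complement of $W$ is $\partial W$ itself, since the only essential curve on a twice-punctured disk is its boundary. In particular, if $v \in \C_0(\dot S)$ satisfies $\pi_W(v) = \emptyset$, then $v = \partial W$.

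Under our assumption that $\partial W$ is not a vertex of $[x,y]_{\dot S}$, every vertex $v$ of $[x,y]_{\dot S}$ then satisfies $\pi_W(v) \neq \emptyset$. Applying Theorem~\ref{BGIT} to $\calG = [x,y]_{\dot S}$ yields $\diam_W(\pi_W([x,y]_{\dot S})) < M$. Since $x,y \in \C^s_0(\dot S)$ are endpoints of $[x,y]_{\dot S}$, we have $\pi_W(x), \pi_W(y) \subset \pi_W([x,y]_{\dot S})$, so
\[
d_W(x,y) = \diam\{\pi_W(x) \cup \pi_W(y)\} \leq \diam_W(\pi_W([x,y]_{\dot S})) < M,
\]
contradicting $d_W(x,y) > M$. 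Therefore $\partial W$ must be a vertex of $[x,y]_{\dot S}$, i.e. $W$ is a witness for $[x,y]_{\dot S}$.

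There is essentially no obstacle here: the argument is a direct application of the contrapositive of Theorem~\ref{BGIT} combined with the elementary topological observation that $\partial W$ is the unique candidate curve disjoint from a proper witness $W$. The only subtlety worth highlighting is making sure that the classification of curves disjoint from $W$ uses the defining property of a proper witness (the complement being a twice-punctured disk containing $z$), which was already established in Section~\ref{S:witnesses defined}.
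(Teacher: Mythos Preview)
Your proof is correct and follows essentially the same approach as the paper: both arguments combine the Bounded Geodesic Image Theorem with the observation that $\partial W$ is the unique curve in $\C(\dot S)$ with empty projection to $W$. The only difference is cosmetic---you phrase it as a contrapositive, while the paper argues directly that the hypothesis $d_W(x,y)>M$ forces some vertex of $[x,y]_{\dot S}$ to have empty projection, hence to equal $\partial W$.
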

\begin{proof}  Since $d_W(x,y) > M$, it follows by Theorem~\ref{BGIT} that some vertex of $[x,y]_{\dot S}$ has empty projection to $W$.  But the only multi-curve in $\C(\dot S)$ with empty projection to $W$ is $\partial W$, hence $\partial W$ is a vertex of $[x,y]_{\dot S}$.
\end{proof}

No two consecutive vertices of $[x,y]_{\dot S}$ can be boundaries of witness (since any two such boundaries nontrivially intersect).  Therefore, the next lemma follows.
\begin{lemma} \label{L:distance witnesses}
For any $x,y \in \sC$ and geodesics $[x,y]_{\dot S}$, there are at most $\tfrac{d_{\dot S}(x,y)}2$ witnesses for $[x,y]_{\dot S}$. \qed
\end{lemma}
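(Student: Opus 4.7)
The plan is to reduce the count of witnesses to a counting problem on the vertices of the geodesic $[x,y]_{\dot S}$, then apply two combinatorial observations: one about which vertices can appear as a witness boundary $\partial W$, and one about which positions they can occupy relative to each other.

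First, I would label the vertices $v_0 = x, v_1, \ldots, v_n = y$, where $n = d_{\dot S}(x,y)$. Recall from the definition of survival path, a witness $W$ is a witness for $[x,y]_{\dot S}$ exactly when its boundary $\partial W$ is a vertex of $[x,y]_{\dot S}$; since any proper witness is determined by its boundary (noted in Section~\ref{S:witnesses defined}), counting witnesses for $[x,y]_{\dot S}$ is the same as counting those $v_i$ that arise as $\partial W$ for some proper witness $W$.

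The next step is to rule out the endpoints: because $x, y \in \C^s(\dot S)$, neither $v_0$ nor $v_n$ bounds a twice-punctured disk containing the $z$-puncture, and hence neither is the boundary of a proper witness. So the witness boundaries all lie among $v_1, \ldots, v_{n-1}$. Then I would use the observation flagged in the text: if $W_1 \neq W_2$ are proper witnesses, their boundaries $\partial W_1, \partial W_2$ intersect nontrivially (the quickest justification is that two disjoint curves each bounding a twice-punctured disk containing the common $z$-puncture would force one disk to sit inside the other with an unpunctured annulus between, making the two boundaries isotopic). In particular, two consecutive vertices $v_i, v_{i+1}$ of the geodesic $[x,y]_{\dot S}$ are disjoint simple closed curves and so cannot both be boundaries of proper witnesses.

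Combining these two facts gives the bound: we are choosing a subset of $\{v_1, \ldots, v_{n-1}\}$ containing no two consecutive elements, and the maximum size of such a subset is $\lceil (n-1)/2 \rceil$, which is at most $n/2 = d_{\dot S}(x,y)/2$. There is no real obstacle; the only step that requires any thought is the nontrivial intersection of witness boundaries, and even that is a direct topological consequence of the twice-punctured-disk description of proper witnesses.
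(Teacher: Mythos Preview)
Your proof is correct and follows essentially the same approach as the paper: the key observation is exactly the one the paper states just before the lemma, namely that no two consecutive vertices of $[x,y]_{\dot S}$ can be boundaries of proper witnesses. Your write-up is more explicit than the paper's one-line justification, in particular by noting that the endpoints $x,y \in \C^s(\dot S)$ cannot themselves be witness boundaries (which is needed for the counting to come out to $n/2$ rather than $\lceil (n+1)/2 \rceil$), but this is the same argument.
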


The following lemma estimates the lengths of witness geodesics on a survival path.
\begin{lemma}\label{L:witnessLength} Given a survival path $\sigma (x,y)$ and a witness $W$ for $\sigma(x,y)$, the initial and terminal vertices $x'$ and $y'$ of the witness geodesic segment $[x', y']_{W}$ satisfy
\[ d_W(x,x'),d_W(y,y') < M.\]
Consequently, $d_W(x',y')$ of  satisfies
\[ d_W(x',y') \stackrel{\mbox{\tiny $2M\! ,\! 0$}}{\asymp} d_W(x,y).\]
\end{lemma}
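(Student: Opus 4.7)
The plan is a direct application of the Bounded Geodesic Image Theorem (Theorem~\ref{BGIT}) to the two subsegments of the main geodesic $[x,y]_{\dot S}$ flanking the witness geodesic. The crucial preliminary observation is that, since $W$ is a proper witness, $\partial W$ bounds a twice--punctured disk containing the $z$--puncture, and hence $\partial W \notin \C^s_0(\dot S)$; in particular $x, y \neq \partial W$. Moreover, because $W$ is a witness, the only curve in $\C(\dot S)$ with empty $W$--projection is $\partial W$ itself, so every other vertex of $\C(\dot S)$ has nonempty projection to $W$.

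Next I would consider the sub-geodesic of $[x,y]_{\dot S}$ running from $x$ to $x'$. Since $x'$ is the immediate predecessor of $\partial W$ on $[x,y]_{\dot S}$ and a geodesic does not revisit vertices, $\partial W$ does not appear on this sub-geodesic. Combined with the preliminary observation, every vertex of this sub-geodesic has nonempty projection to $W$, so Theorem~\ref{BGIT} yields $\diam_W(\pi_W([x,x']_{\dot S})) < M$, and in particular $d_W(x, x') < M$. The symmetric argument applied to the sub-geodesic from $y'$ to $y$ gives $d_W(y, y') < M$.

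For the consequence, the triangle inequality for $d_W$ (which follows from its definition as a diameter of a union of projection sets) combined with the two bounds above yields
\[
|d_W(x,y) - d_W(x',y')| \leq d_W(x,x') + d_W(y,y') < 2M.
\]
Since $x', y' \in \C(W)$ are separated by $\partial W$ on $[x,y]_{\dot S}$ they are distinct, so $d_W(x',y') = d_{\C(W)}(x',y') \geq 1$, and the additive estimate immediately promotes to the stated multiplicative comparability. The only step requiring any genuine care is verifying the hypothesis of Theorem~\ref{BGIT}, which reduces to the observation that $\partial W$ appears exactly once on $[x,y]_{\dot S}$, precisely at the middle vertex of the triple $x', \partial W, y'$ that is replaced in the survival-path construction.
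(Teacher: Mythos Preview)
Your proof is correct and follows exactly the paper's approach: apply Theorem~\ref{BGIT} to the two subsegments of $[x,y]_{\dot S}$ from $x$ to $x'$ and from $y'$ to $y$, then invoke the triangle inequality for the consequence. The paper's proof is a terse two lines; you have simply supplied the verification (that every vertex on each flanking subsegment has nonempty $W$--projection, since $\partial W$ occurs only at the middle of the replaced triple) that makes the BGIT application legitimate.
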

\begin{proof} By Theorem~\ref{BGIT} applied to the subsegments of $[x,y]_{\dot S}$ from $x$ to $x'$ and $y'$ to $y$ proves the first inequality.  The second is immediate from the triangle inequality.
\end{proof}

Finally we have the easy half of a distance estimate (c.f.~\cite{MM2}).
\begin{lemma} \label{L:upper bound formula}  For any $x,y \in \C^s(\dot S)$ and $k > M$, we have
\[ d^s(x,y) \leq 2k^2 + 2k \sum_{W \in \Omega(\dot S)} \lcut d_W(x,y) \rcut_k. \]
\end{lemma}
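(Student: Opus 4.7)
The plan is to take any survival path $\sigma(x,y)$ from $x$ to $y$, use the trivial bound $d^s(x,y)\leq|\sigma(x,y)|$, and estimate $|\sigma(x,y)|$ by combining the two results already in hand. Since $\sigma(x,y)$ is obtained from its main geodesic $[x,y]_{\dot S}$ by replacing each consecutive triple $x',\partial W,y'$ (one per proper witness $W$ of the main geodesic) with a witness geodesic $[x',y']_W$, its length is exactly
\[
|\sigma(x,y)| \;=\; d_{\dot S}(x,y) \;+\; \sum_{W}\bigl(d_W(x',y')-2\bigr),
\]
where the sum runs over the proper witnesses for $\sigma(x,y)$.

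First I would split these witnesses into a \emph{heavy} set $A=\{W:d_W(x,y)\geq k\}$ and a \emph{light} set $B=\{W:d_W(x,y)<k\}$. Lemma~\ref{L:witnessLength} gives $d_W(x',y')\leq d_W(x,y)+2M$ for every such $W$, and combined with $k>M$ this yields $d_W(x',y')\leq 3d_W(x,y)=3\lcut d_W(x,y)\rcut_k$ for $W\in A$ and $d_W(x',y')\leq 3k$ for $W\in B$. Lemma~\ref{L:distance witnesses} then bounds $|A|+|B|\leq d_{\dot S}(x,y)/2$. Because $k>M$, Corollary~\ref{C:necessarily witnesses} further guarantees that $A$ contains \emph{every} proper witness with $d_W(x,y)\geq k$, so $\sum_{W\in A} d_W(x,y)\leq \sum_{W\in\Omega(\dot S)}\lcut d_W(x,y)\rcut_k$.

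To finish, I will case split on whether $d_{\dot S}(x,y)<k$ or $d_{\dot S}(x,y)\geq k$. In the first case, $\lcut d_{\dot S}(x,y)\rcut_k=0$ and $|A|+|B|<k/2$, so the heavy witnesses contribute at most $3\sum_W\lcut d_W(x,y)\rcut_k$ and the light witnesses contribute at most $(k/2)(3k)=3k^2/2$; combined with the leading $d_{\dot S}(x,y)<k$ this yields $|\sigma(x,y)|<k+3k^2/2+3\sum_W\lcut d_W(x,y)\rcut_k$, which is absorbed into $2k^2+2k\sum_W\lcut d_W(x,y)\rcut_k$ once $k\geq 2$. In the second case, $d_{\dot S}(x,y)=\lcut d_{\dot S}(x,y)\rcut_k$ itself appears on the right as the $W=\dot S$ summand, and the light contribution is at most $(3k/2)d_{\dot S}(x,y)$, so the total simplifies to at most $(4+3k/2)\sum_W\lcut d_W(x,y)\rcut_k$, which is absorbed into $2k\sum_W\lcut d_W(x,y)\rcut_k$ as soon as $k\geq 8$.

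The only real obstacle is bookkeeping: verifying that both case splits combine cleanly to the uniform form $2k^2+2k\sum_W\lcut d_W(x,y)\rcut_k$. The hypothesis $k>M$, together with the standing convention $M\geq 8$ noted after Theorem~\ref{BGIT}, makes every absorption step automatic.
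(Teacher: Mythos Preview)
Your proof is correct and follows essentially the same approach as the paper: bound $d^s(x,y)$ by the length of a survival path, use Lemma~\ref{L:witnessLength} to control witness geodesic lengths, Lemma~\ref{L:distance witnesses} to bound the number of witnesses, and Corollary~\ref{C:necessarily witnesses} to identify the heavy witnesses with the proper witnesses appearing in the threshold sum. The only cosmetic difference is that the paper avoids your case split on $d_{\dot S}(x,y)$ by using the single inequality $d\leq \lcut d\rcut_k + k$ uniformly, but the bookkeeping is otherwise identical.
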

Recall that $\Omega(\dot S)$ denotes the set of all witnesses for $\C^s(\dot S)$ and that $\lcut x \rcut_k$ is the cut-off function giving value $x$ if $x \geq k$ and $0$ otherwise.

\begin{proof}  Since $\sigma(x,y)$ is a path from $x$ to $y$, it suffices to prove that the length of $\sigma(x,y)$ is bounded above by the right-hand side.  For each witness $W$ of $x,y$ whose boundary appears in $[x,y]_{\dot S}$, we have replaced the length two segment $\{x',\partial W, y'\}$ with $[x',y']_W$, which has length $d_W(x',y')$.  By Lemma \ref{L:witnessLength} we have

\[ d_W(x',y') \leq 2M + d_W(x,y).\]

If $d_W(x,y) \geq k > M$, this implies the length $d_W(x',y')$, of $[x',y']_W$ is less than $3d_W(x,y)$.  Otherwise, the length is less than $3k$.
Let $W_1,\ldots, W_n$ denote the witnesses for $x,y$ whose boundaries appear in $[x,y]_{\dot S}$.  By Lemma~\ref{L:distance witnesses}, $n \leq \tfrac12 d_{\dot S}(x,y)$, half the length of $[x,y]_{\dot S}$.  Further note that by Corollary~\ref{C:necessarily witnesses}, if $d_W(x,y) \geq k > M$, then $W$ is one of the witnesses $W_j$, for some $j$.

Combining all of these (and fact that $k > M > 2$) we obtain the following bound on the length of $\sigma(x,y)$, and hence $d^s(x,y)$:
\[ \begin{array}{rclcl}
d^s(x,y) & \leq & \displaystyle{d_{\dot S}(x,y)  + \sum_{j=1}^n 3d_{W_j}(x,y)} \leq  \displaystyle{d_{\dot S}(x,y)+ 3 \sum_{j=1}^n (\lcut d_{W_j}(x,y) \rcut_k + k)}\\
& = & \displaystyle{d_{\dot S}(x,y) + 3nk + 3 \sum_{j=1}^n \lcut d_{W_j}(x,y) \rcut_k} \leq \displaystyle{(\tfrac{3k}2 + 1)d_{\dot S}(x,y) + 3 \sum_{j=1}^n \lcut d_{W_j}(x,y) \rcut_k}\\
& \leq & \displaystyle{2k(\lcut d_{\dot S}(x,y) \rcut_k + k) + 3 \sum_{j=1}^n \lcut d_{W_j}(x,y) \rcut_k} \leq \displaystyle{2k^2 + 2k \sum_{W \in \Omega(\dot S)} \lcut d_W(x,y) \rcut_k} \\
\end{array} \]
\end{proof}

\begin{lemma} \label{L:projecting survival paths} Given $x,y \in \sC$, if $W$ is not a witness for $[x,y]_{\dot S}$, then
\[ \diam_W(\sigma(x,y)) \leq M + 4.\]
\end{lemma}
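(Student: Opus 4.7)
The plan is to show that every vertex on a witness geodesic of $\sigma(x,y)$ lies within $W$-distance $2$ of a vertex of the main geodesic $[x,y]_{\dot S}$, and then to apply the Bounded Geodesic Image Theorem (Theorem~\ref{BGIT}) directly to $[x,y]_{\dot S}$. The triangle inequality will then yield the bound $\diam_W(\sigma(x,y)) \leq 2 + M + 2 = M+4$. Throughout I assume, as implicit in the statement, that $W$ is a proper witness, since the estimate clearly fails when $W=\dot S$.

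First I would verify that every vertex of $[x,y]_{\dot S}$ has nonempty projection to $W$, so that BGIT applies.  Since $W$ is not a witness for $[x,y]_{\dot S}$, the curve $\partial W$ is not a vertex of $[x,y]_{\dot S}$.  Any vertex of $[x,y]_{\dot S}$ is either a curve in $\sC$, which automatically has nonempty projection to $W$ because $W$ is a witness, or else it is of the form $\partial W'$ for some proper witness $W' \neq W$ of $[x,y]_{\dot S}$.  In the latter case, since a proper witness is determined by its boundary curve, $\partial W' \neq \partial W$, and hence $\partial W'$ cannot be isotoped disjoint from $W$; in particular $\pi_W(\partial W') \neq \emptyset$.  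BGIT then gives $\diam_W([x,y]_{\dot S}) \leq M$.

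Next I would control the projections of the witness geodesics.  For a witness $W'$ of $[x,y]_{\dot S}$ (necessarily distinct from $W$) and any vertex $u$ of $[x',y']_{W'}$, the curve $u$ is supported in $W'$ and so is disjoint from $\partial W'$.  Thus $u$ and $\partial W'$ span an edge in $\C(\dot S)$ both of whose endpoints project nontrivially to $W$ by the previous paragraph.  The $2$-Lipschitz projection bound (Proposition~\ref{P:2 Lipschitz}) therefore gives $d_W(u,\partial W') \leq 2$, and $\partial W'$ is itself a vertex of $[x,y]_{\dot S}$.

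Combining the two previous steps: for any two vertices $u,v$ of $\sigma(x,y)$, choose vertices $u',v' \in [x,y]_{\dot S}$ with $d_W(u,u'),\, d_W(v,v') \leq 2$, taking $u'=u$ or $v'=v$ when they already sit on the main geodesic. The triangle inequality and BGIT then give
\[ d_W(u,v) \leq d_W(u,u') + d_W(u',v') + d_W(v',v) \leq 2 + M + 2 = M+4. \]
The one nontrivial point is establishing nonempty projection of $\partial W'$ to $W$ whenever $W'\neq W$; this uses the bijection between proper witnesses and their boundary curves. Everything else is a direct application of BGIT together with the Lipschitz property of subsurface projection.
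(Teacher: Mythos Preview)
Your proof is correct and follows essentially the same route as the paper's: bound $\diam_W([x,y]_{\dot S})$ by $M$ via BGIT, observe that any vertex on a witness geodesic $[x',y']_{W'}$ is distance $1$ in $\C(\dot S)$ from $\partial W' \in [x,y]_{\dot S}$ and hence $W$--distance at most $2$ from it by the Lipschitz bound, and conclude by the triangle inequality. The paper simply asserts that every vertex of $[x,y]_{\dot S}$ has nonempty projection to $W$, whereas you spell out the case split between vertices in $\sC$ and boundaries of other proper witnesses; this extra care is fine and amounts to the observation (used elsewhere in the paper, e.g.\ in the proof of Corollary~\ref{C:necessarily witnesses}) that $\partial W$ is the unique curve in $\C(\dot S)$ with empty projection to $W$.
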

\begin{proof} Since $W$ is not a witness for $[x,y]_{\dot S}$, every $z \in [x,y]_{\dot S}$ has non-empty projection to $W$.  Therefore, $\diam_W([x,y])_{\dot S} \leq M$ by Theorem~\ref{BGIT}.  If $w' \in \C(W')$ is on a witness geodesic segment of $\sigma(x,y)$, then $d_{\dot S}(w',\partial W') =1$ so $d_W(w',\partial W') \leq 2$ by Proposition~\ref{P:2 Lipschitz}.  Since $\partial W' \in [x,y]_{\dot S}$, the lemma follows by the triangle inequality.
\end{proof}


\begin{lemma} \label{L:subpaths of survival paths} Suppose $\sigma(x,y)$ is a survival path and $x',y' \in \sigma(x,y)$ with $x \leq x' <y' \leq y$, with respect to the ordering from $\sigma(x,y)$.  Then if $x',y'$ lie on the main geodesic $[x,y]_{\dot S}$, then the subpath of $\sigma(x,y)$ from $x'$ to $y'$ is a survival path.

If $x' \in \C(W)$ and/or $y' \in \C(W')$ for proper witnesses $W,W'$ for $x,y$, respectively, then the same conclusion holds, provided the subsegments of $\C(W)$ and/or $\C(W')$ in $\sigma(x,y)$ between $x'$ and $y'$ has length at least $2M$.
\end{lemma}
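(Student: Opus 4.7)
The plan is to verify the statement case by case, using the Bounded Geodesic Image Theorem to certify that the natural candidate main geodesic for each subpath is in fact a $\C(\dot S)$--geodesic; once this is done, the identification of the subpath with a survival path is just bookkeeping.

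First I would dispatch the case when both $x'$ and $y'$ lie on the main geodesic $[x,y]_{\dot S}$. Any subsegment of a $\C(\dot S)$--geodesic is itself a $\C(\dot S)$--geodesic, and the witness triples $(v_{j-1},\partial W_j, v_{j+1})$ of $\sigma(x,y)$ with $\partial W_j$ strictly between $x'$ and $y'$ on $[x,y]_{\dot S}$ are precisely the witness triples of the subpath, and are replaced in the subpath by exactly the same $\C(W_j)$--geodesics used in $\sigma(x,y)$. The subpath is therefore a survival path from $x'$ to $y'$ by definition.

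The substantive case is when $x'$ lies in the interior of a witness geodesic $[v_{i-1}, v_{i+1}]_W$, and $y'$ lies on the main geodesic, say $y' = v_q$ with $q > i$. The candidate main geodesic of the subpath is
\[
x',\ \partial W,\ v_{i+1},\ v_{i+2},\ \ldots,\ v_q,
\]
and the main technical step, which I expect to be the chief obstacle, is to show that this is actually a $\C(\dot S)$--geodesic. The upper bound $d_{\dot S}(x',v_q) \leq q - i + 1$ is immediate from the candidate path (noting $d_{\dot S}(x',\partial W) = 1$, since $x'$ is an essential curve in $W$ and hence disjoint from $\partial W$). For the matching lower bound, the hypothesis that the $\C(W)$--subsegment from $x'$ to $v_{i+1}$ has length at least $2M$ gives $d_W(x',v_{i+1}) \geq 2M$. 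Because $v_{i+1},\ldots,v_q$ is a subsegment of the geodesic $[x,y]_{\dot S}$ in which $\partial W = v_i$ appears only at position $i$, no vertex along $v_{i+1},\ldots,v_q$ equals $\partial W$, so Theorem~\ref{BGIT} yields $d_W(v_{i+1},v_q) < M$. The triangle inequality then gives
\[
d_W(x', v_q)\ \geq\ d_W(x',v_{i+1}) - d_W(v_{i+1},v_q)\ >\ 2M - M\ =\ M,
\]
so Corollary~\ref{C:necessarily witnesses} forces $\partial W$ to appear on every $\C(\dot S)$--geodesic from $x'$ to $v_q$. Hence $d_{\dot S}(x',v_q) = d_{\dot S}(x',\partial W) + d_{\dot S}(\partial W, v_q) = 1 + (q - i)$, matching the upper bound, so the candidate is a genuine geodesic.

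With the main geodesic in hand, the subsegment of $[v_{i-1}, v_{i+1}]_W$ from $x'$ to $v_{i+1}$ is a $\C(W)$--geodesic (being a subsegment of one), so it serves as the witness geodesic for $W$ in the new survival path; the witness geodesics of $\sigma(x,y)$ whose boundary $\partial W_j = v_{i_j}$ satisfies $i < i_j < q$ are carried over unchanged. The symmetric case, where $y'$ lies in a witness geodesic and $x'$ lies on $[x,y]_{\dot S}$, is handled identically, and the mixed case where both endpoints lie on witness geodesics follows by combining the two arguments, with the degenerate subcase $W = W'$ reducing the subpath to a single $\C(W)$--subsegment which is a survival path with main geodesic $x', \partial W, y'$. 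Throughout, the ``length at least $2M$'' requirement is used precisely once, and exactly to upgrade the BGIT estimate into the strict inequality $d_W(x',v_q) > M$ demanded by Corollary~\ref{C:necessarily witnesses}.
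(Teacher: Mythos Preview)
Your proof is correct and follows essentially the same approach as the paper's: dispatch the case of both endpoints on the main geodesic by restriction, then use the Bounded Geodesic Image Theorem together with the $2M$ hypothesis to force $\partial W$ onto any $\C(\dot S)$--geodesic from $x'$ to the far endpoint, certifying the candidate path as a genuine main geodesic. The only cosmetic difference is that the paper applies Lemma~\ref{L:witnessLength} to bound $d_W(y'',y)$ and hence $d_W(x',y)$, then restricts the resulting geodesic to $[x',y']$, whereas you apply BGIT directly to the subsegment $v_{i+1},\ldots,v_q$ to bound $d_W(x',y')$; these are equivalent uses of the same estimate.
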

\begin{proof} When $x',y'$ are on the main geodesic, this is straightforward, since in this case, the subsegment of the main geodesic between $x'$ and $y'$ serves as the main geodesic for a survival path between $x'$ and $y'$.

There are several cases for the second statement.  The proofs are all similar, so we just describe one case where, say, $x' \in [x'',y'']_W \subset \C(W)$ with $x \leq x'' \leq x' \leq y'' \leq y' \leq y$, and $y'$ is in the main geodesic.  The assumption in this case means that in $\C(W)$, the distance between $x'$ and $y''$ is at least $2M$.  Lemma~\ref{L:witnessLength} implies that $d_W(y'',y) < M$, and so by the triangle inequality, $d_W(x',y) > M$.  Therefore, by Theorem~\ref{BGIT} any geodesic from $x'$ to $y$ must pass through $\partial W$.  In particular, the path that starts at $x'$, travels to $\partial W$, then continues along the subsegment of $[x,y]_{\dot S}$ from $\partial W$ to $y'$, is a geodesic in $\C(\dot S)$.   We can easily build a survival path from $x'$ to $y'$ using this geodesic that is a subsegment of $\sigma(x,y)$, as required.  The other cases are similar.
\end{proof}

\subsection{Infinite survival paths}

Masur-Minsky proved that for any surface $Z$ and any two points in $\bar \C(Z) = \C(Z) \cup \partial \C(Z)$, where $\partial \C(Z)$ is the Gromov boundary, there is a geodesic ``connecting" these points; see \cite{MM2}.  Given $x,y \in \bar \C(Z)$, we let $[x,y]_Z$ denote such a geodesic.

The construction of survival paths above can be carried out for geodesic lines and rays in $\C(\dot S)$, replacing any length two path $x',\partial W, y'$ with a $\C(W)$ geodesic from $x'$ to $y'$ to produce a {\em survival ray} or {\em survival line}, respectively.  More generally, to a geodesic segment or ray of $\C(\dot S)$ we can construct other types of survival rays and survival lines.  Specifically, first construct a survival path as above or as just described, then append to one or both endpoints an infinite witness ray (or rays).  For example, for any two distinct witnesses $W$ and $W'$ and points $z,z'$ in the Gromov boundaries of $\C(W)$ and $\C(W')$, respectively, we can construct a survival line starts and ends with geodesic rays in $\C(W)$ and $\C(W')$, limiting to $z$ and $z'$, respectively, and having main geodesic being a segment.  In this way, we see that survival lines can thus be constructed for any pair of distinct points in
\[ z,z' \in \bigcup_{W \in \Omega(\dot S)} \bar \C(W),\]
and we denote such by $\sigma(z,z')$, as in the finite case.  From this discussion, we have the following.

\begin{lemma} \label{L:visual on witness boundaries} For any distinct pair of elements
\[ z,z' \in \bigcup_{W \in \Omega(\dot S)} \bar \C(W) \]
there exists a (possibly infinite) survival path $\sigma(z,z')$ ``connecting" these points.\qed
\end{lemma}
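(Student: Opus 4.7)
My plan is a case analysis, organized by whether each of $z$ and $z'$ is a vertex of or a Gromov boundary point of its respective witness curve complex. Write $z \in \bar\C(W)$ and $z' \in \bar\C(W')$ for witnesses $W, W' \in \Omega(\dot S)$. The construction recipe is exactly the one described in the paragraph preceding the statement, so all I must do is check that each case fits into that template and that the mild edge cases cause no trouble.

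If both $z$ and $z'$ are vertices, then they lie in $\C^s(\dot S)$ via the natural inclusions $\C(W) \subset \C^s(\dot S)$, and the finite survival path $\sigma(z,z')$ defined at the beginning of this section is the desired object. If exactly one, say $z$, is a boundary point and $W$ is a proper witness, fix a Masur-Minsky geodesic ray $r = [x',z]_W \subset \C(W)$ starting at a vertex $x'$ adjacent to $\partial W$; then $\partial W, x'$ is the initial pair of a $\C(\dot S)$ geodesic from $\partial W$ to $z'$. Running the finite survival-path construction on this geodesic and replacing the initial pair $\partial W, x'$ by $r$ yields a survival ray from $z$ to $z'$. When $W = \dot S$, apply the survival-path construction directly to a Masur-Minsky geodesic ray from $z'$ to $z$ in $\C(\dot S)$.

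Finally, if both $z$ and $z'$ are boundary points, combine the previous moves at each end. The mild edge case is $W = W' \neq \dot S$, where a Masur-Minsky geodesic line inside $\C(W)$ between $z$ and $z'$ already constitutes a (degenerate) survival line with trivial main geodesic $\{\partial W\}$. Otherwise, connect appropriate anchoring vertices ($\partial W$ and $\partial W'$ when both witnesses are proper, or interior vertices when either of $W, W'$ is $\dot S$) by a main $\C(\dot S)$ geodesic, apply the finite survival-path construction, and then append terminal witness rays in $\C(W)$ and $\C(W')$ limiting to $z$ and $z'$. No step presents a real obstacle: the survival-path and survival-ray/line templates recalled in the paragraph before the lemma, together with the Masur-Minsky existence of geodesics between pairs of points in $\bar \C(Z) = \C(Z) \cup \partial \C(Z)$, do all the work.
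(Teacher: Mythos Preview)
Your proposal is correct and follows exactly the approach the paper intends: the lemma carries a \qed immediately after its statement because the paper regards it as an immediate consequence of the constructive paragraph preceding it, and your case analysis simply makes that construction explicit. One small quibble on the order of choices in the mixed case with $W$ proper: you should first choose a $\C(\dot S)$--geodesic from $\partial W$ (or from an anchor vertex in $\C(W)$) to $z'$, let $x'$ be the vertex adjacent to $\partial W$ along that geodesic (necessarily in $\C(W)$, since the twice-punctured disk on the other side of $\partial W$ contains no essential curves), and only then take the ray $[x',z]_W$; as written, fixing an arbitrary $x'$ adjacent to $\partial W$ first and then asserting that $\partial W, x'$ begins a geodesic to $z'$ need not hold.
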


The next proposition allows us to deduce many of the properties of survival paths to infinite survival paths.
\begin{proposition} \label{P:exhaustion by survival} Any infinite survival path (line or ray) is an increasing union of {\em finite} survival paths.
\end{proposition}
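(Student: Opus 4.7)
My plan is to exhaust $\sigma$ by a nested sequence of finite subpaths $\sigma_n$ and verify that each $\sigma_n$ is itself a finite survival path in the sense of the definition above. I pick vertices $x_n, y_n$ on $\sigma$ tending to the two ends (only $y_n$ tends to infinity if $\sigma$ is a ray with a finite endpoint) and let $\sigma_n$ denote the subpath of $\sigma$ between them. Each end of $\sigma$ falls into one of two types: either (i) the main geodesic of $\sigma$ itself extends to infinity in $\C(\dot S)$ at that end, or (ii) the main geodesic terminates at some $\partial W$ for a proper witness $W$ and $\sigma$ continues as an infinite witness ray $R \subset \C(W)$ limiting to a point of $\partial \C(W)$. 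In case (i) I choose truncation points to be consecutive main-geodesic vertices marching toward infinity; in case (ii) I pick $y_n \in R$ with the $\C(W)$-distance from the initial vertex $y'$ of $R$ (the predecessor of $\partial W$ along the main geodesic) to $y_n$ tending to infinity. The nesting $\sigma_n \subset \sigma_{n+1}$ and the union $\bigcup_n \sigma_n = \sigma$ are immediate from these choices.

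When both $x_n$ and $y_n$ are of type (i), so they lie on the main geodesic of $\sigma$, the subsegment of the main geodesic between them is itself a $\C(\dot S)$-geodesic and the witness replacements within are inherited directly from $\sigma$; this displays $\sigma_n$ as a finite survival path, exactly as in the first assertion of Lemma~\ref{L:subpaths of survival paths}.

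The substantive case is when, say, $y_n$ lies on an infinite witness ray $R \subset \C(W)$ as in (ii). Applying Theorem~\ref{BGIT} to the finite subsegment of the main geodesic of $\sigma$ from $x_n$ to $y'$, all of whose vertices are curves distinct from $\partial W$ and so project nontrivially to $W$, gives $d_W(x_n, y') \leq M$; the choice of $y_n$ arranges $d_W(y', y_n) > 2M$; and the triangle inequality then gives $d_W(x_n, y_n) > M$. A second application of Theorem~\ref{BGIT} forces every $\C(\dot S)$-geodesic from $x_n$ to $y_n$ to pass through $\partial W$. Thus concatenating the subsegment of $\sigma$'s main geodesic from $x_n$ to $\partial W$ with the edge $\partial W \to y_n$ in $\C(\dot S)$ (a single edge because $y_n \in \C(W)$ is disjoint from $\partial W$) produces a $\C(\dot S)$-geodesic $\gamma$ from $x_n$ to $y_n$ of minimal length. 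Taking $\gamma$ as the main geodesic of $\sigma_n$, the witness $W$ appears on it with predecessor $y'$ and successor $y_n$, and the associated witness replacement is the subsegment of $R$ from $y'$ to $y_n$, which is a $\C(W)$-geodesic; all other witness replacements in $\sigma_n$ are inherited unchanged from $\sigma$. The symmetric argument handles the other end if it is also of type (ii).

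The hard part is mostly organizational, namely handling the four possible combinations of end-types at the two ends of $\sigma$, but each case is resolved by the same bounded-geodesic-image argument above, which essentially extends the proof technique of Lemma~\ref{L:subpaths of survival paths} from the finite to the infinite setting.
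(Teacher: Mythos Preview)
Your argument is correct and is exactly the approach the paper has in mind: the paper's proof is the single sentence ``This follows just as in the proof of Lemma~\ref{L:subpaths of survival paths},'' and you have simply unpacked that reference, carrying the Bounded Geodesic Image argument of that lemma over to the infinite setting. Your case analysis (type~(i) versus type~(ii) ends) and the BGIT/triangle-inequality step showing that the truncated path is a genuine $\C(\dot S)$--geodesic through $\partial W$ match the lemma's proof essentially line for line.
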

\begin{proof} This follows just as in the proof of Lemma~\ref{L:subpaths of survival paths}.
\end{proof}

\begin{remark} Unless otherwise stated, the term ``survival path" will be reserved for finite survival paths.   ``Infinite survival path" will mean either survival ray or survival line.
\end{remark}

\section{Hyperbolicity of the surviving curve complex} \label{S:hyperbolicity}

In this section we prove the following theorem using survival paths.   The proof appeals to Proposition~\ref{bow}, due to Masur-Schleimer \cite{MSch1} and Bowditch (\cite{Bowhyp2}), which gives criteria for hyperbolicity.
\begin{theorem} \label{hyp}
The complex $\sC$ is Gromov-hyperbolic.
\end{theorem}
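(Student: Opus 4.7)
The plan is to verify the hypotheses of Proposition~\ref{bow} with survival paths: for each pair $x, y \in \C^s_0(\dot S)$, assign the path $\eta(x,y) = \sigma(x,y)$. The first hypothesis (short pairs yield uniformly bounded paths) is immediate from the construction, since $d^s(x,y) \le 1$ forces $\sigma(x,y)$ to have length at most one. The entire content therefore lies in the thin-triangle hypothesis
\[ \sigma(x,z) \subset N_K(\sigma(x,y) \cup \sigma(y,z)) \]
for a uniform constant $K$, where $N_K$ denotes the $K$-neighborhood with respect to $d^s$.

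Fix $x,y,z$ and a vertex $w \in \sigma(x,z)$. I would handle two cases. When $w$ lies on the main geodesic $[x,z]_{\dot S}$, the $\delta$-hyperbolicity of $\C(\dot S)$ (Theorem~\ref{T:C hyperbolic}) places $w$ within $\delta$ in $\C(\dot S)$ of a vertex $u$ on $[x,y]_{\dot S}$ or $[y,z]_{\dot S}$; either $u$ is itself a vertex of the corresponding survival path or $u = \partial W'$ for a witness $W'$, in which case the adjacent predecessor or successor $u'$ on that main geodesic is an endpoint of the witness geodesic for $W'$ and hence a vertex of the survival path, with $d_{\dot S}(w, u') \le \delta + 1$. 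In either sub-case, Proposition~\ref{P:2 Lipschitz} bounds $d_{W}(w, u')$ uniformly for every witness $W$, and Lemma~\ref{L:upper bound formula} applied with $k$ chosen above this uniform bound converts the estimate into a uniform bound on $d^s(w, u')$.

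When $w$ lies in the interior of a witness geodesic $[x'', y'']_W \subset \sigma(x,z)$, I would split further according to the length of that witness geodesic. If $d_W(x'', y'') \le L$ for a threshold $L$ chosen below, then $d^s(w, x'') \le L$ (along the $\C(W) \subset \C^s(\dot S)$ path), and the previous case applied to $x''$ (which lies on the main geodesic) supplies the required uniform bound. If instead $d_W(x'', y'') > L$, then Lemma~\ref{L:witnessLength} forces $d_W(x,z) > L - 2M$, and choosing $L$ large relative to $M$ together with the triangle inequality in $\C(W)$ and Corollary~\ref{C:necessarily witnesses} forces $W$ to be a witness for $\sigma(x,y)$ or $\sigma(y,z)$, say $\sigma(x,y)$, with corresponding witness geodesic $[x''', y''']_W$. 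By Lemma~\ref{L:witnessLength} the endpoints of $[x'', y'']_W$ and $[x''', y''']_W$ are each coarsely pinned to $\pi_W(x), \pi_W(z)$ and $\pi_W(x), \pi_W(y)$ respectively, so $\delta_W$-hyperbolicity of $\C(W)$ and the thin-triangle property place $w$ within uniform $\C(W)$-distance of a vertex of $[x''', y''']_W$, of the witness geodesic for $W$ in $\sigma(y,z)$ (if $W$ is also a witness there), or of a vertex approximating $\pi_W(y)$ (if $d_W(y,z) \le M$, in which case the $\pi_W(y)$--to--$\pi_W(z)$ side of the thin triangle is short and $y''' \in \sigma(x,y)$ itself is close to $\pi_W(y)$). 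Since the inclusion $\C(W) \hookrightarrow \C^s(\dot S)$ is $1$-Lipschitz on $1$-skeleta, $\C(W)$-closeness gives $d^s$-closeness.

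The main obstacle is the long-witness-geodesic case, where three a priori unrelated geodesics in $\C(W)$, arising from the three survival paths, must be compared simultaneously. The key is the Behrstock-style dichotomy that forces $W$ to be a witness for at least one of $\sigma(x,y)$, $\sigma(y,z)$, together with the coarse pinning of witness-geodesic endpoints to the projections $\pi_W(x), \pi_W(y), \pi_W(z)$ provided by Lemma~\ref{L:witnessLength}; these together reduce the slimness estimate to nothing more than the hyperbolicity of a single curve graph $\C(W)$.
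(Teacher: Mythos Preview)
Your overall strategy---verify the Bowditch criterion using survival paths, split according to whether $w$ lies on the main geodesic or on a witness segment---matches the paper's, and your treatment of the long-witness-geodesic sub-case is essentially correct. But there is a genuine gap in your Case~1 (and hence in the short-witness sub-case of Case~2, which reduces to it).

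The problem is the sentence ``Proposition~\ref{P:2 Lipschitz} bounds $d_W(w,u')$ uniformly for every witness $W$.'' It does not. The Lipschitz bound in Proposition~\ref{P:2 Lipschitz} is $d_W(\alpha,\beta)\le 2\,d^s(\alpha,\beta)$, with $d^s$, not $d_{\dot S}$; using it here would be circular. The second clause of that proposition applies only along paths whose every vertex projects nontrivially to $W$, and a short $\C(\dot S)$--geodesic from $w$ to $u'$ may well pass through $\partial W$. Concretely, take $w,u'\in \C(W)$ with $d_W(w,u')$ enormous; both are adjacent to $\partial W$ in $\C(\dot S)$, so $d_{\dot S}(w,u')\le 2$, yet $d^s(w,u')$ is large. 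Knowing only $d_{\dot S}(w,u')\le\delta+1$ you can say, via Theorem~\ref{BGIT}, that $\min\{d_W(w,x),d_W(w,z)\}\le M$ and $\min\{d_W(u',x),d_W(u',y)\}\le M$, but the triangle inequality then gives $d_W(w,u')\le 2M+d_W(a,b)$ for some $a,b\in\{x,y,z\}$, which is unbounded without control on the pairwise projections of $x,y,z$.

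This is exactly the difficulty the paper's proof is organized around. It first isolates the finitely many witnesses in $\Omega_{2M}(x,y,z)$ and shows (Lemma~\ref{L:at most one large in 3}) that at most one of them is ``large'' from all three sides; the resulting partition $D_x\sqcup D_y\sqcup D_z\sqcup D_0$ forces the structure of Figure~\ref{F:triangles in C(dot S)}. The pieces of $\sigma(x,y)$ lying in $\C(W)$ for $W\in D_x\cup D_y\cup D_0$ are handled as you do in your long-witness case. The complementary pieces are compared not to individual vertices but to corresponding complementary pieces on the other two sides, forming quadrilaterals or hexagons whose corners have \emph{uniformly bounded} projections to every proper witness; only then does Lemma~\ref{L:upper bound formula} apply (via Lemma~\ref{L:cobounded slim triangles} and Corollary~\ref{C:cobounded slim n-gons}). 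Your argument is missing precisely this reduction to the cobounded situation.
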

\begin{remark} There are alternate approaches to proving Theorem~\ref{hyp}.  For example, Masur and Schleimer provide a collection of axioms in  \cite{MSch1} whose verification would imply hyperbolicity.  Another approach would be to show that Vokes' condition for hyperbolicity in \cite{Vokes} which requires an action of the entire mapping class group can be relaxed to requiring an action of the stabilizer of $z$, which is a finite index subgroup of the mapping class group.  We have chosen to give a direct proof using survival paths since it is elementary and illustrates their utility.
\end{remark}

The condition for hyperbolicity we  use is the following; see \cite{MSch1,Bowhyp2}.

\begin{proposition}\label{bow}  Given $\epsilon>0$, there exists $\delta>0$ with the following property. Suppose that $G$ is a connected graph and for each $x,y \in V(G)$ there is an associated connected subgraph $\varsigma(x,y)\subseteq G$ including $x,y$. Suppose that,
\begin{enumerate}
  \item For all $x,y,z \in V(G)$,
  \[\varsigma(x,y)\subseteq \calN_{\epsilon}(\varsigma(x,z)\cup \varsigma(z,y))\]
  \item For any $x,y \in V(G)$ with $d(x,y)\leq 1$, the diameter of $\varsigma (x,y)$ in $G$ is at most $\epsilon$.
\end{enumerate}
Then, $G$ is $\delta$--hyperbolic.
\end{proposition}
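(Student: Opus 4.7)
The plan is to verify the slim triangle condition for geodesic triangles in $G$ with a constant $\delta$ depending only on $\epsilon$, by transferring the coarse slim‑triangle hypothesis (1) from the family $\{\varsigma(x,y)\}$ to actual geodesics. The bridge is a uniform Hausdorff comparison between each $\varsigma(x,y)$ and any geodesic $[x,y]$.

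First I would prove by induction on $n = d_G(x,y)$ that $\mathrm{diam}_G(\varsigma(x,y)) \leq Cn$ for some $C = C(\epsilon)$. The base case $n \leq 1$ is exactly condition (2). For the inductive step, choose a midpoint $m$ on a geodesic $[x,y]$; since $m \in \varsigma(x,m) \cap \varsigma(m,y)$, the union of these two halves is connected and so has diameter at most the sum of the two individual diameters. Condition (1) then yields $\mathrm{diam}_G(\varsigma(x,y)) \leq 2\epsilon + 2C\lceil n/2 \rceil$, which closes the induction once $C$ is chosen large enough in terms of $\epsilon$.

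Next I would establish $D = D(\epsilon)$ with $d_H(\varsigma(x,y),[x,y]) \leq D$ for every geodesic $[x,y]$. For the inclusion $\varsigma(x,y)\subseteq \calN_D([x,y])$, I would fix $w \in \varsigma(x,y)$ and iterate condition (1) along a dyadic subdivision of $[x,y]$: after $k$ subdivisions $w$ lies within $k\epsilon$ of some $\varsigma$ on a subsegment of length $n/2^k$, and once $n/2^k$ is $O(1)$ the linear diameter bound from Step 1 replaces the entire tower with an absolute constant, yielding a uniform $D$. For the reverse inclusion, $\varsigma(x,y)$ is a connected subgraph joining $x$ and $y$, hence contains a path $P$; the function $g(\cdot) = d(\cdot,x)-d(\cdot,y)$ changes by at most $2$ along each edge of $P$ and takes the values $\pm n$ at the endpoints, so an intermediate‑value argument produces $u \in P \subseteq \varsigma(x,y)$ with $g(u)$ close to $g(v)$ for any chosen $v \in [x,y]$; combining this with the first inclusion to locate $u$ near some point of $[x,y]$ then forces $u$ near $v$.

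Finally, for any geodesic triangle with vertices $x,y,z$, Step 2 together with condition (1) give $[x,y] \subseteq \calN_D(\varsigma(x,y)) \subseteq \calN_{D+\epsilon}(\varsigma(x,z)\cup\varsigma(z,y)) \subseteq \calN_{2D+\epsilon}([x,z]\cup[z,y])$, establishing $\delta$‑hyperbolicity with $\delta = 2D+\epsilon$. The main obstacle is Step 2, specifically obtaining a uniform rather than $O(\log n)$ bound on the Hausdorff distance; here the linear diameter bound from Step 1 is essential, as it forces the iteration of condition (1) to terminate at a scale where condition (2) takes over and prevents the $\epsilon$‑errors from accumulating unboundedly.
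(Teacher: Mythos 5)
You do not have a counterpart proof in the paper to compare against: Proposition~\ref{bow} is quoted there from Masur--Schleimer \cite{MSch1} and Bowditch \cite{Bowhyp2}, not proved. Your proposal, however, has a genuine gap at its central step, Step~2. Iterating condition (1) along a dyadic subdivision of $[x,y]$ costs an additive $\epsilon$ at \emph{every} level: after $k$ subdivisions you only know that $w$ lies in the $k\epsilon$--neighborhood of some $\varsigma(p,q)$ with $d(p,q)\approx d(x,y)/2^{k}$. Stopping when $d(p,q)=O(1)$ forces $k\approx\log_2 d(x,y)$, so what you actually obtain is $d(w,[x,y])\preceq \epsilon\log_2 d(x,y)$; the linear diameter bound of Step~1 (or condition (2)) controls only the final small piece and cannot retroactively cancel the accumulated $k\epsilon$. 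Optimizing the stopping scale $s$ gives $d(w,[x,y])\le \epsilon\log_2(n/s)+Cs$, still logarithmic in $n=d(x,y)$. Thus the uniform constant $D(\epsilon)$ needed in Step~3 is not established, and with only a logarithmic bound Step~3 yields triangles that are $O(\epsilon\log n)$--thin, which does not imply hyperbolicity. This uniformity is precisely the hard content of the proposition (a uniform Hausdorff bound between $\varsigma(x,y)$ and geodesics is essentially equivalent to the conclusion), and the known proofs supply an extra, substantial ingredient: Bowditch uses the logarithmic estimate to fill loops of length $n$ with area $O(n\log n)$ and then invokes the theorem that a subquadratic isoperimetric function implies hyperbolicity, recovering the uniform Hausdorff closeness only \emph{after} hyperbolicity is known; Masur--Schleimer's ``guessing geodesics'' argument likewise relies on more than the dyadic iteration.

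A secondary, repairable issue: the induction in Step~1 does not close as written. For even $n$ the required inequality $2\epsilon+2C\lceil n/2\rceil\le Cn$ reads $2\epsilon+Cn\le Cn$, which fails for every $C$. The linear bound itself is correct --- unrolling $f(n)\le 2\epsilon+2f(\lceil n/2\rceil)$ with $f(1)\le\epsilon$ gives $f(n)\le 6\epsilon n$, or one can subdivide $[x,y]$ into unit segments and use condition (2) on each piece --- but fixing this does not help with the real obstruction, which is the non-uniform accumulation of errors in Step~2.
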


We will apply Proposition \ref{bow} to the graph $\C^s(\dot S)$, and for vertices $x,y \in \C^s(\dot S)$, the required subcomplex is a (choice of some) survival path $\sigma(x,y)$.  Note that if $x,y$ are distance one apart, then $\sigma(x,y) = [x,y]$, which has diameter $1$.  Therefore, as long as $\epsilon \geq 1$, condition (2) in Theorem~\ref{bow} will be satisfied.  We therefore focus on condition (1), and express this briefly by saying that $x,y,z$ span an {\em $\epsilon$--slim survival triangle}.  The next lemma verifies condition (1) in a special case.

\begin{lemma} \label{L:cobounded slim triangles} Given $R > 4$, there exists $\epsilon > 0$ with the following property.  If $x,y,z \in \C^s(\dot S)$ are any three points such that $d_W(u,v) \leq R$ for all proper witness $W \subsetneq \dot S$ and every $u,v \in \{x,y,z\}$, then $x,y,z$ span an $\epsilon$--slim survival triangle.
\end{lemma}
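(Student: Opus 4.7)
The plan is to combine $\delta$--hyperbolicity of $\C(\dot S)$, applied to the triangle formed by the three main geodesics $[x,y]_{\dot S}, [x,z]_{\dot S}, [y,z]_{\dot S}$, with Lemma~\ref{L:upper bound formula}, which converts uniform bounds on subsurface projections into a bound on $d^s$. The coboundedness hypothesis enters crucially in producing uniform projection bounds on the survival paths themselves.

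First, I would establish that for each pair $u,v \in \{x,y,z\}$ and each proper witness $W \subsetneq \dot S$, $\diam_W(\sigma(u,v)) \leq R + 4M$. When $W$ is not a witness for $[u,v]_{\dot S}$, Lemma~\ref{L:projecting survival paths} gives the sharper bound $M+4$. Otherwise $\partial W$ appears on $[u,v]_{\dot S}$, and $\sigma(u,v)$ decomposes as two sub-arcs of $[u,v]_{\dot S}$ meeting the witness geodesic $[u',v']_W$ at $u'$ and $v'$ respectively. All vertices of the two sub-arcs project nontrivially to $\C(W)$, since $\partial W$ is the unique vertex of $\C(\dot S)$ with empty $W$--projection, so Theorem~\ref{BGIT} contributes at most $M$ from each sub-arc, while Lemma~\ref{L:witnessLength} combined with $d_W(u,v)\leq R$ bounds the witness geodesic by $R+2M$.

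Next, given $p\in\sigma(x,y)$, I would locate a nearby point $q'\in\sigma(x,z)\cup\sigma(y,z)$ in the $\C(\dot S)$ metric. If $p$ lies on $[x,y]_{\dot S}$ set $p_0=p$; otherwise $p$ lies on a witness geodesic $[u',v']_{W'}$ and I set $p_0=\partial W'$, so that $d_{\dot S}(p,p_0)\leq 1$. Applying $\delta$--slimness in $\C(\dot S)$ to the main-geodesic triangle yields $q_0\in[x,z]_{\dot S}\cup[y,z]_{\dot S}$ with $d_{\dot S}(p_0,q_0)\leq\delta$, and the analogous construction (replacing $q_0$ by a neighboring witness geodesic endpoint if $q_0$ is itself a witness boundary) produces $q'\in\sigma(x,z)\cup\sigma(y,z)$ with $d_{\dot S}(p,q')\leq\delta+2$. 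To bound $d^s(p,q')$, I would estimate $d_W(p,q')$ for every witness $W$: for $W=\dot S$ we have $d_{\dot S}(p,q')\leq\delta+2$, and for proper $W$ the first step combined with the coarse triangle inequality for $d_W$ (routed through $x$ or $y$, which lies in both $\sigma(x,y)$ and the relevant one of $\sigma(x,z),\sigma(y,z)$) yields $d_W(p,q')\leq 2R+8M$. Choosing $k$ larger than $\max\{2R+8M,\,\delta+2,\,M\}$ makes every cutoff $\lcut d_W(p,q')\rcut_k$ vanish, so Lemma~\ref{L:upper bound formula} delivers $d^s(p,q')\leq 2k^2$, and I set $\epsilon=2k^2$.

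The main obstacle is the projection diameter bound when $W$ is itself a witness for the main geodesic: without the hypothesis $d_W(u,v)\leq R$, the inserted witness geodesic could be arbitrarily long and no uniform bound on $\diam_W(\sigma(u,v))$ would be available. Once that bound is in place, the rest of the argument is a careful assembly of $\delta$--slimness in $\C(\dot S)$, the triangle inequality for subsurface projections, and the upper bound formula.
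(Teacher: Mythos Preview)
Your approach mirrors the paper's: $\delta$--hyperbolicity of $\C(\dot S)$ on the main-geodesic triangle yields a nearby point, uniform witness-projection bounds follow from Theorem~\ref{BGIT} and the hypothesis, and Lemma~\ref{L:upper bound formula} converts these to a $d^s$ bound (the paper takes $k=M$ and controls the number of nonzero terms via Lemma~\ref{L:distance witnesses}, whereas you take $k$ large enough that every term vanishes---either works). One small oversight: when $W$ is a witness for $[u,v]_{\dot S}$, your decomposition of $\sigma(u,v)$ into two main-geodesic sub-arcs plus the $W$--geodesic ignores any \emph{other} witness geodesics present in $\sigma(u,v)$, but these lie within $\C(\dot S)$--distance $1$ of the main geodesic, so Proposition~\ref{P:2 Lipschitz} handles them with only an additive correction and your diameter bound (hence the rest of the argument) survives.
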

\begin{proof} First note that by Lemma~\ref{L:witnessLength}, the length of any witness geodesic of any one of the three sides is at most $R + 2M$; we will use this fact throughout the proof without further mention.  We also observe that by Theorem~\ref{BGIT}, for any $w \in \sigma(x,y) \cap [x,y]_{\dot S}$ and any proper witness $W \subsetneq \dot S$, at least one of $d_W(x,w)$ or $d_W(y,w)$ is at most $M$.

Next suppose $w$ is on a subsegment $[x',y']_W \subset \sigma(x,y)$ for some proper witness $W$ of $\sigma(x,y)$.  Observe that $w$ is within $\tfrac{R+2M}2 = \tfrac{R}2 + M$ from either $x'$ or $y'$ and so by Theorem~\ref{BGIT} and the triangle inequality, one of $d_W(w,x)$ or $d_W(w,y)$ is at most $\tfrac{R}2+2M$.  If $W'$ is any other proper witness, we claim that $d_{W'}(x,w)$ or $d_{W'}(y,w)$ is at most $M+2 \leq \frac{R}2+2M$.   To see this, note that either $\partial W'$ lies in $[x,\partial W]_{\dot S} \subset [x,y]_{\dot S}$, in $[\partial W,y]_{\dot S} \subset [x,y]_{\dot S}$, or neither.  In the first two cases,  $d_{W'}(\partial W,y) \leq M$ or $d_{W'}(\partial W,x) \leq M$, respectively, by Theorem~\ref{BGIT}, while in the third case both of these inequalities hold.  Therefore, since $w$ and $\partial W$ are disjoint, $d_{W'}(\partial W,w) \leq 2$, and hence $d_{W'}(x,w)$ or $d_{W'}(y,w)$ is at most $M+2 \leq \frac{R}2+2M$.

Now let $w \in \sigma(x,y)$ be any vertex and $w_0 \in [x,y]_{\dot S} \cap \sigma(x,y)$ the nearest vertex along $\sigma(x,y)$, and observe that $d_{\dot S}(w,w_0) \leq 2$.   Since $\C(\dot S)$ is $\delta$--hyperbolic (for some $\delta > 0$), there is a vertex $w_0' \in [x,z]_{\dot S} \cup [y,z]_{\dot S}$ with $d_{\dot S}(w_0,w_0') \leq \delta$.  Without loss of generality, we assume $w_0' \in [x,z]_{\dot S}$.  Choose $w' \in \sigma(x,z)$ to be $w' = w_0'$ if $w_0' \in \sigma(x,z)$ or one of the adjacent vertices of $[x,z]_{\dot S}$ if $w_0'$ is the boundary of a witness.  Then  $d_{\dot S}(w_0',w') \leq 1$, so
\[ d_{\dot S}(w,w') \leq \delta + 3. \]

Now suppose $W \subsetneq \dot S$ is a proper witness.  Then at least one of $d_W(w,x)$ or $d_W(w,y)$ is at most $\tfrac{R}2 + 2M$ as is at least one of $d_W(w',x)$ or $d_W(w',z)$.   If $d_W(w,x),d_W(w',x) \leq \frac{R}2+2M$, then applying the triangle inequality, we see that
\[ d_W(w,w') \leq R + 4M.\]
If instead, $d_W(x,w) \leq \frac{R}2 + 2M$ and $d_W(w',z) \leq \frac{R}2 + 2M$, then the triangle inequality implies
\[  d_W(w,w') \leq d_W(w,x) + d_W(x,z) + d_W(z,w') \leq 2R + 4M.\]
The other two possibilities are similar, and hence $d_W(w,w') \leq 2R + 4M$.

Applying Corollary~\ref{L:upper bound formula} with $k = M$, recalling that by Lemma~\ref{L:distance witnesses} there are at most $\tfrac{d_{\dot S}(w,w')}2 \leq \frac{\delta + 3}2$ proper witnesses for any  geodesic $[w,w']_{\dot S}$, we have
\[ d^s(w,w') \leq 2M^2 + 2M \sum_{W \in \Omega(\dot S)} \lcut d_W(w,w') \rcut_M \leq 2M^2+2M \left( \delta + 3 + \tfrac{\delta + 3}2(2R+4M) \right). \]
Setting $\epsilon$ equal to the right-hand side (which really depends only on $R$, since $M$  and $\delta$ are independent of anything), completes the proof.
\end{proof}

A standard argument subdividing an $n$--gon into triangles proves the following.
\begin{corollary} \label{C:cobounded slim n-gons}  Given $R > 0$ let $\epsilon > 0$ be as in Lemma~\ref{L:cobounded slim triangles}.  If $n \geq 3$ and $x_1,\ldots,x_n \in \C^s(\dot S)$ are such that $d_W(x_i,x_j) \leq R$ for all $1 \leq i,j \leq n$, then for all $w \in \sigma(x_i,x_{i+1})$, there exists $j \neq i$ and $w' \in \sigma(x_j,x_{j+1})$ (with all indices taken modulo $n$) such that $d^s(w,w') \leq \lceil \tfrac{n}2 \rceil \epsilon$.
\end{corollary}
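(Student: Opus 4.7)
The plan is to subdivide the polygon with vertices $x_1, \ldots, x_n$ into $n-2$ triangles via a triangulation, and then apply Lemma~\ref{L:cobounded slim triangles} iteratively to propagate $w$ through the triangulation.  Since the projection bound $d_W(x_a, x_b) \leq R$ is inherited by every triple of vertices, each triangle of any triangulation of the polygon satisfies the hypotheses of Lemma~\ref{L:cobounded slim triangles} and is therefore $\epsilon$--slim.

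Given $w \in \sigma(x_i, x_{i+1})$, let $T$ be the triangle of the triangulation containing this side.  By $\epsilon$--slimness of $T$, there is a point $w_1$ on one of the other two sides of $T$ with $d^s(w, w_1) \leq \epsilon$.  If $w_1$ lies on an external side $\sigma(x_j, x_{j+1})$ with $j \neq i$ we are done; otherwise $w_1$ lies on a diagonal, which is shared with a unique adjacent triangle $T'$, and we apply $\epsilon$--slimness to $T'$ to produce $w_2$.  Iterating, at each step we either terminate at an external side or advance one step in the dual tree of the triangulation.  Consequently we obtain some $w' \in \sigma(x_j, x_{j+1})$ with $j \neq i$ and $d^s(w, w') \leq k\epsilon$, where $k$ is the number of slimness applications used.

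To achieve $k \leq \lceil n/2 \rceil$, the argument proceeds by induction on $n$.  The base case $n = 3$ is Lemma~\ref{L:cobounded slim triangles}.  For $n \geq 4$, choose a diagonal splitting the polygon into two sub-polygons of sizes at most $\lceil n/2 \rceil + 1$, with $\sigma(x_i, x_{i+1})$ an external side of one of them.  The inductive hypothesis applied to the sub-polygon containing $w$ delivers a nearby point on another of its sides; if it lies on an external side of the original polygon we are done, otherwise it lies on the splitting diagonal and a second application of the inductive hypothesis to the other sub-polygon completes the argument.  The main subtlety is the careful bookkeeping to avoid losing a factor in the recursion and thereby match the precise constant $\lceil n/2 \rceil$; this requires selecting the splitting diagonal judiciously, but is the standard polygon-triangulation argument.
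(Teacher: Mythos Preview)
Your approach is exactly the paper's: the paper says only ``a standard argument subdividing an $n$--gon into triangles,'' and your first paragraph is that argument.

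There is, however, a bookkeeping slip in your inductive step. Splitting by a single diagonal into sub-polygons of sizes $a$ and $b$ (so $a+b=n+2$) and applying the inductive hypothesis to \emph{both} pieces yields at best
\[
\left\lceil \tfrac{a}{2}\right\rceil + \left\lceil \tfrac{b}{2}\right\rceil \;\geq\; \tfrac{a+b}{2} \;=\; \tfrac{n+2}{2} \;>\; \left\lceil \tfrac{n}{2}\right\rceil,
\]
so the induction does not close; unrolling it all the way gives $(n-2)\epsilon$, not $\lceil n/2\rceil\epsilon$. The fix is to use the recursion implicit in your first paragraph instead: cut off a single triangle $x_i x_{i+1} x_k$ containing the starting side, apply Lemma~\ref{L:cobounded slim triangles} once, and recurse on whichever of the two remaining sub-polygons (of sizes $a,b$ with $a+b=n+1$) contains the new point. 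Choosing $k$ so that $\max(a,b)\le\lceil (n+1)/2\rceil$ gives the recursion $f(n)\le 1+f(\lceil(n+1)/2\rceil)$ with $f(2)=0$, $f(3)=1$, whose solution $f(n)=\lceil\log_2(n-1)\rceil$ is easily seen to be at most $\lceil n/2\rceil$. (In any case the corollary is only invoked for $n\le 6$, so the precise constant is immaterial.)
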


For the remainder of the proof (and elsewhere in the paper) it is useful to make the following definition.

\begin{definition} Given $x,y,z \in \C^s(\dot S)$ and $R> 0$, consider the proper witnesses with projection at least $R$:
\[ \Omega_R(x,y) = \{ W \in \Omega_0(\dot S) \mid d_W(x,y) > R\},\]
and set
\[ \Omega_R(x,y,z) = \Omega_R(x,y) \cup \Omega_R(x,z) \cup \Omega_R(y,z).\]
\end{definition}
In words, $\Omega_R(x,y)$ is the set of all proper witness for which $x$ and $y$ have distance greater than $R$.

\begin{lemma} \label{L:at most one large in 3}For any three points $x,y,z \in \C^s(\dot S)$ and $R \geq 2M$, there is at most one $W \in \Omega_R(x,y,z)$ such that
\[ W \in \Omega_{R/2}(x,y) \cap \Omega_{R/2}(x,z) \cap \Omega_{R/2}(y,z).\]
\end{lemma}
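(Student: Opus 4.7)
The plan is a proof by contradiction using a Behrstock-style inequality. Assume there exist two distinct witnesses $W, W' \in \Omega_R(x,y,z)$ both lying in $\Omega_{R/2}(x,y) \cap \Omega_{R/2}(x,z) \cap \Omega_{R/2}(y,z)$.

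The first step is to observe that $\partial W$ and $\partial W'$ intersect essentially. By Lemma~\ref{L:W parabolics}, the bijection $\calW$ sends distinct parabolic fixed points to distinct proper witnesses, so $W \neq W'$ correspond to distinct punctures of $\dot S$ other than $z$. The complementary twice-punctured disks $\dot S \setminus W$ and $\dot S \setminus W'$ both contain the $z$-puncture but differ in the second puncture; two such disks can be neither disjoint nor nested (since nesting would force identical puncture sets), so their boundaries intersect transversely. In particular $W$ and $W'$ are overlapping subsurfaces.

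Next I would invoke a Behrstock-type inequality, a standard consequence of Theorem~\ref{BGIT}: there is a constant $B$ (controlled in terms of $M$) such that, for every $\gamma \in \C^s_0(\dot S)$,
\[
\min\{d_W(\partial W',\gamma),\, d_{W'}(\partial W,\gamma)\} \leq B.
\]
Classify each of $x,y,z$ as \emph{$W$-type} when $d_W(\partial W',\cdot) \leq B$ and \emph{$W'$-type} otherwise (the min then forces $d_{W'}(\partial W,\cdot) \leq B$). With three points labeled by two types, pigeonhole produces two points $u,v\in\{x,y,z\}$ of the same type; by the symmetric roles of $W$ and $W'$ we may assume both are of $W$-type. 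The triangle inequality in $\C(W)$ then gives
\[
d_W(u,v) \leq d_W(u,\partial W') + d_W(\partial W',v) \leq 2B.
\]
However, $W \in \Omega_{R/2}(u,v)$ forces $d_W(u,v) > R/2$, and with $R \geq 2M$ chosen so that $R/2 > 2B$, this is the desired contradiction.

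The principal technical point is arranging the Behrstock constant to satisfy $2B < R/2$ whenever $R \geq 2M$. This is proved directly from Theorem~\ref{BGIT}: whenever $d_W(\alpha,\beta), d_{W'}(\alpha,\beta) > M$, Corollary~\ref{C:necessarily witnesses} forces both $\partial W$ and $\partial W'$ onto any $\C(\dot S)$-geodesic $[\alpha,\beta]_{\dot S}$; applying Theorem~\ref{BGIT} to the initial subgeodesic ending just before whichever of $\partial W,\partial W'$ appears first (say $\partial W$) yields a bound of the form $d_{W'}(\alpha,\partial W) < M$, and the symmetric argument from the terminal end controls the other cross-projection. These cross-projection bounds yield Behrstock with a constant that can be absorbed into $M$, which by convention we are free to enlarge (recall $M \geq 8$ is only a lower bound).
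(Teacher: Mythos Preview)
Your approach via a Behrstock-type inequality and pigeonhole is a genuinely different route from the paper's, and it is the standard argument in hierarchically hyperbolic settings. However, as written there is a quantitative gap. With the Behrstock constants actually available (Lemma~\ref{Behr} gives $d_W(\gamma,\partial W')\le 9$ or $d_{W'}(\gamma,\partial W)\le 4$), two same-type points $u,v$ only yield $d_W(u,v)\le 18$, so you need $R/2\ge 18$, i.e.\ $R\ge 36$, whereas the lemma asserts the conclusion for all $R\ge 2M$ with the paper's convention $M\ge 8$. Your final paragraph does not close this gap: deriving Behrstock from Theorem~\ref{BGIT} as you sketch produces a cross-projection bound of size $M$, which would require $M>2M$. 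Enlarging $M$ globally does work, but then you are proving a weaker statement than the one stated. (A minor side issue: distinct proper witnesses need not correspond to distinct non-$z$ punctures---there are infinitely many witnesses for each puncture---though your conclusion that $\partial W$ and $\partial W'$ must intersect is correct; see the sentence preceding Lemma~\ref{L:distance witnesses}.)

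The paper's argument avoids Behrstock entirely and works purely combinatorially in $\C(\dot S)$, which is what gives the sharp threshold $R\ge 2M$. Since $R/2\ge M$, Corollary~\ref{C:necessarily witnesses} puts both $\partial W$ and $\partial W'$ on \emph{every} $\C(\dot S)$--geodesic between any two of $x,y,z$. Fix geodesics $[x,\partial W]_{\dot S}$, $[y,\partial W]_{\dot S}$, $[z,\partial W]_{\dot S}$; concatenating any two gives a geodesic between the corresponding pair. Then $\partial W'$ must lie on at least two of these three legs (else one pairwise concatenation misses it), say on $[x,\partial W]_{\dot S}$ and $[y,\partial W]_{\dot S}$. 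But now the concatenation of the sub-segments $[x,\partial W']\subset[x,\partial W]_{\dot S}$ and $[\partial W',y]\subset[y,\partial W]_{\dot S}$ is a geodesic from $x$ to $y$ that avoids $\partial W$, a contradiction.
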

\begin{proof} Suppose there exist two distinct
\[ W,W' \in \Omega_{R/2}(x,y) \cap \Omega_{R/2}(x,z) \cap \Omega_{R/2}(y,z).\]
Then by Theorem~\ref{BGIT}, $\partial W, \partial W'$ are (distinct) vertices in any $\C(\dot S)$--geodesic between any two vertices in $\{x,y,z\}$. Choose geodesics $[x,\partial W]_{\dot S}$, $[y,\partial W]_{\dot S}$, and $[z,\partial W]_{\dot S}$, and note that concatenating any two of these (with appropriate orientations) produces a geodesic between a pair of vertices in $\{x,y,z\}$.  Since $\partial W' \neq \partial W$ must also lie on all $\C(\dot S)$--geodesics between these three vertices, it must lie on at least one of the geodesic segments to $\partial W$; without loss of generality, suppose $\partial W' \in [x,\partial W]_{\dot S}$.  If $\partial W'$ is not a vertex of either $[y,\partial W]_{\dot S}$ or $[z,\partial W]_{\dot S}$, then our geodesic from $y$ to $z$ does not contain $\partial W'$, a contradiction.  Without loss of generality, we may assume $\partial W' \in [y,\partial W]_{\dot S}$.  But then the geodesic subsegment between $x$ and $\partial W'$ in $[x,\partial W]_{\dot S}$ together with the geodesic subsegment between $\partial W'$ and $y$ in $[y,\partial W]_{\dot S}$ is also a geodesics (as above) and does not pass through $\partial W$, a contradiction.
\end{proof}

\begin{proof}[Proof of Theorem~\ref{hyp}]  Let $x,y,z \in \C^s(\dot S)$.  By the triangle inequality, if $W \in \Omega_{2M}(x,y)$, then at least one of $d_W(x,z)$ or $d_W(y,z)$ is greater than $M$.  By Lemma~\ref{L:at most one large in 3}, there is at most one $W$ such that {\em both} are greater than $M$.  If such $W$ exists, denote it $W_0$ and write $D_0 = \{W_0\}$; otherwise, write $D_0 = \emptyset$.  Defining
\[ D_x = \{ W \in \Omega_{2M}(x,y,z) \setminus D_0 \mid d_W(x,y) > M, d_W(x,z) > M\} \]
(and defining $D_y$, $D_z$ similarly), we can express $\Omega_{2M}(x,y,z)$ as a disjoint union
\[ \Omega_{2M}(x,y,z) = D_x \sqcup D_y \sqcup D_z \sqcup D_0. \]

By Theorem~\ref{BGIT}, the $\C(\dot S)$--geodesics $[x,y]_{\dot S}$ and $[x,z]_{\dot S}$ contain $\partial W$ for all $W \in D_x$, and we write
\[ D_x = \{W_x^1,W_x^2,\ldots,W_x^{m_x}\} \]
so that $x_1 = \partial W_x^1,x_2 = \partial W_x^2, \ldots, x_{m_x} = \partial W_x^{m_x}$ appear in this order along $[x,y]_{\dot S}$ and $[x,z]_{\dot S}$.  Similarly write
\[ D_y = \{W_y^1,\ldots, W_y^{m_y} \} \mbox{ and } D_z = \{W_z^1,\ldots,W_z^{m_z}\}. \]
The $\C(\dot S)$--geodesic triangle between $x$, $y$, and $z$ must appear as in the examples illustrated in Figure~\ref{F:triangles in C(dot S)}.
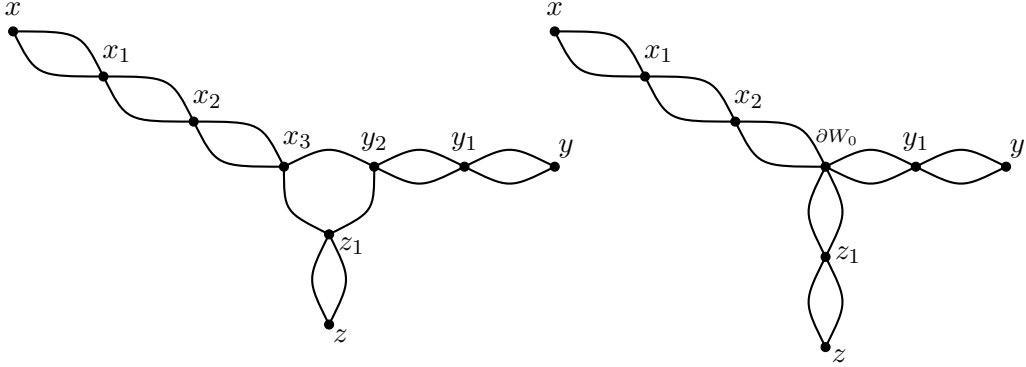
\begin{figure}[h]
\begin{center}
\begin{tikzpicture}[scale = .3]
\draw[thick] (0,0) .. controls (3,0) .. (4,-2);
\draw[thick] (0,0) .. controls (1,-2) .. (4,-2);
\draw[thick] (4,-2) .. controls (7,-2) .. (8,-4);
\draw[thick] (4,-2) .. controls (5,-4) .. (8,-4);
\draw[thick] (8,-4) .. controls (11,-4) .. (12,-6);
\draw[thick] (8,-4) .. controls (9,-6) .. (12,-6);
\draw[thick] (12,-6) .. controls (14,-5) .. (16,-6);
\draw[thick] (16,-6) .. controls (18,-5) .. (20,-6);
\draw[thick] (20,-6) .. controls (22,-5) .. (24,-6);
\draw[thick] (16,-6) .. controls (18,-7) .. (20,-6);
\draw[thick] (20,-6) .. controls (22,-7) .. (24,-6);
\draw[thick] (12,-6) .. controls (12,-8) .. (14,-9);
\draw[thick] (16,-6) .. controls (16,-8) .. (14,-9);
\draw[thick] (14,-9) .. controls (13,-11) .. (14,-13);
\draw[thick] (14,-9) .. controls (15,-11) .. (14,-13);
\draw[fill] (0,0) circle (.2cm);
\draw[fill] (4,-2) circle (.2cm);
\draw[fill] (8,-4) circle (.2cm);
\draw[fill] (12,-6) circle (.2cm);
\draw[fill] (16,-6) circle (.2cm);
\draw[fill] (20,-6) circle (.2cm);
\draw[fill] (24,-6) circle (.2cm);
\draw[fill] (14,-9) circle (.2cm);
\draw[fill] (14,-13) circle (.2cm);
\node at (0,1) {$x$};
\node at (4.6,-1) {$x_1$};
\node at (8.6,-3) {$x_2$};
\node at (12.6,-4.8) {$x_3$};
\node at (16,-4.9) {$y_2$};
\node at (20,-4.9) {$y_1$};
\node at (24.5,-5.2) {$y$};
\node at (15,-9.5) {$z_1$};
\node at (14.5,-13.5) {$z$};
\draw[thick] (18+6,2-2) .. controls (21+6,2-2) .. (22+6,0-2);
\draw[thick] (18+6,2-2) .. controls (19+6,0-2) .. (22+6,0-2);
\draw[thick] (22+6,0-2) .. controls (25+6,0-2) .. (26+6,-2-2);
\draw[thick] (22+6,0-2) .. controls (23+6,-2-2) .. (26+6,-2-2);
\draw[thick] (26+6,-2-2) .. controls (29+6,-2-2) .. (30+6,-4-2);
\draw[thick] (26+6,-2-2) .. controls (27+6,-4-2) .. (30+6,-4-2);
\draw[thick] (30+6,-4-2) .. controls (29+6,-6-2) .. (30+6,-8-2);
\draw[thick] (30+6,-4-2) .. controls (31+6,-6-2) .. (30+6,-8-2);
\draw[thick] (30+6,-8-2) .. controls (29+6,-10-2) .. (30+6,-12-2);
\draw[thick] (30+6,-8-2) .. controls (31+6,-10-2) .. (30+6,-12-2);
\draw[thick] (30+6,-4-2) .. controls (32+6,-3-2) .. (34+6,-4-2);
\draw[thick] (30+6,-4-2) .. controls (32+6,-5-2) .. (34+6,-4-2);
\draw[thick] (34+6,-4-2) .. controls (36+6,-3-2) .. (38+6,-4-2);
\draw[thick] (34+6,-4-2) .. controls (36+6,-5-2) .. (38+6,-4-2);
\draw[fill] (24,0) circle (.2cm); 
\draw[fill] (28,-2) circle (.2cm);
\draw[fill] (32,-4) circle (.2cm);
\draw[fill] (36,-6) circle (.2cm);
\draw[fill] (40,-6) circle (.2cm);
\draw[fill] (44,-6) circle (.2cm);
\draw[fill] (36,-10) circle (.2cm);
\draw[fill] (36,-14) circle (.2cm);
\node at (24,1) {$x$};
\node at (28.6,-1) {$x_1$};
\node at (32.6,-3) {$x_2$};
\node at (36.5,-4.7) {\tiny $\partial W_0$};
\node at (40,-4.9) {$y_1$};
\node at (44.5,-5.2) {$y$};
\node at (37,-9.9) {$z_1$};
\node at (36.6,-14.4) {$z$};
\end{tikzpicture}
\caption{Geodesic triangles in $\C(\dot S)$: Here $x_j = \partial W_x^j$, $y_j = \partial W_y^j$, and $z_j = \partial W_z^j$, and $\{x_1,x_2,x_3\} \subset [x,y]_{\dot S} \cap [x,z]_{\dot S}$, $\{y_1,y_2\} \subset [x,y]_{\dot S} \cap [y,z]_{\dot S}$, and $\{z_1\} \subset [x,z]_{\dot S} \cap [y,z]_{\dot S}$.
The left triangle has $D_0 = \emptyset$, while the triangle on the right has $D_0 = \{W_0\}$, hence $\partial W_0 \in [x,y]_{\dot S} \cap [x,z]_{\dot S} \cap [y,z]_{\dot S}$.  Note: there may be more vertices in common to pairs of geodesics than the vertices $x_j,y_j,z_j$.  Furthermore, there may be various degenerations, e.g.~$D_x= D_0 = \emptyset$, in which case the three bigons in the upper left-hand portion of the left figure disappears and $x_3$ becomes $x$.}
\label{F:triangles in C(dot S)}
\end{center}
\end{figure}

We now subdivide each of the survival paths $\sigma(x,y)$, $\sigma(x,z)$, and $\sigma(y,z)$ into subsegments as follows.
In this subdivision, $\sigma(x,y)$ is a concatenation of witness geodesics for each witness $W$ in $D_x \cup D_y \cup D_0$ and complementary subsegments connecting consecutive such witness geodesics.  The complementary segments are themselves survival paths obtained as concatenations of $\C(\dot S)$--geodesic segments and witness geodesic segments for witnesses for which $d_W(x,y) \leq 2M$.  The paths $\sigma(x,z)$ and $\sigma(y,z)$ are similarly described concatenations.
Applying Lemma~\ref{L:witnessLength}, all of the witness segments that appear in the complementary segments (and are thus {\em not} from witnesses in $\Omega_{2M}(x,y,z)$) have length at most $4M$.

Let $w \in \sigma(x,y)$ be any point.  We must show that there is some $w' \in \sigma(x,z) \cup \sigma(y,z)$ so that $d^s(w,w')$ is uniformly bounded.  There are two cases (which actually divide up further into several sub-cases), depending on whether or not $w$ lies on a witness geodesics for a witness $W \in \Omega_{2M}(x,y,z)$.

Suppose first that $w$ lies on a witness geodesic $[x',y']_W \subset \sigma(x,y)$ for $W \in D_x$.  By definition of $D_x$, $W \in \Omega_M(x,y) \cap \Omega_M(x,z)$, and so there is also a witness geodesic $[x'',z'']_W \subset \sigma(x,z)$.  Since there are $\dot S$--geodesics $[x,x']_{\dot S}, [x,x'']_{\dot S},[y',y]_{\dot S}, [z'',z]_{\dot S}$ so that every vertex has a nonempty projection to $\C(W)$, and since $d_W(y,z) < M$ (again, by definition of $D_x$), Theorem~\ref{BGIT} and the triangle inequality imply
\begin{equation} \label{E:witness endpoints close}
\begin{array}{l} d_W(x',x'') \leq d_W(x',x) + d_W(x,x'') \leq 2M \mbox{ and }\\
d_W(y',z'') \leq d_W(y',y) + d_W(y,z) + d_W(z,z'') \leq 3M.
\end{array}
\end{equation}
So $[x',y']_W$ and $[x'',z'']_W$ are $\C(W)$--geodesics whose starting and ending points are within distance $3M$ of each other.  Since $\C(W)$ is $\delta$--hyperbolic for some $\delta >0$, it follows that there is some $w' \in [x'',z'']_W \subset \sigma(x,z)$ so that $d_W(w,w'') \leq 2 \delta + 3M$.  Since $\C(W)$ is a subgraph of $\C^s(\dot S)$, $d^s(w,w') \leq 2\delta + 3M$.  We can similarly find the required $w'$ if $w$ is in a witness geodesic segment for a witness $W \in D_y$.

Next suppose $w$ lies in the witness geodesic $[x',y']_{W_0} \subset \sigma(x,y)$, for $W_0 \in D_0$ (if $D_0 \neq \emptyset$).  The argument in this sub-case is similar to the previous one, as we now describe.
Let $[x'',z'']_{W_0} \subset \sigma(x,z)$ and $[y'',z']_{W_0}  \subset \sigma(y,z)$ be the $W_0$--geodesic segments.   Arguing as in the proof of (\ref{E:witness endpoints close}), we see that the endpoints of these three geodesic segments in $\C(W)$ satisfy
\[ d_{W_0}(x',x''), d_{W_0}(y',y''), d_{W_0}(z',z'') \leq 2M.\]
Since $\C(W)$ is $\delta$--hyperbolic, we can again easily deduce that for some
\[ w' \in [x'',z'']_{W_0} \cup [y'',z']_{W_0} \subset \sigma(x,z) \cup \sigma(y,z),\]
we have $d^s(w,w') \leq d_{W_0}(w,w') \leq 3\delta + 2M$.

Finally, we assume $w \in \sigma(x,y)$ lies in a complementary subsegment $\sigma(x',y') \subset \sigma(x,y)$ of one of the $\Omega_{2M}(x,y,z)$--witness subsegments of $\sigma(x,y)$ as described above.  Note that $x',y' \in [x,y]_{\dot S} \cap \sigma(x,y)$ both lie in one of the ``bigons'' in Figure~\ref{F:triangles in C(dot S)} (cases (1) and (2) below) or in the single central ``triangle" (case (3) below, which happens when $D_0 = \emptyset$).
Thus, depending on which complementary subsegment we are looking at, we claim that one of the following must hold:
\begin{enumerate}
\item there exists $\sigma(x'',z'') \subset \sigma(x,z)$ so that $d^s(x',x''), d^s(y',z'') \leq 3M$,
\item there exists $\sigma(y'',z'') \subset \sigma(y,z)$ so that $d^s(y',y''), d^s(x',z'') \leq 3M$, or
\item there exists $\sigma(x'',z'') \subset \sigma(x,z)$ and $\sigma(y'',z') \subset \sigma(y,z)$ so that\\ $d^s(x',x''), d^s(y',y''), d^s(z',z'') \leq 3M$.
\end{enumerate}
The proofs of these statements are very similar to the proof in the case that $w \in D_x$ or $D_y$. If $\sigma(x',y')$ is a complementary segment which is part of a bigon and $x'$ is in $\C(W)$ for some $W \in D_x$ (or $x = x''$), then we are in case (1) and we take the corresponding complementary segment $\sigma(x'',z'') \subset \sigma(x,z)$ of the bigon with $x'' \in \C(W)$ (or $x'' = x$).  It follows that all vertices of $[x',y]_{\dot S}$, $[y,z]_{\dot S}$, and $[z,x'']_{\dot S}$ have non-empty projections to $W$, so by Theorem~\ref{BGIT} and the triangle inequality we have
\[ d^s(x',x'') \leq d_W(x',x'') \leq d_W(x',y) + d_W(y,z) + d_W(z,x'') \leq 3M. \]
On the other hand, $y',z'' \in \C(W')$ for some $W' \in D_x \cup D_0$  and similarly
\[ d^s(y',z'') \leq d_{W'}(y',z'') \leq d_{W'}(y',x) + d_{W'}(x,z'') \leq 2M < 3M,\]
and so the conclusion of (1) holds.  If $y' \in \C(W)$ for some $W \in D_y$, then a symmetric argument proves (2) holds.  The only other possibility is that $D_0 = \emptyset$, $x' \in \C(W)$, and $y' \in \C(W')$, where $W \in D_x$ and $W' \in D_y$, so that $\sigma(x',y')$ is a segment of the ``triangle".  A completely analogous argument proves that condition (3) holds.

In any case, note that the two subsegments of the bigon (respectively, three segments of the central triangle), together with segments in curve complexes of proper witnesses give a quadrilateral (respectively, hexagon) of survival paths.  Furthermore, by the triangle inequality and application of Theorem~\ref{BGIT}, we see that there is a uniform bound $R >0$ to the projections to all proper witnesses of the vertices of this quadrilateral (respectively, hexagon).  Let $\epsilon > 0$ be the constant from Lemma~\ref{L:cobounded slim triangles} for this $R$.  By Corollary~\ref{C:cobounded slim n-gons}, there is some $w'$ on one of the other sides of this quadrilateral/hexagon so that $d^s(w,w') \leq 3\epsilon$.  It may be that $w'$ is in $\sigma(x,z)$ or $\sigma(y,z)$, or that it lies in one of the witness segments.  As described above, these segments have length at most $3M$, and so in this latter case, we can find $w'' \in \sigma(x,z) \cup \sigma(y,z)$ with $d^s(w,w'') \leq 3\epsilon + 3M$.

Combining all the above, we see that there is always some $w' \in \sigma(x,z) \cup \sigma(y,z)$ with $d^s(w,w')$ bounded above by
\[ \max\{3 \epsilon + 3M, 2 \delta + 3M, 4 \delta + 2M\}.\]
This provides the required uniform bound on thinness of survival paths, and completes the proof of the theorem.
\end{proof}

\section{Distance Formula} \label{S:distance formula}

In this section we prove the following theorem.
\begin{theorem} \label{dist}
 For any $k \geq \max\{M,24\}$, there exists $K \geq 1$, $C \geq 0$ so that
\[ d^s(x,y) \stackrel{K,C}{\asymp}  \sum_{W \in \Omega(\dot S)} \lcut d_{W}(x, y) \rcut_k,\]
for all $x,y \in \sC$.
\end{theorem}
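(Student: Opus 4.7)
The upper bound $d^s(x,y) \leq 2k^2 + 2k \sum_W \lcut d_W(x,y)\rcut_k$ is Lemma~\ref{L:upper bound formula}, so the task reduces to proving the complementary bound
\[ \sum_W \lcut d_W(x,y)\rcut_k \leq K' d^s(x,y) + C'. \]
My plan is to prove this directly by taking a $\sC$-geodesic $\gamma$ from $x$ to $y$ and bounding the contribution of each big-projection witness by a disjoint portion of $\gamma$. The sum splits into the term $\lcut d_{\dot S}(x,y)\rcut_k \leq d_{\dot S}(x,y) \leq d^s(x,y)$, which is free, and the contribution from proper witnesses, which is where the real work lies.

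Let $\gamma = z_0, z_1, \ldots, z_N$ be a $\sC$-geodesic with $N = d^s(x,y)$. Viewed as a path in $\C(\dot S)$, Theorem~\ref{BGIT} forces $\partial W$ to appear as a vertex of $\gamma$ for each proper witness $W$ with $d_W(x,y) > M$, since the only vertex of $\sC$ with empty $W$-projection is $\partial W$ itself. Let $W_1,\ldots,W_r$ be the proper witnesses with $d_{W_i}(x,y) \geq k$, ordered by the position $t_i$ of $\partial W_i$ along $\gamma$. The neighbors $z_{t_i - 1}$ and $z_{t_i + 1}$ of $\partial W_i$ lie in $\C(W_i)$, so by Proposition~\ref{P:2 Lipschitz} we have $d_{W_i}(x, z_{t_i - 1}) \leq 2(t_i - 1)$ and $d_{W_i}(z_{t_i + 1}, y) \leq 2(N - t_i - 1)$; combined with $d_{W_i}(z_{t_i - 1}, z_{t_i + 1}) \leq 4$, this shows that $d_{W_i}(x,y)$ is absorbed, up to additive error, by the $\sC$-length of $\gamma$ spent on either side of $z_{t_i}$. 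The plan is then to assign to each $W_i$ a window of $\gamma$ centered at $z_{t_i}$ of width proportional to $d_{W_i}(x,y)$ and to show that these windows have pairwise-disjoint interiors, whence summing widths yields $\sum_i d_{W_i}(x,y) \leq K'' N + \mathrm{const}$.

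The main obstacle is precisely the disjointness — a priori several witnesses could charge overlapping portions of $\gamma$. To force disjointness I would establish a Behrstock-type inequality in this setting: for two distinct proper witnesses $W_i \neq W_j$ and any vertex $v$ of $\gamma$, one of $d_{W_i}(\partial W_j, v)$ or $d_{W_j}(\partial W_i, v)$ must be universally bounded. Lemma~\ref{L:at most one large in 3} already encodes a weak form of this phenomenon among triples of points, and combined with the linear ordering of the $t_i$'s it prevents the charging windows from overlapping more than a bounded amount. An inductive peeling argument — using Lemma~\ref{L:subpaths of survival paths} to cut $\sigma(x,y)$ at each $\partial W_i$ in parallel with cutting $\gamma$ — provides convenient bookkeeping, and summing across $i$ then yields the lower bound. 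As an immediate corollary, via Lemma~\ref{L:witnessLength}, Corollary~\ref{C:necessarily witnesses}, and Lemma~\ref{L:distance witnesses} (which together give $|\sigma(x,y)| \asymp \sum_W \lcut d_W(x,y)\rcut_k$), the survival paths $\sigma(x,y)$ are uniform quasi-geodesics in $\sC$.
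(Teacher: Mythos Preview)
Your proposal contains a fundamental error at the outset of the lower-bound argument. You claim that if $\gamma$ is a $\sC$--geodesic and $d_W(x,y)>M$ for a proper witness $W$, then Theorem~\ref{BGIT} forces $\partial W$ to occur as a vertex of $\gamma$. This is impossible: $\partial W$ is precisely the kind of curve (bounding a twice-punctured disk containing the $z$--puncture) that is \emph{excluded} from $\C^s_0(\dot S)$, so it can never appear on a $\sC$--geodesic. In fact every vertex of $\gamma$ has nonempty projection to $W$, so Theorem~\ref{BGIT} (which in any case concerns $\C(\dot S)$--geodesics, not arbitrary paths) gives you nothing here; the only conclusion you can draw from $\gamma$ viewed as a $\C(\dot S)$--path is Proposition~\ref{P:2 Lipschitz}, which yields $d_W(x,y)\le 2N$ for each individual witness but does not control the sum. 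Consequently the ``position $t_i$ of $\partial W_i$ along $\gamma$,'' the neighbors $z_{t_i\pm1}$, and the charging windows built around them simply do not exist, and the rest of the sketch collapses.

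The paper's proof avoids this by never looking for $\partial W$ on $\gamma$. Instead it uses the Behrstock inequality (Lemma~\ref{Behr}) to produce a genuine total order $W_1<\cdots<W_m$ on $\Omega_k(x,y)$ (Proposition~\ref{order}), and then for each $W_j$ locates an index $i_j$ along $\gamma$ as the \emph{last} place where $d_{W_j}(x_{i_j},y)\ge 14$. Lemma~\ref{aux} (another Behrstock consequence) forces $i_1<i_2<\cdots<i_m$, so the segments $[x_{i_{j-1}},x_{i_j}]$ are disjoint, and the $2$--Lipschitz bound then converts each segment length into a lower bound for $d_{W_j}(x,y)$ up to a uniform additive error. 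This transition-point mechanism is exactly the missing ingredient; your allusion to a ``Behrstock-type inequality'' is the right instinct, but it has to replace, not supplement, the incorrect $\partial W\in\gamma$ claim.
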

Recall that here $x \stackrel{K,C}{\asymp} y$ is shorthand for the condition $\frac{1}{K}(x - C) \leq y \leq Kx + C$ and that $\lcut x \rcut_k = x$ if $x \geq k$ and $0$, otherwise.  Note that we have already proved an upper bound on $d^s(x,y)$  of the required form in Corollary~\ref{L:upper bound formula} and thus we need only prove the lower bound.

\begin{remark} As with Theorem~\ref{hyp}, another approach to this theorem would be to follow Masur-Schleimer \cite{MSch1} or Vokes \cite{Vokes}.  As with Theorem~\ref{hyp} we give a proof using survival paths, which is straightforward and elementary.
\end{remark}

One of the main ingredients in our proof is the following due to Behrstock  \cite{BehrTH} (see \cite{Joh1} for the version here).
\begin{lemma}[Behrstock Inequality]\label{Behr}
Assume that $W$ and $W'$ are witnesses for $\sC$ and $u\in \C(\dot S)$ with nonempty projection to both $W$ and $W'$. Then
\[ d_{W}(u, \partial W') \geq 10 \Rightarrow   d_{W'}(u, \partial W) \leq 4 \]
\end{lemma}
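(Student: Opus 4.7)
The plan is to argue by contradiction, following the combinatorial approach of Behrstock \cite{BehrTH} as refined by Johanson \cite{Joh1}. Suppose for contradiction that both $d_W(u,\partial W')\geq 10$ and $d_{W'}(u,\partial W)\geq 5$.

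First I would observe that the non-empty projection hypotheses together with the large distance hypotheses force $W$ and $W'$ to overlap transversely: if they were disjoint, or one were properly contained in the other, then one of $\pi_W(\partial W')$ or $\pi_{W'}(\partial W)$ would be either empty or a single isotopy class of curves, making the corresponding distance trivially small and violating at least one of the assumed inequalities. Thus $\partial W\cap W'$ and $\partial W'\cap W$ each consist of essential arcs whose boundary surgeries with the ambient boundary realize $\pi_{W'}(\partial W)$ and $\pi_W(\partial W')$ respectively.

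Next I would choose a representative arc $\alpha\subset u\cap W$ contributing to $\pi_W(u)$ and an arc $\beta\subset \partial W'\cap W$ contributing to $\pi_W(\partial W')$. The hypothesis $d_W(u,\partial W')\geq 10$ combined with Proposition~\ref{P:2 Lipschitz} forces $\alpha$ and $\beta$ to have many essential intersections in $W$. The key geometric point is that $u$ is a single simple closed curve: each essential intersection of $\alpha$ with $\beta$ marks a point where $u$ crosses $\partial W'$ from inside $W$, and following $u$ across $\partial W'$ produces a subarc of $u\cap W'$ that begins near $\partial W\cap\partial W'$ and wraps back toward $\partial W$ along the boundary strip between the two witnesses. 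This wrapping means that the surgery of the resulting arc $\alpha'\subset u\cap W'$ with $\partial W'$ gives curves isotopic to, or at distance at most $4$ from, the surgery arcs defining $\pi_{W'}(\partial W)$ — contradicting $d_{W'}(u,\partial W)\geq 5$.

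The main obstacle is promoting the qualitative ``wraps back toward $\partial W$'' picture to the sharp quantitative bound $4$ in the conclusion; this requires a careful enumeration of the possible surgery arcs in a regular neighborhood of $\partial W\cup\partial W'$, tracking which boundary components of the surgered neighborhoods are essential in $W'$ and bounding their distance from the corresponding components coming from $\partial W$. Rather than redo this bookkeeping, I would cite \cite{Joh1} for the explicit case analysis yielding the optimal constants $10$ and $4$.
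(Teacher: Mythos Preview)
The paper does not prove this lemma at all: it is stated with attribution to Behrstock \cite{BehrTH}, with the specific constants credited to \cite{Joh1}, and no argument is given. Your proposal therefore goes further than the paper by sketching the mechanism before deferring to \cite{Joh1} for the sharp constants; in that sense it is consistent with, and slightly more informative than, the paper's own treatment.

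A minor remark on your sketch: in the setting of this paper the overlap discussion is simpler than you suggest. Distinct proper witnesses $W,W'\subsetneq \dot S$ always have essentially intersecting boundaries (as noted just before Lemma~\ref{L:distance witnesses}, no two such boundaries can be disjoint), so the transversality you need is automatic and the disjoint/nested cases do not arise. Apart from that, your outline matches the standard argument, and citing \cite{Joh1} for the explicit case analysis yielding the constants $10$ and $4$ is exactly what the paper does.
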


We will also need the following application which we use to provide an ordering on the witnesses for a pair $x,y \in \sC$ having large enough projection distances.  A more general version was proved in \cite{BBML} (see also \cite{CLM}) and is related to the partial order on domains of hierarchies from \cite{MM2}.  The version we will use is the following.
\begin{proposition} \label{order} Suppose $k \geq 14$ and $W,W'$ are witnesses in the set $\Omega_k(x,y)$. Then the following are equivalent:
\begin{center}
\begin{tabular}{lll}
(1) $d_{W'}(y, \partial W)\geq10$ \quad \quad  & (2) $d_{W}(y, \partial W')\leq4$\\
(3) $d_{W}(x, \partial W')\geq10$  & (4) $d_{W'}(x, \partial W)\leq4$
\end{tabular}
\end{center}
\end{proposition}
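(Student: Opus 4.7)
The plan is to establish the four-way equivalence by a cyclic chain of implications $(1) \Rightarrow (2) \Rightarrow (3) \Rightarrow (4) \Rightarrow (1)$, alternating between applications of the Behrstock inequality (Lemma~\ref{Behr}) and the triangle inequality in the relevant curve complexes. The key numerical input is that $k \geq 14 = 10 + 4$, so that the ``large projection'' threshold in $\Omega_k(x,y)$ exceeds the sum of the two Behrstock constants. Throughout, recall that since $W$ and $W'$ are witnesses, every curve has nonempty projection to each, so all the projection distances below are well defined.

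For $(1) \Rightarrow (2)$, we apply Lemma~\ref{Behr} with $u = y$ and with the roles of $W$ and $W'$ swapped: the hypothesis $d_{W'}(y,\partial W) \geq 10$ directly yields $d_W(y,\partial W') \leq 4$. The implication $(3) \Rightarrow (4)$ is the same application with $u = x$ (and without swapping), so $d_W(x,\partial W') \geq 10$ gives $d_{W'}(x,\partial W) \leq 4$.

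For $(2) \Rightarrow (3)$, I will combine (2) with the hypothesis $W \in \Omega_k(x,y)$. By the triangle inequality in $\C(W)$,
\[
d_W(x,\partial W') \;\geq\; d_W(x,y) - d_W(y,\partial W') \;\geq\; k - 4 \;\geq\; 14 - 4 \;=\; 10,
\]
which is exactly (3). Symmetrically, for $(4) \Rightarrow (1)$, using $W' \in \Omega_k(x,y)$,
\[
d_{W'}(y,\partial W) \;\geq\; d_{W'}(x,y) - d_{W'}(x,\partial W) \;\geq\; k - 4 \;\geq\; 10.
\]

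There is no real obstacle here beyond bookkeeping: the argument is a clean four-step cycle, and the numerology $10 + 4 \leq k$ is precisely what is needed for the triangle-inequality steps to feed back into the Behrstock hypothesis at the next step. The only thing to be careful about is to apply Behrstock with the correct substitution of $W$ and $W'$ at each stage, and to verify that each appeal to the triangle inequality uses the $\Omega_k$ hypothesis for the appropriate one of $W$ or $W'$.
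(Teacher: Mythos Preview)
Your proof is correct and follows exactly the same cyclic scheme as the paper: $(1)\Rightarrow(2)$ and $(3)\Rightarrow(4)$ by the Behrstock inequality, and $(2)\Rightarrow(3)$ and $(4)\Rightarrow(1)$ by the triangle inequality together with the hypothesis $d_W(x,y),d_{W'}(x,y)>k\geq 14$. The only difference is presentational---you spell out the role-swapping in Lemma~\ref{Behr} and the numerology $10+4\leq k$ more explicitly than the paper does.
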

\begin{proof} By Lemma  \ref{Behr} we have $(1) \Rightarrow (2)$  and $(3) \Rightarrow (4)$. To prove  $(2) \Rightarrow (3)$
we use triangle inequality:
\[ d_{W}(x, \partial W')\geq d_{W}(x, y)-d_{W}(y, \partial W') \geq k - 4 \geq 10\]
since $k \geq 14$.  The proof of $(4) \Rightarrow (1)$ is identical to the proof that $(2) \Rightarrow (3)$.
\end{proof}

\begin{definition} For any $k\geq 14$, we define a relation $<$ on $\Omega_k(x,y)$, declaring $W < W'$ for $W, W' \in \Omega_k(x,y)$, if
any of the equivalent statements of the Proposition \ref{order} is satisfied.
\end{definition}

\begin{lemma}  For any $k \geq 20$, the relation $<$ is a total order on $\Omega_k(x,y)$.
\end{lemma}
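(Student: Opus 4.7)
The plan is to verify the three axioms of a strict total order. Irreflexivity is automatic: for $W=W'$ the defining inequalities of Proposition~\ref{order} involve $\pi_W(\partial W)$, which is empty by definition of subsurface projection. For asymmetry, suppose both $W<W'$ and $W'<W$ hold; condition (1) for the first reads $d_{W'}(y,\partial W) \geq 10$, while condition (2) for the second (with the roles of $W,W'$ swapped) reads $d_{W'}(y,\partial W) \leq 4$, a direct contradiction.

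For totality on distinct pairs $W \neq W'$, use that $W' \in \Omega_k(x,y)$ and $k \geq 20$ give $d_{W'}(x,y) > 20$. Since $W'$ is a witness, $\partial W$ projects nontrivially to $W'$, and the triangle inequality yields $d_{W'}(x,\partial W) + d_{W'}(\partial W, y) \geq d_{W'}(x,y) > 20$, so at least one summand is $\geq 10$. If $d_{W'}(x,\partial W) \geq 10$, condition (3) with roles swapped gives $W'<W$; otherwise condition (1) gives $W<W'$.

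The main step is transitivity: assume $W<W'$ and $W'<W''$. First, $W \neq W''$, since $W=W''$ together with the hypotheses would give $W<W'$ and $W'<W$ simultaneously, violating asymmetry. By totality, either $W<W''$ or $W''<W$; I would rule out the second. The key estimate is a sandwich in $\C(W')$: condition (4) for $W<W'$ gives $d_{W'}(x,\partial W) \leq 4$, condition (2) for $W'<W''$ gives $d_{W'}(y,\partial W'') \leq 4$, and a three-term triangle inequality combined with $d_{W'}(x,y) > 20$ yields
\[ d_{W'}(\partial W,\partial W'') \;\geq\; d_{W'}(x,y) - d_{W'}(x,\partial W) - d_{W'}(y,\partial W'') \;>\; 20 - 4 - 4 \;=\; 12. \]
Since $\partial W$ has nonempty projection to both witnesses $W', W''$, the Behrstock inequality (Lemma~\ref{Behr}) applied with $u = \partial W$ and the pair $W', W''$ then yields $d_{W''}(\partial W,\partial W') \leq 4$. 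If on the other hand $W''<W$ held, condition (3) would give $d_{W''}(x,\partial W) \geq 10$, and combined with $d_{W''}(x,\partial W') \leq 4$ (condition (4) for $W'<W''$) the triangle inequality would force $d_{W''}(\partial W,\partial W') \geq 6$, contradicting the bound $\leq 4$ from Behrstock.

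The main obstacle is precisely the step of upgrading the lower bound on $d_{W'}(\partial W,\partial W'')$ past the Behrstock threshold of $10$. A naive two-term triangle inequality using only $x$ (or only $y$) yields the insufficient bound $\geq 10-4 = 6$; one must exploit the full strength of $d_{W'}(x,y) > 20$ via the three-term sandwich above, and this is precisely why the hypothesis $k \geq 20$ is needed for the relation to be transitive.
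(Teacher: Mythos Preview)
Your proof is correct and follows essentially the same strategy as the paper's: verify totality via a triangle inequality forcing one of the Proposition~\ref{order} conditions, then prove transitivity by a Behrstock-plus-sandwich argument. The only difference is a harmless reshuffling of which witness you work in at each stage. For totality you argue in $\C(W')$ while the paper argues in $\C(W)$; for transitivity you first sandwich in $\C(W')$ (using only $W<W'$ and $W'<W''$) and then apply Behrstock to reach $\C(W'')$ where the contradictory assumption $W''<W$ is invoked, whereas the paper first invokes $W''<W$ to sandwich in $\C(W)$, applies Behrstock to reach $\C(W')$, and derives the contradiction there from $d_{W'}(x,y)\geq 20$. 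Both routes are equally valid and use the hypothesis $k\geq 20$ in the same essential way.
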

\begin{proof}
We first prove that any two element $W , W' \in \Omega_k(x,y)$ are ordered. If not, then that means Proposition~\ref{order} (3) fails to hold as stated, or with $y$ replacing $x$, and thus we have  $d_{W}(y, \partial W') < 10 $ and $d_{W}(x, \partial W') < 10$. Hence,
\[d_{W}(x,y) \leq d_{W}(x, \partial W')+d_{W}(y, \partial W') < 20 \leq k\]
which contradicts the assumption that $W \in \Omega_k(x,y)$.

The relation is clearly anti-symmetric, so it remains to prove that it is transitive.  To that end, let $W < W' < W''$ in $\Omega_k(x,y)$, and we assume $W \not < W''$, hence $W'' < W$.  Since $W< W'$ and $W'' < W$, we have $d_W(y,\partial W') \leq 4$ and $d_W(x,\partial W'') \leq 4$.  So by the triangle inequality
\[ d_W(\partial W',\partial W'' ) \geq d_W(x,y) - d_W(y,\partial W') - d_W(x,\partial W'') \geq k - 8 > 10.\]
Then by Lemma~\ref{Behr}, we have
\[ d_{W'}(\partial W,\partial W'') \leq 4. \]
So, appealing to the fact that $W< W'$ and $W' < W''$ and Proposition~\ref{order} the triangle inequality implies
\[ 20 \leq k \leq d_{W'}(x,y) \leq d_{W'}(x,\partial W) + d_{W'}(\partial W,\partial W'') + d_{W'}(\partial W'',y) \leq 12, \]
a contradiction.
\end{proof}

The next lemma is also useful in the proof of Theorem \ref{dist}.
\begin{lemma}\label{aux} Let $x,y,u \in \sC$, $W,W' \in \Omega_k(x,y)$ with $k \geq 20$, and  $W<W'$.  Then,
\[ d_{W}(u, y) \geq 14 \Rightarrow d_{W'}(u, x) \leq 8\]
\end{lemma}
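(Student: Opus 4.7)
The plan is to argue by two successive applications of the Behrstock inequality, using the equivalent reformulations of $W<W'$ supplied by Proposition~\ref{order}. The key observation is that every $u\in\sC$ has nonempty projection to every witness (by the very definition of witness), so all Behrstock/triangle-inequality manipulations below are legitimate.

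First I would translate the hypothesis $W<W'$ into the two clauses we will actually need. From Proposition~\ref{order} (with $k\geq 20\geq 14$), the ordering $W<W'$ is equivalent to the simultaneous conditions
\[
d_{W}(y,\partial W')\leq 4 \qquad\text{and}\qquad d_{W'}(x,\partial W)\leq 4.
\]
These are the two ``small projection'' facts we will combine with $d_W(u,y)\geq 14$.

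Next I would use the triangle inequality in $\C(W)$ to move from $y$ to $\partial W'$: from $d_W(u,y)\geq 14$ and $d_W(y,\partial W')\leq 4$,
\[
d_W(u,\partial W')\;\geq\; d_W(u,y)-d_W(y,\partial W')\;\geq\;14-4\;=\;10.
\]
Now Lemma~\ref{Behr} applies to the pair $(W,W')$ and the curve $u$, since $\pi_W(u)$ and $\pi_{W'}(u)$ are both nonempty; it yields
\[
d_{W'}(u,\partial W)\leq 4.
\]
Combining this with the second clause $d_{W'}(x,\partial W)\leq 4$ via the triangle inequality in $\C(W')$ gives
\[
d_{W'}(u,x)\;\leq\; d_{W'}(u,\partial W)+d_{W'}(\partial W,x)\;\leq\;4+4\;=\;8,
\]
which is the desired conclusion.

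There is no real obstacle here; the only point to be careful about is that Behrstock's inequality requires nonempty projections of $u$ to both $W$ and $W'$, and this is automatic from the definition of a witness. The numerical thresholds ($14$, $10$, $4$, $8$) are chosen precisely so that the two triangle-inequality steps around the two Behrstock applications close up, and the hypothesis $k\geq 20$ is used only indirectly, through the total order provided by Proposition~\ref{order}.
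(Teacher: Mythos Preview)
Your proof is correct and essentially identical to the paper's: both extract $d_W(y,\partial W')\leq 4$ and $d_{W'}(x,\partial W)\leq 4$ from the definition of $W<W'$ (Proposition~\ref{order}), apply the triangle inequality to get $d_W(u,\partial W')\geq 10$, invoke Behrstock to obtain $d_{W'}(u,\partial W)\leq 4$, and finish with one more triangle inequality. One minor remark: Proposition~\ref{order} gives the equivalent characterizations of the relation (requiring only $k\geq 14$), not the total order itself, but this does not affect your argument.
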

\begin{proof} From our assumptions, the definition of the order on $\Omega_k(x,y)$, and the triangle inequality we have
\[ d_W(u,\partial W') \geq d_W(u,y) - d_W(y,\partial W') \geq 14 - 4 = 10.\]
By Lemma~\ref{Behr}, we have $d_{W'}(u,\partial W) \leq 4$.  Thus, by the definition of the order on $\Omega_k(x,y)$ and the triangle inequality, we have
\[ d_{W'}(u,x) \leq d_{W'}(u,\partial W) + d_{W'}(\partial W,x) \leq 4 + 4 = 8.\]
\end{proof}

We are now ready to prove the lower bound in Theorem \ref{dist}, which we record in the following proposition.

\begin{proposition}\label{lowerbnd} Fix $k \geq 24$.
Given $x,y \in \sC$ we have
\[d^s(x, y) \geq \frac1{96} \sum_{W \in \Omega(\dot S)} \lcut  d_W(x,y) \rcut_k    \]
\end{proposition}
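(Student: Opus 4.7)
The plan is to extract from a $\C^s(\dot S)$-geodesic $v_0 = x, v_1, \dots, v_n = y$ (with $n = d^s(x,y)$) a pairwise disjoint family of subsets of $\{0,1,\dots,n\}$ indexed by the large-projection witnesses, with each subset having size proportional to the corresponding $d_W(x,y)$. This mirrors the strategy behind the Masur--Minsky distance formula, with the Behrstock-style total order on $\Omega_k(x,y)$ (available since $k \geq 24 \geq 20$) doing the heavy lifting.

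For each $W \in \Omega_k(x,y)$ I will set
\[
S_W = \bigl\{ j \in \{0,1,\dots,n\} : d_W(v_j, x) > 14 \text{ and } d_W(v_j, y) > 14 \bigr\}.
\]
The sets $S_W$ are pairwise disjoint: if $W < W'$ and $j \in S_W$, then $d_W(v_j, y) > 14$, so Lemma~\ref{aux} forces $d_{W'}(v_j, x) \leq 8 < 14$, whence $j \notin S_{W'}$. Consequently $\sum_W |S_W| \leq n+1$.

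Next, using the 2-Lipschitz projection bound of Proposition~\ref{P:2 Lipschitz}, I will lower-bound $|S_W|$ when $d_W(x,y) \geq 60$. Letting $j^- = \max\{j : d_W(v_j, x) \leq 14\}$ and $j^+ = \min\{j : d_W(v_j, y) \leq 14\}$, the condition $d_W(x,y) > 28$ and the triangle inequality give $j^- < j^+$ with $(j^-, j^+) \subseteq S_W$, and the 2-Lipschitz bound yields $j^+ - j^- \geq (d_W(x,y) - 28)/2$, so $|S_W| \geq d_W(x,y)/4$ on this range. Combined with disjointness,
\[
\sum_{W \in \Omega_k(x,y),\ d_W(x,y) \geq 60} d_W(x,y) \leq 4(n+1).
\]

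The main obstacle is the borderline range $k \leq d_W(x,y) < 60$, where the threshold $14$ baked into Lemma~\ref{aux} precludes the preceding lower bound on $|S_W|$. The workaround relies on Corollary~\ref{C:necessarily witnesses}: since every such $W$ satisfies $d_W(x,y) > M$, the boundary $\partial W$ lies on any $\C(\dot S)$-geodesic from $x$ to $y$, and Lemma~\ref{L:distance witnesses} bounds the number of such witnesses by $d_{\dot S}(x,y)/2 \leq d^s(x,y)/2 = n/2$. Each contributes at most $60$, so this range contributes at most $30n$ to the sum. Combining with the trivial estimate $\lcut d_{\dot S}(x,y) \rcut_k \leq n$ for the term $W = \dot S$, I obtain
\[
\sum_{W \in \Omega(\dot S)} \lcut d_W(x,y) \rcut_k \leq 35n + 4,
\]
which is comfortably bounded by $96 \cdot d^s(x,y)$ for all $n \geq 1$ (and the $n=0$ case is trivial since then both sides vanish), yielding the claimed constant $1/96$ with room to spare.
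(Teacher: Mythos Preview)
Your disjoint-index-set strategy is a legitimate variant of the paper's approach---both rely on the total order on $\Omega_k(x,y)$ and Lemma~\ref{aux}---and your disjointness argument for the $S_W$ together with the $2$-Lipschitz lower bound $|S_W|\geq d_W(x,y)/4$ for $d_W(x,y)\geq 60$ are both correct. However, your treatment of the borderline range $k\leq d_W(x,y)<60$ contains an unjustified step: you assert that ``every such $W$ satisfies $d_W(x,y)>M$'' in order to invoke Corollary~\ref{C:necessarily witnesses}, but the proposition only assumes $k\geq 24$, and the paper places no upper bound on the BGIT constant $M$ (only $M\geq 8$; indeed Theorem~\ref{dist} is stated for $k\geq\max\{M,24\}$, suggesting $M$ may exceed $24$). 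If $M\geq 24$ there can be witnesses with $k\leq d_W(x,y)\leq M$ for which $\partial W$ need not lie on any $\C(\dot S)$--geodesic, so Lemma~\ref{L:distance witnesses} gives you no control over their number.

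The paper's proof sidesteps this by never invoking the Bounded Geodesic Image Theorem at all. It orders $\Omega_k(x,y)=\{W_1<\cdots<W_m\}$, defines $i_j$ as the last index with $d_{W_j}(x_{i_j},y)\geq 14$, shows $i_1<\cdots<i_{m-1}$ via Lemma~\ref{aux}, and then combines the $2$-Lipschitz bound with a second application of Lemma~\ref{aux} to get $i_j-i_{j-1}\geq\tfrac12(d_{W_j}(x,y)-23)$ for every $j$. Since $k\geq 24$ makes $d_{W_j}(x,y)-23\geq\tfrac{1}{24}d_{W_j}(x,y)$, the telescoping sum $\sum_j(i_j-i_{j-1})=n$ handles \emph{all} proper witnesses in $\Omega_k(x,y)$ uniformly, with no case split and no appeal to $M$. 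Your argument can be salvaged by adding the hypothesis $k>M$ (harmless for Theorem~\ref{dist}), but as written it does not prove the proposition for all $k\geq 24$.
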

\begin{proof} Let  $[x,y]$ be a geodesic between $x,y$ in $\sC$, and denote its vertices
\[ x = x_0,x_1,\ldots,x_{n-1},x_n = y.\]
So, $n = d^s(x,y)$ is the length of $[x,y]$.
Let $m = |\Omega_k(x,y)|$, suppose $m > 0$, and write
\[\Omega_k(x,y)= \{W_1<W_2<\cdots <W_m\}.\]

For each $1 \leq j < m$ let $0 \leq i_j \leq n$ be such that $d_{W_j}(x_{i_j}, y) \geq 14$ and $d_{W_j}(x_{\ell}, y) \leq 13$ for all $\ell > i_j$.  That is, $x_{i_j}$ is the last vertex $z \in [x,y]$ for which $d_{W_j}(z,y) \geq 14$.  Then, if $j'>j$, so $W_j < W_{j'}$, Lemma~\ref{aux} implies $d_{W_{j'}}(x_{i_j}, x) \leq 8$ and so
\[ d_{W_{j'}}(x_{i_j},y) \geq d_{W_{j'}}(x,y) - d_{W_{j'}}(x_{i_j},x) \geq k - 8 \geq 24-8 = 16.\]
Since the projection $\pi_{W_{j'}}$ is $2$--Lipschitz (see Proposition~\ref{P:2 Lipschitz}) and $x_{i_j}$ and $x_{i_j+1}$ are distance $1$ in $\sC$, we have
\[ d_{W_{j'}}(x_{i_j+1},y) \geq d_{W_{j'}}(x_{i_j},y) - d_{W_{j'}}(x_{i_j},x_{i_j+1}) \geq 16-2 \geq 14.\]
Therefore, $i_1<i_2<\cdots <i_{m-1}$. Set $i_0 = 0$ and $i_m = n$.

Given $1 \leq j < m$, $d_{W_j}(x_{i_j+1},y) \leq 13$ and again appealing to Proposition~\ref{P:2 Lipschitz}, we have
\[ d_{W_j}(x_{i_j},y) \leq d_{W_j}(x_{i_j},x_{i_j+1}) + d_{W_j}(x_{i_j+1},y) \leq 2+ 13 \leq 15.\]
Observe this inequality is trivially true for $j = m$ since $y = x_n = x_{i_m}$ and so the left hand side is at most $2$ in this case.
Another application of Lemma~\ref{aux} implies $d_{W_j}(x,x_{i_{j-1}}) \leq 8$ for all $1 \leq j \leq m$ (the case $j = 1$ is similarly trivial).  Therefore
\begin{equation} \label{E:for the lower bound} d_{W_j}(x_{i_{j-1}},x_{i_j}) \geq d_{W_j}(x,y) - d_{W_j}(x,x_{i_{j-1}}) - d_{W_j}(x_{i_j},y) \geq d_{W_j}(x,y) - 23,
\end{equation}
for all $1 \leq j \leq m$.

Appealing one more time to Proposition~\ref{P:2 Lipschitz}, together with Inequality~(\ref{E:for the lower bound}), we have
\[ d^s(x, y) = n = \sum_{j=1}^m i_j-i_{j-1} \geq \frac12 \sum_{j=1}^m d_{W_j}(x_{i_{j-1}},x_{i_j})  \geq \frac{1}{2}\sum_{j=1}^{m}(d_{W_j}(x,y)-23) \]

Next, observe that since $d_{W_j}(x,y)\geq k \geq 24$ we have
\[d_{W_j}(x,y)-23 \geq   \frac{1}{24}d_{W_j}(x,y).\]
Since $\C^s(\dot S) \subset \C(\dot S)$ is a subcomplex, we have $d^s(x,y) \geq d_{\dot S}(x,y)$ and so
\[ 2 d^s(x,y) \geq d_{\dot S}(x,y)  +  \frac1{48} \sum_{W \in \Omega_k(x,y)} d_W(x,y)  \geq \frac1{48} \sum_{W \in \Omega(\dot S)} \lcut d_W(x,y) \rcut_k.\]
\end{proof}

\begin{proof}[Proof of Theorem \ref{dist}]
Given $k\geq \max\{M,24\}$, let $K = \max\{2k,96\}$ and $C = 2k^2$.  The theorem then follows from Corollary~\ref{L:upper bound formula}  and Proposition~\ref{lowerbnd}.
\end{proof}
As a consequence of the Theorem \ref{dist} we have the following two facts.

\begin{corollary} \label{C:witnesses qi embed} Given a witness  $W\subset S$, the inclusion map $\C(W)\hookrightarrow \sC$ is a quasi-isometric embedding.
\end{corollary}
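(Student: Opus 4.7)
The plan is to derive the quasi-isometric embedding directly from the distance formula of Theorem~\ref{dist}, obtaining the upper bound from an essentially tautological observation and the lower bound from the fact that the sum in the distance formula dominates any single summand.

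For the easy direction, I would first verify that $\C(W)$ genuinely sits inside $\C^s(\dot S)$ as a subcomplex: when $W = \dot S$ there is nothing to check, and when $W$ is a proper witness every curve in $W$ is disjoint from the simple closed curve $\partial W$, which itself bounds a twice-punctured disk containing the $z$-puncture; hence no curve of $\C(W)$ can bound such a disk, so $\C(W) \subset \C^s(\dot S)$. Any geodesic of $\C(W)$ then gives an edge-path of the same length in $\C^s(\dot S)$, yielding $d^s(\alpha,\beta) \leq d_W(\alpha,\beta)$ for all $\alpha,\beta \in \C_0(W)$.

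For the lower bound, fix $k \geq \max\{M,24\}$ and apply Theorem~\ref{dist} with constants $K,C$, noting that every term in the sum is non-negative. Then for $\alpha,\beta \in \C_0(W)$,
\[
\lcut d_W(\alpha,\beta)\rcut_k \;\leq\; \sum_{W' \in \Omega(\dot S)} \lcut d_{W'}(\alpha,\beta)\rcut_k \;\leq\; K\, d^s(\alpha,\beta) + C.
\]
If $d_W(\alpha,\beta) \geq k$ the left-hand side equals $d_W(\alpha,\beta)$, giving $d_W(\alpha,\beta) \leq K\,d^s(\alpha,\beta)+C$; and if $d_W(\alpha,\beta) < k$ the same inequality holds with $C$ replaced by $\max\{C,k\}$. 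Combining with the upper bound yields
\[
d^s(\alpha,\beta) \;\leq\; d_W(\alpha,\beta) \;\leq\; K\,d^s(\alpha,\beta) + \max\{C,k\},
\]
which is exactly the quasi-isometric embedding condition on vertex sets; extending to the $1$-skeleton via the standard convention on simplicial path metrics is immediate.

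There is essentially no main obstacle here: all the substantive work is encoded in Theorem~\ref{dist}. The only point that requires a brief check is confirming that $\C(W)$ really is a subcomplex of $\C^s(\dot S)$ (handled in the first paragraph above), and that the definition of $d_W$ on vertices of $\C(W)$ agrees with the intrinsic $\C(W)$-distance, which is immediate from the definition of $\pi_W$ when restricted to curves already contained in $W$.
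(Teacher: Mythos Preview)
Your proof is correct and follows exactly the approach the paper intends: the corollary is stated immediately after Theorem~\ref{dist} with no proof beyond ``As a consequence of Theorem~\ref{dist}'', and you have simply unpacked that consequence by taking the single $W$--term from the sum for the lower bound and using the subcomplex inclusion for the upper bound. One small caveat: the case $W=\dot S$ does not literally fit the inclusion statement (since $\C^s(\dot S)\subset \C(\dot S)$ rather than the reverse), but this is an imprecision in the paper's formulation rather than in your argument, and the corollary is only ever applied to proper witnesses.
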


\begin{corollary}\label{qd}Survival paths are uniform quasi-geodesics in $\sC$.

\end{corollary}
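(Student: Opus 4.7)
My plan is to combine the upper bound on survival path lengths implicit in Corollary~\ref{L:upper bound formula} with Theorem~\ref{dist}, and then use Lemma~\ref{L:subpaths of survival paths} to control arbitrary subpaths.

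I will first observe that every survival path has length at most linear in the $d^s$-distance between its endpoints. That is, there exist uniform $K_0 \geq 1$ and $C_0 \geq 0$ such that for any $p,q \in \sC$ and any survival path $\sigma(p,q)$,
\[ \mathrm{length}(\sigma(p,q)) \leq K_0 \, d^s(p,q) + C_0. \]
Indeed, the proof of Corollary~\ref{L:upper bound formula} bounds $\mathrm{length}(\sigma(p,q))$ above by $2k^2 + 2k \sum_{W \in \Omega(\dot S)} \lcut d_W(p,q) \rcut_k$ for any $k > M$, and Theorem~\ref{dist} in turn bounds this sum by $K(d^s(p,q) + C)$.

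Next I will deduce the quasi-geodesic condition for subpaths. Fix $\sigma(x,y)$ and vertices $u, v$ on it with $u$ preceding $v$; let $L$ denote the length of the subpath between them. The reverse inequality $d^s(u,v) \leq L$ is automatic, so it suffices to establish $L \leq K_1 \, d^s(u,v) + C_1$ with uniform $K_1, C_1$. If $L \leq 2M$ the bound is immediate, so assume $L > 2M$. I will adjust each of $u, v$ by at most $2M$ along $\sigma(x,y)$: if $u$ lies on a witness geodesic segment $[p,q]_W \subset \sigma(x,y)$ and the subsegment of $\C(W)$ between $u$ and $v$ inside the subpath has length less than $2M$, replace $u$ by the $v$-side endpoint of that segment; leave $u$ unchanged if it lies on the main geodesic or if the subsegment already has length at least $2M$. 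Apply the symmetric rule to $v$. Let $u', v'$ denote the adjusted endpoints. By Lemma~\ref{L:subpaths of survival paths} the subpath from $u'$ to $v'$ is itself a survival path $\sigma(u',v')$, whose length $L'$ satisfies $L' \leq K_0 \, d^s(u',v') + C_0$ by the first step. Since each adjustment changes $L$ by less than $2M$ and shifts the endpoint by less than $2M$ in $\sC$, the triangle inequality gives $L \leq L' + 4M \leq K_0 \, d^s(u,v) + (C_0 + 4M(K_0+1))$.

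The main technical obstacle will be verifying that the adjustment genuinely produces endpoints meeting the hypotheses of Lemma~\ref{L:subpaths of survival paths}. The one delicate case is when $u$ and $v$ lie on a common witness segment $[p,q]_W$: here the relevant subsegment of $\C(W)$ has length $L = d_W(u,v)$, so the assumption $L > 2M$ means the lemma applies with no adjustment needed at all. In every other configuration, each unilateral adjustment places the corresponding endpoint either on the main geodesic or at a point leaving a witness subsegment of length at least $2M$, exactly as Lemma~\ref{L:subpaths of survival paths} demands.
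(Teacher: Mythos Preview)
Your proof is correct and is essentially the elaboration the paper has in mind. The paper states Corollary~\ref{qd} as an immediate consequence of Theorem~\ref{dist} without supplying a proof; your argument fills in the details in the natural way, combining the length bound from the proof of Corollary~\ref{L:upper bound formula}, the distance formula, and Lemma~\ref{L:subpaths of survival paths} to control arbitrary subpaths, with the endpoint adjustment by at most $2M$ handling the edge cases cleanly.
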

Moreover, we have
\begin{lemma} \label{L:reparameterize survival} Survival paths can be reparametrized to be uniform quasi-geodesics in $\calC(\dot S)$.
\end{lemma}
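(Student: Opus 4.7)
The plan is to reparametrize $\sigma(x,y)$ by using the main geodesic $[x,y]_{\dot S}$ as the time parameter and compressing each witness geodesic segment into a short parameter interval. The observation that drives the plan is that every vertex of a witness geodesic $[x',y']_W \subset \sigma(x,y)$ is disjoint from $\partial W$, so the entire witness segment lies in the closed $d_{\dot S}$--ball of radius $1$ around $\partial W$ regardless of its $d^s$--length.

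Concretely, write $x = v_0, v_1, \dots, v_n = y$ for the vertices of $[x,y]_{\dot S}$, so $n = d_{\dot S}(x,y)$, and let $W_{j_1}, \dots, W_{j_m}$ be the witnesses for $[x,y]_{\dot S}$, indexed so that $v_{j_i} = \partial W_{j_i}$. Since $x, y \in \C^s(\dot S)$ are not witness boundaries, Lemma~\ref{L:distance witnesses} gives $1 \leq j_1 < \cdots < j_m \leq n-1$ with $j_{i+1} \geq j_i + 2$. I would define $\gamma \colon [0,n] \to \sigma(x,y)$ by letting $\gamma|_{[j_i-1,\, j_i+1]}$ parametrize the witness geodesic $[v_{j_i-1}, v_{j_i+1}]_{W_{j_i}}$ at constant speed, and letting $\gamma|_{[\ell,\, \ell+1]}$ parametrize the edge $(v_\ell, v_{\ell+1})$ of $[x,y]_{\dot S}$ at unit speed whenever neither $v_\ell$ nor $v_{\ell+1}$ is a witness boundary. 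Consecutive witness intervals meet at most at one endpoint, where both rules assign the shared vertex $v_{j_i+1} = v_{j_{i+1}-1}$, and transitions with main-geodesic edges are similarly consistent, so $\gamma$ is well defined.

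To show $\gamma$ is a uniform quasi-isometric embedding into $\C(\dot S)$, I would associate to each $s \in [0,n]$ an index $i(s)$, setting $i(s) = j_i$ when $s \in [j_i - 1, j_i + 1]$ and $i(s) = \lfloor s \rfloor$ otherwise. By construction, $|i(s) - s| \leq 1$, and the observation above gives $d_{\dot S}(\gamma(s), v_{i(s)}) \leq 1$. Since $v_0, \dots, v_n$ is a $\C(\dot S)$--geodesic, $d_{\dot S}(v_{i(s)}, v_{i(t)}) = |i(t) - i(s)|$, and two applications of the triangle inequality yield
\[ \bigl| d_{\dot S}(\gamma(s), \gamma(t)) - |t - s| \bigr| \leq 4 \]
for all $s, t \in [0,n]$, exhibiting $\gamma$ as a $(1,4)$--quasi-isometric embedding with constants independent of $x$ and $y$.

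I do not foresee any genuine obstacle here. The geometric content reduces to the single observation that each witness geodesic has $\C(\dot S)$--diameter at most $2$; the remainder is bookkeeping, with the only care needed being consistency of the reparametrization at the junctions between witness intervals and main-geodesic edges.
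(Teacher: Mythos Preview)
Your proposal is correct and follows essentially the same approach as the paper: compress each witness geodesic segment into a parameter interval of length $2$ (replacing the original length-$2$ piece $x',\partial W,y'$ of $[x,y]_{\dot S}$) and use the fact that the witness segment lies in the $1$--neighborhood of $\partial W$ in $\C(\dot S)$. The paper's proof is terser and omits the explicit bookkeeping, but your detailed verification yielding the $(1,4)$ constants is a faithful fleshing-out of exactly that argument.
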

\begin{proof}  Let $\sigma(x,y)$ be a survival path with main geodesic $[x,y]_{\dot S}$.  For every proper witness $W \subsetneq \dot S$, if there is a $W$--witness geodesic segment in $\sigma(x,y)$, we reparametrize along this segment so that it is traversed along an interval of length $2$.  Since such $W$--witness geodesic segments replaced geodesic subsegments of $[x,y]_{\dot S}$ of length $2$, and since they lie in the $1$--neighborhood of $\partial W$, this clearly defines the required reparametrization.
\end{proof}

\begin{corollary} \label{C:infinite survival qgeod} Any infinite survival path is a uniform quasi-geodesic.
\end{corollary}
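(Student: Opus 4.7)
The plan is to reduce the statement to the finite case handled by Corollary~\ref{qd}, using the exhaustion provided by Proposition~\ref{P:exhaustion by survival}. The key point is that both the $\sC$--distance $d^s(p,q)$ between two points on an infinite survival path $\sigma$ and the arclength parameter along $\sigma$ between them depend only on an arbitrarily large finite sub-survival-path of $\sigma$ containing both points.

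Concretely, let $\sigma$ be an infinite survival path (ray or line), parametrized by arclength in the $1$--skeleton of $\sC$. Let $(K,C)$ be uniform quasi-geodesic constants for finite survival paths, as provided by Corollary~\ref{qd}. By Proposition~\ref{P:exhaustion by survival}, we may write $\sigma = \bigcup_n \sigma_n$ as an increasing union of finite survival paths. Given two points $p,q \in \sigma$, choose $n$ large enough that $p,q \in \sigma_n$. The subpath of $\sigma$ from $p$ to $q$ is literally the subpath of $\sigma_n$ from $p$ to $q$, so the parameter length $\ell(p,q)$ agrees whether measured along $\sigma$ or along $\sigma_n$. Applying the $(K,C)$--quasi-geodesic inequality for $\sigma_n$ to the pair $p,q$ yields
\[
\tfrac{1}{K}\,\ell(p,q) - C \;\leq\; d^s(p,q) \;\leq\; K\,\ell(p,q) + C,
\]
and since this holds for every $p,q \in \sigma$ with constants independent of $\sigma$ and of the chosen exhaustion, $\sigma$ itself is a $(K,C)$--quasi-geodesic in $\sC$.

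I do not anticipate a substantive obstacle; the whole argument is a purely formal bootstrap from the finite to the infinite case, whose only real input is that the constants in Corollary~\ref{qd} are genuinely uniform, i.e.\ independent of the endpoints of the finite survival path in question. No additional appeal to hyperbolicity of $\sC$, the distance formula of Theorem~\ref{dist}, or the bounded geodesic image theorem is required beyond what is already absorbed in Corollary~\ref{qd}.
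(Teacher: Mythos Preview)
Your proof is correct and follows exactly the same approach as the paper, which simply states that the result is immediate from Corollary~\ref{qd} and Proposition~\ref{P:exhaustion by survival}. You have merely spelled out the details of this immediate deduction.
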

\begin{proof} This is immediate from Corollary~\ref{qd} and Proposition~\ref{P:exhaustion by survival}.
\end{proof}

\section{Boundary of the surviving curve complex $\sC$} \label{S:boundary}

Recall that we denote the disjoint union of ending lamination spaces of all witnesses by
\[\calE\calL^s(\dot S):=\bigsqcup_{W \in \Omega(\dot S)} \calE \calL(W).\]
We call this the \textit{space of surviving ending laminations} of $\dot S$, and give it the coarse Hausdorff topology.

In this section we will prove Theorem~\ref{survivalendinglam} from the introduction.  In fact, we will prove the following more precise version, that will be useful for our purposes.


\begin{theorem} \label{T:boundary ending precise} There exists a homeomorphism $\calF \colon \partial \C^s(\dot S) \to \EL^s(\dot S)$ such that for any sequence $\{\alpha_n\} \subset \C^s(\dot S)$, $\alpha_n \to x$ in $\bar \C^s(\dot S)$ if and only if $\alpha_n \xrightarrow{\text{CH}} \calF(x)$.
\end{theorem}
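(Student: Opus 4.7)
The plan is to adapt Klarreich's theorem (Theorem~\ref{T:Klarreich}) to the surviving setting, using the distance formula (Theorem~\ref{dist}) and the fact that each inclusion $\C(W) \hookrightarrow \sC$ is a quasi-isometric embedding (Corollary~\ref{C:witnesses qi embed}) as substitutes for the Masur--Minsky tools in the classical case. Fix a basepoint $o \in \C^s_0(\dot S)$ throughout.

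To define $\calF$, let $x \in \partial\sC$ and let $\{\alpha_n\} \subset \C^s_0(\dot S)$ satisfy $\alpha_n \to x$. Since $d^s(o,\alpha_n) \to \infty$ and $\langle \alpha_n,\alpha_m\rangle_o \to \infty$ as $n,m \to \infty$, Theorem~\ref{dist} forces the existence of a witness $W \in \Omega(\dot S)$ with $d_W(o,\alpha_n)$ unbounded. I first show $W$ is unique. If another witness $W'$ also had $d_{W'}(o,\alpha_n) \to \infty$, then for $n,m$ large both $W, W' \in \Omega_k(o,\alpha_n) \cap \Omega_k(o,\alpha_m)$, and the Behrstock-style ordering from Proposition~\ref{order} together with Lemma~\ref{L:at most one large in 3} would force one of $d_W(\alpha_n,\alpha_m)$ or $d_{W'}(\alpha_n,\alpha_m)$ to stay bounded. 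Via the distance formula this would cap $\langle \alpha_n,\alpha_m\rangle_o$ by a constant, contradicting its divergence. With $W$ identified (and identifying $\pi_{\dot S}$ with the identity), the sequence $\pi_W(\alpha_n)$ is unbounded in $\C(W)$, and the quasi-isometric embedding $\C(W) \hookrightarrow \sC$ combined with $\alpha_n \to x$ in $\bar\sC$ implies $\pi_W(\alpha_n)$ converges to a point $\bar x \in \partial\C(W)$. Klarreich's theorem for $\C(W)$ then gives a unique $\calL \in \EL(W) \subset \EL^s(\dot S)$; set $\calF(x) := \calL$. Independence of the choice of sequence is verified by interleaving two such sequences into a single sequence converging to $x$ and applying the uniqueness of $W$ and $\bar x$.

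Bijectivity and the convergence criterion follow from the same ingredients. A quasi-isometric embedding of Gromov hyperbolic spaces induces a topological embedding of Gromov boundaries, so $\calF$ is injective: if $\calF(x) = \calF(y) = \calL \in \EL(W)$, both $x$ and $y$ are the image in $\partial\sC$ of the Klarreich preimage of $\calL$ in $\partial\C(W)$, hence $x = y$. For surjectivity, any $\calL \in \EL(W)$ is the CH-limit of some sequence $\gamma_n \in \C(W)_0$; by Klarreich $\{\gamma_n\}$ converges to a point of $\partial\C(W)$, which the quasi-isometric embedding carries to some $x \in \partial\sC$ with $\calF(x) = \calL$. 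If $\alpha_n \to x$ in $\bar\sC$ with $\calF(x) = \calL \in \EL(W)$, then by construction $\pi_W(\alpha_n)$ converges in $\bar\C(W)$ to the Klarreich preimage of $\calL$, hence $\pi_W(\alpha_n) \xrightarrow{\text{CH}} \calL$ by Klarreich, and Lemma~\ref{L:proj and conv} (trivial when $W = \dot S$) upgrades this to $\alpha_n \xrightarrow{\text{CH}} \calL$. Conversely, if $\alpha_n \xrightarrow{\text{CH}} \calL \in \EL(W)$, Lemma~\ref{L:proj and conv} provides $\pi_W(\alpha_n) \xrightarrow{\text{CH}} \calL$, Klarreich then gives convergence in $\bar\C(W)$, and the quasi-isometric embedding yields $\alpha_n \to \calF^{-1}(\calL)$ in $\bar\sC$.

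With the ``if and only if'' criterion in hand, $\calF$ is a homeomorphism by a standard diagonal argument: given $x_n \to x$ in $\partial\sC$, choose for each $n$ a vertex $\beta_n \in \C^s_0(\dot S)$ close enough to $x_n$ (in the Gromov product sense) that $\beta_n \to x$ in $\bar\sC$ while $\beta_n$ still CH-approximates $\calF(x_n)$; the criterion forces $\beta_n \xrightarrow{\text{CH}} \calF(x)$, so $\calF(x_n) \xrightarrow{\text{CH}} \calF(x)$, and continuity of $\calF^{-1}$ is the symmetric statement. The main obstacle is the uniqueness of the witness $W$: one must rule out the possibility that distinct subsequences of $\{\alpha_n\}$ asymptotically concentrate in different witnesses, and this is where the ordering on large-projection witnesses from Section~\ref{S:distance formula}, combined with Behrstock's inequality (Lemma~\ref{Behr}) and the distance formula, must be assembled carefully.
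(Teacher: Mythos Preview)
Your overall strategy—identify a single witness $W$ for the limit, then invoke Klarreich's theorem for $\C(W)$ and transfer via Lemma~\ref{L:proj and conv}—is exactly the right plan, and it matches the paper's route. But two of your key steps do not go through as written.

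First, the existence and uniqueness of the witness $W$ with $d_W(o,\alpha_n)\to\infty$. The distance formula (Theorem~\ref{dist}) does \emph{not} by itself produce a single witness along the whole sequence: it only says that for each $n$ the sum $\sum_W \lcut d_W(o,\alpha_n)\rcut_k$ is large, which is compatible with the dominant witness changing with $n$. The paper devotes Lemma~\ref{L:bijection} (and its internal Claim) to extracting a fixed $W_\infty$, splitting into the cases $d_{\dot S}(o,\alpha_n)\to\infty$ versus bounded, and in the latter case using that there are at most $R/2$ witnesses along $[\,o,\alpha_n\,]_{\dot S}$. Your uniqueness argument also has a sign error: if $d_W(\alpha_n,\alpha_m)$ stays bounded while $d_W(o,\alpha_n),d_W(o,\alpha_m)\to\infty$, this makes $\langle\alpha_n,\alpha_m\rangle_o$ \emph{large}, not small, so you cannot ``cap'' the Gromov product this way. (A correct uniqueness argument is easy once you truly have two witnesses with projections going to infinity along the full sequence: if $W<W'$ in $\Omega_k(o,\alpha_n)$ then $d_W(\alpha_n,\partial W')\le 4$, forcing $d_W(o,\alpha_n)\le d_W(o,\partial W')+4$, a fixed bound.) You acknowledge this obstacle at the end but do not resolve it.

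Second, the quasi-isometric embedding $\C(W)\hookrightarrow\sC$ alone does not give you that $\pi_W(\alpha_n)$ converges in $\bar\C(W)$ when $\alpha_n\to x$ in $\bar\sC$, nor the converse. A q.i.\ embedding induces a boundary embedding, but $\alpha_n$ need not lie in $\C(W)$; you need that $\pi_W$ behaves like a coarse nearest-point projection, i.e., that $d^s(\alpha_n,\pi_W(\alpha_n))$ is controlled once $d_W(o,\alpha_n)$ is large. This is precisely the content of Lemma~\ref{L:proj and conv2} in the paper, proved via survival paths and Lemma~\ref{L:witnessLength}, and it is not a formal consequence of Corollary~\ref{C:witnesses qi embed}. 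Your continuity sketch (``$\beta_n$ CH-approximates $\calF(x_n)$'') is also too loose: the paper's proof passes to Hausdorff-convergent subsequences at several stages to make the diagonal argument rigorous, and the implication $\beta_n\xrightarrow{\text{CH}}\calF(x)\Rightarrow\calF(x_n)\xrightarrow{\text{CH}}\calF(x)$ does not follow without saying in what sense $\beta_n$ approximates $\calF(x_n)$.
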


We denote the Gromov product of $\alpha,\beta \in \sC$ based at $o \in \sC$ by $\la \alpha,\beta \ra_o^s$, and recall that the Gromov boundary $\partial \sC$ of $\sC$ is defined to be the set of equivalence classes of sequences $\{\alpha_n\}$ which converge at infinity with respect to $\la \, \, , \, \, \ra_o^s$.  Throughout the rest of this section we will use (without explicit mention) the fact that the Gromov product between a pair of point (in any hyperbolic space) is uniformly estimated by the minimal distance from the basepoint to a point on a uniform quasi-geodesic between the points.

For each proper witness $W \subsetneq \dot S$, Corollary~\ref{C:witnesses qi embed} implies that  $\partial \C(W)$ embeds into $\partial \sC$.  Likewise, Corollary~\ref{C:infinite survival qgeod}, combined together with the fact that $d_{\dot S} \leq d^s$ implies that $\partial \C(\dot S)$ also embeds $\partial \sC$.  Using these embeddings, we view $\partial \C(W)$ as a subspace of $\partial \sC$, for all witnesses $W \subseteq \dot S$.  The next proposition says that the subspaces are all disjoint.
\begin{proposition} \label{P:Witness boundaries embed disjoint} For any two witnesses $W \neq W'$ for $\dot S$, $\partial \C(W) \cap \partial \C(W')= \emptyset$.
\end{proposition}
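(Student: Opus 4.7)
I plan to prove the proposition by contradiction: assume there is some $x \in \partial \C(W) \cap \partial \C(W')$ for distinct witnesses $W \neq W'$, and deduce a contradiction by showing that the relevant Gromov products are bounded rather than tending to infinity. Up to relabeling we may assume $W$ is a \emph{proper} witness (since $W \neq W'$ forces at least one to be proper). Fix a basepoint $o \in \sC$ and choose sequences $\{\alpha_n\} \subset \C(W)_0$ with $\alpha_n \to x$ in $\partial \C(W)$, and $\{\beta_n\} \subset \sC$ with $\beta_n \to x$ in $\partial \sC$ coming from the embedding $\partial \C(W') \hookrightarrow \partial \sC$. If $W'$ is proper, take $\beta_n \in \C(W')_0$; if $W' = \dot S$, take $\beta_n$ to be consecutive vertices of an infinite survival ray built from a $\C(\dot S)$--geodesic ray to the point $y \in \partial \C(\dot S)$ corresponding to $x$. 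By assumption $\la \alpha_n, \beta_n \ra_o^s \to \infty$, and my goal is to contradict this.

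The key estimate is that $\pi_W(\beta_n)$ is uniformly bounded in $\C(W)$. When $W'$ is proper, $\beta_n$ is disjoint from $\partial W'$, so $d_W(\beta_n, \partial W') \leq 2$ by Proposition~\ref{P:2 Lipschitz}, and $\pi_W(\partial W')$ is a fixed set. When $W' = \dot S$, each $\beta_n$ is either a vertex of the main $\C(\dot S)$--geodesic ray, or lies on an inserted $\C(U)$--geodesic segment for some proper witness $U$ with $\partial U$ a vertex of the main ray. In the first case, Theorem~\ref{BGIT} applied to the relevant subray(s) bounds $\pi_W(\beta_n)$; in the second, $\beta_n$ is disjoint from $\partial U$, so Proposition~\ref{P:2 Lipschitz} and Theorem~\ref{BGIT} (applied to the main ray) bound $\pi_W(\beta_n)$ in terms of $\pi_W(\partial U)$. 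In particular, the potential presence of $\partial W$ itself on the main ray only introduces a single $\C(W)$--segment of bounded diameter, handled by Lemma~\ref{L:witnessLength}. Combined with $\pi_W(\alpha_n) = \alpha_n \to \infty$ in $\C(W)$, we conclude $d_W(\alpha_n, \beta_n) \to \infty$.

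Now I analyze the (finite) survival path $\sigma(\alpha_n,\beta_n)$. For $n$ large so that $d_W(\alpha_n,\beta_n) > M$, Corollary~\ref{C:necessarily witnesses} places $\partial W$ on the main geodesic $[\alpha_n,\beta_n]_{\dot S}$; let $v_n^-, v_n^+$ be its predecessor and successor on the main geodesic, so $\sigma(\alpha_n,\beta_n)$ contains the $\C(W)$--geodesic segment $[v_n^-, v_n^+]_W$. By Lemma~\ref{L:witnessLength}, $d_W(v_n^-, \alpha_n) \leq M$ and $d_W(v_n^+, \beta_n) \leq M$, so $v_n^-$ diverges in $\C(W)$ while $v_n^+$ remains in a bounded region of $\C(W)$ that also contains $\pi_W(o)$. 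Hyperbolicity of $\C(W)$ then gives a point $p_n \in [v_n^-, v_n^+]_W$ with $d_W(p_n, \pi_W(o))$ bounded by $d_W(v_n^+, \pi_W(o))$ plus the hyperbolicity constant of $\C(W)$; this bound is independent of $n$.

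Because $\C(W) \hookrightarrow \sC$ is a quasi-isometric embedding (Corollary~\ref{C:witnesses qi embed}), $d^s(p_n, \pi_W(o)) = O(1)$, and hence $d^s(p_n, o) \leq d^s(p_n, \pi_W(o)) + d^s(\pi_W(o), o) = O(1)$. Since $p_n$ lies on $\sigma(\alpha_n,\beta_n)$, and survival paths are uniform quasi-geodesics in $\sC$ (Corollary~\ref{qd}, or Corollary~\ref{C:infinite survival qgeod} in the $W' = \dot S$ case), the Gromov product $\la \alpha_n, \beta_n \ra_o^s$ is coarsely equal to the $d^s$--distance from $o$ to $\sigma(\alpha_n, \beta_n)$, which is $O(1)$. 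This contradicts $\la \alpha_n, \beta_n \ra_o^s \to \infty$ and completes the proof. The main obstacle I anticipate is the case $W' = \dot S$, where $\{\beta_n\}$ is constructed from a survival ray rather than being a sequence of curves in a single subsurface, requiring the careful BGI bookkeeping in Step 2 to handle inserted $\C(U)$--segments and the possible appearance of $\partial W$ on the main $\C(\dot S)$--ray.
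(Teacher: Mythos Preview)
Your proof is correct, but it takes a considerably more elaborate route than the paper's. The paper's argument is essentially a two-liner: given any $x \in \partial \C(W)$ and $x' \in \partial \C(W')$ with $W \neq W'$, Lemma~\ref{L:visual on witness boundaries} furnishes a bi-infinite survival line $\sigma(x,x')$ connecting them, and by Corollary~\ref{C:infinite survival qgeod} this line is a uniform quasi-geodesic in $\sC$. Since a bi-infinite quasi-geodesic in a hyperbolic space has two distinct endpoints at infinity, $x \neq x'$ in $\partial \sC$.

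Your argument is a finitary unpacking of the same idea: rather than invoking the pre-packaged bi-infinite survival line, you build finite survival paths $\sigma(\alpha_n,\beta_n)$ and locate on each of them a point $p_n$ at uniformly bounded $d^s$--distance from $o$, forcing the Gromov product to stay bounded. The underlying geometry is identical --- any survival path from deep in $\C(W)$ to deep in $\C(W')$ must traverse a fixed bounded region of $\sC$ --- but you re-derive this from projection estimates and BGIT rather than quoting Lemma~\ref{L:visual on witness boundaries}. What the paper's approach buys is brevity and uniform treatment of the two cases; what yours buys is independence from the infinite survival path machinery. A couple of minor expository points: the appeal to Lemma~\ref{L:witnessLength} to bound the inserted $\C(W)$--segment on the survival ray (in the $W' = \dot S$ case) is not quite apt, since that lemma compares to $d_W$ of \emph{finite} endpoints, but the conclusion you actually need --- that only finitely many $\beta_n$ lie on that one fixed segment --- is immediate anyway. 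Likewise, invoking hyperbolicity of $\C(W)$ to produce $p_n$ is unnecessary, since simply taking $p_n = v_n^+$ already lies on $\sigma(\alpha_n,\beta_n)$ and has bounded $d_W$--distance to $\pi_W(o)$. Neither point affects correctness.
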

\begin{proof} Let $x\in \partial \C(W)$ and $x'\in \partial \C(W')$. Then by Lemma~\ref{L:visual on witness boundaries}, there is a bi-infinite survival path $\sigma(x,x')$ and by Corollary~\ref{C:infinite survival qgeod} this survival path is a quasi geodesic. Hence $x\neq x'$.
\end{proof}

We now have a natural inclusion of the disjoint union of Gromov boundaries
\[ \bigsqcup_{W \in \Omega(\dot S)} \partial \C(W) \subset \partial \sC.\]
In fact, this disjoint union accounts for the entire Gromov boundary.
\begin{lemma} \label{L:bijection} We have
\[ \bigsqcup_{W \in \Omega(\dot S)} \partial \C(W) = \partial \sC.\]
\end{lemma}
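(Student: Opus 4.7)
The plan is to show every $x \in \partial \sC$ lies in $\partial \C(W)$ for some $W \in \Omega(\dot S)$. Given such $x$, I would pick a representative sequence $\alpha_n \to x$ with basepoint $o$, and consider the survival paths $\sigma_n = \sigma(o, \alpha_n)$. These are uniform quasi-geodesics by Corollary~\ref{qd}. My intent is to extract an infinite survival path $\sigma_\infty$ issuing from $o$ that represents $x$ at infinity, then read off the appropriate witness $W$ from the terminal behavior of $\sigma_\infty$.

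For the extraction, since $\langle \alpha_m, \alpha_n \rangle_o^s \to \infty$, the initial length-$T$ segments of the $\sigma_n$ pairwise fellow-travel once $n$ is large (depending on $T$). Passing to subsequences and diagonalizing in $T$, I can extract an infinite survival path $\sigma_\infty$ to which the $\sigma_n$ converge on compact initial subsegments. Controlling this requires tracking the combinatorial data of such an initial segment---its main-geodesic vertex sequence, the proper witnesses whose boundaries occur there, and the lengths of the witness geodesic segments---and for this the Bounded Geodesic Image Theorem (Theorem~\ref{BGIT}) together with Lemma~\ref{L:witnessLength} provide the necessary bounds in terms of $T$. By Corollary~\ref{C:infinite survival qgeod}, $\sigma_\infty$ is itself a uniform quasi-geodesic in $\sC$, and by construction its endpoint at infinity is $x$.

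Next, I would use the structural dichotomy for infinite survival paths from the end of Section~\ref{S:survival paths}: either (a) the main geodesic of $\sigma_\infty$ is finite, and $\sigma_\infty$ terminates in an infinite geodesic ray in $\C(W_*)$ for some proper witness $W_* \subsetneq \dot S$, or (b) the main geodesic of $\sigma_\infty$ is itself an infinite $\C(\dot S)$-ray with only finite witness segments spliced between kept main-geodesic vertices. In case (a), the terminal $\C(W_*)$-ray converges to a point $y \in \partial \C(W_*)$, and the quasi-isometric embedding $\C(W_*) \hookrightarrow \sC$ from Corollary~\ref{C:witnesses qi embed} carries $y$ to the endpoint of $\sigma_\infty$ in $\partial \sC$, giving $x = y \in \partial \C(W_*)$. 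In case (b), the kept main-geodesic vertices all lie in $\sC$ (the vertices removed by the survival-path construction are precisely the $\partial W$ for proper witnesses $W$, which are exactly the vertices of $\C(\dot S)$ that do not belong to $\sC$), and by Lemma~\ref{L:distance witnesses} at most every other main-geodesic vertex is removed, so these kept vertices form a quasi-geodesic sequence in $\C(\dot S)$ converging to a point $y \in \partial \C(\dot S)$ while also being a sequence along the uniform quasi-geodesic $\sigma_\infty$ converging to $x$ in $\partial \sC$. Since $d^s \geq d_{\dot S}$, distinct $y, y' \in \partial \C(\dot S)$ yield distinct $x, x' \in \partial \sC$, giving a well-defined injection $\partial \C(\dot S) \hookrightarrow \partial \sC$ under which $x$ corresponds to $y$, so $x \in \partial \C(\dot S)$---the $W = \dot S$ case. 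Combined with Proposition~\ref{P:Witness boundaries embed disjoint} for disjointness, this yields the lemma.

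The main obstacle is the subsequence/diagonal construction of $\sigma_\infty$ in the second paragraph. Since $\sC$ is not locally finite, one cannot directly apply Arz\`ela--Ascoli to the sequence $\{\sigma_n\}$; instead, the piecewise structure of survival paths (main $\C(\dot S)$-geodesic plus witness geodesic segments in various $\C(W)$) must be used to extract coherent data stabilizing along a subsequence, with the Bounded Geodesic Image Theorem ensuring that only finitely many witnesses can contribute substantial projection distances on any bounded initial portion.
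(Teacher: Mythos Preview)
Your dichotomy---case (a) a terminal witness ray, case (b) an infinite main geodesic---is the right one and mirrors the paper's two cases. The genuine gap is the extraction of $\sigma_\infty$, which you flag but do not resolve. Fellow-traveling of initial length-$T$ segments of the $\sigma_n$ does follow from hyperbolicity and Corollary~\ref{qd}, but this gives only Hausdorff-closeness in $\sC$; it does not force the main-geodesic vertices in $\C(\dot S)$ or the witness segments to stabilize along any subsequence, since neither $\C(\dot S)$ nor any $\C(W)$ is locally finite and no Arzel\`a--Ascoli or diagonal argument is available. Your observation that only finitely many witnesses contribute on a bounded initial portion bounds their \emph{number} on each $[o,\alpha_n]_{\dot S}$ (via Lemma~\ref{L:distance witnesses}) but not their identity; a priori different $n$ could see entirely disjoint collections of witnesses, and nothing you have written singles out $W_*$.

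The paper sidesteps this by never constructing a limiting path. It takes $\{\alpha_n\}$ itself to be a quasi-geodesic with $\alpha_0=o$ and splits on whether $d_{\dot S}(o,\alpha_n)\to\infty$. If so, a direct estimate (using $d_{\dot S}\le d^s$ and the fact that $\sigma(\alpha_n,\alpha_m)$ stays uniformly close to the quasi-geodesic) shows $\langle\alpha_n,\alpha_m\rangle_o\to\infty$ in $\C(\dot S)$, giving $x\in\partial\C(\dot S)$. If instead $d_{\dot S}(o,\alpha_n)\le R$, the key step is a \emph{persistence claim}: once $d_W(o,\alpha_n)$ exceeds an explicit threshold (roughly $M+2(h+1)$ where $h$ is the fellow-traveling constant), $\partial W$ must lie on $[o,\alpha_m]_{\dot S}$ for every $m\ge n$. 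Since at most $R/2$ witnesses can appear on any such geodesic, the set of persistent witnesses stabilizes to a finite set $\Omega_\infty$, and the one whose boundary is furthest from $o$ along the main geodesic is the terminal witness $W_\infty$. One then checks that the tail of $\sigma(o,\alpha_n)$ beyond its last vertex in $\C(W_\infty)$ has uniformly bounded length, so $\alpha_n$ stays boundedly close to a sequence in $\C(W_\infty)$ and hence $x\in\partial\C(W_\infty)$. This persistence-and-stabilization argument is exactly the missing ingredient your extraction step would need.
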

\begin{proof} Let $x\in  \partial \sC$ and $\alpha_n\rightarrow x\in \partial\sC$, and we assume without loss of generality that $\{\alpha_n\}$  is a quasi-geodesic in $\C^s(\dot S)$ and that the first vertex is the basepoint $\alpha_0 = o$.  If $d_{\dot S}(\alpha_n, o)\rightarrow \infty $ as $n \rightarrow \infty$, then given $R >0$, let $N > 0$ be such that $d_{\dot S}(\alpha_n,o) \geq R$ for all $n \geq N$.  For any $m \geq n \geq N$, the subsegment of the quasi-geodesics, $\{\alpha_n,\alpha_{n+1},\ldots,\alpha_m\}$, is some uniformly bounded distance $D$ to $\sigma(\alpha_n,\alpha_m)$ in $\C^s(\dot S)$, by hyperbolicity and Corollary~\ref{qd}.  Therefore, the distance in $\C(\dot S)$ from any point of $\sigma(\alpha_n,\alpha_m)$ to $o$ is at least $R - D$.  So the distance from any point of $[\alpha_n,\alpha_m]_{\dot S}$ to $o$ is at least $R - D - 1$.  Letting $R \to \infty$, it follows that  $\la \alpha_n, \alpha_m\ra_o \rightarrow \infty$ in $\Cdot$.  Consequently,  $\{\alpha_n\}$ converges to a point in $\partial \C(\dot S)$, so $x \in \partial \C(\dot S)$.   For the rest of the proof, we may assume that $d_{\dot S}(\alpha_n,o)$ is  bounded by some constant $0 < R < \infty$ for all $n$. 


By the distance formula \ref{dist},
\[ d^s(\alpha_0,\alpha_n) \stackrel{K,C}{\asymp}  \sum_{W \in \Omega(\dot S)} \lcut d_{W}(\alpha_0, \alpha_n) \rcut_k,\]
and since $d^s(\alpha_0, \alpha_n) \rightarrow \infty$, we can find a witness $W_n$ for $\sigma(\alpha_0,\alpha_n)$ for each $n \in \mathbb N$, so that we have $d_{W_n}(\alpha_0, \alpha_n) \rightarrow \infty$ as $n \to \infty$.

We would like to show that there is a unique witness $W$ such that  $d_W(\alpha_0, \alpha_n)\rightarrow \infty$.
To do that, let $h > 0$ be  the maximal Hausdorff distance in $\C^s(\dot S)$ between $\sigma(\alpha_0,\alpha_n)$ and $\{\alpha_k\}_{k=0}^n$, for all $n \geq 0$ (which is finite by hyperbolicity of $\C^s(\dot S)$ and Corollary~\ref{qd}).
\begin{claim} \label{Claim:large persists} Given $n \in \mathbb N$, if $d_W(\alpha_0,\alpha_n) \geq  M + 1 + 2(h+1)$ for some witness $W \subsetneq \dot S$, then $W$ is a witness for $\sigma(\alpha_0,\alpha_m)$, for all $m \geq n$.
\end{claim}
\begin{proof} Let $\alpha_n' \in \sigma(\alpha_0,\alpha_m)$ be such that $d^s(\alpha_n,\alpha_n') \leq h$.  If $W$ is not a witness for $\sigma(\alpha_0,\alpha_m)$, then every vertex of the main geodesic $[\alpha_0,\alpha_m]_{\dot S}$ of $\sigma(\alpha_0,\alpha_m)$ has nonempty project to $W$.  Furthermore, the geodesic in $\C^s(\dot S)$ from $\alpha_n$ to $\alpha_n'$ of length at most $h$ can be extended to a path in $\C(\dot S$) to $[\alpha_0,\alpha_m]_{\dot S}$ of length at most $h+1$ such that every vertex has nonempty project to $W$.  By Proposition~\ref{P:2 Lipschitz} and the triangle inequality we have,
\[ |d_W(\alpha_0,\alpha_n) - d_W(\alpha_0,\alpha_n')| \leq 2(h+1).\]
If $\alpha_n' \in [\alpha_0,\alpha_m]_{\dot S}$, then since every vertex of this geodesic has nonempty projection to $W$, it follows that $d_W(\alpha_0,\alpha_n') < M$.  If $\alpha_n' \not \in [\alpha_0,\alpha_m]_{\dot S}$, then there is a witness  $W'$ for $[\alpha_0, \alpha_m]_{\dot S}$ such that $d_{\dot S}(\alpha'_n, \partial W')=1$, and as a result $d_W(\alpha_0,\alpha_n') < M+1$. In any case,
\[ d_W(\alpha_0,\alpha_n) \leq d_W(\alpha_0,\alpha_n') + 2(h+1) < M + 1+ 2(h+1),\]
which is a contradiction.  This proves the claim.
\end{proof}

Since $d_{W_n}(\alpha_0,\alpha_n) \to \infty$, there exists $n_0 >0$ such that $d_{W_{n_0}}(\alpha_0,\alpha_{n_0}) \geq 2(h+1) + M + 1$, and hence for all $m \geq n_0$, $W_{n_0}$ is a witness for $\sigma(\alpha_0,\alpha_m)$.
Let $\Omega([\alpha_0,\alpha_n]_{\dot S})$ be the set of proper witnesses for $[\alpha_0,\alpha_n]_{\dot S}$, and set
\[ \Omega_n = \bigcap_{m =n}^\infty \Omega([\alpha_0,\alpha_m]_{\dot S}).\]
Note that $\Omega_n \subset \Omega_{n+1}$ for all $n$ and that $\Omega_n$ is nonempty for all $n \geq n_0$.  Since each $\Omega_n$ contains no more than $R/2$ elements by Corollary~\ref{qd}, the (nested) union $\Omega_\infty$ is given by $\Omega_\infty = \Omega_N$  for some $N \geq n_0$.
The boundaries of the witnesses in $\Omega_\infty$ lie on the geodesic $[\alpha_0,\alpha_m]_{\dot S}$ for all $m \geq N$, and we let $W_\infty \in \Omega_\infty$ be the one furthest from $\alpha_0$.  Without loss of generality, we may assume that $[\alpha_0,\alpha_m]_{\dot S}$ and $[\alpha_0,\alpha_{m'}]_{\dot S}$ all agree on $[\alpha_0,\partial W_\infty]$, for all $m,m' \geq N$.

For any $m \geq N$ and any witness $W$ of $[\alpha_0,\alpha_N]_{\dot S}$ with $\partial W$ {\em further} from $\partial W_\infty$, note that $d _W(\alpha_0,\alpha_m) < M+1+2(h+1)$: otherwise, by Claim~\ref{Claim:large persists} $W$ would be a witness for $[\alpha_0,\alpha_{m'}]_{\dot S}$ for all $m' \geq m$ and so $W \in \Omega_\infty$ with $\partial W$ further from $\alpha_0$ than $\partial W_\infty$, a contradiction to our choice of $W_\infty$.

For any $n \geq N$, let $\beta_n$ be the last vertex of $\sigma(\alpha_0,\alpha_n)$ in $\C(W_\infty)$.  By the previous paragraph together with Theorem~\ref{BGIT} and the bound
\[ d_{\dot S}(\beta_n,\alpha_n) \leq d_{\dot S}(\alpha_0,\alpha_n) \leq R/2,\]
we see that the subpath of $\sigma(\alpha_0,\alpha_n)$ from $\beta_n$ to $\alpha_n$ has length bounded above by some constant $C >0$, independent of $n$.  In particular, $d^s(\alpha_n,\beta_n) \leq C$. Therefore, $\alpha_n$ and $\beta_n$ converge to the same point $x$ on the Gromov boundary of $\C^s(\dot S)$.  Since $\beta_n \in \C(W_\infty)$, which is quasi-isometrically embedded in $\C^s(\dot S)$, it follows that $x \in \partial \C(W_\infty)$, as required.
\end{proof}

The next Lemma provides a convenient tool for deciding when a sequence in $\C^s(\dot S)$ converges to a point in $\partial \C(W)$, for some proper witness $W$.
\begin{lemma}\label{L:proj and conv2} Given $\{\alpha_n\} \subset \sC$ and $x\in \partial \calC(W)$ for some witness $W$, then $\alpha_n {\rightarrow} x$ if and only if $\pi_W(\alpha_n) \rightarrow x$.
\end{lemma}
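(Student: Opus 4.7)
If $W = \dot S$, then $\pi_W$ is the identity on $\C^s_0(\dot S)$ and the conclusion is trivial, so assume $W$ is a proper witness.  Fix a basepoint $o \in \C(W) \subset \sC$.  Recall that $\pi_W(\alpha_n)$ is a subset of $\C(W)$ of diameter at most $2$, so ``$\pi_W(\alpha_n) \to x$'' is unambiguous: a choice of representative vertex converges to $x$ in $\bar \C(W)$, which by Corollary~\ref{C:witnesses qi embed} is equivalent to convergence in $\bar\C^s(\dot S)$.

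For the forward direction, assume $\alpha_n \to x$ in $\bar\C^s(\dot S)$.  The plan is to rerun the argument from the proof of Lemma~\ref{L:bijection} applied to $\{\alpha_n\}$.  Since $x$ lies in $\partial \C(W)$ for a proper witness $W$ and $\partial \C(\dot S) \cap \partial \C(W) = \emptyset$ by Proposition~\ref{P:Witness boundaries embed disjoint}, the case $d_{\dot S}(o, \alpha_n) \to \infty$ considered at the beginning of that proof cannot occur.  The remainder of the argument then produces a proper witness $W_\infty$ and vertices $\beta_n$, defined as the last vertex of $\sigma(o, \alpha_n)$ lying in $\C(W_\infty)$, with $\beta_n \to x$ in $\bar\C^s(\dot S)$ and $x \in \partial \C(W_\infty)$.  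Proposition~\ref{P:Witness boundaries embed disjoint} then forces $W_\infty = W$.  Since $\beta_n$ is the terminal endpoint of the $W$-witness geodesic in $\sigma(o, \alpha_n)$, Lemma~\ref{L:witnessLength} gives $d_W(\beta_n, \alpha_n) \leq M$, hence $d_W(\beta_n, \pi_W(\alpha_n)) \leq M + 2$.  Combined with $\beta_n \to x$ in $\bar \C(W)$ (via Corollary~\ref{C:witnesses qi embed} again), this shows $\pi_W(\alpha_n) \to x$.

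For the backward direction, assume $\pi_W(\alpha_n) \to x$ in $\bar \C(W)$, so $d_W(o, \alpha_n) \to \infty$.  By Corollary~\ref{C:necessarily witnesses}, for $n$ sufficiently large $W$ is a witness for $[o, \alpha_n]_{\dot S}$, so $\sigma(o, \alpha_n)$ contains a $W$-witness geodesic $[x_n', y_n']_W$.  Lemma~\ref{L:witnessLength} yields $d_W(y_n', \pi_W(\alpha_n)) \leq M + 2$, so $y_n' \to x$ in $\bar \C(W)$ and hence in $\bar\C^s(\dot S)$.  Since $y_n'$ lies on the uniform quasi-geodesic $\sigma(o, \alpha_n)$ (Corollary~\ref{qd}), we have $d^s(o, \alpha_n) \asymp d^s(o, y_n') + d^s(y_n', \alpha_n)$, so $\la y_n', \alpha_n \ra_o^s$ is estimated up to uniform additive error by $d^s(o, y_n') \geq d_W(o, y_n') \to \infty$.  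Combining this with $\la y_n', y_m' \ra_o^s \to \infty$ (since $y_n' \to x$) via the iterated $\delta$-hyperbolic estimate
\[ \la \alpha_n, \alpha_m \ra_o^s \succeq \min\{ \la \alpha_n, y_n' \ra_o^s, \la y_n', y_m' \ra_o^s, \la y_m', \alpha_m \ra_o^s \}, \]
we conclude $\la \alpha_n, \alpha_m \ra_o^s \to \infty$, so $\{\alpha_n\}$ converges at infinity in $\sC$ to the same point as $\{y_n'\}$, namely $x$.  The main technical point is the Gromov-product bookkeeping above; otherwise the argument is a direct application of the survival-path toolkit developed in Sections~\ref{S:survival paths}--\ref{S:boundary}.
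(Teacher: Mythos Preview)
Your backward direction is essentially the same as the paper's and is fine, modulo one slip: the inequality $d^s(o,y_n') \geq d_W(o,y_n')$ is backwards, since $\C(W)$ is a subcomplex of $\C^s(\dot S)$ and hence $d^s \leq d_W$.  What you actually need is $d^s(o,y_n') \to \infty$, and this follows from $d_W(o,y_n') \to \infty$ together with the quasi-isometric embedding of Corollary~\ref{C:witnesses qi embed}; so the conclusion survives, but the stated inequality does not.

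The forward direction, however, has a genuine gap.  The argument in the proof of Lemma~\ref{L:bijection} begins by assuming without loss of generality that $\{\alpha_n\}$ is a quasi-geodesic based at $o$.  That assumption is harmless for the purpose of Lemma~\ref{L:bijection} (locating which $\partial \C(W)$ the limit lies in), but it is \emph{not} harmless here: you must prove $\pi_W(\alpha_n) \to x$ for the given sequence, not for some quasi-geodesic replacement.  Concretely, Claim~\ref{Claim:large persists} (which drives the construction of $W_\infty$ and the $\beta_n$) uses the constant $h$, defined as the maximal Hausdorff distance between $\sigma(o,\alpha_n)$ and the initial string $\{\alpha_k\}_{k=0}^n$; for an arbitrary sequence converging to $x$ this is typically unbounded, and the claim fails.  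You also cannot conclude from $x \notin \partial\C(\dot S)$ that $d_{\dot S}(o,\alpha_n)$ is bounded without the quasi-geodesic hypothesis.  One can try to repair this via a subsequence argument (every subsequence has a further subsequence on which the projections converge to $x$), but extracting a quasi-geodesic subsequence based at $o$ from an arbitrary sequence converging to a boundary point is itself nontrivial and is not what you wrote.

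The paper's forward direction avoids this entirely: it fixes an auxiliary sequence $\{\beta_m\} \subset \C(W)$ with $\beta_m \to x$, looks at the survival paths $\sigma(\alpha_n,\beta_m)$, and observes that the first vertex $\delta_{n,m}$ of $\sigma(\alpha_n,\beta_m)$ lying in $\C(W)$ satisfies $d_W(\delta_{n,m},\alpha_n) < M$ by Lemma~\ref{L:witnessLength}.  This places $\pi_W(\alpha_n)$ uniformly close (in $\C^s(\dot S)$) to a point on the quasi-geodesic $\sigma(\alpha_n,\beta_m)$, giving $\langle \pi_W(\alpha_n),\beta_m\rangle_o^s \succeq \langle \alpha_n,\beta_m\rangle_o^s \to \infty$ directly, with no quasi-geodesic hypothesis on $\{\alpha_n\}$.
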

\begin{proof}  Throughout, we assume $o=\alpha_0$, the basepoint, which without loss of generality we assume lies in $W$, and let $\{\beta_n\} \subset \C(W)$ be any sequence converging to $x$, so that for the Gromov product in $\C(W)$ we have $\langle \beta_n,\beta_m \rangle_o^W \to \infty$ as $n,m \to \infty$.

Since $\sigma(\alpha_n,\beta_n)$ is a uniform quasi-geodesic by Corollary~\ref{qd} it follows that
\[ d^s(o,\sigma(\alpha_n,\beta_n))  \asymp  \langle \alpha_n,\beta_m \rangle^s_o ,\]
with uniform constants (where the distance on the left is the minimal distance from $o$ to the survival path).

Let $\delta_{n,m}$ be the first point of intersection of $\sigma(\alpha_n,\beta_m)$ with $\C(W)$ (starting from $\alpha_n$).  By Lemma~\ref{L:witnessLength}, $d_W(\delta_{n,m},\alpha_n) < M$.  Consequently, because $\delta_{n,m} \in \C(W)$ and $\pi_W(\alpha_n) \subset \C(W)$, this means
\[ d^s(\delta_{n,m},\pi_W(\alpha_n)) \leq d_W(\delta_{n,m},\pi_W(\alpha_n) ) = d_W(\delta_{n,m},\alpha_n)   < M.\]
Therefore, by hyperbolicity, the $\C^s(\dot S$)--geodesic from (any curve in) $\pi_W(\alpha_n)$ to $\beta_n$ lies in a uniformly bounded neighborhood of $\sigma(\alpha_n,\beta_m)$, and so
\[ \langle \pi_W(\alpha_n),\beta_m \rangle^s_o \asymp d^s(o,[\pi_W(\alpha_n),\beta_m]) \succeq d^s(o,\sigma(\alpha_n,\beta_m)) \asymp \langle \alpha_n,\beta_m \rangle^s_o. \]
If $\alpha_n \to x$, then the right-hand side of the above coarse inequality tends to infinty, and hence so does the left-hand side.  This implies $\pi_W(\alpha_n) \to x$.

Next suppose that $\pi_W(\alpha_n) \to x \in \partial \C(W)$.  As above, we have
\[ \langle \alpha_n,\pi_W(\alpha_n) \rangle_o^s \asymp d^s(o,[\alpha_n,\pi_W(\alpha_n)]),\]
and so it suffices to show that the right-hand side tends to infinity as $n \to \infty$.  Since $\pi_W(\alpha_n) \to x \in \partial \C(W)$, we have $d_W(o,\alpha_n) \to \infty$, and setting $\delta_n$ to be the first point of $\sigma(\alpha_n,o)$ in $\C(W)$, Lemma~\ref{L:witnessLength} implies that $d^s(\delta_n,\pi_W(\alpha_n)) < M$.  Therefore, $\sigma(\alpha_n,o)$ passes within $d^s$--distance $M$ of $\pi_W(\alpha_n)$ on its way to $o$.  Since $\sigma(\alpha_n,o)$ is a uniform quasi-geodesic by Corollary~\ref{qd}, it follows that $[\alpha_n,\pi_W(\alpha_n)]$ is uniformly Hausdorff close to the initial segment $J_n \subset \sigma(\alpha_n,o)$ from $\alpha_n$ to $\delta_n$.  Since the closest point of $J_n$ to $o$ is, coarsely, the point $\delta_n$, which is uniformly close to $\pi_W(\alpha_n)$, we have
\begin{eqnarray*} \langle \alpha_n,\pi_W(\alpha_n) \rangle_o^s & \asymp & d^s(o,[\alpha_n,\pi_W(\alpha_n)])  \\
 & \asymp & d^s(o,J_n)  \, \, \asymp \, \, d^s(o,\delta_n) \, \, \asymp \, \, d^s(o,\pi_W(\alpha_n)) \, \, \asymp \, \, d_W(o,\alpha_n) \to \infty.
\end{eqnarray*}
Therefore, $\alpha_n$ and $\pi_W(\alpha_n)$ converge together to $x \in \partial \C(W)$.  This completes the proof.
\end{proof}

\begin{proof}[Proof of Theorem~\ref{T:boundary ending precise}] By Lemma~\ref{L:bijection}, for any $x \in \partial \sC$, there exists a witness $W \subseteq \dot S$ so $x \in \partial \C(W)$.  Let $\mathcal F(x) = \mathcal F_W(x)$, where $\mathcal F_W \colon \partial \C(W) \to \EL(W)$ is the homeomorphism given by Theorem~\ref{T:Klarreich}.  This defines a bijection $\mathcal F \colon \partial \sC \to \EL^s(\dot S)$.

We let $x \in \partial \sC$ with $\alpha_n \to x$ in $\bar \C^s(\dot S)$, and prove that $\alpha_n$ coarse Hausdorff converges to $\mathcal F(x)$.  Let $W \subseteq \dot S$ be the witness with $x \in \partial \C(W)$.  According to Proposition~\ref{L:proj and conv2}, $\pi_W(\alpha_n) \to x$ in $\bar \C(W)$.  By Theorem~\ref{T:Klarreich}, $\pi_W(\alpha_n) \stackrel{CH}{\to} \mathcal F_W(x) = \mathcal F(x)$, and by Lemma~\ref{L:proj and conv}, $\alpha_n \stackrel{CH}{\to} \mathcal F(x)$, as required.

To prove the other implication, we suppose that $\alpha_n \stackrel{CH}{\to} \mathcal L$, for some $\mathcal L \in \EL^s(\dot S)$, and prove that $\alpha_n \to x$ in $\bar \C^s(\dot S)$ where $\calF(x) = \calL$.  Let $W \subseteq S$ be the witness with $\mathcal L \in \EL(W)$.  By Lemma~\ref{L:proj and conv}, $\pi_W(\alpha_n) \stackrel{CH}{\to} \mathcal L$.  By Theorem~\ref{T:Klarreich}, $\pi_W(\alpha_n) \to x$ in $\bar \C(W)$ where $\mathcal F_W(x) = \mathcal L$. By Proposition~\ref{L:proj and conv2}, $\alpha_n \to x$ in $\bar \C^s(\dot S)$ and $\mathcal F(x) = \mathcal F_W(x) = \mathcal L$.

All that remains is to show that $\mathcal F$ is a homeomorphism.  Throughout the remainder of this proof, we will frequently pass to subsequences, and will reindex without mention.
We start by proving that $\mathcal F$ is continuous. Let $\{x^n\} \subset \partial \sC$ with $x^n \to x$ as $n \to \infty$.  Pass to {\em any} Hausdorff convergence subsequence so that $\calF(x^n) \xrightarrow{\text{H}}\calL$ for some lamination $\calL$.  If we can show that $\calF(x) \subseteq \calL$, then this will show that the original sequence coarse Hausdorff converges to $\mathcal F(x)$, and thus $\calF$ will be continuous.

For each $n$, let $\{ \alpha_k^n \}_{k=1}^\infty \subset \C^s(\dot S)$ be a sequence with $\alpha_k^n \rightarrow x^n$ as $k\rightarrow \infty$.  Since $x^n \to x$, we may pass to subsequences so that for any sequence $\{k_n\}$, we have $\alpha_{k_n}^n \to x$ as $n \to \infty$.
From the first part of the argument, $\alpha_k^n \xrightarrow{\text{CH}}\calF(x^n)$ as $k \to \infty$, for all $n$.  For each $n$, pass to a subsequence so that $ \alpha_k^n \xrightarrow{\text{H}}\calL_n$, thus $\calF(x^n)\subseteq \calL_n$.  By passing to yet a further subsequence for each $n$, we may assume $d_H(\alpha_k^n, \calL_n) < \frac{1}n$ for all $k$; in particular, this holds for $k=1$.
Now pass to a subsequence of $\{\calL_n\}$ so that $\calL_n \stackrel{H}{\to} \calL_o$ for some lamination $\calL_o$, it follows that $\alpha_1^n \stackrel{H}\to \calL_o$, as $n \to \infty$.  Since $\alpha_1^n \to x$ (from the above, setting $k_n=1$ for all $n$), this implies that $\calF(x) \subseteq \calL_o$.


Since $\calF(x^n)\subseteq \calL_n$ we have $\calL\subseteq \calL_o$.
If $\calF(x) \in \EL(\dot S)$ then it is the unique minimal sublamination of $\calL_o$, and since $\calL \subseteq \calL_o$, we have $\calF(x) \subseteq \calL$.  If $\calF(x) \in \EL(W)$ for some proper witness, then either $\calF(x)$ is the unique minimal sublamination of $\calL_o$, or $\calL_o$ contains $\calF(x) \cup \partial W$.  Since $\partial W$ does not intersect the interior of $W$, whereas $\calL \subset \calL_o$ is a sublamination that does nontrivially intersects the interior of $W$, it follows that $\calF(x) \subseteq \calL$. Therefore we have $\calF(x) \subseteq \calL$ in both cases, and so $\calF$ is continuous.

To prove continuity of $\calG=\calF^{-1}$, suppose $\calL_n \xrightarrow{\text{CH}} \calL$, and we must show $\calG(\calL_n) \to \calG(\calL)$. We first pick a sequence of curves $\alpha^n_k$ such that $\alpha^n_k \rightarrow \calG(\calL_n)$ in $\bar \C^s(\dot S)$. Then, $\alpha^n_k \xrightarrow{\text{CH}}\calL_n$ as $k\rightarrow \infty$, by the first part of the proof, and after passing to subsequences as necessary, we may assume: (i) $\alpha_k^n \stackrel{H}\to \calL_n'$ as $k \to \infty$, and hence $\calL_n \subseteq \calL_n'$ for all $n$; (ii) $d_H(\alpha_k^n,\calL_n') < \tfrac1n$ for all $k$; and (iii) $\langle \alpha_k^n,\alpha_\ell^n \rangle_o \geq \min\{k,\ell\} + n$, for all $k,\ell,n$.

Now pass to {\em any} Hausdorff convergent subsequence $\calL_n' \stackrel{H}\to \calL'$. It suffices to show that for this subsequence $\calG(\calL_n) \to \calG(\calL)$.  Observe that we also have
$\calL \subseteq \calL'$ and by (ii) above we also have $\alpha_{k_n}^n \to \calL'$ as $n \to \infty$, for {\em any} sequence $\{k_n\}$.  Thus, for example, we can conclude that $\alpha_1^n \stackrel{CH}\to \calL$, and so by the first part of the proof we have $\alpha_1^n \to \calG(\calL)$.

As equivalence classes of sequences, we thus have $\calG(\calL_n) = [\{\alpha_k^n\}]$ and $\calG(\calL) = [\{\alpha_1^m\}]$.  We further observe that by hyperbolicity and the conditions above, for all $k,n,m$ we have
\[ \langle \alpha_1^m,\alpha_k^n \rangle_o \succeq \min\{ \langle \alpha_1^m,\alpha_1^n \rangle_o,\langle \alpha_1^n,\alpha_k^n \rangle_o \} \geq \min\{ \langle \alpha_1^m,\alpha_1^n \rangle_o, 1+n\} .\]
Therefore,
\[ \sup_m \liminf_{k,n\to \infty} \langle \alpha_1^m,\alpha_k^n \rangle_o \succeq \sup_m \liminf_{n\to \infty} \langle \alpha_1^m,\alpha_1^n \rangle_o = \infty,\]
from which it follows that $\calG(\calL_n) \to \calG(\calL)$, as required.  This completes the proof.
\end{proof}

\begin{proof}[Proof of Theorem~\ref{survivalendinglam}]
Let $\calF \colon \partial \C^s(\dot S) \to \EL^s(\dot S)$ be the homeomorphism from Theorem~\ref{T:boundary ending precise}.  It suffices to show that $\calF$ is $\PMod(\dot S)$--equivariant.  For this, let $f \in \PMod(\dot S)$ be any mapping class and $x \in \partial \C^s(\dot S)$ any boundary point.  If $\{\alpha_n\} \subset \C^s(\dot S)$ is any sequence with $\alpha_n \to x$ in $\bar \C^s(\dot S)$, then $f \cdot \alpha_n \to f \cdot x$ since $f$ acts by isometries on $\C^s(\dot S)$.    Applying Theorem~\ref{T:boundary ending precise} to the sequence $\{f \cdot \alpha_n\}$ we see that $f \cdot \alpha_n \stackrel{CH}\to \calF(f \cdot x)$.  On the other hand we also have $f \cdot \alpha_n \stackrel{CH}\to f \cdot \calF(x)$, since $f$ acts by homeomorphisms on the space of laminations with the coarse Hausdorff topology.  Therefore, $f \cdot \calF(x) = \calF(f \cdot x)$, as required.
\end{proof}
\section{Extended survival map} \label{S:extended survival} $\quad$
We start by introducing some notation before we define the
extended survival map.  First observe that there is an injection $\C(S) \to \PML(S)$ given by sending a point in the interior of the simplex $\{v_0,\ldots,v_k\}$ with barycentric coordinates $(s_0,\ldots,s_k)$ to the projective class, $[s_0v_0 + \cdots + s_kv_k]$; here we are viewing $s_0v_0 + \cdots s_kv_k$ as a measured geodesic lamination with support $v_0 \cup \ldots \cup v_k$ and with the transverse counting measure scaled by $s_i$ on the $i^{th}$ component, for each $i$.  We denote the image by $\PML_{\C}(S)$, which by construction admits a bijective map $\PML_\C(S) \to \C(S)$ (inverse to the inclusion above).

By Theorem~\ref{T:Klarreich}, $\partial \calC(S) \cong \EL(S)$, and so it is natural to define
\[ \PML_{\bar\calC}(S)=\PML_{\calC}(S) \cup \PFL(S), \]
and we extend the bijection $\PML_\C(S) \to \C(S)$ to a surjective map
\[ \PML_{\overline\calC}(S) \rightarrow \overline \calC(S)\]
By Proposition~\ref{P:support continuous} and Theorem~\ref{T:Klarreich}, this is continuous at every point of $\PFL(S)$.

Similar to the survival map $\tilde \Phi$ defined in Section~\ref{S:tree map construction}, we can define a map
\[ \widetilde \Psi \colon \PML(S) \times \Diff_0(S) \to \PML(\dot S). \]
This is defined by exactly the same procedure as in Section~2.4 of \cite{LeinMjSch}, which goes roughly as follows: If $\mu$ is a measured lamination with no closed leaves in its support $|\mu|$, and if $f(z) \not\in |\mu|$, then $\widetilde \Psi(\mu,f) = f^{-1}(\mu)$.  When $|\mu|$ contains closed leaves we replace those with the foliated annular neighborhoods of such curves defined in Section~\ref{S:tree map construction}).  When the $f(z)$ lies on a leaf of $|\mu|$ (or the modified $|\mu|$ when there are closed leaves) we ``split $|\mu|$ apart at $f(z)$", then take the $f^{-1}$--image.  The same proof as that given in \cite[Proposition~2.9]{LeinMjSch} shows that $\widetilde \Psi$ is continuous.


As in Section~\ref{S:tree map construction} (and in \cite{LeinMjSch}) via the lifted evaluation map $\widetilde{\ev} \colon \Diff_0(S) \to \mathbb H$, given by $\widetilde{\ev}(f) = \tilde f(\tilde z)$ (for $\tilde f$ the canonical lift), the map $\widetilde \Psi$ descends to a continuous, $\pi_1S$--equivariant map $\Psi$ making the following diagram commute:
\begin{center}
\begin{tikzpicture}
    \node (E) at (0,0) {$\PML(S)\times \Diff_0(S)$};
    \node[below=of E] (B) {$\PML(S)\times \mathbb{H}$};
    \node[right=of B] (A) {$\PML(\dot S)$.};
    \draw[->] (E)--(A) node [midway, right,above] {$\widetilde\Psi$};
    \draw[->] (B)--(A) node [midway,above] {$\Psi$};
    \draw[->] (E)--(B) node [midway,left] {$\id_{\PML(S)} \times \widetilde{\ev}$};
\end{tikzpicture}
\end{center}

By construction, the restriction $\Psi_\C = \Psi|_{\PML_\C(S) \times \mathbb H}$ and $\Phi$ agree after composing with the bijection between $\PML_\C(S)$ and $\C(S)$ in the first factor.  Since $\Phi$ maps $\C(S) \times \mathbb H$ onto $\C^s(\dot S)$, if we define $\PML_{\C^s}(\dot S)$ to be the image of $\C^s(\dot S)$ via the map $\C^s(\dot S) \to \PML(\dot S)$ defined similarly to the one above, then the following diagram of $\pi_1S$--equivariant maps commutes, with the vertical arrows being bijections
\begin{equation} \label{E:Psi C}
\xymatrixcolsep{4pc}\xymatrixrowsep{3pc}\xymatrix{
\PML_\C(S) \times \mathbb H \ar[r]^{\quad \Psi_\C}  \ar[d] & \PML_{\C^s}(\dot S) \ar[d] \\
\C(S) \times \mathbb H \ar[r]^{\quad \Phi} & \C^s(\dot S) }
\end{equation}

Similar to $\PML_{\bar \C}(S) = \PML_\C(S) \cup \PFL(S)$ above, we define
\[ \PML_{\bar \C^s}(\dot S) = \PML_{\C^s}(\dot S) \cup \PFL^s(\dot S),\]
where, recall, $\PFL^s(\dot S)$ is the space of measured laminations on $\dot S$ whose support is contained in $\EL^s(\dot S)$.   Then $\Psi_\C$ extends to a map
\[ \Psi_{\bar \C} \colon \PML_{\bar \C}(S) \times \mathbb H \to \PML_{\bar \C^s}(\dot S).\]
The fact that $\Psi([\mu],w)$ is in $\PFL^s(\dot S)$ for any $w \in \mathbb H$ and $[\mu] \in \PFL(S)$ is straightforward from the definition (c.f.~\cite[Proposition~2.12]{LeinMjSch}): for {\em generic} $w$, $\Psi([\mu],w)$ is obtained from $[\mu]$ by adding the $z$--puncture in one of the complementary components of $|\mu|$ and adjusting by a homeomorphism.  With this, it follows that the map $\Phi$ extends to a map $\hat \Phi$ making the following diagram, extending \eqref{E:Psi C}, commute.
\begin{equation*}
\xymatrixcolsep{4pc}\xymatrixrowsep{3pc}\xymatrix{
\PML_{\bar \C}(S) \times \mathbb H \ar[r]^{\quad \Psi_{\bar\C}}  \ar[d] & \PML_{\bar \C^s}(\dot S) \ar[d] \\
\bar\C(S) \times \mathbb H \ar[r]^{\quad \hat \Phi} &\bar \C^s(\dot S) }
\end{equation*}


We will call the map $\hat\Phi:\bar{\C}(S)\times \mathbb{H} \rightarrow \csC $ the \textit{extended survival map}.
Vertical maps in the diagram are natural maps which take projective measured laminations to their supports and they send $\PFL(S)\times \mathbb{H}$  onto $\EL(S)\times \mathbb{H}$ and $\PFL^s(\dot S)$ onto $\EL^s(\dot S)$.
\begin{lemma} \label{L:continuity of hat Phi} The extended survivial map $\hat \Phi$ is $\pi_1S$--equivariant and is continuous at every point of $\partial\calC(S)$.
\end{lemma}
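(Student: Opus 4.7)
I would first prove $\pi_1S$-equivariance as a diagram chase: $\Psi$ is $\pi_1S$-equivariant by its construction via the lifted evaluation map $\widetilde{\ev}$, so $\Psi_{\bar\C}$ is equivariant; the two vertical support maps intertwine the $\pi_1S$-actions on $\PML$ with the actions on $\bar\C(S)$ and $\csC$; and since $\hat\Phi$ is defined to make the extended diagram commute, it inherits equivariance.

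For continuity at a point $(x,w) \in \partial\C(S) \times \HH$, my strategy is to take a sequence $(x_n, w_n) \to (x, w)$ and argue via subsequences: it suffices to show that any subsequence admits a further subsequence along which $\hat\Phi(x_{n_k}, w_{n_k}) \to \hat\Phi(x, w)$ in $\csC$. For each $n$ I would choose a lift $[\mu_n] \in \PML_{\bar\C}(S)$ of $x_n$ under the left vertical support map---a weighted multicurve when $x_n \in \C(S)$, or any element of $\PFL(S)$ with support $x_n$ when $x_n \in \EL(S)$. By compactness of $\PML(S)$, pass to a subsequence with $[\mu_{n_k}] \to [\mu^*] \in \PML(S)$. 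Proposition~\ref{P:support continuous} would then give $|\mu_{n_k}| \xrightarrow{\text{CH}} |\mu^*|$, and combined with Theorem~\ref{T:Klarreich} (noting that when $x_{n_k} \in \C(S)$ the components of $|\mu_{n_k}|$ lie within distance one of $x_{n_k}$ in $\bar\C(S)$ and hence also converge to $x$) this would yield $x \subseteq |\mu^*|$.

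Continuity of $\Psi$ would then give $\Psi([\mu_{n_k}], w_{n_k}) \to \Psi([\mu^*], w)$ in $\PML(\dot S)$, and a second application of Proposition~\ref{P:support continuous} would yield $|\Psi([\mu_{n_k}], w_{n_k})| \xrightarrow{\text{CH}} |\Psi([\mu^*], w)|$. The key geometric observation I would rely on is that the formula $\widetilde\Psi(\mu, f) = f^{-1}(\mu)$, together with the splitting modifications of Section~\ref{S:extended survival} when $f(z)$ meets a leaf, respects sublamination containment: choosing any $[\nu] \in \PFL(S)$ with $|\nu| = x$, so that $\calF(\hat\Phi(x,w)) = |\Psi([\nu],w)|$ by the definition of $\hat\Phi$ on the boundary, one obtains $|\Psi([\mu^*],w)| \supseteq |\Psi([\nu],w)| = \calF(\hat\Phi(x,w))$.

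To conclude, I would select vertex approximations $\alpha_{n_k} \in \C^s(\dot S)$ to $\hat\Phi(x_{n_k}, w_{n_k})$: when the latter is interior to $\C^s(\dot S)$, take $\alpha_{n_k}$ to be a component of the multicurve $|\Psi([\mu_{n_k}], w_{n_k})|$, chosen consistently after a further subsequence (exploiting the bound on the number of components) so that its Hausdorff limit contains $\calF(\hat\Phi(x,w))$; when $\hat\Phi(x_{n_k}, w_{n_k}) \in \partial\C^s(\dot S)$, use Theorem~\ref{T:boundary ending precise} to pick $\alpha_{n_k}$ with $d_H(\alpha_{n_k}, |\Psi([\mu_{n_k}], w_{n_k})|) < 1/k$. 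In both cases every Hausdorff subsequential limit of $\alpha_{n_k}$ contains $\calF(\hat\Phi(x,w))$, so Corollary~\ref{C:CH sublamination} gives $\alpha_{n_k} \xrightarrow{\text{CH}} \calF(\hat\Phi(x,w))$, and Theorem~\ref{T:boundary ending precise} converts this into $\alpha_{n_k} \to \hat\Phi(x,w)$ in $\csC$; a standard diagonal argument then upgrades this to $\hat\Phi(x_{n_k}, w_{n_k}) \to \hat\Phi(x,w)$. The main obstacle I anticipate lies in this final bookkeeping: selecting the approximations $\alpha_{n_k}$ uniformly across the two cases, and verifying carefully that $|\Psi([\mu^*], w)|$ really does contain $\calF(\hat\Phi(x,w))$ when the limit $|\mu^*|$ strictly contains $x$ (so that the splitting modifications of $\Psi$ are applied compatibly to $|\nu| = x$ and to the larger lamination $|\mu^*|$).
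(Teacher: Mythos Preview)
Your approach is correct and follows the same overall strategy as the paper (lift to $\PML$, use continuity of $\Psi$, push down via the support map and Theorem~\ref{T:boundary ending precise}), but you have made the argument harder than it needs to be by overlooking one fact: the limit support $|\mu^*|$ is not merely a lamination containing $x$, it is \emph{equal} to $x$. Since $x \in \EL(S)$ is filling, its complementary regions are ideal (possibly once-punctured) polygons, so any geodesic lamination containing $x$ differs from $x$ only by finitely many isolated diagonal leaves; such leaves carry no transverse measure, hence cannot appear in the support of $\mu^*$. Thus $[\mu^*] \in \PFL(S)$ already, and $|\Psi([\mu^*],w)| = \calF(\hat\Phi(x,w))$ on the nose. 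This is exactly what the paper does: it simply asserts that the subsequential limit $\mu$ is a measure on $\calL$, then applies Proposition~\ref{P:support continuous} and Theorem~\ref{T:boundary ending precise} directly.

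With that observation, the entire final paragraph of your proposal becomes unnecessary: there is no need to verify that $\widetilde\Psi$ respects sublamination containment (a nontrivial claim you flag as the main obstacle), no need to choose vertex approximations $\alpha_{n_k}$ case by case, and no diagonal argument. The ``obstacle'' you anticipate simply does not occur. Your equivariance argument via the commutative diagram is fine; the paper instead deduces equivariance on the boundary from equivariance of $\Phi$ on $\C^s(\dot S)$ together with the continuity just proved, which is marginally quicker but equivalent in content.
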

\begin{proof}
To prove the continuity statement, we use the homeomorphism $\mathcal F$ from Theorem~\ref{T:boundary ending precise} to identity $\partial \C^s(\dot S)$ with $\EL^s(\dot S)$.  Now suppose $\{\calL_n\}\in \overline\calC(S)$, $\calL\in \partial \calC(S)$ , $\calL_n \rightarrow \calL$ and $\{x_n\}$ be a sequence in $\mathbb{H}$ such that $x_n \rightarrow x$. Passing to a subsequence, there is a measure $\mu_n$ on $\calL_n$ and a measure $\mu$ on $\calL$ such that $\mu_n \rightarrow \mu$ in $\ML(S)$. Since $\Psi$ is continuous on $\PFL(S)\times \mathbb{H}$
\[ \Psi([\mu_n], x_n) \rightarrow  \Psi([\mu], x).    \]
By Proposition~\ref{P:support continuous} this implies,
\[| \Psi(\mu_n, x_n)| \stackrel{CH}{\longrightarrow} |\Psi(\mu, x)|.  \]
On the other hand,  $\Psi(\FL(S)\times \mathbb{H})\subset \FL^s(\dot S)$, and by Theorem \ref{T:boundary ending precise} this means
\[ \hat \Phi(\calL_n,x_n) \to \hat \Phi(\calL,x) \]
in $\bar \C^s(\dot S)$, since $|\hat \Psi(\mu_n, x_n)|= \hat \Phi(\calL_n, x_n )$ and  $|\hat \Psi(\mu, x)|=\hat\Phi (\calL,x)$.

The $\pi_1S$--equivariance follows from that of $\Phi$ on $\C^s(\dot S)$ and continuity at the remaining points.
\end{proof}

The following useful fact and it's proof are identical to the statement and proof of \cite[Lemma~2.14]{LeinMjSch}.
\begin{lemma} \label{L:points identified by hat phi} Fix $(\calL_1,x_1),(\calL_2,x_2) \in \EL(S) \times \HH$.  Then $\hat \Phi(\calL_1,x_1) = \hat \Phi(\calL_2,x_2)$ if and only if $\calL_1 =\calL_2$ and $x_1,x_2$ are on the same leaf of, or in the same complementary region of, $p^{-1}(\calL_1) \subset \HH$.
\end{lemma}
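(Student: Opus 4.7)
The plan is to adapt the argument of \cite[Lemma~2.14]{LeinMjSch} from the closed case, using the factorization of $\hat\Phi$ through $\Psi_{\bar\C}$ and the support map $\PML_{\bar\C^s}(\dot S) \to \bar\C^s(\dot S)$. Specifically, I will first analyze when $\Psi([\mu_1],x_1) = \Psi([\mu_2],x_2)$ in $\PFL^s(\dot S)$ for measures $\mu_i$ with $|\mu_i| = \calL_i$, then pass to supports (noting that on $\PFL^s(\dot S)$ the support map is a homeomorphism onto $\EL^s(\dot S) \cong \partial\C^s(\dot S)$ only after projectivizing, but the projective class is what we care about).

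For the ``if'' direction, assume $\calL_1 = \calL_2 = \calL$ and that $x_1,x_2$ lie on the same leaf or in the same complementary region of $p^{-1}(\calL)$. Choose $\mu$ with $|\mu| = \calL$ and $f,g \in \Diff_0(S)$ with canonical lifts satisfying $\tilde f(\tilde z) = x_1$ and $\tilde g(\tilde z) = x_2$. Then $f(z)$ and $g(z)$ lie on the same leaf (respectively, in the same complementary region) of $\calL$, so I can isotope $f$ to $g$ through a family $h_t$ with $h_t(z)$ tracing a path confined to that leaf (respectively, region). Inspecting the definition of $\widetilde\Psi$ (and the splitting operation used when $h_t(z)$ lies on a leaf of $|\mu|$), the projective class of $h_t^{-1}(\mu)$ in $\PML(\dot S)$ is independent of $t$, yielding $\Psi([\mu],x_1) = \Psi([\mu],x_2)$ and hence $\hat\Phi(\calL,x_1) = \hat\Phi(\calL,x_2)$.

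For the ``only if'' direction, suppose $\hat\Phi(\calL_1,x_1) = \hat\Phi(\calL_2,x_2) = \calL^s \in \EL^s(\dot S)$, and let $W \subseteq \dot S$ be the unique witness with $\calL^s \in \EL(W)$. To show $\calL_1 = \calL_2$, I observe from the definition of $\Psi$ that $\calL^s$ is obtained from $\calL_i$ by applying $f_i^{-1}$ (and splitting at $f_i(z)$ if necessary). Pushing forward by $f_i$ and then filling in the $z$-puncture (and collapsing the twice-punctured disk bounded by $\partial W$ when $W \neq \dot S$) recovers a lamination isotopic to $\calL_i$ in $\EL(S)$; since both constructions start from the same $\calL^s$, we get $\calL_1 = \calL_2 =: \calL$. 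Next, choose $\mu$ with $|\mu|=\calL$ and form the loop $\gamma$ based at $z$ gotten by concatenating $p$ of the geodesic segment $[\tilde z, x_1]$, a path in $S$ from $f_1(z)$ to $f_2(z)$ realizing the isotopy, and the reverse of $p$ of $[\tilde z, x_2]$; the Birman class $[\gamma] \in \PMod(\dot S)$ fixes the projective class of $f_1^{-1}(\mu)$ (appropriately split). By Kra's theorem \cite{Kra}, cited in the proof of Lemma~\ref{L:W parabolics}, $[\gamma]$ is either the product of Dehn twists along the boundary of a regular neighborhood of a simple representative of $\gamma$, or is pseudo-Anosov on the subsurface filled by $\gamma$; since it must fix the projective class of a filling measured lamination on $\dot S$, the pseudo-Anosov case is impossible unless the filled subsurface is disjoint from the lamination, forcing $\gamma$ to be homotopic into a complementary region of $\calL$ or onto a leaf. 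Lifting this homotopy to $\HH$ produces a path from $x_1$ to $x_2$ lying in a single leaf or complementary region of $p^{-1}(\calL)$, completing the argument.

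The main obstacle will be the Kra-type step in the ``only if'' direction: ruling out pseudo-Anosov behavior on a subsurface filled by $\gamma$ when $\calL$ is filling, and interpreting the resulting free homotopy as a homotopy that can be taken based at $f_1(z)$ and then lifted coherently to a path from $x_1$ to $x_2$ in $\HH$. All remaining steps are bookkeeping with the definitions of $\Psi$, $\widetilde\Psi$, the splitting operation, and the forgetful/collapse maps relating $\calL^s$ to $\calL$.
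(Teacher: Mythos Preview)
The paper does not actually prove this lemma; it simply records that the statement and proof are identical to \cite[Lemma~2.14]{LeinMjSch}.  Your ``if'' direction and your reduction $\calL_1=\calL_2$ via the forgetful map are both correct and match that reference.

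There is, however, a genuine gap in your ``only if'' argument for the location of $x_1,x_2$.  The loop $\gamma$ you build --- concatenating $p([\tilde z,x_1])$, an unspecified path $\beta$ from $f_1(z)$ to $f_2(z)$, and $p([\tilde z,x_2])^{-1}$ --- is not pinned down, and for the natural choice $\beta = p([x_1,x_2])$ the loop lifts to a closed path at $\tilde z$ and is therefore trivial in $\pi_1(S,z)$.  More seriously, the hypothesis only gives equality of \emph{supports} in $\EL^s(\dot S)$, not of projective measures, so you cannot immediately say that the point--push fixes a projective measured lamination; and you never establish the key identity $[\gamma]\cdot f_1^{-1}(\calL) = f_2^{-1}(\calL)$ in $\calG\calL(\dot S)$ for your $\gamma$.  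Without that, the Kra dichotomy does not engage.

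The argument in \cite{LeinMjSch} avoids Kra entirely and is shorter.  Since $f_1^{-1}(\calL)$ and $f_2^{-1}(\calL)$ are isotopic in $\dot S$, choose $h\in\Diff_0(\dot S)$ carrying one to the other, and set $g=f_2\circ h\circ f_1^{-1}\in\Diff_0(S)$, which preserves $\calL$.  Choosing representatives supported on a compact set, the canonical lift $\tilde g$ has bounded displacement and hence extends to the identity on $\partial\HH$; since it also preserves $p^{-1}(\calL)$ and leaves of $p^{-1}(\calL)$ are determined by their ideal endpoints, $\tilde g$ preserves every leaf and every complementary region.  A direct check using $\tilde h(\tilde z)=\tilde z$ gives $\tilde g(x_1)=x_2$, so $x_1$ and $x_2$ lie in the same leaf or complementary region.
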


Suppose that $\calL \in \EL(S)$, $x \in \calP \subset \partial \mathbb H$ is a parabolic fixed point, $H_x \subset \mathbb H$ is the horoball based at $x$ as in Section~\ref{S:cusps and witnesses}, and $U \subset \mathbb H$ is the complementary region of $p^{-1}(\calL)$ containing $H_x$.  Given $y  \in U$, choose any $f \in \Diff(S)$ so that $\tilde f(\tilde z) = y$, so that $\hat \Phi(\calL,y) = f^{-1}(\calL)$.  Observe that $p(U)$ is a complementary region of $\calL$ containing a puncture (corresponding to $x$), and hence $\hat \Phi(\calL,y)$ is a lamination with two punctures in the complementary component $f^{-1}(p(U))$ (one of which is the $z$-puncture).  Therefore, $\hat \Phi(\calL,y)$ is an ending lamination in a proper witness.
More precisely, by Lemma~\ref{L:points identified by hat phi}, we may assume $y \in H_x$ without changing the image $\hat \Phi(\calL,y)$, and then as in the proof of Lemma~\ref{L:W parabolics}, $f^{-1}(\partial p(H_x))$ is the boundary of the witness $\calW(x)$ which is disjoint from $\hat \Phi(\calL,x)$. Thus, $\hat \Phi(\calL,y) \in \EL(\calW(x))$.

In fact, every ending lamination on a proper witness arises as such an image as the next lemma shows.

\begin{lemma} \label{L:where the witness ELs come from} Suppose $\calL_0 \in \EL(W)$ is an ending lamination in a proper witness $W \subsetneq \dot S$.  Then there exists $\calL \in \EL(S)$, $x \in \calP$, a complementary region $U$ of $p^{-1}(\calL)$ containing $H_x$, and $y \in H_x$ so that $\calW(x)=W$ and $\hat \Phi(\calL,y) = \calL_0$.
\end{lemma}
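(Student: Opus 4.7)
The plan is to take $\calL$ to be $f(\calL_0)$ for a suitable $f \in \Diff_0(S)$ whose canonical lift sends $\tilde z$ into $H_x$, and then read off the conclusion from the formula for $\widetilde \Psi$. First, Lemma~\ref{L:W parabolics} supplies a unique $x \in \calP$ with $\calW(x) = W$. Since $\widetilde{\ev} \colon \Diff_0(S) \to \HH$ is surjective (a locally trivial fiber bundle, as noted in Section~\ref{S:tree map construction}), we may choose $f \in \Diff_0(S)$ with $y := \tilde f(\tilde z) \in H_x$. The relation $f^{-1}(\partial p(H_x)) = \partial W$ from Lemma~\ref{L:W parabolics} says that $f$ carries $\partial W$ onto the horocycle $\partial p(H_x)$, sending $W$ into $S \setminus p(H_x)$ and the complementary once-punctured disk of $S$ (containing the puncture $q$ of $S$ corresponding to $x$) onto $p(H_x)$, with $f(z)$ in the interior of $p(H_x)$.

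Set $\calL := f(\calL_0) \subset S$. The central step is to verify $\calL \in \EL(S)$. Minimality is preserved by $f$ and by viewing in a larger surface. For filling, consider the complementary regions of $\calL_0$ in $W$: each is an ideal polygon, a once-punctured ideal polygon (with puncture being a puncture of $W$, i.e., a puncture of $S$ different from $q$), or a once-holed ideal polygon with hole $\partial W$; the last type occurs for exactly one region since $W$ has only the one boundary component. Under the inclusion $\calL_0 \subset W \subset S$, each of the first two types persists as a complementary region of $\calL_0$ in $S$ of the same type. The unique region abutting $\partial W$ combines with the once-punctured disk on the other side of $\partial W$ (containing $q$) to form a once-punctured ideal polygon in $S$ with puncture $q$. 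Thus $\calL_0$ is filling when viewed in $S$, and applying $f$ yields that $\calL$ is minimal and filling in $S$, so $\calL \in \EL(S)$.

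Since $\calL \subset S \setminus p(H_x)$ while $f(z) \in p(H_x)$, we have $f(z) \notin \calL$. As $\calL$ is an ending lamination on a surface of complexity at least $2$, its support contains no closed leaves, so the definition of $\widetilde \Psi$ gives $\widetilde \Psi(\mu, f) = f^{-1}(\mu)$ for any measure $\mu$ with $|\mu| = \calL$. Passing through the commuting diagrams that define $\hat \Phi$ from $\Psi$ and taking supports then yields
\[ \hat \Phi(\calL, y) = f^{-1}(\calL) = \calL_0. \]
The horoball $H_x$ is connected and disjoint from $p^{-1}(\calL)$, hence lies in a single complementary region $U$ of $p^{-1}(\calL)$ in $\HH$. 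The main obstacle is the filling verification for $\calL$, which requires a careful accounting of how the one complementary region of $\calL_0$ in $W$ abutting $\partial W$ merges, across $\partial W$, with the once-punctured disk on the other side to produce a genuine once-punctured ideal polygon in $S$.
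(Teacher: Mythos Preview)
Your approach and the paper's are essentially the same---both produce $\calL$ as the straightening of $\calL_0$ on $S$ and compute $\hat\Phi$ via the formula $\widetilde\Psi([\mu],f)=f^{-1}(\mu)$---but you reverse the order of choices: you first pick $x$ via the bijection $\calW$, then an arbitrary $f$ with $\tilde f(\tilde z)\in H_x$, and finally set $\calL:=f(\calL_0)$. The paper instead first lets $\calL$ be the geodesic straightening of $\calL_0$ in $S$, chooses $f$ so that $f(\calL_0)=\calL$ \emph{as sets}, and only afterwards reads off the parabolic point $x$ from the complementary region of $\calL$ containing $f(z)$; the equality $\calW(x)=W$ is deduced last from $\EL(W)\cap\EL(W')=\emptyset$ for $W\ne W'$. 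Your explicit filling verification is more detailed than the paper's one-line assertion.

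Your ordering, however, creates a real gap at the final step. The map $\widetilde\Psi$ is defined on $\PML(S)$, so the $\calL$ appearing in ``$|\mu|=\calL$'' must be the \emph{geodesic} representative of the isotopy class of $f(\calL_0)$, not the set $f(\calL_0)$ itself. With that reading the formula yields $\hat\Phi(\calL,y)=f^{-1}(\calL)$, and now the asserted equality $f^{-1}(\calL)=\calL_0$ is an equality of isotopy classes in $\dot S$; this is not automatic, since two laminations on $\dot S$ that become isotopic after filling $z$ may differ by a nontrivial point-push. Relatedly, Lemma~\ref{L:W parabolics} gives $f^{-1}(\partial p(H_x))=\partial W$ only as isotopy classes in $\dot S$, so your claim that $f$ literally carries $W$ into $S\setminus p(H_x)$ is not justified for a generic $f$. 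The paper's ordering sidesteps this entirely: since there $f(\calL_0)=\calL$ on the nose, $f^{-1}(\calL)=\calL_0$ is tautological. You can repair your argument by first composing $f$ with an isotopy rel $z$ so that $f(\partial W)=\partial p(H_x)$ literally, and then noting that $f(\calL_0)$ and its geodesic straightening both lie in $S\setminus p(H_x)$ and are isotopic there (the inclusion $S\setminus p(H_x)\hookrightarrow S$ being a homotopy equivalence), hence isotopic rel $f(z)$.
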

\begin{proof} Note that the inclusion of $W \subset \dot S$ is homotopic through embeddings to a diffeomorphism, after filling in $z$ (since after filling in $z$, $\partial W$ is peripheral).  Consequently, after filling in $z$, $\calL_0$ is isotopic to a geodesic ending lamination $\calL$ on $S$.  Let $f \colon S \to S$ be a diffeomorphism isotopic to the identity with $f(\calL_0) = \calL$.  Then $\calL_0 = f^{-1}(\calL) = \widetilde \Psi([\mu],f)$ where $[\mu]$ is the projective class of any transverse measure on $\calL$.

Next, observe that $f(z)$ lies in a complementary region $V$ of $\calL$ which is a punctured polygon (since $\partial W$ is a simple closed curve disjoint from $\calL_0$ bounding a twice punctured disk including the $z$-puncture).  Let $U \subset \mathbb H$ be the complementary region of $p^{-1}(\calL)$ that projects to $V$.  Then $U$ is an infinite sided polygon invariant by a parabolic subgroup fixing some $x \in \calP$.
Now let $\tilde f \colon \mathbb H \to \mathbb H$ be the canonical lift as in Section~\ref{S:tree map construction} and let $y' = \tilde f(\tilde z)$, so that by definition $\widetilde \Psi([\mu],f) = \Psi([\mu],y') = \hat \Phi(\calL,y')$.  By Lemma~\ref{L:points identified by hat phi}, for any $y \in H_x \subset U$, it follows that $\hat \Phi(\calL,y) = \hat \Phi(\calL,y') = \calL_0$.  From the remarks preceeding this lemma, it follows that $\calL_0 \in \EL(\calW(x))$.  Since $\EL(W) \cap \EL(W') = \emptyset$, unless $W = W'$, it follows that $\calW(x) = W$, completing the proof.
\end{proof}

\section{Universal Cannon--Thurston maps} \label{S:UCT maps}

In this section we will prove the following.

\bigskip

\noindent {\bf Theorem~\ref{CT}}{\em \CTstatement}

\bigskip

Before proceeding, we describe the subset $\Abd \subset \partial \mathbb H^2$ in Theorem~\ref{CT}.

\begin{definition} Let $Y \subseteq \dot S$ be a subsurface.  A point $x\in \partial\HH$ \textit{fills} $Y$ if,
\begin{itemize}
\item The image of every geodesic ending in $x$ projected to $\dot S$ intersects every curve which projects to $Y$,
\item There is a geodesic ray $r\subset \HH$ ending at $x$ with $p(r)\subset Y$.
\end{itemize}
Now let $\Abd\subset \partial \HH$ be the set of points that fill $\dot S$.
\end{definition}

We note that when $x \not \in \Abd$, there is a ray $r$ ending at $x$ so that $r$ is contained in a proper subsurface $Y \subsetneq \dot S$.  The boundary of this subsurface is an essential curve $v$ and $\Phi_v(r) \subset T_v$ is a bounded diameter set.  Thus, restricting to the set $\Abd$ is necessary (c.f.~\cite[Lemma~3.4]{LeinMjSch}).

Given the modifications to the setup, the existence of the extension of Theorem~\ref{CT} follows just as in the case that $S$ is closed in \cite{LeinMjSch}; this is outlined in Section~\ref{S:existence}. The surjectivity requires more substantial modification, however, and this is carried out in Section~\ref{S:surjectivity}.  The proof of the universal property of $\partial \Phi$, as well as the discussion of $\partial \Phi_0 \colon \partial \C(S) \to \partial \C(\dot S)$, Theorem~\ref{T:UCT C short}, and the relationship to Theorem~\ref{CT} is carried out in Section~\ref{S:Universal}.

\subsection{Quasiconvex nesting and existence of Cannon--Thurston maps} \label{S:existence}

In this section we will prove the existence part of the Theorem \ref{CT}.

\begin{theorem}\label{CTex} For any vertex $v\in \calC(S)$, the induced survival map $\Phi_v \colon \HH \rightarrow \sC$ has a continuous, $\pi_1(S)$--equivariant extension to
\[\overline\Phi_v: \HH\cup \Abd\rightarrow \overline\C^s(\dot S))\]
Moreover, the restriction $\partial \Phi_v = \overline{\Phi}_v|_{\Abd} \colon \Abd \to \partial \C^s(\dot S)$ does not depend on the choice of $v$.
\end{theorem}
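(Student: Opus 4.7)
The plan is to adapt the strategy from \cite{LeinMjSch} Section~3, developed there for closed surfaces, to the punctured setting. For $x \in \Abd$, I will define $\overline{\Phi}_v(x) \in \partial \sC$ by exhibiting a nested neighborhood basis at $x$ whose $\Phi_v$-images are uniformly quasi-convex subsets of $\sC$ escaping to infinity from the basepoint $\Phi_v(\tilde z)$.

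First, using the filling condition defining $\Abd$, I build a nested family $H_1 \supset H_2 \supset \cdots \subset \HH$ of open half-planes, each bounded by a lift $\widetilde{\alpha}_n$ of an essential simple closed curve $\alpha_n \subset S$, so that $\widetilde{\alpha}_n$ separates $x$ from $\tilde z$ and $\overline{H}_n \cap \partial \HH$ shrinks down to $\{x\}$. Such $\widetilde{\alpha}_n$ can be found because any geodesic ray to $x$ is forced to cross $p^{-1}(\alpha)$ for every essential curve $\alpha$ on $\dot S$. The main step---and the principal obstacle---is to show $\Phi_v(H_n)$ is uniformly quasi-convex in $\sC$ and that $d^s(\Phi_v(\tilde z), \Phi_v(H_n)) \to \infty$. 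Both assertions are intended to follow from the distance formula (Theorem~\ref{dist}), the subsurface projection technology of Section~\ref{S:distance formula}, and the Behrstock-type ordering of Proposition~\ref{order}. More concretely: for $\tilde w \in H_n$, the curves $\Phi_v(\tilde z)$ and $\Phi_v(\tilde w)$ will have large projections $d_W(\Phi_v(\tilde z), \Phi_v(\tilde w))$ to proper witnesses $W$ surrounding the $x$-side of $\widetilde{\alpha}_n$ (these grow without bound as $n \to \infty$ by the filling condition), which both forces $d^s(\Phi_v(\tilde z), \Phi_v(H_n)) \to \infty$ and confines every survival path between $\Phi_v(\tilde z)$ and $\Phi_v(\tilde w)$ to a uniform neighborhood of $\Phi_v(H_n)$ via the ordering machinery of Section~\ref{S:distance formula}.

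Granting this, for any sequence $\{x_k\} \subset \HH$ with $x_k \to x$, the tails sit eventually in each $H_n$, so the Gromov products $\la \Phi_v(x_k), \Phi_v(x_\ell) \ra_{\Phi_v(\tilde z)}$ tend to infinity and $\Phi_v(x_k)$ converges to some $\overline{\Phi}_v(x) \in \partial \sC$; independence from the sequence follows since any two sequences approaching $x$ must cohabit each $H_n$ eventually. Continuity of $\overline{\Phi}_v$ at points of $\Abd$ and its $\pi_1(S)$-equivariance are then standard consequences of the nested structure together with the equivariance of $\Phi_v$. For independence of $v$: writing $\tilde w = \tilde f(\tilde z)$ with $f \in \Diff_0(S)$, the curves $\Phi_v(\tilde w)$ and $\Phi_{v'}(\tilde w)$ coincide, up to bounded tree-edge error, with $f^{-1}(v)$ and $f^{-1}(v')$ viewed as curves on $\dot S$, with intersection number $i(v, v')$; disjoint $v, v'$ in $S$ therefore yield disjoint representatives in $\dot S$, and by induction along any edge path in $\calC(S)$ one obtains $d^s(\Phi_v(\tilde w), \Phi_{v'}(\tilde w)) \preceq d_{\calC(S)}(v, v')$ uniformly in $\tilde w$. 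Consequently $\{\Phi_v(x_k)\}$ and $\{\Phi_{v'}(x_k)\}$ represent the same point of $\partial \sC$ for every $x \in \Abd$, proving $\partial \Phi_v = \partial \Phi_{v'}$.
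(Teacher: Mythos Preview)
The proposal has a substantial gap at exactly the step you flag as ``the principal obstacle.'' You take $Q_n = \Phi_v(H_n)$ as your candidate quasi-convex sets, where $H_n$ is a half-plane bounded by a lift $\widetilde\alpha_n$ of a simple closed curve. But $\Phi_v(H_n)$ lies in the single Bass--Serre tree $T_v = \Pi^{-1}(v)$, and there is no reason this tree---let alone a subtree of it---is uniformly quasi-convex in $\sC$: survival paths between two vertices of $T_v$ typically leave $T_v$ and pass through other fibers. Your assertion that the ordering machinery of Section~\ref{S:distance formula} ``confines every survival path \ldots\ to a uniform neighborhood of $\Phi_v(H_n)$'' is not substantiated; the total order on $\Omega_k(x,y)$ organizes witnesses along a single survival path and says nothing about confinement to a fixed fiber. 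Likewise, the claim that $d^s(\Phi_v(\tilde z),\Phi_v(H_n))\to\infty$ does not follow from the filling condition as you suggest: that condition concerns geodesic rays in $\HH$ crossing preimages of curves in $S$, whereas what you need is a specific witness $W\subsetneq \dot S$ with $d_W(\Phi_v(\tilde z),\Phi_v(\tilde w))$ uniformly large over all $\tilde w\in H_n$. The phrase ``proper witnesses $W$ surrounding the $x$-side of $\widetilde\alpha_n$'' conflates objects in $\HH$ with subsurfaces of $\dot S$ and does not identify any such $W$.

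The paper's argument avoids both issues by a different construction. Rather than lifts of simple closed curves, one fixes a geodesic $\gamma\subset\HH$ projecting to a \emph{filling} (non-simple) closed geodesic in $S$, and uses $\pi_1S$--translates $\gamma_n$ nesting down on $x$. Rather than $\Phi_v(H_n)$, the quasi-convex sets are $\mathscr H^+(\gamma_n)=\Phi(\C(S)\times\calH^+(\gamma_n))$, built from the \emph{entire} first factor $\C(S)$. Weak convexity of $\mathscr H^\pm(\gamma_n)$ then follows structurally from the facts that $\mathscr H^+(\gamma_n)\cup\mathscr H^-(\gamma_n)=\sC$ and $\mathscr H^+(\gamma_n)\cap\mathscr H^-(\gamma_n)=\calX(\gamma_n)$ (see the two bullet points citing \cite[Propositions~3.1--3.2]{LeinMjSch}), so no distance-formula argument is needed. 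The escape $d^s(b,\mathscr H^+(\gamma_n))\to\infty$ (Proposition~\ref{nestinglimit}) is proved by a Hausdorff-limit contradiction that uses both the filling hypothesis on $p(\gamma)$ and the assumption $x\in\Abd$; it is not a consequence of Theorem~\ref{dist}. A bonus of this route is that $\mathscr H^+(\gamma_n)$ is independent of $v$ by construction, so independence of $\partial\Phi_v$ from $v$ is immediate---your separate bounded-distance argument for that part is correct but becomes unnecessary.
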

By the last claim, we may denote the restriction as $\partial \Phi  \colon \Abd \to \partial \C^s(\dot S)$, without reference to the choice of $v$.
To prove this theorem, we will use the following from \cite[Lemma~1.9]{LeinMjSch}, which is a mild generalization of a lemma of Mitra in \cite{Mitra1}.
\begin{lemma} \label{mitra} Let $X$ and $Y$ be two hyperbolic metric spaces, and $F: X \rightarrow Y$ a continuous map. Fix a basepoint $y\in Y$ and a subset $A \subset \partial X$. Then there is a $A$--Cannon-Thurston map
\[\overline F: X\cup A \rightarrow Y\cup \partial Y\]
if and only if for all $s\in A$ there is a neighborhood basis $\calB_i\subset X \cup A$ of $s$ and a collection of uniformly quasiconvex sets $Q_i\subset Y$ such that;
\begin{itemize}
\item $F(\calB_i\cap X )\subset Q_i$, and
\item $d_Y(y, Q_i)\rightarrow \infty$ as $i\rightarrow \infty$.
\end{itemize}
Moreover,
\[\bigcap_{i}\overline Q_i=\bigcap_{i}\partial Q_i =\{\overline F(s)\}\]
determines $\overline F(s)$ uniquely, where $\partial Q_i = \bar Q_i \cap \partial Y$.
\end{lemma}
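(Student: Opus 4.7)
\textbf{Proof plan for Lemma \ref{mitra}.} The statement is an equivalence, and the plan is to address the two implications separately. The key technical ingredient for both directions is the following standard hyperbolicity estimate: if $Q \subset Y$ is $K$--quasiconvex and $q,q' \in Q$, then
\[ \langle q, q'\rangle_y \;\geq\; d_Y(y,Q) - c,\]
where $c$ depends only on $\delta$ and $K$. I would prove this by observing that the geodesics $[y,q]$ and $[y,q']$ must each traverse a length of at least $d_Y(y,Q)$ before entering a $K$--neighborhood of $Q$, so by slim triangles they fellow-travel for an initial segment of length essentially $d_Y(y,Q)$.

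For the sufficient direction ($\Leftarrow$), I would assume the quasiconvex nesting hypothesis. Fix $s\in A$ and pick any sequence $\{x_n\} \subset X$ with $x_n \to s$ in $X \cup A$. For every fixed $i$, eventually $x_n \in \calB_i$, so $F(x_n) \in Q_i$. The estimate above, combined with $d_Y(y, Q_i) \to \infty$ and uniform quasiconvexity, yields $\langle F(x_n), F(x_m) \rangle_y \to \infty$, so $\{F(x_n)\}$ converges to some $\xi \in \partial Y$. Applying the same Gromov-product estimate to an interleaved sequence built from two approximating sequences shows the limit $\xi$ is independent of the choice of $\{x_n\}$, so I would set $\overline F(s) := \xi$ (and $\overline F|_X := F$). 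Continuity of $\overline F$ at $s$ then follows from the fact that every neighborhood of $\xi$ in $\overline Y$ contains $Q_i \cup \partial Q_i$ for all sufficiently large $i$, again by the same estimate.

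For the necessary direction ($\Rightarrow$), assume $\overline F$ exists. Given $s \in A$, set $\xi := \overline F(s)$, choose a geodesic ray $\rho \colon [0,\infty) \to Y$ from $y$ to $\xi$, and define $Q_i := \calN_R(\rho([i,\infty)))$ for a fixed $R$ large enough (depending only on $\delta$) that each $Q_i$ is uniformly quasiconvex. Then $d_Y(y, Q_i) \geq i-R \to \infty$, the boundary traces $\partial Q_i$ form a nested sequence collapsing to $\{\xi\}$, and the sets $V_i := Q_i \cup \partial Q_i$ form a neighborhood basis of $\xi$ in $\overline Y$. Continuity of $\overline F$ at $s$ lets me choose a nested neighborhood basis $\calB_i$ of $s$ in $X \cup A$ with $\overline F(\calB_i) \subset V_i$; restricting to $X$ gives $F(\calB_i \cap X) \subset Q_i$, as required. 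Finally, the equalities $\bigcap_i \overline Q_i = \bigcap_i \partial Q_i = \{\overline F(s)\}$ follow since $d_Y(y, Q_i) \to \infty$ forces $\bigcap_i \overline Q_i \cap Y = \emptyset$, while the Gromov-product estimate forces $\bigcap_i \partial Q_i$ to be a single boundary point, necessarily equal to the limit constructed above.

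The main obstacle I anticipate is the fellow-traveling estimate and the uniqueness argument identifying $\bigcap_i \partial Q_i$ with a single point: uniform quasiconvexity alone is not visibly enough, and one must combine it with the growing distance $d_Y(y, Q_i) \to \infty$ to force convergence in the Gromov boundary sense. Once this estimate is in hand, the rest of both implications amounts to bookkeeping with neighborhood bases and the continuity of $\overline F$.
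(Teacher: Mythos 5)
A preliminary remark: the paper does not prove this lemma at all; it is quoted from \cite[Lemma~1.9]{LeinMjSch} (a mild generalization of Mitra's lemma), and only the ``if'' direction together with the ``moreover'' clause is ever used. Your proof of that direction is essentially the standard one and is correct in outline: uniform $K$--quasiconvexity gives $[q,q']\subset\calN_K(Q_i)$, hence $\langle q,q'\rangle_y\ge d_Y(y,[q,q'])-2\delta\ge d_Y(y,Q_i)-K-2\delta$ (this is the clean justification of your key estimate; the ``each geodesic travels $d_Y(y,Q_i)$ before entering $\calN_K(Q)$, so they fellow-travel'' phrasing is not by itself a proof), and then images of sequences converging to $s$ converge in $\partial Y$ to a sequence-independent limit, continuity follows from the fact that $Q_i\cup\partial Q_i$ eventually lies in any visual neighborhood of the limit, and $\bigcap_i\overline Q_i=\bigcap_i\partial Q_i=\{\overline F(s)\}$ since $d_Y(y,Q_i)\to\infty$ kills interior points while the same Gromov-product bound forces $\bigcap_i\partial Q_i$ to be a single point containing $\overline F(s)$. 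Two small repairs: take the $\calB_i$ open (or pass to interiors) so that continuity at points of $\calB_i\cap A$ other than $s$ can be checked, and record explicitly that $\overline F(s)\in\partial Q_i$ for every $i$.

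The genuine gap is in your ``only if'' direction. You take $Q_i=\calN_R(\rho([i,\infty)))$ for a geodesic ray $\rho$ from $y$ to $\xi=\overline F(s)$ and assert that $V_i=Q_i\cup\partial Q_i$ is a neighborhood basis of $\xi$ in $Y\cup\partial Y$, so that continuity of $\overline F$ produces a neighborhood basis $\calB_i$ of $s$ with $\overline F(\calB_i)\subset V_i$. That assertion is false in any hyperbolic space that is not ray-like: a neighborhood of $\xi$ must contain every $w\in Y$ with $\langle w,\xi\rangle_y$ large, and such $w$ can be arbitrarily far from $\rho$ (in the upper half-plane with $y=i$ and $\xi=\infty$, the points $x+i$ with $|x|$ huge have large Gromov product with $\xi$ but are very far from the vertical ray; the same happens in a tree by branching off the ray). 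Consequently, for $X=Y=\HH$, $F=\mathrm{id}$, $A=\partial X$, there is \emph{no} neighborhood basis $\calB_i$ of $s$ with $F(\calB_i\cap X)$ contained in a bounded neighborhood of a subray, so your construction cannot yield the required pairs $(\calB_i,Q_i)$. The standard repair is to fatten the target sets rather than the ray: let $V_i=\{w\in Y\cup\partial Y:\langle w,\xi\rangle_y>i\}$, set $\calB_i=\overline F^{-1}(V_i)$ (intersected with a fixed countable neighborhood basis of $s$ so that it is a basis), and let $Q_i$ be the union of all geodesics joining pairs of points of $F(\calB_i\cap X)$. The geodesic join of an arbitrary subset of a $\delta$--hyperbolic space is $2\delta$--quasiconvex, it contains $F(\calB_i\cap X)$, and $d_Y(y,Q_i)\ge\inf\langle z,z'\rangle_y\ge i-O(\delta)\to\infty$ because all points of $F(\calB_i\cap X)$ have Gromov product at least $i$ with $\xi$. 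With that substitution the necessity direction, and hence the full equivalence, goes through.
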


\noindent Given the adjustments already made to our setup, the proof of Theorem \ref{CTex} is now nearly identical to \cite[Theorem~3.6]{LeinMjSch}, so we just recall the main ingredients, and explain the modifications necessary in our setting.\\

\noindent We fix a bi-infinite geodesic  $\gamma$ in $\HH$ so that $p(\gamma)$ is a closed geodesic that fills $S$ (i.e.~nontrivially intersects every essential simple closed curve or arc on $S$).
As in \cite{LeinMjSch}, we construct quasi-convex sets from such $\gamma$ as follows. Define
\[\calX(\gamma)=\Phi(\C(S)\times \gamma)\]
where $\Phi$ is the survival map. Let $\calH^{\pm}(\gamma)$ denote the two half spaces bounded by $\gamma$ and define the sets
\[ \mathscr{H}^{\pm}(\gamma)= \Phi(\C(S)\times \calH^{\pm}(\gamma))     \]

\noindent The proofs of the following two facts about these sets are identical to the quoted results in \cite{LeinMjSch}.
\begin{itemize}
\item \cite[Proposition~3.1]{LeinMjSch}: $\calX(\gamma)$, $ \mathscr{H}^{\pm}(\gamma)$ are  simplicial subcomplexes of $\sC$ spanned by their vertex sets and are weakly convex (meaning every two points in the set are joined by {\em some} geodesic contained in the set).
\item \cite[Proposition~3.2]{LeinMjSch}: We have,
\[  \mathscr{H}^{+}(\gamma)\cup  \mathscr{H}^{-}(\gamma)= \sC\]
and
\[  {\mathscr{H}}^{+}(\gamma)\cap  \mathscr{H}^{-}(\gamma) = \calX(\gamma).\]

\end{itemize}

Now we consider a set $\{\gamma_n\}$ of pairwise disjoint translates of $\gamma$ in $\HH$ so that the corresponding (closed) half spaces nest:
\[ \calH^{+}(\gamma_1) \supset\calH^{+}(\gamma_2)\supset\cdots\]
\noindent Since the action is properly discontinuously on $\HH$, there is a $x\in \partial \HH$ such that
\begin{equation}\label{eq:1}
  \bigcap^{\infty}_{n=1}\overline{\calH^{+}(\gamma_n)}=\{x\}.
\end{equation}
Here, $\overline{\calH^+(\gamma_n)}$ is the closure in $\overline{\mathbb H}$.  For such a sequence, we say {\em $\{\gamma_n\}$ nests down on $x$}.

On the other hand, if $r \subset \mathbb H$ is a geodesic ray ending in some point $x\in \partial \mathbb H$ which is {\em not} a parabolic fixed point, $p(r)$ intersects $p(\gamma)$ infinitely many times.  Hence, we can find a sequence $\{\gamma_n\}$ which nests down on $x$.  In particular, for any element $x \in \Abd$ has a sequence $\{\gamma_k\}$ that nests down on $x$.

The main ingredient in the proof of existence of the extension is the following.

\begin{proposition}\label{nestinglimit} If $\{\gamma_n\}$  nests down to $x\in \Abd $, then for a basepoint $b\in \sC$, the sets $\mathscr{H}^{+}(\gamma_n)$ are quasiconvex and  we have
\[d^s(b, \mathscr{H}^{+}(\gamma_n))\rightarrow \infty \,\,\text{as}\,\, n\rightarrow \infty\]
\end{proposition}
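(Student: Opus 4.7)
For the quasiconvexity I invoke Proposition~3.1 (cited above), which gives weak convexity of each $\mathscr{H}^{+}(\gamma_n)$ in $\sC$: any two vertices are joined by some geodesic contained in the set. In the $\delta$-hyperbolic space $\sC$ (Theorem~\ref{hyp}) this upgrades for free to $2\delta$-quasiconvexity, since the slim bigon estimate forces any geodesic between two points of the set into a uniform neighborhood of an internal one.

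For the distance claim I argue by contradiction. Suppose, after passing to a subsequence, there exist $D>0$ and $b_n = \Phi(\alpha_n,y_n)\in \mathscr{H}^{+}(\gamma_n)$ with $y_n\in \calH^{+}(\gamma_n)$ and $d^s(b,b_n)\le D$, and write $b=\Phi_v(y_0)$ for a vertex $v\in\C(S)$ and $y_0\in\HH$. Since the forgetful map $\Pi\colon \sC \to \C(S)$ is $1$-Lipschitz on vertices, $d_S(v,\alpha_n)\le D$. Moreover, for adjacent $\alpha,\alpha'\in\C(S)$ one may realize them by disjoint representatives missing $p(y_n)$, so $\Phi(\alpha,y_n)$ and $\Phi(\alpha',y_n)$ are uniformly close in $\sC$; iterating along a $\C(S)$-geodesic from $v$ to $\alpha_n$ yields $d^s(\Phi_v(y_n),b_n)\le C_0\, d_S(v,\alpha_n)\le C_0 D$ for a uniform constant $C_0$. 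It therefore suffices to show $d^s(b,\Phi_v(y_n))\to\infty$.

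This is where the hypothesis $x\in\Abd$ is crucial. Since $p(\gamma)$ fills $S$, the curve $v$ has lifts $\widetilde v$ crossing each translate $\gamma_n$; moreover the geodesic ray $r$ from $y_0$ to $x$ crosses infinitely many lifts of $v$. As $\{\gamma_n\}$ nests down to $x$, both endpoints of $\gamma_n$ converge to $x$ in $\partial\HH$. Thus, for each lift $\widetilde v_k$ crossed by $r$ between $y_0$ and $r\cap\gamma_n$, for $n$ large both endpoints of $\gamma_n$ lie in the arc of $\partial\HH$ on the far side of $\widetilde v_k$ from $y_0$, so $\widetilde v_k$ separates $y_0$ from all of $\calH^{+}(\gamma_n)$, and in particular from $y_n$. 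Consequently the number $N_n$ of lifts of $v$ separating $y_0$ from $y_n$ tends to infinity, uniformly over $y_n\in\calH^{+}(\gamma_n)$. Since $\Phi_v$ factors through the Bass-Serre tree $T_v=\Pi^{-1}(v)$ and $T_v$-distance counts exactly these separating lifts, $d_{T_v}(\Phi_v(y_0),\Phi_v(y_n))\ge N_n\to\infty$.

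The last step is to upgrade this tree-distance divergence to divergence in $\sC$; this is the principal obstacle. The plan is to establish that $T_v\hookrightarrow \sC$ is a quasi-isometric embedding, using the distance formula (Theorem~\ref{dist}) together with the bounded geodesic image theorem (Theorem~\ref{BGIT}): since every vertex of $T_v$ projects to $v\in\C(S)$ under $\Pi$, any large subsurface projection between two vertices of $T_v$ must come from a witness whose boundary is dual to the $T_v$-segment between them, and such projections can be bounded below in terms of the $T_v$-distance. Combining the resulting quasi-isometric embedding with the divergence of $d_{T_v}$ from the previous paragraph contradicts $d^s(b,b_n)\le D$, completing the proof.
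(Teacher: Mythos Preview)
Your treatment of quasiconvexity is fine and matches the paper's: weak convexity plus $\delta$--hyperbolicity gives uniform quasiconvexity.

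For the divergence statement, your approach diverges substantially from the paper's, and the final step contains a genuine gap. The paper argues by a \emph{layering} construction: fixing $R>0$, it inductively produces indices $N_1<\cdots<N_{R+1}$ so that the slices $\calX(\gamma_{N_j})\cap\Pi^{-1}(B_R(\Pi(b)))$ are pairwise disjoint; since $\calX(\gamma_{N_j})$ separates $\mathscr{H}^+(\gamma_{N_j})$ from its complement, any path from $b$ into $\mathscr{H}^+(\gamma_{N_{R+1}})$ that stays in $\Pi^{-1}(B_R(\Pi(b)))$ must cross all $R+1$ slices and hence have length at least $R$. The inductive step is a Hausdorff--limit/compactness argument that uses the hypothesis $x\in\Abd$ to rule out a ray to $x$ with no transverse intersections with a limiting curve. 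No statement about trees being quasi-isometrically embedded is used.

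Your reduction to the single fibre $T_v$ (your steps 3--8) is essentially sound: the map $\Phi(\,\cdot\,,y_n)$ is simplicial on $\C(S)$, so moving $\alpha_n$ to $v$ costs at most $D$ in $\sC$; and since every terminal subray to $x\in\Abd$ meets $v$, the ray $r$ crosses infinitely many lifts of $v$, giving $d_{T_v}(\Phi_v(y_0),\Phi_v(y_n))\to\infty$. The problem is the last step. Your proposed justification that $T_v\hookrightarrow\sC$ is a quasi-isometric embedding---``any large subsurface projection between two vertices of $T_v$ must come from a witness whose boundary is dual to the $T_v$-segment''---does not make sense in this setting. The witnesses appearing in Theorem~\ref{dist} are $\dot S$ itself and the proper witnesses $W$ with $\partial W$ bounding a twice-punctured disk containing $z$; none of these are ``dual to edges of $T_v$'', and nothing in the distance formula ties projections to such $W$ to tree distance. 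The distance formula gives you control only once you already know some witness projection is large, not the other way around.

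If the embedding $T_v\hookrightarrow\sC$ is to be salvaged, the natural route is through $\C(\dot S)$: one would need to show that $T_v$ is quasi-isometrically embedded in $\C(\dot S)$ (an analogue for punctured $S$ of what is done in \cite{LeinMjSch} for closed $S$), and then invoke the trivial bound $d^s\ge d_{\dot S}$. That first statement is nontrivial, is not proved anywhere in the present paper, and your sketch does not supply it. The paper's layering argument sidesteps the issue entirely.
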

The proof is nearly identical to that of \cite[Proposition~3.5]{LeinMjSch}, but since it's the key to the proof of existence, we sketch it for completeness.
\begin{proof}[Sketch of proof] Because of the nesting in $\HH$, we have nesting in $\C^s(\dot S)$,
\[ \mathscr{H}^+(\gamma_1) \supset \mathscr{H}^+(\gamma_2) \supset \cdots . \]
We must show that for any $R>0$, there exists $N >0$ so that $d^s(b,\mathscr{H}^+(\gamma_n)) \geq R$, for all $n \geq N$.  The first observation is that because $\Pi \colon \C^s(\dot S) \to \C(S)$ is simplicial (hence $1$--Lipschitz), it suffices to find $N >0$ so that $d^s(b,\mathscr{H}^+(\gamma_n) \cap \Pi^{-1}(B_R(\Pi(b))) \geq R$ for all $n \geq N$.

To prove this, one can use an inductive argument to construct an  increasing sequence $N_1 < N_2 < \ldots <N_{R+1}$ so that
\[ \calX(\gamma_{N_j}) \cap \Pi^{-1}(B_R(\Pi(b)))  \cap \calX(\gamma_{N_{j+1}}) = \emptyset.\]
Before explaining the idea, we note that this implies that $\{\mathscr{H}^+(\gamma_{N_j}) \cap \Pi^{-1}(B_R(\Pi(b)))\}_{j=1}^{R+1}$ are {\em properly} nested: a path from $b$ to $\mathscr{H}^+(\gamma_{N_{R+1}})$ inside $\Pi^{-1}(B_R(\Pi(b)))$ must pass through a vertex of $\mathscr{H}^+(\gamma_{N_j})$, for each $j$,  before entering the next set.  Therefore, it must contain at least $R+1$ vertices, and so have length at least $R$.  This completes the proof by taking $N = N_{R+1}$, since then a geodesic from $b$ to a point of $\mathscr{H}^+(\gamma_{N_{R+1}})$ will have length at least $R$ (if it leaves $\Pi^{-1}(B_R(\Pi(b)))$, then it's length is greater than $R$).

The main idea to find the sequence $N_1 < N_2 < \ldots N_{R+1}$ is involved in the inductive step.  If we have already found $N_1 < N_2 < \ldots N_{k-1}$, and we want to find $N_k$, we suppose there is no such $N_k$, and derive a contradiction.  For this, assume
\[ \calX(\gamma_{N_{k-1}}) \cap \calX(\gamma_n) \cap \Pi^{-1}(B_R(\Pi(b))) \neq \emptyset,\]
for all $n > N_{k-1}$, and let $u_n$ be a vertex in this intersection.  Set $v_n = \Pi(u_n)$, and recall that $\Phi_{v_n}^{-1}(u_n) = U_n \subset \HH$ is a component of the complement a small neighborhood of the preimage in $\HH$ of the geodesic representative of $v_n$ in $S$.  The fact that $u_n \in \calX(\gamma_{N_{k-1}}) \cap \calX(\gamma_n)$ translates into the fact that $\gamma_{N_{k-1}} \cap U_n \neq \emptyset$ and $\gamma_n \cap U_n \neq \emptyset$.  After passing to subsequences and extracting a limit, we find a geodesic from a point on $\gamma_{N_{k-1}}$ (or one of its endpoints in $\partial \HH$) to $x$, which projects to have empty transverse intersection with $v_n$ in $S$. Since $v_n$ is contained in the bounded set $B_R(\Pi(b))$, any subsequential Hausdorff limit does not contain an ending lamination on $S$, by Theorem~\ref{T:Klarreich}, and so any ray with no transverse intersections is eventually trapped in a subsurface (a component of the minimal subsurface of the maximal measurable sublamination of the Hausdorff limit).  This contradicts the fact that $x \in \Abd$, and completes the sketch of the proof.
\end{proof}

We are now ready for the proof of the existence part of Theorem~\ref{CT}.
\begin{proof}[Proof of Theorem \ref{CTex}]  The existence and continuity of $\overline\Phi_v$ follows by verifying the hypotheses in Lemma \ref{mitra}.

Fix a basepoint  $b\in \sC$ and let $\{\gamma_n\}$ be a sequence nesting to a point $x\in \Abd$. The collection of sets
\[ \{ \overline{\calH^{+}(\gamma_n)} \cap (\mathbb H \cup \Abd)\}_{n=1}^\infty, \]
is a neighborhood basis of $x$ in $\mathbb H \cup \Abd$.
By definition of $\mathscr H^+(\gamma_n)$
\[\Phi_v(\calH^{+}(\gamma_n))=\Phi (\{v\}\times \calH^{+}(\gamma_n)) \subset \mathscr{H}^{+}(\gamma_n),\]
for all $n$.
By Proposition \ref{nestinglimit}, $d^s(b, \mathscr{H}^{+}(\gamma_n))\rightarrow \infty \,\,\text{as}\,\, n\rightarrow \infty$.
Therefore, by Lemma \ref{mitra} we have a $\Abd$-Cannon--Thurston map $\overline\Phi_v$ defined on $x \in \Abd$ by
\[ \{\overline\Phi_v(x)\} =  \bigcap_{n=1}^\infty \overline{\mathscr{H}^+(\gamma_n)}.\]
Since the sets on the right-hand side do not depend on the choice of $v$, and since $x \in \Abd$, we also write $\partial \Phi(x) = \overline\Phi_v(x)$, and note that $\partial \Phi \colon \Abd \to \partial \C^s(\dot S)$ does not depend on $v$.
\end{proof}

Observe that for all $x \in \Abd$, we have
\begin{equation} \label{E:what is partial Phi} \partial \Phi (x) = \bigcap_{n=1}^\infty \partial \mathscr{H}^+(\gamma_n)
\end{equation}
where $\{\gamma_n\}$ is any sequence nesting down on $x$, because the intersection of the closure is in fact the intersection of the boundaries.

\subsection{Surjectivity of the Cannon-Thurston map} \label{S:surjectivity}

We start with the following lemma.

\begin{lemma}\label{surj1}For any $v\in \calC^0(S)$ we have
 \[\partial\sC \subset\overline {\Phi_v(\mathbb{H})}\]
\end{lemma}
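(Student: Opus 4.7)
The plan is to realize every boundary point of $\calC^s(\dot S)$ as $\partial \Phi(x_0)$ for some $x_0 \in \Abd$; since $\partial\Phi(\Abd) \subseteq \overline{\Phi_v(\HH)}$ by Theorem~\ref{CTex}, this is enough. By Theorem~\ref{T:boundary ending precise}, a boundary point corresponds to some $\calL_0 \in \EL^s(\dot S)$, and by Lemma~\ref{L:where the witness ELs come from} (together with its direct analogue for $\calL_0 \in \EL(\dot S)$, obtained by filling in the $z$--puncture to produce an element of $\EL(S)$), we may write $\calL_0 = \hat\Phi(\calL, y)$ for some $\calL \in \EL(S)$ and $y \in \HH$. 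Let $U$ be the complementary region of $p^{-1}(\calL)$ containing $y$; by Lemma~\ref{L:points identified by hat phi}, $\hat\Phi(\calL, y') = \calL_0$ for every $y' \in U$.

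For the point $x_0$, I would choose a non-parabolic ideal endpoint of $U$ on $\partial \HH$, i.e., an endpoint of one of the leaves of $p^{-1}(\calL)$ bordering $U$. Every geodesic ray ending at $x_0$ eventually fellow-travels this leaf, whose projection to $S$ is dense in $\calL$ by minimality, so the ray's projection meets every curve having positive transverse intersection with $\calL$. A case-by-case check (depending on whether $\calL_0 \in \EL(\dot S)$ or $\calL_0 \in \EL(W)$ for a proper witness $W$, and normalizing $y$ within its $\pi_1 S$--orbit if needed) then shows that this covers every essential simple closed curve on $\dot S$, including proper-witness boundaries around $z$ and another puncture of $S$, so $x_0 \in \Abd$.

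To identify $\partial \Phi(x_0)$ with $\calL_0$, I would apply the characterization~\eqref{E:what is partial Phi}: with $\{\gamma_n\}$ nesting down to $x_0$, $\partial \Phi(x_0) = \bigcap_n \partial \mathscr{H}^+(\gamma_n)$. Since $x_0 \in \partial U$, for each sufficiently large $n$ there is some $y' \in U \cap \calH^+(\gamma_n)$, and by the previous paragraph $\hat\Phi(\calL, y') = \calL_0$. Picking a sequence $v_m \to \calL$ in $\overline{\calC}(S)$, continuity of $\hat\Phi$ at $(\calL, y')$ from Lemma~\ref{L:continuity of hat Phi} yields $\Phi(v_m, y') \to \calL_0$ in $\overline{\calC}^s(\dot S)$; since each $\Phi(v_m, y')$ lies in $\mathscr{H}^+(\gamma_n)$, we have $\calL_0 \in \partial \mathscr{H}^+(\gamma_n)$ for all such $n$, and thus $\calL_0 \in \bigcap_n \partial \mathscr{H}^+(\gamma_n) = \{\partial \Phi(x_0)\}$.

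The main obstacle is the verification that $x_0 \in \Abd$: this requires careful control of how the complementary regions of $\calL$ on $S$ are distributed relative to the punctures of $S$ and the marked point $z$, and in the proper-witness case it is particularly delicate because one must rule out the possibility that some essential curve on $\dot S$ (bounding a twice-punctured disk containing $z$) lies entirely in a complementary region of $\calL$ disjoint from every geodesic to $x_0$.
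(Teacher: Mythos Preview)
Your approach is valid and in fact proves the stronger statement that $\partial\Phi$ is surjective, which the paper only establishes later (using this very lemma as an ingredient together with Proposition~\ref{surj2}). The paper's own proof of Lemma~\ref{surj1} is quite different and much shorter: it observes that $\partial\sC\cap\overline{\Phi_v(\HH)}$ is closed and contains the set ${\rm PA}_0$ of fixed points of pseudo-Anosov elements of $\pi_1S<\PMod(\dot S)$ (by $\pi_1S$--equivariance of $\Phi_v$ and north--south dynamics on $\overline{\C}^s(\dot S)$). Since $\pi_1S$ is normal in $\PMod(\dot S)$, the set ${\rm PA}_0$ is $\PMod(\dot S)$--invariant; a density argument (pseudo-Anosov fixed points are dense in $\EL(\dot S)$, which in turn is dense in $\EL^s(\dot S)$) then forces $\overline{{\rm PA}_0}=\partial\sC$. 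This avoids the extended survival map $\hat\Phi$ entirely. Your constructive argument, by contrast, is essentially the ``easy'' direction of Theorem~\ref{CTU} run for an arbitrary $\calL_0$, and it renders the paper's separate surjectivity theorem redundant.

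One remark on your stated obstacle: the verification that $x_0\in\Abd$ is simpler than you suggest. The defining condition for $\Abd$ concerns essential simple closed curves on $S$ (via the covering $p\colon\HH\to S$), not curves on $\dot S$. Since $\calL\in\EL(S)$ is filling and minimal, every leaf is dense in $\calL$, and any geodesic ray asymptotic to a leaf endpoint eventually crosses every curve in $\C_0(S)$; hence any non-parabolic ideal endpoint of a leaf of $p^{-1}(\calL)$ lies in $\Abd$ with no case analysis needed. The ``proper-witness boundary'' curves you worry about become inessential in $S$ after filling $z$ back in, so they play no role in testing membership in $\Abd$.
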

The analogous statement for $S$ closed is \cite[Lemma~3.12]{LeinMjSch}, but the proof there does not work in our setting. Specifically, the proof in \cite{LeinMjSch} appeals to Klarreich's theorem about the map from Teichm\"uller space to the curve complex, and extension to the boundary of that; see \cite{Klarreich}.  In our situation, the analogue would be a map from Teichm\"uller space to $\C^s(\dot S)$, to which Klarreich's result does not apply.

\begin{proof} We first claim that if $X\subset \partial\sC$ is closed and $\PMod(\dot S)$--invariant then either $X=\emptyset$ or $X=\partial\sC$. This is true since the set ${\rm{PA}}$ of fixed points of pseudo-Anosov elements of $\PMod(\dot S)$ is dense in $\EL(\dot{S})$ and $\EL(\dot{S})$  is dense in $\EL^s(\dot S)$.  As a result, ${\rm{PA}}$ is dense in $\partial\sC$. Since any nonempty, closed, pure mapping class group invariant subset of $\partial \sC$ has to include ${\rm{PA}}$, the claim is true.

Now we will show that $\partial\sC \cap \overline{\Phi_v(\mathbb{H})}$ contains a $\PMod(\dot S)$--invariant set.  For this, first let ${\rm{PA}}_0 \subset {\rm{PA}}$ be the set of pseudo-Anosov fixed points for elements in $\pi_1S < \PMod(\dot S)$.  Since the $\pi_1(S)$ action leaves $\Phi_v(\mathbb H)$ invariant, and since pseudo-Anosov elements act with north-south dynamics on $\overline{\mathcal C}^s(\dot S)$, it follows that ${\rm{PA}}_0 \subset \overline{\Phi_v(\mathbb H)}$.
Next, we need to show that $f({\rm{PA}}_0)={\rm{PA}}_0$ for $f\in \PMod(\dot S)$.  For any point $x \in PA_0$, let $\gamma \in \pi_1(S)$ be a pseudo-Anosov element with $\gamma(x) = x$.  Then $f \gamma f^{-1}$ fixes $f(x)$, but  $f \gamma f^{-1}$ is also a pseudo-Anosov element of $\pi_1(S)$, since $\pi_1(S)$ is a normal subgroup of $\PMod(\dot S)$.   So, $f({\rm{PA}}_0) \subset {\rm{PA}}_0$, since $x \in {\rm{PA}}_0$ was arbitrary.  Applying the same argument to $f^{-1}$, we find $f({\rm{PA}}_0) = {\rm{PA}}_0$.  Since $f \in \PMod(\dot S)$ was arbitrary, ${\rm{PA}}_0$ is $\PMod(\dot S)$--invariant.

Therefore, $\overline {\rm{PA}}_0$ is a nonempty closed $\PMod(\dot S)$--invariant subset of $\partial \sC \cap \overline{\Phi_v(\mathbb H)}$, and so both of these sets equal $\partial \sC$.
\end{proof}
To prove the surjectivity, we will need the following proposition.  The exact analogue for $S$ closed is much simpler, but is false in our case (as the second condition suggests); see \cite[Proposition~3.13]{LeinMjSch}.  To state the proposition, recall that $\mathcal P \subset \HH$ denotes the set of parabolic fixed points; see Section~\ref{S:cusps and witnesses}.

\begin{proposition}\label{surj2} If $\{x_n\}$ is a sequence of points in $\mathbb{H}$ with limit $x\in \partial\mathbb{H} \setminus \Abd$, then one of the following holds:
\begin{enumerate}
\item $\Phi_v(x_n)$ does not converge to a point of $\partial\sC $; or
\item $x \in \mathcal P$ and $\Phi_v(x_n)$ accumulates only on points in $\partial \C(\mathcal W(x))$.
\end{enumerate}
\end{proposition}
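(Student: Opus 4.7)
The plan is to argue by contradiction in the following sense: I would suppose that $\{\Phi_v(x_n)\}$ has at least one accumulation point $p \in \partial \sC$ (otherwise case (1) holds trivially), pass to a subsequence with $\Phi_v(x_n) \to p$, and show that $x \in \mathcal P$ with $p \in \partial \C(\mathcal W(x))$. By Theorem~\ref{T:boundary ending precise}, this convergence is equivalent to $\Phi_v(x_n) \xrightarrow{\mathrm{CH}} \mathcal L := \mathcal F(p)$, where $\mathcal L \in \EL(W_0)$ for a unique witness $W_0 \subseteq \dot S$. The whole argument then boils down to excluding $W_0 = \dot S$ (Case A) and, given $W_0$ proper, identifying $x$ with the unique point $x^* \in \mathcal P$ satisfying $\mathcal W(x^*) = W_0$ given by Lemma~\ref{L:W parabolics} (Case B).

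For Case A, suppose $\mathcal L$ fills $\dot S$. Since $x \notin \Abd$, I can choose an essential simple closed curve $\alpha \in \C(\dot S)$ and a geodesic ray $r$ ending at $x$ with $p(r) \cap \alpha = \emptyset$, placing $x$ in the closure of a complementary region $U \subset \HH$ of $p^{-1}(\alpha)$. After passing to a subsequence I may assume $x_n \in U$, and since the crossings of the geodesic from $\widetilde z$ to $x_n$ with $p^{-1}(\alpha)$ are stable under perturbation of the endpoint, their number stays bounded. Writing $\Phi_v(x_n) = f_n^{-1}(v)$ with $f_n \in \Diff_0(S)$, the fact that $f_n$ is isotopic to the identity in $S$ yields
\[
i_S(\Phi_v(x_n), \alpha) = i_S(v, f_n(\alpha)) = i_S(v, \alpha),
\]
which is uniformly bounded. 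On the other hand, since $\mathcal L$ fills $\dot S$, $\mathcal L \cap \alpha$ is an uncountable Cantor set, and coarse Hausdorff convergence $\Phi_v(x_n) \xrightarrow{\mathrm{CH}} \mathcal L$ then forces $i_S(\Phi_v(x_n), \alpha) \to \infty$, a contradiction.

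For Case B, with $W_0$ proper and $x^* \in \mathcal P$ determined by $\mathcal W(x^*) = W_0$, Lemma~\ref{L:proj and conv} together with Theorem~\ref{T:Klarreich} applied inside $\C(W_0)$ gives $d_{W_0}(v, \Phi_v(x_n)) \to \infty$. Suppose for contradiction that $x \neq x^*$; since $\overline{H_{x^*}} \cap \partial \HH = \{x^*\}$, the hypothesis $x_n \to x$ forces $x_n \notin H_{x^*}$ for all large $n$, so $p(x_n)$ lies outside the cusp neighborhood $p(H_{x^*})$. Taking $f_n$ whose isotopy class of path dragging $z$ to $p(x_n)$ avoids the $z'$-cusp (where $z'$ is the puncture of $S$ corresponding to $x^*$), the curve $f_n^{-1}(v)$ has bounded intersection with $\partial W_0$ in $\dot S$, since no unbounded winding about the cusp is produced. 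By definition of subsurface projection, $\pi_{W_0}(\Phi_v(x_n))$ then remains in a bounded diameter subset of $\C(W_0)$, contradicting $d_{W_0}(v, \Phi_v(x_n)) \to \infty$. Hence $x = x^*$, so $x \in \mathcal P$ and $p \in \partial \C(W_0) = \partial \C(\mathcal W(x))$, as required.

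The main obstacle will be the bounded-projection claim in Case B: making precise that when $p(x_n)$ is outside $p(H_{x^*})$, the Birman-type drag producing $f_n^{-1}(v)$ cannot accumulate unbounded winding around $\partial W_0$, and hence cannot produce unbounded subsurface projections. This requires a careful choice of the diffeomorphism $f_n$ representing $\tilde f_n(\tilde z) = x_n$ in its isotopy class, using the $\pi_1 S$--equivariance of $\Phi$ and the normalization of horoballs from Section~\ref{S:cusps and witnesses} to control the relative position of $p(x_n)$ and $\partial W_0$.
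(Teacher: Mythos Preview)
Your decomposition by the witness $W_0$ containing $\calL$, rather than by the subsurface $Y \subsetneq S$ in which the projected ray is eventually trapped, is different from the paper's and leaves genuine gaps in both cases.

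In Case~A the intersection-number argument conflates $S$ and $\dot S$. Since $\Pi(\Phi_v(x_n)) = v$ for every $n$, the curve $\Phi_v(x_n)$ is isotopic to $v$ in $S$, so $i_S(\Phi_v(x_n),\alpha) = i_S(v,\alpha)$ is literally constant; coarse Hausdorff convergence to $\calL$ concerns the $\dot S$--geodesic representatives and says nothing about $i_S$. If instead you work in $\dot S$, the quantity that would blow up is $i_{\dot S}(\Phi_v(x_n),\tilde\alpha)$ for a \emph{fixed} lift $\tilde\alpha \in \C^s_0(\dot S)$ of $\alpha$, but then your crossing count (intersections of $[\tilde z,x_n]$ with $p^{-1}(\alpha)$ in the cover of $S$) does not bound that. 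Moreover, the assertion that those crossings ``stay bounded'' already fails: $x_n \to x$ with $x$ in the limit set of a component of $p^{-1}(Y)$ does not force $x_n$ to eventually lie in that component.

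In Case~B you correctly flag the bounded-projection step as the crux, but the proposed fix does not work: the diffeomorphism $f_n$ with $\tilde f_n(\tilde z) = x_n$ is determined up to isotopy \emph{fixing $z$} (this is exactly why $\Phi$ is well-defined), so you are not free to choose a drag path that avoids the $z'$--cusp. And even granting that step, you have not addressed the possibility that $x$ is non-parabolic --- say trapped in a non-annular $Y \subsetneq S$ --- while $\Phi_v(x_n)$ still accumulates on some $\partial\C(W_0)$; ruling this out is part of what the proposition asserts.

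The paper avoids both problems by first reading off the trapping subsurface $Y \subsetneq S$ from $x \notin \Abd$. When $Y$ is not a cusp annulus, the argument of \cite[Proposition~3.13]{LeinMjSch} rules out accumulation on $\partial\sC$ altogether (this simultaneously covers your Case~A and the non-parabolic part of your Case~B); when $Y$ is a cusp annulus, $x \in \calP$ is automatic, and Lemma~\ref{L:a new annulus} --- which replaces $\{x_n\}$ by a sequence inside the horoball $H_x$ with the same limit in $\partial\sC$ --- forces $\calL \in \partial\C(\calW(x))$.
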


To prove this, we will need the following lemma.  For the remainder of this paper, we identify the points of $\partial \C^s(\dot S)$ with $\EL^s(\dot S)$ via Theorem~\ref{T:boundary ending precise}.

\begin{lemma} \label{L:a new annulus} Suppose $x \in \mathcal P$ and $\{x_n\} \subset \mathbb H$ with $x_n \to x$.  If $\Phi_v(x_n) \to \calL$ in $\partial \sC$, then $\calL \in \partial \C(\mathcal W(x))$.
\end{lemma}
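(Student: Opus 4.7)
The plan is to reduce this lemma to Lemma~\ref{L:W parabolics} combined with the quasi-isometric embedding of witness curve complexes (Corollary~\ref{C:witnesses qi embed}). The key observation is that, once $x_n$ is sufficiently close to the parabolic fixed point $x$, it must lie in the chosen horoball $H_x$, and then by Lemma~\ref{L:W parabolics} the image $\Phi_v(x_n)$ is forced to lie inside the subcomplex $\C(\mathcal W(x)) \subset \sC$.

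First I would record the elementary topological fact that, because $H_x$ is an open horoball based at $x \in \partial \HH$, the union $H_x \cup \{x\}$ contains an open neighborhood of $x$ in the compactification $\overline{\HH}$. Therefore the convergence $x_n \to x$ implies $x_n \in H_x$ for all sufficiently large $n$, and after discarding finitely many terms we may assume $\{x_n\} \subset H_x$. Lemma~\ref{L:W parabolics} then gives $\Phi_v(x_n) = \Phi(v, x_n) \in \C(\mathcal W(x))$ for every $n$.

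Next, Corollary~\ref{C:witnesses qi embed} asserts that the inclusion $\C(\mathcal W(x)) \hookrightarrow \sC$ is a quasi-isometric embedding. Under such an embedding the Gromov products in the subspace are coarsely equivalent to those in the ambient space, so the convergence $\Phi_v(x_n) \to \calL$ in $\overline{\sC}$ yields $\langle \Phi_v(x_n), \Phi_v(x_m) \rangle^{\mathcal W(x)}_o \to \infty$ as $n, m \to \infty$. Hence $\{\Phi_v(x_n)\}$ already converges to some $\calL' \in \partial \C(\mathcal W(x))$, and under the embedding $\partial \C(\mathcal W(x)) \hookrightarrow \partial \sC$ provided by Proposition~\ref{P:Witness boundaries embed disjoint} and Lemma~\ref{L:bijection}, the points $\calL'$ and $\calL$ coincide, so $\calL \in \partial \C(\mathcal W(x))$ as required.

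There is no serious obstacle here: the lemma is essentially a direct corollary of the structural results of Section~\ref{S:cusps and witnesses} and Section~\ref{S:boundary}. The only subtlety worth flagging is the identification of ``entering the horoball $H_x$'' with ``approaching $x$ in $\overline{\HH}$'', and this holds automatically for any open horoball based at a point of $\partial \HH$.
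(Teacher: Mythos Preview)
Your argument contains a genuine error at the very first step. The claim that ``$H_x \cup \{x\}$ contains an open neighborhood of $x$ in $\overline{\HH}$'' is false. In the upper half-plane model with $x = \infty$ and $H_x = \{z : \mathrm{Im}(z) \geq 1\}$, the sequence $x_n = n + i/n$ converges to $\infty$ in $\overline{\HH}$ but never enters $H_x$. (In the disk model: a horoball is a Euclidean disk internally tangent to $\partial \mathbb D$ at $x$, and one can approach $x$ tangentially while staying outside that disk.) So you cannot conclude that $x_n$ eventually lies in $H_x$.

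There is a second, more decisive problem: even if your topological claim were correct, it would prove too much. Lemma~\ref{L:W parabolics} says not merely that $\Phi(\C(S) \times H_x) = \C(\mathcal W(x))$, but that $\Phi(\{v\} \times H_x)$ is a \emph{single vertex} $u \in \C(\mathcal W(x))$ (this is the statement $\Phi(\Pi(u) \times H_x) = u$ applied with $u$ the image of $v$). Thus if $x_n \in H_x$ for all large $n$, the sequence $\Phi_v(x_n)$ is eventually constant and cannot converge to any point of $\partial \sC$. In other words, the hypothesis $\Phi_v(x_n) \to \calL \in \partial \sC$ forces $x_n$ to leave every horoball at $x$, which is exactly the opposite of what your argument assumes.

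The paper's proof confronts this directly: it first observes that $\Phi_v(H_x) = \{u\}$ and deduces that $\mathrm{Im}(x_n) \to 0$ (otherwise $\Phi_v(x_n)$ stays bounded distance from $u$). It then uses that consecutive geodesic segments $[x_n,x_{n+1}]$ must cross $H_x$, and along these segments finds points $y_n \in H_x$ together with varying curves $v_n \in \C(S)$ so that $\Phi(v_n,y_n) \to \calL$. Since these images lie in $\C(\mathcal W(x))$, the conclusion follows. The essential point you are missing is that one must vary the first coordinate in $\C(S) \times \HH$ to produce a sequence in $\C(\mathcal W(x))$ converging to $\calL$; fixing $v$ and staying in $H_x$ gives only a single point.
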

\begin{proof}   We suppose $\Phi_v(x_n) \to \calL$ in $\partial \sC$.
Let $H = H_x \subset \mathbb H$ be the horoball based at $x$ disjoint from all chosen neighborhoods of geodesics used to define $\Phi$ as in Sections~\ref{S:tree map construction} and \ref{S:cusps and witnesses}.  Applying an isometry if necessary, we can assume that $x = \infty$ in the upper-half plane model and $H = \{z \in \mathbb C \mid {\rm{Im}}(z) \geq 1 \}$ is stabilized by the cyclic, parabolic group $\langle g \rangle < \pi_1(S,z)$.  By Lemma~\ref{L:W parabolics}, the $\Phi_v$--image of $H$ is a single point $\Phi_v(H)= \{u\}$.  Note that if ${\rm{Im}}(x_n) > \epsilon > 0$ for some $\epsilon > 0$, then $\Phi_v(x_n)$ remains a bounded distance from $u$, and hence does not converge to any $\calL \in \partial \sC$.  Therefore, it must be that ${\rm{Im}}(x_n) \to 0$ and consequently ${\rm{Re}}(x_n) \to \pm \infty$.

We may pass to a subsequence so that the hyperbolic geodesics $[x_n,x_{n+1}]$ nontrivially intersect $H$, and from this find a sequence of points $y_n \in H$ and curves $v_n \in \C(S)$ so that $u_n = \Phi(v_n,y_n)  \to \calL$ as $n \to \infty$ (c.f.~the proof of \cite[Proposition 3.11]{LeinMjSch}).  According to Lemma~\ref{L:W parabolics}, $\Phi(\C(S) \times H) = \C(\calW(x))$, and so $\Phi(v_n,y_n) \in \C(\calW(x))$.  Consequently, $\calL \in \partial \C(\calW(x))$, as required.
\end{proof}

\begin{proof}[Proof of Proposition~\ref{surj2}] We suppose $\Phi_v(x_n) \to \calL \in \partial \sC$ and argue as in \cite{LeinMjSch}.  Specifically, the assumption that $x \in \partial \HH \setminus \Abd$ means that a ray $r$ ending at $x$, after projecting to $S$, is eventually trapped in some proper, $\pi_1$--injective subsurface $Y \subset S$, and fills $Y$ if $Y$ is not an annulus.  If $Y$ is not an annular neighborhood of a puncture, then we arrive at the same contradiction from \cite[Proposition~3.13]{LeinMjSch}.  On the other hand, if $Y$ is an annular neighborhood of a puncture, then by Lemma~\ref{L:a new annulus}, $\calL \in \partial \C(\mathcal W(x))$, as required.
\end{proof}

\begin{theorem} The Cannon--Thurston map
\[ \partial \Phi: \Abd \rightarrow \partial \sC\]
is surjective and $\PMod(\dot S)$--equivariant.
\end{theorem}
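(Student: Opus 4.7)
The plan for surjectivity is to take $\calL \in \partial\C^s(\dot S)$, invoke Lemma~\ref{surj1} to choose a sequence $\{x_n\} \subset \HH$ with $\Phi_v(x_n) \to \calL$, and pass to a subsequence so that $x_n \to x$ in $\overline\HH$. Continuity of $\Phi_v$ rules out $x \in \HH$; if $x \in \Abd$, then Theorem~\ref{CTex} gives $\partial\Phi(x) = \calL$ directly. The only subtle case is $x \in \partial\HH \setminus \Abd$, where Proposition~\ref{surj2} forces $x \in \mathcal{P}$ and $\calL \in \EL(\mathcal{W}(x))$. Here I would replace $x$ by a different point $z \in \Abd$ still mapping to $\calL$: using Lemma~\ref{L:where the witness ELs come from}, write $\calL = \hat\Phi(\calL_S, q)$ for some $\calL_S \in \EL(S)$ and $q \in H_x$. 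Since $H_x$ lies inside a complementary region $U$ of $p^{-1}(\calL_S)$, the region $U$ has $x$ as its unique parabolic ideal vertex and infinitely many non-parabolic ideal vertices (endpoints of leaves of $p^{-1}(\calL_S)$); selecting $z$ to be any such non-parabolic vertex, the remarks after Theorem~\ref{CTU} give $z \in \Abd$.

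To verify $\partial\Phi(z) = \calL$, I would pick $y_n \in U$ with $y_n \to z$ in $\overline\HH$. By Lemma~\ref{L:points identified by hat phi}, $\hat\Phi(\calL_S, y_n) = \calL$ for every $n$, since all the $y_n$ lie in the same complementary region. Approximating $\calL_S$ by vertices $v_k \in \C(S)$ converging to $\calL_S$ in $\overline{\C(S)}$, the continuity of $\hat\Phi$ at $\partial\C(S)$ from Lemma~\ref{L:continuity of hat Phi} yields $\hat\Phi(v_k, y_n) \to \calL$ as $k \to \infty$ for each fixed $n$. On the other hand, since $\partial\Phi$ is independent of $v$ by Theorem~\ref{CTex}, each $\overline\Phi_{v_k}$ agrees with $\overline\Phi_v$ on $\Abd$, so $\overline\Phi_{v_k}(z) = \partial\Phi(z)$ and continuity of $\overline\Phi_{v_k}$ at $z$ gives $\hat\Phi(v_k, y_n) \to \partial\Phi(z)$ as $n \to \infty$ for each fixed $k$. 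A diagonal extraction on the double sequence $\{\hat\Phi(v_k, y_n)\}$ then identifies these two iterated limits, forcing $\partial\Phi(z) = \calL$.

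Equivariance under $\PMod(\dot S)$ extends the $\pi_1(S)$-equivariance already in Theorem~\ref{CTex}: a pure mapping class is represented by a diffeomorphism of $\dot S$ fixing $z$, which extends to a diffeomorphism of $S$ fixing $z$ and lifts to a homeomorphism of $\HH$ well-defined modulo $\pi_1(S)$. The natural equivariance of $\widetilde\Phi$ on $\C(S) \times \Diff_0(S)$ with respect to the $\PMod(\dot S)$-actions on both sides descends through the lifted evaluation map to an equivariance of $\Phi$, and then by continuity to $\partial\Phi$, with the $\pi_1(S)$-ambiguity of the lift absorbed by the already established $\pi_1(S)$-equivariance. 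The hardest step will be the double-limit identification in the verification of $\partial\Phi(z) = \calL$: the iterated limits of $\{\hat\Phi(v_k, y_n)\}$ as $k \to \infty$ (giving $\calL$) and as $n \to \infty$ (giving $\partial\Phi(z)$) need to be shown equal despite the absence of a priori joint uniformity, which I would handle via a careful diagonal extraction using first-countability of $\overline{\C^s(\dot S)}$.
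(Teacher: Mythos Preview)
Your overall outline for surjectivity matches the paper's until the case $x \in \calP$, where you and the paper diverge. The paper does not try to locate a preimage of $\calL$ via the extended map $\hat\Phi$; instead it stays with the original sequence $\{x_n\}$ and translates it by powers of the parabolic generator $g \in \stab_{\pi_1S}(x)$. Since $g$ acts on $\C^s(\dot S)$ as a Dehn twist in $\partial \calW(x)$, it fixes $\pi_{\calW(x)}$, so $\Phi_v(g^{k_n}x_n)$ still converges to $\calL$ (via Lemma~\ref{L:proj and conv2}); but now $g^{k_n}x_n \to \xi \neq x$, and Proposition~\ref{surj2} forces $\xi \in \Abd$. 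This is short and avoids any appeal to $\hat\Phi$.

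Your alternative route---pick a non-parabolic ideal vertex $z$ of the complementary region $U$ of $p^{-1}(\calL_S)$ and show $\partial\Phi(z) = \calL$---is a reasonable idea, but the verification you propose has a genuine gap. Knowing $\lim_k \hat\Phi(v_k,y_n) = \calL$ for each $n$ and $\lim_n \hat\Phi(v_k,y_n) = \partial\Phi(z)$ for each $k$ does \emph{not} imply the two iterated limits agree, and no diagonal extraction or first-countability assumption repairs this: take $a_{k,n} = k/(k+n)$ in $[0,1]$ for the standard counterexample. To salvage your approach you should instead compute $\partial\Phi(z)$ directly from the nesting description \eqref{E:what is partial Phi}: for any $\{\gamma_n\}$ nesting down on $z$, each $\gamma_n$ eventually meets $U$, so $\calL = \hat\Phi(\calL_S,U\cap\gamma_n) \in \hat\Phi(\partial\C(S)\times\gamma_n) \subset \partial\mathscr H^+(\gamma_n)$ by \eqref{E:boundary intersections 2}, whence $\partial\Phi(z) = \calL$. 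This is exactly the forward direction of the proof of Theorem~\ref{CTU}, which is logically independent of surjectivity.

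For equivariance, your sketch is too vague: there is no natural $\PMod(\dot S)$--action on $\C(S)\times\Diff_0(S)$ compatible with $\widetilde\Phi$ in the way you suggest, since a nontrivial mapping class of $S$ does not act on the identity component $\Diff_0(S)$. The paper's argument instead checks equivariance on the dense set of attracting fixed points in $\Abd$ of elements $\delta\in\pi_1S$ that are pseudo-Anosov in $\PMod(\dot S)$, using that such fixed points are characterized dynamically on both sides and that $\pi_1S$ is normal in $\PMod(\dot S)$.
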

\begin{proof} Let $\calL \in  \partial \sC$. Then, by Lemma \ref{surj1} $\calL=\lim \Phi_v(x_n)$
for some sequence $\{x_n\}\in \mathbb{H}$. Passing to a subsequence, assume that $x_n \rightarrow x$ in $\partial\mathbb{H}$. If $x\in\Abd $ we are done since by continuity at every point of $\Abd$ we have,
 \[\calL=\lim \Phi_v(x_n)=\overline\Phi_v(x)=\partial\Phi(x). \]
 If  $x\notin \Abd$,  then by Proposition~\ref{surj2}, $x\in \calP$ and $\calL \in \partial \C(W)$, where $W = \calW(x)$.   By Lemma~\ref{L:proj and conv2}, $\pi_W(\Phi_v(x_n)) \to \calL \in \partial \C(W)$.  Let $g\in \pi_1(\dot S)$ be the generator of $\stab_{\pi_1S}(x)$.   As in the proof of Lemma~\ref{L:a new annulus}, $x_n$ is not entirely contained in any horoball based at $x$, and hence it must be that there exists a sequence $\{k_n\}$ such that $g^{k_n}(x_n)\rightarrow \xi$  where $\xi\in \partial\mathbb H$ is some point such that $\xi\neq x$.  Since $g$ is a Dehn twist in $\partial W$, it does not affect $\pi_W(\Phi_v(x_n))$.  Thus $\pi_W(\Phi_v(g^{k_n}(x_n))) \to \calL$ and hence $\Phi_v(g^{k_n}(x_n)) \to \calL$ by another application of Lemma~\ref{L:proj and conv2}.
 But in this case since $\xi\neq x$ does not satisfy either of the possibilities given in Lemma \ref{surj2}, and hence $\xi\in  \Abd$.  But this implies
 \[ \calL = \lim_{n \to \infty} \Phi_v(g^{k_n}(x_n)) = \overline{\Phi}_v(\xi),\]
 again appealing to continuity of $\overline{\Phi}$.

The proof of $\PMod(\dot S)$--equivariance is identical to the proof of \cite[Theorem~1.2]{LeinMjSch}.  The idea is to use $\pi_1S$--equivariance, and prove $\partial \Phi(\phi \cdot x) = \phi \cdot \partial \Phi(x)$ for $\phi \in \PMod(\dot S)$ and $x$ in the dense subset of $\Abd$ consisting of attracting fixed points of elements $\delta \in \pi_1S$ whose axes project to filling closed geodesics on $S$.  The point is that such points $x$ are attracting fixed points in $\partial\HH$ of $\delta$, but their images are also attracting fixed points in $\partial \C^s(\dot S)$ since $\delta$ is pseudo-Anosov by Kra's Theorem \cite{Kra}, when viewed as an element of $\PMod(\dot S)$.  The fact that $\phi(x)$ and $\phi(\partial \Phi(x))$ are the attracting fixed points of $\phi \delta \phi^{-1}$ in $\partial \HH$ and $\partial \C^s(\dot S)$, respectively, finishes the proof.
\end{proof}

\subsection{Universality and the curve complex} \label{S:Universal}
The following theorem on the universality is an analog of \cite[Corollary3.10]{LeinMjSch}.  While the statement is similar, it should be noted that in \cite{LeinMjSch}, the map is finite-to-one, though this is not the case here since some of the complementary regions of the preimage in $\HH$ of laminations in $S$ are infinite sided ideal polygons, and whose sides accumulate to a parabolic fixed point.  We follow \cite{LeinMjSch} where possible, and describe the differences when necessary.

\begin{CTUtheorem} \label{T:CU theorem}
Given two distinct points $x,y\in \Abd$, $\partial\Phi(x) = \partial \Phi(y)$ if and only if $x$ and $y$ are the ideal endpoints of a leaf or complementary region of $p^{-1}(\calL)$ for some $\calL \in \EL(S)$.
\end{CTUtheorem}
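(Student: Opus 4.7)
The plan is to reduce the theorem to the fiber characterization of the extended survival map $\hat\Phi$ provided by Lemma~\ref{L:points identified by hat phi}. The key step is to identify $\partial\Phi(x)$, for each $x\in\Abd$, with the image $\hat\Phi(\calL_x,w)$ where $\calL_x\in\EL(S)$ is an ending lamination canonically associated to $x$ and $w$ is any point on the leaf or in the complementary region of $p^{-1}(\calL_x)$ whose ideal boundary contains $x$.

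First I would establish, for each $x\in\Abd$, the existence of such an ending lamination $\calL_x\in\EL(S)$: the projection to $S$ of any geodesic ray in $\HH$ ending at $x$ has a coarse Hausdorff limit that is minimal and fills $S$, because $x\in\Abd$ forces the projected ray to cross every essential simple closed curve, which rules out the limit being contained in a proper subsurface.  Consequently, $x$ is either an endpoint of a leaf of $p^{-1}(\calL_x)$ or an ideal vertex of a complementary region of $p^{-1}(\calL_x)$.

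Second I would prove the identification
\[\partial\Phi(x)=\hat\Phi(\calL_x,w)\]
for every $w\in\HH$ on the corresponding leaf or complementary region.  Choose a sequence of curves $v_k\in\C_0(S)$ with $v_k\to\calL_x$ in $\overline\calC(S)$ (available from Theorem~\ref{T:Klarreich} and density of simple closed curves) and a sequence $w_n\to x$ along a geodesic ray contained in the leaf or region.  By Lemma~\ref{L:continuity of hat Phi}, $\hat\Phi$ is continuous at every point of $\partial\calC(S)\times\HH$, so $\Phi_{v_k}(w)\to\hat\Phi(\calL_x,w)$ as $k\to\infty$.  By Lemma~\ref{L:points identified by hat phi}, $\hat\Phi(\calL_x,w_n)=\hat\Phi(\calL_x,w)$ for all $n$, since the $w_n$ all lie on the same leaf or complementary region.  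Meanwhile, $\Phi_{v_k}(w_n)\to\partial\Phi(x)$ as $n\to\infty$ for each $k$ by Theorem~\ref{CTex}.  A diagonal argument in $\csC$, justified by the description of convergence via coarse Hausdorff limits in $\EL^s(\dot S)$ from Theorem~\ref{T:boundary ending precise}, then yields the identification.

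From this both directions follow.  For $(\Leftarrow)$, if $x,y$ share a leaf or complementary region of $p^{-1}(\calL)$, take $\calL_x=\calL_y=\calL$ and a common $w$ to get $\partial\Phi(x)=\hat\Phi(\calL,w)=\partial\Phi(y)$.  For $(\Rightarrow)$, from $\partial\Phi(x)=\partial\Phi(y)$ the identification gives $\hat\Phi(\calL_x,w_x)=\hat\Phi(\calL_y,w_y)$, and Lemma~\ref{L:points identified by hat phi} forces $\calL_x=\calL_y$ with $w_x,w_y$ on a common leaf or region; since $x$ and $y$ are ideal boundary points of the leaves or regions containing $w_x$ and $w_y$ respectively, they are endpoints of the same leaf or complementary region.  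The main obstacle is the diagonal step above, particularly when the complementary region is infinite-sided, i.e.\ contains a parabolic fixed point $\xi\in\calP$.  In that case $\hat\Phi(\calL_x,w)\in\partial\calC(\calW(\xi))\subset\partial\csC$ rather than in $\partial\calC(\dot S)$ (by Lemma~\ref{L:where the witness ELs come from}), so the convergence must be run in the coarse Hausdorff topology on $\EL^s(\dot S)$ with careful tracking of which witness the limit inhabits as $k$ and $n$ vary; ensuring that neither limit escapes to a different witness, and that the coarse Hausdorff limits genuinely coincide, is the technical heart of the proof.
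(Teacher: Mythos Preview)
Your approach has a genuine gap in its first step. The claim that every $x\in\Abd$ determines a lamination $\calL_x\in\EL(S)$ as the ``coarse Hausdorff limit'' of the projection of a ray to $x$ is not well-posed: a ray is not a sequence of laminations, and for a generic $x\in\Abd$ the projected ray $p(r)$ is dense in $S$ rather than accumulating on any particular geodesic lamination. Membership $x\in\Abd$ only guarantees that $p(r)$ meets every essential simple closed curve, not that its closure singles out an ending lamination. Even interpreting your $\calL_x$ as the $\omega$-limit set of $p(r)$, proving that it is minimal and filling, and---more importantly---that $x$ is an ideal endpoint of a leaf or complementary region of its preimage, is essentially what the $(\Rightarrow)$ direction asks you to establish; so the argument is circular. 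The diagonal step is also incomplete: you exhibit two \emph{iterated} limits of $\Phi_{v_k}(w_n)$ (first $k\to\infty$, then $n\to\infty$, and vice versa), but nothing in the setup forces these to coincide in $\csC$.

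The paper avoids both issues by working with the nested quasiconvex sets $\mathscr H^+(\gamma_n^x)$ built in the existence proof, together with the identities
\[
\partial\mathscr H^+(\gamma)\cap\partial\mathscr H^-(\gamma)=\partial\calX(\gamma)
\quad\text{and}\quad
\partial\calX(\gamma)=\hat\Phi\bigl(\partial\C(S)\times\gamma\bigr)\cup\{\pm\calL_\gamma\}.
\]
For $(\Leftarrow)$ one checks directly that $\hat\Phi(\{\calL\}\times E)$ lies in every $\partial\mathscr H^+(\gamma_n^x)$ and every $\partial\mathscr H^+(\gamma_n^y)$, since $E$ meets each $\gamma_n^x$ and $\gamma_n^y$. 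For $(\Rightarrow)$, from $\partial\Phi(x)=\partial\Phi(y)=\calL_0$ one deduces $\calL_0\in\partial\calX(\gamma_n^x)\cap\partial\calX(\gamma_n^y)$ for all $n$; since the pseudo-Anosov fixed points $\pm\calL_{\gamma_n^x},\pm\calL_{\gamma_n^y}$ are all distinct, $\calL_0$ must lie in $\hat\Phi(\partial\C(S)\times\gamma_n^x)\cap\hat\Phi(\partial\C(S)\times\gamma_n^y)$. Lemma~\ref{L:points identified by hat phi} then produces a single $\calL\in\EL(S)$ and a leaf or region $E$ of $p^{-1}(\calL)$ containing points $x_n\in\gamma_n^x$, $y_n\in\gamma_n^y$; passing to the limit gives $x,y$ as ideal endpoints of $E$. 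The lamination $\calL$ is thus extracted from the structure of $\partial\calX(\gamma_n)$, not from the ray to $x$.
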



The proof will require a few additional facts.  The first is the analogue of \cite[Proposition~3.8]{LeinMjSch} which states that the intersections at infinity of the images of the half-spaces satisfy
\begin{equation} \label{E:boundary intersections 1}
\partial \mathscr{H}^+(\gamma) \cap \partial \mathscr{H}^-(\gamma) = \partial \calX (\gamma)
\end{equation}
where as above, $\gamma$ is a geodesics that projects to a closed, filling geodesic in $S$.
The next is the analogue in our setting of \cite[Lemma~3.9]{LeinMjSch}.  To describe this, recall that the element $\delta \in\pi_1S$ stabilizing $\gamma$ is a pseudo-Anosov mapping class when viewed in $\PMod(\dot S)$ by a theorem of Kra \cite{Kra}. Let $\pm \calL \in \EL(\dot S) \subset \partial \C^s(\dot S)$ be the attracting and repelling fixed points (i.e. the stable/unstable laminations).  Then we have
\begin{equation} \label{E:boundary intersections 2}
\partial \calX(\gamma) =  \hat \Phi(\partial \C(S) \times \gamma) \cup \{ \pm \calL \}
\end{equation}
The proofs of these facts are identical to those in \cite{LeinMjSch}, and we do not repeat them.




\begin{proof}[Proof of Theorem~\ref{CTU}]  Given $x,y \in \Abd$, first suppose that there is an ending lamination $\calL \in \EL(S)$ and $E \subset \HH$ which is either a leaf or complementary region of $p^{-1}(\calL)$, so that $x$ and $y$ are ideal vertices of $E$.  Let $\{\gamma^x_n\},\{\gamma^y_n\}$ be $\pi_1S$--translates of $\gamma$ that nest down on $x$ and $y$, respectively.   Then by \eqref{E:what is partial Phi}, we have
\[ \partial \Phi(x) = \bigcap_{n=1}^\infty \partial \mathscr{H}^+(\gamma^x_n) \quad \mbox{ and } \quad \partial \Phi(y) = \bigcap_{n=1}^\infty \partial \mathscr{H}^+(\gamma^y_n) \]
By Lemma~\ref{L:points identified by hat phi}, $\hat \Phi(\{\calL\} \times E)$ is a single point, which we denote $\hat \Phi(\{\calL\} \times E) = \calL_0  \in \EL^s(\dot S)$. Now observe that because $\gamma^x_n$ intersects $E$ for all sufficiently large $n$, \eqref{E:boundary intersections 2} implies
\[ \calL_0 \in \bigcap_{n=1}^\infty \hat \Phi(\{\calL\} \times \gamma^x_n) \subset \bigcap_{n=1}^\infty \partial \calX(\gamma^x_n) \subset \bigcap_{n=1}^\infty \partial \mathscr{H}^+(\gamma^x_n) = \partial \Phi(x).\]
Therefore, $\partial \Phi(x) = \calL_0$.  The exact same argument shows $\partial \Phi(y) = \calL_0$, and hence
\[ \partial \Phi(x) = \calL_0= \partial \Phi(y),\]
as required.

Now suppose $\partial \Phi(x) = \partial \Phi(y) = \calL_0 \in \EL^s(\dot S)$.
Again by \eqref{E:what is partial Phi} there are sequences $\{\gamma^x_n\}$ and $\{\gamma^y_n\}$ ($\pi_1S$-translates of $\gamma$) nesting down to $x$ and $y$  respectively so that
\[\bigcap_{n=1}^\infty \partial \mathscr{H}^{+}(\gamma^x_n)= \calL_0 = \bigcap_{n=1}^\infty \partial \mathscr{H}^{+}(\gamma^y_n). \]
Because the intersections are nested, this implies that for all $n$ we have
\[ \calL_0 \in \partial \mathscr{H}^{+}(\gamma_n^x) \cap \partial \mathscr{H}^{+}(\gamma_n^y). \]
Passing to a subsequence if necessary,  we may assume that for all $n$, $\mathscr{H}^+(\gamma_n^x) \subset \mathscr{H}^-(\gamma_n^y)$ and $\mathscr{H}^+(\gamma_n^y) \subset \mathscr{H}^-(\gamma_n^x)$.  Therefore, for all $n$ we have
\begin{eqnarray*} \calL_0 & \in & \partial \mathscr{H}^{+}(\gamma_n^x) \cap \partial \mathscr{H}^{+}(\gamma_n^y)\\
& = & \left( \partial \mathscr{H}^+(\gamma_n^x) \cap \partial \mathscr{H}^-(\gamma_n^y)\right)  \cap \left( \partial \mathscr{H}^+(\gamma_n^y) \cap \partial \mathscr{H}^-(\gamma_n^x) \right)\\
& = & \left( \partial \mathscr{H}^+(\gamma_n^x) \cap \partial \mathscr{H}^-(\gamma_n^x)\right)  \cap \left( \partial \mathscr{H}^+(\gamma_n^y) \cap \partial \mathscr{H}^-(\gamma_n^y) \right)\\
& = &  \partial \calX(\gamma_n^x) \cap \partial \calX(\gamma_n^y).
\end{eqnarray*}
The last equality here is an application of  \eqref{E:boundary intersections 1}.
Combining this with the description of $\calL_0$ above and \eqref{E:boundary intersections 2}, we have
\[ \partial \Phi(x) = \partial \Phi(y) = \calL_0 = \bigcap_{n=1}^\infty (\partial \calX(\gamma_n^x) \cap \partial \calX(\gamma_n^y)) = \bigcap_{n=1}^\infty (\hat \Phi(\partial \C(S) \times \gamma_n^x) \cap \hat \Phi(\partial \C(S) \times \gamma_n^y)). \]
For the last equation where we have applied \eqref{E:boundary intersections 2}, we have used the fact that the stable/unstable laminations of the pseudo-Anosov mapping classes corresponding to $\delta_n^x$ and $\delta_n^y$ in $\pi_1S$ stabilizing $\gamma_n^x$ and $\gamma_n^y$, respectively, are all distinct, hence $\calL_0$ is not one of the stable/unstable laminations.

From the equation above, we have $\calL_n^x,\calL_n^y \in \EL(S)$ and $x_n \in \gamma_n^x, y_n \in \gamma_n^y$ so that $\hat \Phi(\calL_n^x,x_n) = \hat \Phi(\calL_n^y,y_n) = \calL_0$, for all $n$.  According to Lemma~\ref{L:points identified by hat phi}, there exists $\calL \in \EL(S)$ so that $\calL_n^x = \calL_n^y = \calL$ for all $n$, and there exists a leaf or complementary region $E$ of $p^{-1}(\calL)$ so that $x_n,y_n \in E$.  Since $\gamma_n^x$ and $\gamma_n^y$ nest down on $x$ and $y$, respectively, it follows that $x_n \to x$ and $y_n \to y$ as $n \to \infty$.  Therefore, $x,y$ are endpoints of a leaf of $p^{-1}(\calL)$ or ideal endpoints of a complementary region of $p^{-1}(\calL)$, as required.
\end{proof}

We can now easily deduce the following, which also proves Proposition~\ref{P:CT difference}.

\begin{proposition} \label{P:preimage of witness ELs} Given $\calL_0 \in \EL^s(\dot S)$, $\partial \Phi^{-1}(\calL_0)$ is infinite if and only if $\calL_0 \in \EL(W)$ for some proper witness $W$.
\end{proposition}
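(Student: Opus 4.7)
The plan is to leverage Theorem~\ref{CTU}, Lemma~\ref{L:where the witness ELs come from}, and the dichotomy for complementary regions of $p^{-1}(\calL)$ when $\calL \in \EL(S)$: such a region has infinitely many ideal endpoints in $\partial\HH$ if and only if it contains a parabolic fixed point, which in turn happens exactly when $\hat\Phi$ sends points in the region to an ending lamination on a proper witness.

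For the direction $(\Leftarrow)$, I would start with $\calL_0 \in \EL(W)$ for a proper witness $W$ and invoke Lemma~\ref{L:where the witness ELs come from} to produce $\calL \in \EL(S)$, a parabolic fixed point $x \in \calP$ with horoball $H_x$, a complementary region $U$ of $p^{-1}(\calL)$ with $H_x \subset U$, and $y_0 \in H_x$ such that $\calW(x) = W$ and $\hat\Phi(\calL, y_0) = \calL_0$. Since $U$ contains $x$, its ideal endpoints accumulate at $x$, and all but $x$ lie in $\Abd$ by the remarks following Theorem~\ref{CTU}. For each such non-parabolic ideal endpoint $v$ of $U$, I would choose $\pi_1 S$--translates $\{\gamma_n\}$ of the reference filling geodesic $\gamma$ nesting down on $v$. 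Because $v \neq x$ while $U$ extends to $x$, the half-plane $\calH^+(\gamma_n)$ meets $U$ but does not contain it for large $n$, so $\gamma_n \cap U \neq \emptyset$. Any $y_n$ in this intersection satisfies $\hat\Phi(\calL, y_n) = \hat\Phi(\calL, y_0) = \calL_0$ by Lemma~\ref{L:points identified by hat phi}, and equation~\eqref{E:boundary intersections 2} places $\calL_0 \in \hat\Phi(\partial\calC(S) \times \gamma_n) \subset \partial\calX(\gamma_n) \subset \partial\mathscr{H}^+(\gamma_n)$. Intersecting over $n$ and invoking~\eqref{E:what is partial Phi} yields $\partial\Phi(v) = \calL_0$, so $\partial\Phi^{-1}(\calL_0)$ contains the infinite set of non-parabolic ideal endpoints of $U$.

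For the direction $(\Rightarrow)$, I would argue the contrapositive: if $\calL_0 \in \EL(\dot S)$, so $\calL_0 \notin \EL(W)$ for any proper witness $W$ by disjointness of the spaces $\EL(W)$ in $\EL^s(\dot S)$, choose any $v_0 \in \partial\Phi^{-1}(\calL_0)$ via surjectivity. The proof of Theorem~\ref{CTU} supplies $\calL \in \EL(S)$ and a leaf or complementary region $E$ of $p^{-1}(\calL)$, with $v_0$ an ideal endpoint of $E$ and $\calL_0 = \hat\Phi(\calL, y)$ for $y \in E$, where $\calL$ is uniquely determined by $\calL_0$ via Lemma~\ref{L:points identified by hat phi}. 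If $E$ contained a parabolic fixed point, the discussion preceding Lemma~\ref{L:where the witness ELs come from} would place $\calL_0$ in $\EL(\calW(x))$ for that point $x$, contradicting $\calL_0 \in \EL(\dot S)$. Hence $E$ is a leaf (two ideal endpoints) or a finite-sided region (finitely many ideal endpoints). Theorem~\ref{CTU} combined with the uniqueness from Lemma~\ref{L:points identified by hat phi} then confines $\partial\Phi^{-1}(\calL_0)$ to the ideal endpoints of the $\hat\Phi(\calL,\cdot)$-equivalence class containing $E$; each member of this class avoids parabolic fixed points by the same cusp-exclusion argument, giving finitely many ideal endpoints in total.

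The main subtlety appears in the $(\Rightarrow)$ direction, where one must ensure that the Theorem~\ref{CTU} equivalence class around $v_0$ does not secretly chain through infinitely many different leaves and finite-sided regions all mapped to $\calL_0$; Lemma~\ref{L:points identified by hat phi} does precisely this, pinning the $\hat\Phi(\calL,\cdot)$-preimage of $\calL_0$ to a single local piece of $p^{-1}(\calL)$ at $v_0$. The $(\Leftarrow)$ direction is then essentially a direct application of Lemma~\ref{L:where the witness ELs come from} together with the nesting argument from Theorem~\ref{CTex}.
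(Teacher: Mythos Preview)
Your proposal is correct and follows essentially the same route as the paper: both arguments combine Theorem~\ref{CTU}, Lemma~\ref{L:points identified by hat phi}, Lemma~\ref{L:where the witness ELs come from} (with the paragraph preceding it), and the dichotomy that a complementary region of $p^{-1}(\calL)$ has infinitely many ideal vertices precisely when it contains a horoball $H_x$. Two small cleanups: in $(\Leftarrow)$ you are re-deriving the first half of the proof of Theorem~\ref{CTU} (you could simply cite it), and in $(\Rightarrow)$ you should first dispose of the case $|\partial\Phi^{-1}(\calL_0)|\le 2$ before invoking Theorem~\ref{CTU}, which requires two distinct preimage points to produce $\calL$ and $E$.
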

\begin{proof}   Theorem~\ref{CTU} implies that for $\calL_0 \in \EL^s(\dot S)$, $\partial \Phi^{-1}(\calL_0)$ contains more than two points if and only if there is a lamination $\calL \in \EL(S)$ and a complementary region $U$ of $p^{-1}(\calL)$ so that $\partial \Phi^{-1}(\calL_0)$ is precisely the set of ideal points of $U$.  Moreover, in this case the proof above shows that $\calL_0 = \hat \Phi(\{\calL\} \times U)$.

On the other hand, Lemma~\ref{L:where the witness ELs come from} and the paragraph preceding it tell us that $\calL_0 \in \EL^s(\dot S)$ is contained in $\EL(W)$ for a proper witness $W \subsetneq \dot S$ if and only if it is given by $\calL_0 = \hat \Phi(\{\calL\} \times U)$ where $\calL \in \EL(S)$ and $U$ is the complementary region of $p^{-1}(\calL)$ containing $H_x$, where $x \in \calP$ with $W = \calW(x)$.

Finally, we note that a complementary region of a lamination $\calL \in \EL(S)$ has infinitely many ideal vertices if and only if it projects to a complementary region of $\calL$ containing a puncture, and this happens if and only if it contains a horoball $H_x$ for some $x \in \calP$.

Combining all three of the facts above proves the proposition.
\end{proof}

Now we define $\AbdC \subset \Abd$ to be those points that map by $\partial \Phi$ to $\EL(\dot S)$ and then define $\partial \Phi_0 \colon \AbdC \to \partial \C(\dot S) = \EL(\dot S)$ to be the ``restriction'' of $\partial \Phi$ to $\AbdC$.  Theorem~\ref{T:Phi0 identified} is a consequence of the Theorem~\ref{CTU} since $\partial \Phi_0$ is the restriction of $\partial \Phi$ to $\AbdC$.  Then Proposition~\ref{P:CT difference} is immediate from Proposition~\ref{P:preimage of witness ELs} and  the definitions. Theorem~\ref{T:UCT C short} then follows from Theorem~\ref{CT}.

We end with an alternate description of $\AbdC$.  For $\calL \in \EL(S)$, consider the subset $\calS_{\calL} \subset \partial \HH$ consisting of all ideal endpoints of complementary components of $p^{-1}(\calL)$ {\em which have infinitely many such ideal endpoints}.  That is, $\calS_{\calL}$ is the set of ideal endpoints of complementary regions that project to complementary regions of $\calL$ that contain a puncture.  The following is thus an immediate consequence of Theorem~\ref{CTU} and Proposition~\ref{P:preimage of witness ELs}.

\begin{corollary} \label{C:CT difference 2} The set  of points $\AbdC  \subset \Abd \subset \partial \HH$  that map to $\EL(\dot S) \subset \EL^s(\dot S)$ is
\[ \AbdC  = \Abd \setminus \bigcup_{\calL \in \EL(S)} \calS_{\calL}.\]
\hfill $\Box$
\end{corollary}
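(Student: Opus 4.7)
The plan is to read the corollary off directly by combining the characterization of $\AbdC$ with Proposition~\ref{P:preimage of witness ELs} and Theorem~\ref{CTU}. By definition, $x \in \AbdC$ if and only if $\partial \Phi(x) \in \EL(\dot S)$, equivalently $\partial \Phi(x) \notin \EL(W)$ for every \emph{proper} witness $W$. Proposition~\ref{P:preimage of witness ELs} rephrases this as: $x \in \AbdC$ if and only if the fiber $\partial \Phi^{-1}(\partial \Phi(x))$ is finite. So it suffices to identify the set of $x \in \Abd$ with infinite fiber and show it equals $\bigcup_{\calL \in \EL(S)} \calS_{\calL}$.

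Next, I would invoke Theorem~\ref{CTU} to describe $\partial \Phi^{-1}(\partial \Phi(x))$ explicitly: it consists of $x$ together with all $y \neq x$ such that $x$ and $y$ are two of the ideal endpoints of a single leaf, or of a single complementary region, of $p^{-1}(\calL)$, for some $\calL \in \EL(S)$. A leaf has only two ideal endpoints, and a complementary region contributes only finitely many endpoints to the fiber when it is a finite-sided ideal polygon. Consequently, the fiber is infinite precisely when $x$ is an ideal endpoint of some complementary region $U$ of $p^{-1}(\calL)$ with infinitely many ideal endpoints.

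Finally, I would translate ``infinitely many ideal endpoints" into the language of $\calS_{\calL}$. A complementary region $U$ of $p^{-1}(\calL)$ is an ideal polygon in $\HH$ (possibly with infinitely many sides), and $U$ has infinitely many ideal endpoints exactly when the stabilizer of $U$ in $\pi_1 S$ contains a nontrivial parabolic subgroup; equivalently, $p(U)$ is a complementary region of $\calL$ in $S$ that contains a puncture. This is exactly the defining condition for $U$ to contribute to $\calS_{\calL}$, so the equivalence is complete. There is no substantive obstacle here, since both ingredients are already in hand; the only point requiring any care is checking that a finite-sided complementary region of $p^{-1}(\calL)$ contributes only finitely many points to the fiber of $\partial \Phi$, which is immediate from the form of the fiber given by Theorem~\ref{CTU}.
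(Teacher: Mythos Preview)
Your proposal is correct and follows essentially the same approach as the paper, which simply declares the corollary an immediate consequence of Theorem~\ref{CTU} and Proposition~\ref{P:preimage of witness ELs} and gives no further argument. Your write-up is a faithful unpacking of that sentence: reduce via Proposition~\ref{P:preimage of witness ELs} to the question of when the fiber is infinite, then use Theorem~\ref{CTU} to identify the infinite-fiber locus with the union of the $\calS_{\calL}$.

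One small remark: in your middle paragraph you implicitly use that the fiber $\partial\Phi^{-1}(\partial\Phi(x))$, when it has more than two points, coincides with the ideal vertex set of a \emph{single} complementary region of $p^{-1}(\calL)$ for a \emph{single} $\calL\in\EL(S)$ (rather than being a union over many laminations, each contributing finitely many points). This uniqueness is contained in the proof of Proposition~\ref{P:preimage of witness ELs} (ultimately via Lemma~\ref{L:points identified by hat phi}), so since you are already invoking that proposition your argument is complete; but it would be cleaner to cite that explicitly rather than re-deriving the fiber description from Theorem~\ref{CTU} alone.
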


\bibliographystyle{alpha}
\bibliography{main}
\end{document}